\newtheorem{theorem}{Theorem}
\newtheorem{lemma}{Lemma}[section]
\newtheorem{proposition}{Proposition}[section]
\newtheorem{corollary}{Corollary}[theorem]
\theoremstyle{definition}
\newtheorem{definition}{Definition}
\newtheorem{remark}{Remark}[section]
\newtheorem{example}{Example}[section]
\newtheorem{notation}{Notation}
\newcommand{\Crm}{\mathrm{C}}
\newcommand{\Irm}{\mathrm{I}}
\newcommand{\Lrm}{\mathrm{L}}
\newcommand{\Wrm}{\mathrm{W}}
\newcommand{\Acal}{\mathcal{A}}
\newcommand{\Bcal}{\mathcal{B}}
\newcommand{\Dcal}{\mathcal{D}}
\newcommand{\Fcal}{\mathcal{F}}
\newcommand{\Hcal}{\mathcal{H}}
\newcommand{\Ical}{\mathcal{I}}
\newcommand{\Lcal}{\mathcal{L}}
\newcommand{\Mcal}{\mathcal{M}}
\newcommand{\Ocal}{\mathcal{O}}
\newcommand{\Scal}{\mathcal{S}}
\newcommand{\Bfrak}{\mathfrak{B}}
\newcommand{\Sbf}{\mathbf{S}}
\newcommand{\pbf}{\mathbf{p}}
\newcommand{\Abb}{\mathbb{A}}
\renewcommand{\Bbb}{\mathbb{B}}
\newcommand{\Kbb}{\mathbb{K}}
\newcommand{\Lbb}{\mathbb{L}}
\newcommand{\Pbb}{\mathbb{P}}
\DeclareMathOperator{\id}{id}
\DeclareMathOperator{\im}{Im}
\DeclareMathOperator{\curl}{curl}
\DeclareMathOperator{\rank}{rank}
\DeclareMathOperator{\spn}{span}
\newcommand{\set}[2]{\left\{\, #1 \  \textup{\textbf{:}}\  #2 \,\right\}}
\newcommand{\setn}[2]{\{\, #1 \  \textup{\textbf{:}}\  #2 \,\}}
\newcommand{\setBB}[2]{\biggl\{\, #1 \  \textup{\textbf{:}}\  #2 \,\biggr\}}
\newcommand{\dpr}[1]{\langle #1 \rangle}
\newcommand{\dprb}[1]{\bigl\langle #1 \bigr\rangle}
\newcommand{\cl}[1]{\overline{#1}}
\newcommand{\dd}{\;\mathrm{d}}
\newcommand{\N}{\mathbb{N}}
\newcommand{\R}{\mathbb{R}}
\newcommand{\C}{\mathbb{C}}
\newcommand{\loc}{\mathrm{loc}}
\newcommand{\sym}{\mathrm{sym}}
\newcommand{\todown}{\downarrow}
\newcommand{\embed}{\hookrightarrow}
\newcommand{\BigO}{\mathrm{\textup{O}}}
\newcommand{\sbullet}{\begin{picture}(1,1)(-0.5,-2)\circle*{2}\end{picture}}
\newcommand{\frarg}{\,\sbullet\,}
\newcommand{\BV}{\mathrm{BV}}
\newcommand{\BD}{\mathrm{BD}}
\newcommand{\eps}{\epsilon}
\newcommand{\proofstep}[1]{\textit{#1}}
\newcommand{\Leb}{\mathscr L}
\renewcommand{\eps}{\varepsilon}
\renewcommand{\phi}{\varphi}
\newcommand{\M}{\mathcal M}
\DeclareMathOperator{\Gr}{Gr}
\DeclareMathOperator{\tr}{tr}
\DeclareMathOperator{\Div}{div}
\newcommand{\mres}{\mathbin{\vrule height 1.6ex depth 0pt width
        0.13ex\vrule height 0.13ex depth 0pt width 1.3ex}}
\def\vint_#1{\mathchoice%
    {\mathop{\kern 0.2em\vrule width 0.6em height 0.69678ex depth -0.58065ex
            \kern -0.8em \intop}\nolimits_{\kern -0.4em#1}}%
    {\mathop{\kern 0.1em\vrule width 0.5em height 0.69678ex depth -0.60387ex
            \kern -0.6em \intop}\nolimits_{#1}}%
    {\mathop{\kern 0.1em\vrule width 0.5em height 0.69678ex depth -0.60387ex
            \kern -0.6em \intop}\nolimits_{#1}}%
    {\mathop{\kern 0.1em\vrule width 0.5em height 0.69678ex depth -0.60387ex
            \kern -0.6em \intop}\nolimits_{#1}}}
\newcommand{\aveint}[2]{\mathchoice%
    {\mathop{\kern 0.2em\vrule width 0.6em height 0.69678ex depth -0.58065ex
            \kern -0.8em \intop}\nolimits_{\kern -0.45em#1}^{#2}}%
    {\mathop{\kern 0.1em\vrule width 0.5em height 0.69678ex depth -0.60387ex
            \kern -0.6em \intop}\nolimits_{#1}^{#2}}%
    {\mathop{\kern 0.1em\vrule width 0.5em height 0.69678ex depth -0.60387ex
            \kern -0.6em \intop}\nolimits_{#1}^{#2}}%
    {\mathop{\kern 0.1em\vrule width 0.5em height 0.69678ex depth -0.60387ex
            \kern -0.6em \intop}\nolimits_{#1}^{#2}}}
\DeclareMathOperator{\proj}{Proj}
\title[Slicing and fine properties for $\BV^\Acal$]{Slicing and fine properties for \\ functions 
	 with Bounded $\Acal$-variation}
\date{\today}
\author[A. Arroyo-Rabasa]{Adolfo Arroyo-Rabasa}
\address{Mathematics Institute, University of Warwick, Coventry CV4 7AL, UK}
\email{\href{mailto:Adolfo.Arroyo-Rabasa@warwick.ac.uk}{Adolfo.Arroyo-Rabasa@warwick.ac.uk}}
\subjclass[2010]{49Q20, 26B30}
\keywords{approximate continuity, bounded $\Acal$-variation, elliptic operator, fine properties, jump set, rectifiability,  slicing, structure theorem}
\begin{document}
    
    \begin{abstract}
    	We study the slicing and fine properties of functions in
    	 $\BV^\Acal$, the space of functions with bounded $\Acal$-variation. Here, $\Acal$ is a  homogeneous linear
    	 differential operator with constant coefficients (of arbitrary order). Our main result is the characterization of all $\Acal$ satisfying the following one-dimensional structure theorem: every $u \in \BV^\Acal$ can be sliced into one-dimensional $\BV$-sections. Moreover, decomposing $\Acal u$ into an absolutely continuous part $\Acal^a u$,  a Cantor part $\Acal^c u$ and a jump part $\Acal^j u$, each of these measures can be recovered from the corresponding classical $D^a,D^c$ and $D^j$ $BV$-derivatives of its one-dimensional sections. By means of this result,  we are able  
    	 to analyze the set of Lebesgue points as well as the set of jump points where these functions have approximate one-sided limits. Thus, proving a  structure and fine properties theorem in $\BV^\Acal$. Our results extend most of the classical fine properties of $\BV$ (and all of those known for $\BD$). In particular, we establish a slicing theory and fine properties for $\mathscr {BV}^k$, $\BD^k$ and a whole class of $\BV^\Acal$-spaces that is not covered by the existing theory.

    \end{abstract}

    \maketitle
    
    \tableofcontents

    \section{Introduction}

Let $\Omega$ be an open set of $\R^n$. In this article we consider the space 
\[
	\BV^\Acal(\Omega) = \set{\Lrm^1\big(\Omega;\R^N\big)}{\Acal u \in \M\big(\Omega;\R^M\big)}
\] 
of \emph{functions with bounded $\Acal$-variation} over $\Omega$. Here, $\Mcal(\Omega;\R^M)$ denotes the space of $\R^M$-valued Radon measures on $\Omega$ and $\Acal : \Dcal'(\Omega;\R^N) \to \Dcal'(\Omega;\R^M)$ is a $k$\textsuperscript{th} order homogeneous system of linear partial differential operators
with constant coefficients. More precisely, the operator $\Acal$ acts on functions $u : \Omega \subset \R^n \to \R^N$  as
\begin{equation}\label{eq:A}
\Acal u  \, \coloneqq \, \sum_{|\alpha| = k} A_\alpha \partial^\alpha u,
\end{equation}
where  the coefficients $A_\alpha$ are matrices in $\R^M \otimes \R^N \cong \R^{M \times N}$, $\alpha = (\alpha_1,\dots,\alpha_n)$ is a multi-index in $\N_0^n$, and $\partial^\alpha$ denotes the  distributional partial derivative  $\partial_1^{\alpha_1}\cdots \partial_n^{\alpha_n}$ of order $|\alpha| = \alpha_1 + \cdots + \alpha_n$.

The purpose of this work is to give a comprehensive determination of the structural and fine properties of functions in $\BV^\Acal(\Omega)$, very much in the fashion of what is known for $\BV(\Omega;\R^N)$. A fundamental part of this endeavor is to construct a unified theory that circumvents  the use of tools that are exclusive to the theory of gradients (such as the co-area formula and the theory of sets of finite perimeter). In this regard, we have found that some of the core ideas used to establish the \emph{same} fine properties in $\BD$ remain valid in the $\BV^\Acal$-framework; particularly, the one-dimensional slicing theory (see~\cite{bellettini1992caratterizzazione,ambrosio1997fine-properties}). 
In its current form, the implementation of slicing techniques appeals directly to the unique structure  that  the gradient and the symmetric gradient possess. As such, little is known about the overall structural properties of operators admitting \emph{slicing} into lower dimensional elements. Our main contribution in this domain is the following: First, we introduce the notion of \guillemotleft~{$\rank_\Acal(w)$}~\guillemotright~for a vector $w$ in the target space \guillemotleft~$\R^M$~\guillemotright\; of the operator $\Acal$. This  concept extends the classical notion of rank  when $\R^M \cong \R^{m\times d}$ is a space of matrices, but is also sensible to $\Acal$ in a suitable algebraic way. For first-order operators (when $k = 1$), $\rank_\Acal$-one vectors $w$ can be formally defined as those vectors satisfying
%
%this concept can be understood in terms of the following intrinsic property: $\rank_\Acal(w) = 1$ if and only if 
\[
	w \cdot \Acal u = \partial_{\xi} (u \cdot e)   \quad \text{for all $u \in \Crm^\infty(\R^n;\R^N)$},
\]
for some direction $\xi \in \R^n$ and some coordinate $e \in \R^N$. In this case, one may think of $(\xi,e)$ as coordinates on which the operator $\Acal$ controls the partial derivative operator $\partial_\xi(\;)^e$. Naturally, the more $\rank_\Acal$-one tensors exist, the more individual partial derivatives are controlled by the operator $\Acal$.
%is sensible to $\Acal$ in a suitable algebraic way. 
In this regard, we show that the \emph{algebraic} mixing property
\[
	\bigcap_{\substack{\pi \, \le \, \R^n\\\dim(\pi) = n-1}} \spn %\set{\im \Abb(\eta)}{\eta \in \pi}.
	\set{\im \Abb^k(\xi)}{\xi \in \pi} = \{0\}, \qquad \Abb^k(\xi) \coloneqq \sum_{|\alpha| = k}\xi^\alpha A_\alpha,
\]
%give a purely algebraic condition on the principal symbol of $\Acal$ that 
is equivalent to the existence of a family $\{w_1,\dots,w_M\} \subset \R^M$ such that
\[
	\spn\{w_1,\dots,w_M\} = \R^M, \qquad \rank_\Acal(w_j) \le 1.
\]  
Then, we  show that this spanning property is equivalent to the following \emph{functional} property: the space  $\BV^\Acal(\Omega)$ admits a definition by {slicing} into one-dimensional $\BV$-sections. We proceed to develop a slicing theory in $\BV^\Acal$, which is based on the notion of $\rank_\Acal$ and the understanding of the functional properties that stem from the mixing condition. These slicing methods are crucial for carrying the analysis of Lebesgue point properties, which,  we subsequently use to establish the structure and fine properties for $\BV^\Acal$-spaces. 

Before embarking in a formal discussion of the main theorems, let us bring some perspective to our results by briefly recalling the  slicing and fine properties of the classical $\BV$-theory.
%To bring some perspective to our results, let us first recall some of the basic notions and main results of the classical $\BV$-theory. %most of which can be found in \cite{Caccioppoli_52_I,Caccioppoli_52_II,AFP2000,evans1992,de-giorgi-su-una-teoria-1954,de-giorgi-nuovi-teoremi-1955,de-giorgi-sulla-proprieta-1958,de-giorgi1961frontiere-orien,federer-note-on-gauss-green-1958,federer1969geometric-measu,Fleming_Rishel_60} and references therein.     
The space of functions of bounded variation $\BV(\Omega;\R^N)$ consists of all functions $u \in \Lrm^1(\Omega;\R^N)$ whose distributional gradient  can be represented by an $\R^N \otimes \R^n$ tensor-valued Radon measure $Du \in \M(\Omega;\R^N \otimes \R^n)$. The theory surrounding this space of functions originated by the work of
\textsc{Caccioppoli} \cite{Caccioppoli_52_I,Caccioppoli_52_II}, \textsc{De Giorgi} \cite{de-giorgi-su-una-teoria-1954,de-giorgi-nuovi-teoremi-1955,de-giorgi-sulla-proprieta-1958,de-giorgi1961frontiere-orien} and \textsc{Federer} \cite{federer-note-on-gauss-green-1958,federer1969geometric-measu}, who studied a particular class of BV functions that consists of characteristic functions ({sets of finite perimeter}). Independently, \textsc{Fleming  \& Rishel}\cite{Fleming_Rishel_60} proved the co-area formula, which cast into the context of sets of finite perimeter evolved into the following well-known identity for gradient measures:
   \begin{equation*}\label{eq:coarea}
    |Du|(B) = \int_{-\infty}^\infty \Hcal^{n-1}(\partial^*\{u > t\} \cap B)\dd t.
    \end{equation*}
    The existence of such a decomposition into a family of $(n-1)$-dimensional rectifiable {sections} is an example of the \emph{structural properties} for BV-functions. It implies, among other things, that the total variation of a gradient measure vanishes on sets of zero $\Hcal^{n-1}$-measure. In fact, it implies the stronger bound $|Du| \ll \Ical^{n-1} \ll \Hcal^{n-1}$. Later on, \textsc{Federer} \cite[\S3.2.14]{federer1969geometric-measu} and \textsc{Vol'pert} \cite{Vol_pert_1967} showed that, for every $u \in \BV(\Omega;\R^M)$, the set $S_u$ of Lebesgue discontinuous points is $\Hcal^{n-1}$-countably rectifiable. %\footnote{For an integrable function $u \in \Lrm^1(\Omega;\R^N)$, we write  $S_u \subset \Omega$ to denote the  set of points where $u$ is approximately discontinuous (or  discontinuous in the sense of Lebesgue), see Definition~\ref{def:appcont}.} 
    They also showed that the measure $Du$ can be decomposed, with respect to the $n$-dimensional Lebesgue measure $\Leb^n$, into a singular part $D^s u$ and an absolutely continuous part $D^a u = \nabla u \, \Leb^n$ with density given by the approximate differential $\nabla u : \Omega \to \R^N \otimes \R^n$ of $u$. Furthermore, the singular part $D^s u$ may be split into a \emph{Cantor part} and a \emph{jump part} as
\begin{equation*}\label{eq:dec}
\begin{split}
D^s u & = D^c u \, + \, D^j u \\
& = D^s u \mres (\Omega \setminus S_u) \, + \, (u^+ - u^-) \otimes \nu_u \, \Hcal^{n-1} \mres J_u,
\end{split}
\end{equation*}
 where the {Cantor part} $D^c u$ is the restriction of  $D^s u$ to the set $\Omega \setminus S_u$ of Lebesgue continuity points of $u$, and the {jump set} $J_u \subset S_u$ is the set of {approximate discontinuity points} $x \in \Omega$ where $u$ has one-sided limits $u^+(x) \neq u^-(x)$ with respect to a suitable direction $\nu_u(x)$ normal to $S_u$.   
Thus, $|Du|$ and each of the terms $D^a u, D^c u, D^ju$ possess particular geometrical properties that correlate directly with the Lebesgue continuity properties of  $u$. These properties ultimately conform the so-called \emph{fine properties} of functions of bounded variation. 

     Another remarkable fact is that $\BV$-functions can be characterized by their decomposition into one-directional sections (see~\cite[Remark~3.104]{AFP2000}). Namely, an integrable function $u$ lies in $\BV(\Omega;\R^N)$ if and only if, for any direction $\xi \in \Sbf^{n-1} = \set{\zeta \in \R^n}{|\zeta|= 1}$ and every coordinate $e \in \R^N$, its one-dimensional sections $u_{y,\xi}^e$ belong to $\BV(\Omega_y^\xi;\R^N)$ for $\Hcal^{n-1}$-almost every $y \in \pi_\xi$, and
    \begin{equation}\label{eq:bvslice}
    \int_{\pi_\xi} |D u_{y,\xi}^e|(\Omega_y^\xi) \dd\Hcal^{n-1}(y) < \infty.
    \end{equation}
    Here, $\Omega_y^\xi = \set{s \in \R}{y + s \xi \in \Omega}$, $\pi_\xi \le \R^n$ is the plane orthogonal to $\xi$ passing through the origin and $u_{y,\xi}^e(t) = u(y + t\xi) \cdot e$. As a matter of fact, the structure theorem extends to each of these one-dimensional sections:
    \begin{equation}\label{eq:BVslice}
    (D^\sigma u : e \otimes \xi) = \int_{\pi_\xi} D^\sigma u_{y,\xi}^e \dd\Hcal^{n-1} \qquad \text{for all $\sigma = a,c,j$.}
    \end{equation}

    Related decompositions and slicing techniques hold for  
    \[
    	\BD(\Omega) = \set{u \in \Lrm^1(\Omega;\R^n)}{Eu = Du + Du^T \in \Mcal(\Omega;(\R^n \otimes \R^n)_\sym)},
    \]
     the space of functions $u : \Omega \subset \R^n \to \R^n$ of \emph{bounded deformation} over $\Omega$. %, the space of functions of , which consists of all maps $u \in \Lrm^1(\Omega;\R^n)$ whose distributional symmetric gradient
%    \[
%    Eu = \frac 12 \left(\frac{\partial u^j}{\partial x_i} + \frac{\partial u^i}{\partial x_j}\right)_{i,j = 1,\dots,n} 
%    \]
%    can be represented by a Radon measure in $\M(\Omega,(\R^n \otimes \R^n)_\sym)$. 
     In analogy with~\eqref{eq:bvslice}, the slicing properties of $\BD$ present a significant reduction of coordinates \guillemotleft~$e \in \R^N$ \guillemotright\; that are required to \emph{control} the symmetric gradient. This was observed by \textsc{Bellettini \& Coscia}~\cite{bellettini1992caratterizzazione}, who showed that  function $u : \Omega \subset \R^n\to \R^n$ has bounded deformation if and only if for a basis $\{\xi_1,\dots,\xi_n\}$ of $\R^n$, the one-dimensional sections $u_{y,\xi_i}^{\xi_i}$ belong to $\BV(\Omega_{\xi_i}^{\xi_i})$ for $\Hcal^{n-1}$-almost every $y \in \pi_\xi$ and
    \[
    \int_{\pi_\xi} |D u_{y,\xi_i}^{\xi_i}| \dd \Hcal^{n-1}(y) < \infty. 
    \]
   The full range of fine properties for $\BD$ is contained in the celebrated work of \textsc{Ambrosio, Coscia \& Dal Maso}~\cite{ambrosio1997fine-properties}, where the authors establish a crucial one-dimensional structure theorem of the flavor of~\eqref{eq:BVslice}. More precisely, they showed that if $u \in \BD(\Omega)$, then  
   \begin{equation}\label{eq:BDslice}
    (E^\sigma u : \xi \otimes \xi) = \int_{\pi_\xi} D^\sigma u_{y,\xi}^\xi \dd \Hcal^{n-1}(y) \qquad \text{for all $\sigma = a,c,j.$}
   \end{equation}
   In analogy with $\BV$, here, $Eu = E^a u + E^s u$ is the Radon--Nykod\'ym decomposition of $Eu$ with respect to to $\Leb^d$, $E^c u = E^s u \mres (S_u \setminus J_u)$ is the Cantor part of $E u$, and $E^j u = E^s u \mres J_u$ is the jump part of $Eu$.   Just as for the $\BV$-theory, it is shown that $|Eu|$-almost every point is either an approximate continuity point or an approximate jump point, i.e., 
    \[
    	|Eu|(S_u \setminus J_u) = 0.
    \] 
%    The stronger bound $\Hcal^{n-1}(S_u \setminus J_u) = 0$ remains a longstanding conjecture in $\BD$-spaces. In this regard, the best known bound is due to~\textsc{Kohn}, who proved that  $\mathrm{Cap}_{n-1}(S_u \setminus J_u) = 0$,
%    		    where $\mathrm{Cap}_{n-1}$ denotes the Riesz $(n-1)$-capacitary measure.
%	It is worth to remark that this bound is often of more relevance to applications than actually establishing the longstanding conjectured estimate $\Hcal^{n-1}(S_u \setminus J_u) = 0$.\footnote{For functions of bounded deformation it is unknown whether $S_u$ is $\sigma$-finite with respect to $\Hcal^{n-1}$. The sharpest result in this context is due to \textsc{Kohn} \cite[Part~II]{kohn1979new-estimates-f}, who obtained the capacitary estimate
%		    $\mathrm{Cap}_{n-1}(S_u \setminus J_u) = 0$,
%		    where $\mathrm{Cap}_{n-1}$ denotes the Riesz $(n-1)$-capacitary measure.}
	Other interesting properties such as {approximate differentiability} are also discussed in~\cite{ambrosio1997fine-properties} (though this was formerly established by \textsc{Haj{\l}asz} ~\cite{hajlasz_on_appr_diff_bd_96} in a more general framework). Gathering these results into one single statement translates into the following structure theorem for functions of bounded deformation: if $u \in \BD(\Omega)$, then one may split $Eu$ into the mutually singular measures
	\[
	Eu = \sym(\nabla u) \, \Leb^n \, + \, E^s u \mres (\Omega \setminus S_u) \, + \, (u^+ - u^-) \odot \nu_u \, \Hcal^{n-1} \mres J_u,
	\] 
	where $\nabla u : \Omega \to \R^n \otimes \R^n$ is the approximate gradient map of $u$, $a \odot b \coloneqq \frac 12 (a \otimes b + b \otimes a)$ for vectors $a,b \in \R^n$, and $J_u$ is indeed an $\Hcal^{n-1}$-countably rectifiable set.

  Having revised the elements behind the slicing and fine properties of $\BV$ and $\BD$, we are now in position to give a brief account of the most important results presented in this work. In order to keep this preliminary exposition accurate and simple, we shall for now focus on the case when $\Acal$ is a first-order operator ($k = 1$).
  Our main assumption on $\Acal : \Crm^\infty(\R^n;\R^N) \to \Crm^\infty(\R^n;\R^M)$ is that it is an \emph{elliptic} operator, i.e., there exists a positive constant $c$ such that
\begin{equation}\label{eq:8}
	|\Abb(\xi)[v]| \ge c|\xi||v| \quad \text{for all $\xi \in \R^n, v \in \R^N$}, \qquad \Abb(\xi) \coloneqq \sum_{j = 1}^n \xi_j A_j.
\end{equation}
	A vector $w \in \R^M$ with $\rank_\Acal$-one satisfies the following identity in Fourier space: there exists a pair $(\xi,e) \in \R^n \times \R^N$ such that
	\[
		w \cdot \Abb(\eta)[v] = (\xi \cdot \eta)(e \cdot v) \qquad \text{for all $\eta \in \R^n$, $v \in \R^N$}.
	\]
  	Any such pair $(\xi,e)$ associated to a $\rank_\Acal$-one tensor is said to belong to \guillemotleft~$\partial\sigma(\Acal)$~\guillemotright\; the \emph{directional spectrum} of $\Acal$. Notice that, for the gradient and symmetric gradient, we have $\partial\sigma(D) = \R^n \times \R^N$ and $\partial\sigma(E) = \set{(\xi,\xi)}{\xi \in \R^n}$ respectively. These are precisely the pairs of directions and coordinates where the one-dimensional slicing holds for each of this operators (compare this with the identities~\eqref{eq:BVslice}-\eqref{eq:BDslice} and the fact that both $e \otimes \xi$, $\xi \otimes \xi$ are \emph{rank-one} tensors). 
	
%  	and to consider the $\rank_{\Acal}$-one cone
%  	\[
%  		\Acal^\otimes_1 \coloneqq \set{w \in W}{\rank_\Abb(w) = 1}.
%  	\]
%  	It turns out that, for every $w \in \Acal^\otimes_1$, there exists a pair $(\xi,a) \in \R^n \times \R^N$ such that
%  	We call the set of all such pairs in $\R^n \times \R^N$ the rank-one spectrum of $\Acal$.
  	In Theorem~\ref{thm:structure} we show that~\eqref{eq:8} and the mixing algebraic condition
    	\begin{equation}\label{eq:9}
    	\bigcap_{\substack{\pi \, \le \, \R^n\\\dim(\pi) = n-1}} \spn %\set{\im \Abb(\eta)}{\eta \in \pi}.
    	\set{\im \Abb(\xi)}{\xi \in \pi} = \{0\}
    	\end{equation}
%    	where
%    \[
%    	\Abb^k(\xi)[v] \coloneqq \sum_{|\alpha| = k} \xi^\alpha A_\alpha[v], \quad \xi \in \R^n, v \in \R^N,
%    \] 
%	are equivalent to the existence of a spanning set of $\rank_\Acal$-one vectors, i.e., 
%	\[
%		\spn\set{w \in W}{\rank_\Acal(w) \le 1} = W. 
%	\]
   are equivalent to the following slicing representation of $\BV^\Acal(\Omega)$: a function $u$ belongs to $\BV^\Acal(\Omega)$ if and only if, for every pair $(\xi,e) \in \partial\sigma(\Acal)$, the one-dimensional sections
    \[
    \begin{tikzcd}[row sep=0pt,column sep=1pc]
    	u_{y,\xi}^e \colon \Omega_y^\xi \arrow{r} & \R \\
    	{\hphantom{f\colon{}}} \; \; \; t \arrow[mapsto]{r} &  u(y + t\xi) \cdot e
    \end{tikzcd}
    \] 
%     $$u_{y,\xi}^a : \Omega_y^\xi \subset \R  	\longrightarrow \R : t \longmapsto u(y + t\xi) \cdot e$$
    belong to $\BV(\Omega_y^\xi)$ for $\Hcal^{n-1}$-almost every $y \in\pi_y$ and
    \[
    	 \int_{\pi_\xi} |D u_{y,\xi}^e|(\Omega_{\xi}^y) \dd \Hcal^{n-1}(y) < \infty.
    \]
   	Furthermore, in Theorem~\ref{thm:oned} we show that $\Acal u$ satisfies a one-dimensional \emph{structure theorem} in the following sense: if $w$ is a $\rank_\Acal$-one vector with an associated spectral pair $(\xi,e) \in \partial\sigma(\Acal)$, then  
   	\[
   	(w\cdot\Acal^\sigma u) = \int_{\pi_\xi}  D^\sigma u_{y,\xi}^e \dd \Hcal^{n-1}(y), \quad \text{for all } \sigma \in \{a,c,j\}.
   	\]
   	By exploiting this structural property, we are able to give a purely geometric proof that $|\Acal u|$ vanishes when projected on purely unrectifiable $\sigma$-finite $(n-1)$-dimensional sets (see Corollary~\ref{thm:dim}). This is, if $u \in \BV^\Acal(\Omega)$ and $B \subset \Omega$ is a Borel set satisfying 
   	\[
   	\Hcal^{n-1}\big(\pbf_\xi(B)\big) = 0 \quad \text{for $\Hcal^{n-1}$-almost every $\xi \in \Sbf^{n-1}$},
   	\]
   	where $\pbf_\xi : \R^n \to \pi_\xi$ is the canonical orthogonal projection,
   	then $|\Acal u|(B) = 0$ and in particular the Besicovitch--Federer Theorem implies the measure theoretical estimate
   	\[
   	|\Acal u| \ll \Ical^{n-1} \ll \Hcal^{n-1}.
   	\]
   	This accounts for rectifiability estimates of $\Acal$-gradient measures (see also~\cite{GAFA} for a proof of this result that relies on harmonic analysis techniques for elliptic operators).

   	 We also show that, for all $\Acal$ satisfying~\eqref{eq:8}-\eqref{eq:9}, most of the well-known fine properties of the $\BV$-theory extend to $u \in \BV^\Acal$ and its  parts $\Acal^a u, \Acal^c u, \Acal^j u$. Let us give a brief account of the most relevant ones, all of which are contained in Theorem~\ref{thm:2} (for $k=1$) and Theorem~\ref{thm:k} (for arbitrary order $k$);
   	 see also Theorem~\ref{thm:II}. 	Following the same principles of the $\BV$-splitting, one can decompose $\Acal u$, with respect to the $n$-dimensional Lebesgue, into a singular part $\Acal^s u$ and an absolutely continuous part $\Acal^a u = A(\nabla u) \Leb^n$ where $A$ is a linear map acting on the approximate differential $\nabla u$ of $u$ (this follows from a result for elliptic operators of \textsc{Alberti, Bianchini} and \textsc{Crippa}~\cite[Thm.~3.4]{Alberti_Bianchini_Crippa_14} and an observation of \textsc{Raita}~
   	 \cite{Raita_critical_Lp}). A priori, one can trivially split $\Acal^s u$ into a Cantor part, a \emph{diffuse discontinuous} part, and a jump part as
   	 \begin{equation}\label{eq:trivial}
   	 \begin{split}
   	 	\Acal^s u & \phantom{:}= \Acal^{c} u + \Acal^{d} u + \Acal^j u \\
   	 	& \coloneqq \Acal^s u \mres (\Omega \setminus S_u) \, + \, \Acal^s u \mres (S_u \setminus J_u) \, + \, \Acal^s u \mres J_u.
   	\end{split}
   	 \end{equation} 
   	 %In~\cite{anna}, the author and \textsc{Skorobogatova} characterized the jump part of $\Acal u$ when $\Acal$ is a complex-elliptic operator (a slightly more general setting than the one presented here). 
   	 In~\cite{anna}, the author and \textsc{Skorobogatova} showed that %More precisely, it is shown  that 
   	 $J_u$ is $\Hcal^{n-1}$-countably recitifiable with a measure-theoretic  orientation normal $\nu_u : J_u \to \Sbf^{n-1}$ (see however the more general result in~\cite{nin}), and that the density of the jump part is characterized as
    \[
    \begin{split}
    \Acal^j u & = %\Acal^c u \, + \, \Acal^j u \\
%    & =  \Acal^s u \mres (\Omega \setminus J_u) \, + \, 
    (u^+ - u^-) \otimes_\Abb \nu_u \, \Hcal^{n-1} \mres J_u, \qquad v \otimes_\Abb \xi \coloneqq \Abb(\xi)[v].
    \end{split}
    \]
    Hence, in the $\BV^\Acal$-setting the representation of the jump part differs only in that the density $(u^+ - u^-) \otimes_\Abb \nu_u$  replaces the classical $\BV$-jump density $(u^+ - u^-) \otimes \nu_u$. This and other \emph{soft} fine properties have been also discussed in~\cite{anna} in a slightly more general framework. 
    
    In the realm of continuity properties and the classical structure theorem, we show that there is \emph{essentially} only one type of discontinuities by proving the estimate 
    \begin{equation}\label{eq:sorryfrancis}
    |\Acal u|(S_u \setminus J_u) = 0.
    \end{equation}
    This implies that only jump-type discontinuities are recorded by the total variation measure $|\Acal u|$. Notice that, then, the diffuse discontinuous part $\Acal^d u$ becomes a superfluous term of~\eqref{eq:trivial}.
    Therefore, from~\eqref{eq:sorryfrancis} we also conclude that $\Acal u$ satisfies the classical structure theorem decomposition
	\begin{equation}\label{eq:str}
    	\Acal u \, = \, A(\nabla u) \,\Leb^n \, + \, \Acal^s u \mres (\Omega \setminus S_u) \, + \,  (u^+ - u^-)  \otimes_\Abb \nu_u \, \Hcal^{n-1} \mres J_u,
    \end{equation}
    where $\nabla u : \Omega \to \R^M \otimes \R^n$ is the approximate differential of $u$ and $A$ is a linear map that is expressed solely in terms of the principal symbol $\Abb$. Parting from~\eqref{eq:sorryfrancis} and following by verbatim the proof of~\cite[Thm.~6.1]{ambrosio1997fine-properties}, one may upgrade~\eqref{eq:sorryfrancis} to the following statement: the set of Lebesgue discontinuity points of $u$ that are not jump points is negligible for all $\Acal$-gradient measures, i.e.,  
    \[
    	|\Acal v|(S_u \setminus J_u) = 0 \quad \text{for all $u,v \in \BV^\Acal(\Omega)$.}
    \]

    %\note{modify, mention}
%    Lastly, in order to complete the full-picture of the fine properties, we review the Lebesgue quasi-continuity properties of $u \in \BV^\Acal(\Omega)$, and, the size of its discontinuity set $S_u$ (these were established in \cite{anna}; see also~\cite{diening2019continuity}). 
%    Proposition~\ref{prop:capacity} states that if $u \in \BV^\Acal(\Omega)$, then
%    \[
%    	\lim_{r \to 0^+} \left\{\inf_{\R^N} \aveint{B_r(x)}{} |u(y) - z| \dd y\right\} = 0  \quad \text{for all $x \in \Theta_u$,}
%    \]
%    where 
%    \[
%    	\Theta_u = \set{x \in \Omega}{\limsup_{r \to 0^+}\, {r^{-(n-1)}}\,{|\Acal u|(B_r(x))} = 0}.
%    \]
%    While this quasi-continuity is  interesting in its own right, it also serves as a means to obtain the capacitary estimate (reviewed in Theorem~\ref{thm:cap_est})
%    \[
%    	\mathrm{Cap}_{n-1}(S_u) = 0,
%    \]
%    which extends known estimates established by \textsc{Kohn} for $u \in \BD(\Omega)$.
    
    In the last section, we address the concepts and main results for operators of arbitrary order under the additional assumption of complex-ellipticity. We introduce a linearization principle that allows us to transform the PDE measure-constraint  $\Acal u$ into a first-order PDE constraint $d\Acal$ over $\nabla^{k-1} u$. This method seems to be new (see also~\cite{anna}) in the study of general elliptic operators and might be interesting in its own right.
    
%     \subsection*{Organization}
%    This paper is structured as follows. We begin with some basic results on the structural properties of $\C$-elliptic operators. Next, we illustrate several  examples (and non-examples) of operators satisfying both complex-ellipticity and the mixing property~\eqref{eq:mix} in Section~\ref{sct:ex}. %Section~\ref{sct:sbv} contains a brief discussion about the space of \term{special functions of bounded $\Acal$-variation}, which is a natural generalization of the spaces $\mathrm{SBV}$ and $\mathrm{SBD}$, frequently used in modeling fractures. 
%    In Section~\ref{sct:slicing}, we demonstrate the equivalence between the algebraic mixing condition and the ability to decompose the $\Acal$-gradient into sections presented in the Structure Theorem~\ref{thm:structure}. There, we also introduce the notion of $\rank_\Acal$, and the connection between slicing and the existence of sufficient $\rank_\Acal$-one directions. Section \ref{sec:lin} is dedicated to the linearization procedure that allows us to reduce higher order partial differential operators to first-order ones. Then, we see the proof of the dimensional estimates for the discontinuity set in Section \ref{sct:Cantor_dim_est}. %Finally, in Section~\ref{sct:strproofs}, we display the proofs of the rectifiability results in our structure theorems.
%    Appendix~\ref{sct:prelim} contains all relevant background theory, such as some basic measure theoretic results and the integral representation formula for $\C$-elliptic operators, which plays a crucial role in our quasi-continuity result.
    
    \subsection*{Acknowledgments} I would like to thank Anna Skorobogatova for many valuable conversations. {This project has received funding from the European Research Council (ERC) under the European Union's Horizon 2020 research and innovation programme, grant agreement No 757254 (SINGULARITY).

     \section{Statement of the main results} 	
     In its most general setting, we shall consider a $k$\textsuperscript{th} order homogeneous  linear partial differential operator with constant coefficients operator 
     acting on spaces of smooth functions as  
     $$\Acal : \Crm^\infty(\R^n;V) \to \Crm^\infty(\R^n;W).$$ Here, $V$ and $W$ are finite-dimensional euclidean spaces, of respective dimensions $N$ and $M$  (up to a linear isomorphism the reader may think of $V$ and $W$ as $\R^N$ and $\R^M$ respectively). More precisely, we shall consider operators $\Acal : \Crm^\infty(\Omega;V) \to \Crm^\infty(\Omega;W)$  acting on smooth maps $u \in \Crm^\infty(\Omega;V)$ as
     \begin{equation*}\label{eq:B}
     \Acal u \, = \, \sum_{|\alpha|=k} A_\alpha \partial^\alpha u \, \in \, \Crm^\infty(\Omega;W),
     \end{equation*}
     where the coefficients $A_\alpha \in W \otimes V^* \cong \mathrm{Lin}(V;W)$ are constant tensors. %, $\alpha = (\alpha_1,\dots,\alpha_n)\in \N^n_0$

    \subsection{Slicing of first-order operators} We commence by discussing the so-called \emph{slicing} theory, which extends the known theory for gradients and symmetric gradients~\cite{ambrosio1997fine-properties,bellettini1992caratterizzazione}.

   %The use of the symbol $\mathbf S^{n-1}$ is reserved to denote the $(n-1)$-dimensional sphere in $\R^n$.  
In all that follows we consider $\Omega \subset \R^n$ to be an open set  with Lipschitz boundary. For a non-zero vector $\xi \in \R^n$, we write $$\pi_\xi = \set{\eta \in \R^n}{\xi \cdot \eta = 0}$$ to denote the orthogonal hyper-plane to $\xi$ passing through the origin.   For a Borel set $B \subset \Omega$ and $y \in \pi_\xi$, we define the one-dimensional slice of $B$ in the $\xi$-direction and passing through $y \in \pi_\xi$ as $$B_y^\xi  = \set{s \in \R}{y + s \xi \in B}.$$ For a given covector $e \in V^*$ and a function $u : \Omega \to V$, we define a function of one variable $u_{y,\xi}^e : \Omega_y^\xi \to \R$ by setting
    $$u^e_{y,\xi}(t) \coloneqq \dpr{ e, u(y + t\xi) }.$$ 
    For stating our results it will be crucial to give an algebraic meaning to those partial derivative operators $\partial_\xi (\;)^e$ that are controlled by $\Acal$:
    \begin{definition}[Directional spectrum]
    	Let $\Acal : \Crm^\infty(\R^n;V) \to \Crm^\infty(\R^n;W)$ be a first-order homogeneous partial differential operator. We define the directional spectrum of $\Acal$ as the set of pairs $(\xi,e) \in \R^n \times V^*$ with the following property: there exists a covector $w \in W^*$ such that
    	\begin{equation}\label{eq:1}
    		\dpr{w , \Abb(\eta)v} = \dpr{\xi ,\eta} \dpr{e,v} \quad \text{for all $\eta \in \R^n$ and all $v \in V$}.
    	\end{equation}
    	We write $\partial\sigma(\Acal)$ to denote the directional spectrum of $\Acal$. 
    \end{definition}

The elements of $\partial\sigma(\Acal)$ have the following slicing property:
    \begin{proposition}\label{lem:1}
    	 Let $\Acal : \Crm^\infty(\R^n;V) \to \Crm^\infty(\R^n;W)$ be a first-order partial differential operator and let $u$ be a function in $\BV^\Acal(\R^n)$. Then, for every $(\xi,e) \in \partial\sigma(\Acal)$ and  $w \in W^*$ satisfying
    	 \begin{equation*}
    	 \dpr{w , \Abb(\eta)v} = \dpr{\xi ,\eta} \dpr{e,v} \quad \text{for all $(\eta,v) \in \R^n \times V$,}
    	 \end{equation*}
    	 it holds that $u_{y,\xi}^e \in \BV(\R)$ for $\Hcal^{n-1}$-almost every $y \in \pi_{\xi}$ and
    	 \[
    	 	\int_{\pi_\xi} |Du_{y,\xi}^e|(\R) \dd \Hcal^{n-1}(y) \le |\Acal u|(\R^n) \cdot |w|.
    	 \]
    	 Moreover,  
    	  \[
    	 \dpr{w ,\Acal u}(B) = \int_{\pi_\xi} Du_{y,\xi}^e(B_y^\xi) \dd \Hcal^{n-1}(y),
    	 \]
    	 \[
    	 		|\dpr{w,\Acal u}|(B) = \int_{\pi_\xi} |Du_{y,\xi}^e|(B_y^\xi) \dd \Hcal^{n-1}(y),
    	 \]
    	 for all Borel sets $B \subset \R^n$.
    \end{proposition}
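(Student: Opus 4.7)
The plan is to reduce the statement to the classical one-directional $\BV$-slicing of a scalar $\Lrm^1$ function, by exploiting the algebraic identity that defines $\partial\sigma(\Acal)$.

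First, I would write $\Acal = \sum_{j=1}^n A_j \partial_j$ with $A_j \in \mathrm{Lin}(V;W)$, so that $\Abb(\eta) = \sum_j \eta_j A_j$. Specializing $\eta = e_j$ in \eqref{eq:1} gives $\dpr{w, A_j v} = \xi_j \dpr{e, v}$ for every $v \in V$, which is equivalent to $A_j^\ast w = \xi_j\, e$ in $V^\ast$. Pairing the distributional identity $\Acal u = \sum_j A_j \partial_j u$ with $w$ then yields
\[
\dpr{w, \Acal u} \;=\; \sum_{j=1}^n \xi_j\, \partial_j u^e \;=\; \partial_\xi u^e \qquad \text{in } \Dcal'(\R^n),
\]
where $u^e := \dpr{e, u} \in \Lrm^1(\R^n)$. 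Since $\Acal u \in \Mcal(\R^n;W)$ and $|\dpr{w,\Acal u}| \le |w|\,|\Acal u|$ pointwise, the left-hand side is a scalar Radon measure. This reduces the proposition to a statement about a single scalar function $u^e$ whose distributional derivative in the direction $\xi$ is a Radon measure.

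Second, I would invoke the classical single-direction slicing lemma for scalar functions: if $f \in \Lrm^1(\R^n)$ and $\partial_\xi f = \nu \in \Mcal(\R^n)$, then for $\Hcal^{n-1}$-almost every $y \in \pi_\xi$ the slice $f_y(t) := f(y + t\xi)$ belongs to $\BV(\R)$, and for every Borel set $B \subset \R^n$
\[
\nu(B) \;=\; \int_{\pi_\xi} D f_y(B_y^\xi)\dd \Hcal^{n-1}(y), \qquad |\nu|(B) \;=\; \int_{\pi_\xi} |D f_y|(B_y^\xi)\dd \Hcal^{n-1}(y).
\]
This is proved, after an orthonormal change of coordinates that puts $\xi = e_n$, by a Fubini argument on smooth test functions combined with the characterization of $\BV(\R)$ via distributional derivatives, and extended from open sets to arbitrary Borel sets by outer regularity of Radon measures. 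Applying this with $f = u^e$ and $\nu = \dpr{w, \Acal u}$, and combining with the first step, gives both displayed identities of the proposition.

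The integral bound then follows immediately: taking $B = \R^n$ in the $|\nu|$-identity and using $|\dpr{w, \Acal u}| \le |w|\,|\Acal u|$ yields $\int_{\pi_\xi} |Du^e_{y,\xi}|(\R)\dd\Hcal^{n-1}(y) \le |w|\cdot |\Acal u|(\R^n)$. The main obstacle in this outline is really the second step --- one must verify that the merely directional hypothesis $\partial_\xi f \in \Mcal(\R^n)$, which is strictly weaker than $f \in \BV(\R^n)$, still produces almost every one-dimensional $\BV$-slice together with the Borel-set identities above. The algebraic reduction of the first step and the final bound are purely soft consequences of the definition of $\partial\sigma(\Acal)$.
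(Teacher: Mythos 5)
Your overall structure matches the paper's: you derive the scalar distributional identity $\dpr{w,\Acal u}=\partial_\xi u^e$ from the spectral condition and then reduce to a one-directional slicing statement. Your algebraic derivation (specializing $\eta=\mathbf e_j$ to read off $A_j^\ast w=\xi_j\,e$) is equivalent to the paper's Fourier-transform argument, and the final bound and the Borel-set identities do follow once the slicing statement is in hand. The issue is the sketch you give for the slicing lemma itself. You write that it is proved ``by a Fubini argument on smooth test functions combined with the characterization of $\BV(\R)$ via distributional derivatives,'' but this is circular as stated: the Fubini identity $-\int f\,\partial_\xi\phi = -\int_{\pi_\xi}\int_\R f_y\,\phi_y'\,dt\,d\Hcal^{n-1}(y)$ only identifies the inner integral with $\int\phi_y\,dDf_y$ \emph{after} you already know that $Df_y$ is a Radon measure for $\Hcal^{n-1}$-a.e.\ $y$, which is exactly what has to be shown. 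The paper closes this via mollification: for $u_\eps = u\star\rho_\eps$ one first establishes, by Fubini and Young's inequality, the uniform bound $\int_{\pi_\xi} V[(u_\eps)^e_{y,\xi}]\,d\Hcal^{n-1}(y)\le |w|\,|\Acal u|(\R^n)$, and then lower semicontinuity of the pointwise variation plus Fatou's lemma (as in the analogous step of Ambrosio--Coscia--Dal Maso) lets one pass to the limit $\eps\to 0^+$ and deduce that a.e.\ slice lies in $\BV(\R)$ together with the quantitative bound. You correctly flagged this as ``the main obstacle,'' so the plan is sound, but the mollification/Fatou step is the actual content of the lemma and should not be left implicit --- the remaining ingredients (Fubini for the measure identity, a density argument for Borel sets) are then straightforward, just as you indicate.
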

Notice that the algebraic identity~\eqref{eq:1} is equivalent to requiring that $\dpr{w,\Abb} =\xi \otimes e$, when considering $\Abb$ as a bi-linear form from $\R^n \times V$ to $W$. This motivates us to think of such covectors $w$ as some sort of ``rank-one vectors'' with respect to $\Acal$. To formalize this idea, let us consider the linear map 
\[
\begin{tikzcd}[row sep=0pt,column sep=1pc]
	\bar f_\Acal \colon \R^n \otimes V \arrow{r} & W \\
	{\hphantom{\bar f_\Acal \colon{}}} \eta \otimes v \arrow[mapsto]{r} & \Abb(\eta)[v]
\end{tikzcd}.
\]  
%$f_\Acal : \R^n \otimes V \to W$ defined by the assignment $\eta \otimes v \mapsto \Abb(\eta)v$. 
For a covector $w$ in $W^*$, the composition of linear maps $w \circ \bar f_\Acal$ belongs to the tensor space $(\R^n \otimes V)^*$ and hence we may define the following notion of rank:
\begin{definition}[$\rank_\Acal$] Let $w \in W^*$, 
	we define
	\[
		\rank_\Acal(w) \coloneqq \rank(w \circ \bar f_\Acal), 
	\]
	where the latter is the canonical rank acting on $(\R^n \otimes V)^*$. %We write  $$\Acal^\otimes_m \coloneqq \set{w \in W^*}{\rank_\Acal(w) = m}, \qquad 1 \le m \le \min\{n,\dim(V)\},$$
	%to denote the cone of $\rank_\Acal$-m vectors.
\end{definition}

Our first main result is the following $\BV$ one-dimensional representation of  $\BV^\Acal(\Omega)$, which we show to be equivalent to an algebraic mixing condition  imposed on the principal symbol (this condition was introduced by 
\textsc{Spector} and \textsc{Van Schaftingen} \cite{spectorVS} as a {sufficient} condition to establish endpoint $\Lrm^{\frac{n}{n-1},1}$ Lorentz-type estimates for first-order elliptic systems).
    
	\begin{theorem}[Slicing representation]\label{thm:structure} Let $\Acal : \Crm^\infty(\R^n;V) \to \Crm^\infty(\R^n;W)$ be a  first-order homogeneous linear  elliptic differential operator. The following are equivalent:
		\begin{enumerate}\itemsep3pt
			\item[\textnormal{(\emph{1})}]\label{itm:mix} The principal symbol of $\Acal$ satisfies the algebraic mixing property
			\begin{equation}\label{eq:mix}
			\bigcap_{\substack{\pi \, \le \, \R^n\\\dim(\pi) = n-1}} \spn %\set{\im \Abb(\eta)}{\eta \in \pi}.
			\set{\Abb(\eta)[v]}{\eta \in \pi, v \in V} = \{0_W\}. 
			\tag{$\mathfrak m$}
			\end{equation}
%			\item $\Acal(D)$ satisfies the rank-one property
%			\[
%				\spn\set{w^* \in W_\Acal}{\rank_\Abb(w^*)  = 1} = W_\Acal,
%			\]
%			where $\rank_\Abb : W^* \to \Nbb$ is defined in~\eqref{eq:rank}.
%			\item[\textnormal{(\emph{2})}] $\Acal$ satisfies the $\rank_\Acal$-one property
%			\[
%				\spn\set{w \in W}{\rank_\Acal(w) = 1} = W.
%			\]
%	\item[\textnormal{(\emph{2})}] There exists a spanning set of directions $\{w_1,\dots,w_M\}$ of $W$ with (see Def.~\ref{def:rank})
%\[
%\rank_\Acal(w_i) \le 1 \qquad \text{for all $i = 1,\dots,M$.}
%\]
			\item[\textnormal{(\emph{2})}] There exist covectors  $w_1,\dots,w_M$  in $W^*$ such that 
			\[
			\spn\{w_1,\dots,w_M\} = W^*, \qquad \rank_\Acal(w_i) \le 1.
			\]
			In particular, there exist directions $\xi_1,\dots,\xi_M$ in $\R^n$ and covectors $e_1,\dots,e_M$ in $V^*$ with the following property: 
%			and
%			\[
%				\Acal u = \sum_{i = 1}^M w_i \bigg(\int_{\pi_\xi} D u_{\xi_i,y}^{e_i} \dd \Hcal^{n-1}(y) \bigg) \quad \text{for all $u \in \BV^\Acal(\Omega)$.}
%			\]
%			\begin{enumerate}
%				\item $\spn\{w_1,\dots,w_M\} = W$,
%				\item $(\xi_i,e_i) \in \partial\sigma(\Acal)$ for all $i = 1,\dots,M$,
%				\item $w\cdot\Abb(\eta)[v] = (\xi\cdot\eta)(e \cdot v)$ {for all $(\eta,v) \in \R^n \times V$.
%			\end{enumerate}
			%\item[\textnormal{(\emph{2})}]
			a function $u$ belongs to $\BV^\Acal(\Omega)$ if and only if, for every $i = 1,\dots,M$, the one-dimensional sections $u_{y,\xi_i}^{e_i}$ satisfy 
			%, for every pair $(\xi,a) \in \R^n \times V$ such that 
%			\begin{equation}\label{eq:spectrum}
%			    	 w\cdot\Abb(\eta)[v] = (\xi\cdot\eta)(e \cdot v) \quad \text{for all $(\eta,v) \in \R^n \times V$,}
%			\end{equation}
			%$$(\xi,a) \in  \partial\sigma(\Acal),$$ 
			\[
				\text{$u_{y,\xi_i}^{e_i}\in\BV(\Omega_y^{\xi_i})$ for $\Hcal^{n-1}$-almost every $y \in \pi_{\xi_i}$}
			\]
			  and
			\begin{equation*}
				\int_{\pi_{\xi_i}} |Du_{y,\xi_i}^{e_i}|(\Omega_y^{\xi_i}) \dd \Hcal^{n-1}(y) < \infty.
			\end{equation*}
		\end{enumerate}
		\end{theorem}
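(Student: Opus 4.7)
The plan is to decouple the algebraic equivalence (1) $\Leftrightarrow$ (first part of (2)) from the slicing characterization in (2), and to obtain the algebraic equivalence via a clean duality identity. Set
\[
\Lambda_\Acal \,\coloneqq\, \set{w \in W^*}{\rank_\Acal(w) \le 1}
\]
and, for a hyperplane $\pi \le \R^n$,
\[
L_\pi \,\coloneqq\, \spn\set{\Abb(\eta)[v]}{\eta \in \pi,\, v \in V} \,\subset\, W.
\]
The starting observation is that $\rank_\Acal(w) \le 1$ is equivalent to the bilinear form $w \circ \Abb \colon \R^n \times V \to \R$ being a pure tensor $\xi \otimes e$, which by~\eqref{eq:1} is precisely the condition $(\xi,e) \in \partial\sigma(\Acal)$ with associated covector $w$. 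In this language, condition (1) reads $\bigcap_{\dim\pi = n-1} L_\pi = \{0_W\}$.

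The core algebraic step is the duality identity
\[
\Lambda_\Acal^\perp \,=\, \bigcap_{\dim\pi = n-1} L_\pi,
\]
from which (1) $\Leftrightarrow$ $\spn \Lambda_\Acal = W^*$ follows by standard finite-dimensional duality. For $\supseteq$, given $y$ in every $L_\pi$ and $w \in \Lambda_\Acal$, I would write $w \circ \Abb = \xi \otimes e$ and take $\pi = \xi^\perp$ (any hyperplane if $\xi = 0$); by construction $w$ annihilates $L_\pi$ and hence $y$. For the harder inclusion $\subseteq$, suppose $y \notin L_{\pi_0}$ for some hyperplane $\pi_0$ and choose $w \in W^*$ with $w|_{L_{\pi_0}} = 0$ and $\dpr{w,y} \ne 0$. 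The vanishing condition means $w \circ \bar f_\Acal$ vanishes on $\pi_0 \otimes V$, and since $\pi_0$ has codimension one in $\R^n$, this forces $w \circ \bar f_\Acal = \xi_0 \otimes e_0$ for some $\xi_0$ spanning $\pi_0^\perp$ and some $e_0 \in V^*$; hence $w \in \Lambda_\Acal$, contradicting $y \in \Lambda_\Acal^\perp$. Extracting a basis $\{w_1,\dots,w_M\}$ of $W^*$ from $\Lambda_\Acal$ and reading off the associated spectral pairs $(\xi_i,e_i) \in \partial\sigma(\Acal)$ then delivers the first assertion of (2).

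For the slicing characterization in (2), the forward direction is a direct application of Proposition~\ref{lem:1} to each pair $(\xi_i,e_i)$. For the converse, starting from $u \in \Lrm^1(\Omega;V)$ whose sections $u_{y,\xi_i}^{e_i}$ satisfy the stated $\BV$-bounds, I would use Fubini together with the one-dimensional integration-by-parts formula in the $\xi_i$-direction to verify that each distribution $\partial_{\xi_i}\dpr{e_i,u} = \dpr{w_i,\Acal u}$ is a finite Radon measure with total variation bounded by $\int_{\pi_{\xi_i}} |Du_{y,\xi_i}^{e_i}|(\Omega_y^{\xi_i}) \dd\Hcal^{n-1}(y)$. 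Since $\{w_1,\dots,w_M\}$ is a basis of $W^*$, this determines $\Acal u$ itself as a $W$-valued Radon measure, whence $u \in \BV^\Acal(\Omega)$.

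I expect the main obstacle to be the $\subseteq$ inclusion in the duality identity: one must leverage the codimension-one structure of the hyperplane $\pi_0$ to force the pure-tensor factorization $w \circ \bar f_\Acal = \xi_0 \otimes e_0$, and this is precisely where the hyperplane-indexed intersection in the mixing condition~\eqref{eq:mix} enters the argument. The ellipticity hypothesis plays no role in the algebraic equivalence; it appears only to ensure that $\BV^\Acal(\Omega)$ is the natural framework in which Proposition~\ref{lem:1} and its Fubini-based converse yield the slicing characterization.
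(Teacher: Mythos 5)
Your proposal is correct and tracks the paper's argument closely: the paper's proof of this theorem reduces to Lemma~\ref{lem:slicemix} (the equivalence of the mixing condition with the rank-one spanning property) followed by an application of Proposition~\ref{lem:1} to each spectral pair. Your duality identity $\Lambda_\Acal^\perp = \bigcap_{\dim\pi=n-1} L_\pi$ is exactly the content of Lemma~\ref{lem:slicemix}, repackaged symmetrically: the $\supseteq$ inclusion is the paper's direction \textnormal{(\emph{1})}$\Rightarrow$\textnormal{(\emph{2})} (each rank-$\le 1$ covector $w$ with $w\circ\bar f_\Acal=\xi\otimes e$ lies in $L_{\xi^\perp}^\perp$), and the $\subseteq$ inclusion is the paper's observation~\eqref{eq:dual} plus De~Morgan, i.e.\ the fact that annihilating $\pi_0\otimes V$ forces the pure-tensor factorization because $\pi_0$ has codimension one. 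Organizing both directions around a single annihilator identity is cleaner, but it is not a different mechanism.

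One small but real omission: Proposition~\ref{lem:1} is stated and proved only for $\Omega=\R^n$, whereas the theorem concerns a Lipschitz domain $\Omega$. The paper opens the proof by extending $u$ by zero to $\BV^\Acal(\R^n)$, which is legitimate precisely because \eqref{eq:mix} together with ellipticity implies complex-ellipticity, and complex-ellipticity gives the trace/extension operator of Section~\ref{sec:traces}. Your closing remark that ellipticity ``plays no role in the algebraic equivalence'' and ``appears only to ensure that $\BV^\Acal(\Omega)$ is the natural framework'' is therefore a little off target: ellipticity is what licenses the reduction to $\R^n$ that your forward direction silently uses. (Your converse direction, by contrast, tests against $\Crm_c^\infty(\Omega)$ functions and genuinely does not need the extension.) Fixing this is a one-line insertion, and your sketch of the converse is actually more explicit than the paper's, which essentially leaves that implication implicit.
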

	
%	\begin{remark}
%		Property (3) in the previous theorem can be refined to the following statement: $u \in \BV^\Acal(\Omega)$ if and only if
%		\[
%		\spn \{w_1,\dots,w_r\} = \spn \bigcup_{\xi \in \Sbf^{n-1}} \im \Abb(\xi)
%		\]
%		and
%		\begin{equation*}
%			\int_{\pi_\xi} |Du_{y,\xi_i}^{e_i}|(\Omega_y^\xi) \dd \Hcal^{n-1}(y) < \infty \quad \text{for all $i = 1,\dots,r$},
%		\end{equation*}
%	where  each pair $(\xi_i,e_i) \in \partial\sigma(\Acal)$ and coordinate $w_i \in W$ are associated by~\eqref{eq:1}.
%	\end{remark}

\begin{remark} For $\Omega = \R^n$, the slicing representation theorem also holds if one dispenses with the ellipticity assumption. Effectively, the ellipticity of $\Acal$ is only used to guarantee that the trivial extension by zero $E : \BV^\Acal(\Omega) \to \BV^\Acal(\R^n)$ is a bounded operator. This, in turn, allows for localization arguments up to the boundary.
\end{remark}

\begin{remark}\label{rem:unclear} If
	$\Acal$ is a first-order elliptic operator satisfying the mixing property~\eqref{eq:mix}, then one can show that every coordinate in $\R^n$ (or $V^*$) is the first (or second component) of a spectral pair, i.e.,
	%	A priori, it is unclear whether every direction $\xi \in \Sbf^{n-1}$ is the first coordinate of a pair satisfying~\eqref{eq:spectrum}. Corollary~\ref{cor:projection} asserts that this is the case when $\Acal$ is a first-order elliptic operator satisfying the mixing property~\eqref{eq:mix}. More generally, it holds
	\[
	\proj_{\R^n}\partial\sigma(\Acal)= \R^n, \qquad  \proj_{V^*} \partial\sigma(\Acal) = V^*.
	\]
	A related statement holds for $k$\textsuperscript{th} order elliptic operators satisfying the generalized mixing property~\eqref{eq:mixk} defined below.
\end{remark}
	
	\begin{notation}
		In analogy with $\BV$, we decompose $\Acal u = \Acal^a u + \Acal^s u$, where $\Acal^a u$ is the absolutely continuous part of $\Acal u$ with respect to $\Leb^n$ and $\Acal^s u$ is the singular part of $\Acal u$ with respect to $\Leb^n$. We also define the Cantor and jump parts of $\Acal^s u$ as follows: %for any $u \in \BV^\Acal(\Omega)$, the measures
		\begin{align*}
			\Acal^c u \coloneqq \Acal^s u \mres (\Omega \setminus S_u), \qquad \Acal^j u \coloneqq \Acal^s u \mres J_u.
			%    \Acal u \, %& = \,\Acal^{a} u \, + \, \Acal^s u \\
			%    & =\phantom{:} 
			%    \Acal^s +  \, \Acal^c u \, + \, \Acal^j u, \\
			%    & \eqqcolon \Acal^{a} u \, + \, \Acal^s u \mres (\Omega \setminus J_u) \, + \, \Acal^s u \mres J_u
		\end{align*}
		%are called respectively the \emph{Cantor part} and the \emph{jump part}  of $\Acal u$.  
	\end{notation}  
			
		Our second main result is a one-dimensional structure theorem for first-order elliptic operators that satisfy the mixing property~\eqref{eq:mix}:
			
		\begin{theorem}[Sectional structure theorem]\label{thm:oned}  Let $\Acal : \Crm^\infty(\R^n;V) \to \Crm^\infty(\R^n;W)$ be a  homogeneous first-order linear elliptic partial differential operator satisfying the mixing condition~\eqref{eq:mix}. Let $(\xi,e) \in \partial\sigma(\Acal)$ and let $w \in W^*$ satisfy
			\[
			\dpr{w , \Abb(\eta)v} = \dpr{\xi , \eta}\dpr{e , v} \quad \text{for all $\eta \in \R^n$ and all $v \in V$}.
			\]
Then,  for every $u \in \BV^\Acal(\Omega)$, we have
	\begin{align*}
		\begin{split}
			\dpr{w , \Acal^\sigma u} & =  \int_{\pi_{\xi}}  D^\sigma u_{y,\xi}^{e} \dd \Hcal^{n-1}(y)\\ %\qquad \text{for $\sigma = a,c,j$.}
			|\dpr{w , \Acal^\sigma u}| & =  \int_{\pi_{\xi}}  |D^\sigma u_{y,\xi}^{e}| \dd \Hcal^{n-1}(y) 
			\end{split}\qquad \text{for all $\sigma \in \{a,c,j\}$,}
	\end{align*}
as measures over $\Omega$.
	\end{theorem}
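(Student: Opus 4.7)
The plan is to identify each of the three candidate measures
\[
\mu^\sigma(B) \coloneqq \int_{\pi_{\xi}} D^\sigma u_{y,\xi}^{e}(B_y^\xi) \dd \Hcal^{n-1}(y), \qquad \sigma \in \{a,c,j\},
\]
defined for Borel sets $B \subset \Omega$, with $\dpr{w, \Acal^\sigma u}$ and then deduce the total-variation statements. Since $\BV$ gives the pointwise decomposition $Du_{y,\xi}^e = D^a u_{y,\xi}^e + D^c u_{y,\xi}^e + D^j u_{y,\xi}^e$ for $\Hcal^{n-1}$-a.e.\ $y \in \pi_\xi$, Proposition~\ref{lem:1} immediately gives $\mu^a + \mu^c + \mu^j = \dpr{w,\Acal u}$. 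By the uniqueness of the decomposition of an $\R$-valued Radon measure into $\Leb^n$-absolutely continuous, $\Leb^n$-singular/diffuse, and atomic (jump) pieces, it is enough to verify that (i) $\mu^a \ll \Leb^n$; (ii) $\mu^j$ is concentrated on $J_u$; and (iii) $\mu^c$ is $\Leb^n$-singular and concentrated on $\Omega \setminus S_u$.

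For (i), if $\Leb^n(N)=0$ then Fubini gives $\Leb^1(N_y^\xi)=0$ for $\Hcal^{n-1}$-a.e.\ $y$; since $D^a u_{y,\xi}^e \ll \Leb^1$ the corresponding integrand vanishes, and the Proposition~\ref{lem:1} total-variation bound propagates to $|\mu^a|(N)=0$. For the singularity half of (iii), a Fubini/measurable-selection argument produces a Borel set $C \subset \Omega$ with $\Leb^n(C)=0$ whose $\xi$-slices cover the Cantor supports of the $1$D sections; $\mu^c$ is then supported in $C$.

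The main obstacle is the sharp comparison of the jump/approximate-continuity sets of the one-dimensional sections with the slices of $J_u$ and $S_u$. Concretely, I would establish, for $\Hcal^{n-1}$-a.e.\ $y \in \pi_\xi$, the containments
\[
J_{u_{y,\xi}^{e}} \subset (J_u)_y^\xi \quad \text{and} \quad S_{u_{y,\xi}^{e}} \subset (S_u)_y^\xi.
\]
The first rests on two ingredients already in hand: the $\Hcal^{n-1}$-rectifiability of $J_u$ with a measurable normal $\nu_u$ (from \cite{anna}), and the elementary fact that $\Hcal^{n-1}$-a.e.\ $y$ produces a line $y+\R\xi$ that meets $J_u$ transversally, so that approximate one-sided limits of $u$ at $x \in J_u$ restrict to pointwise one-sided limits of $u_{y,\xi}^e$ at the corresponding $t$ whenever $\nu_u(x)\cdot \xi \neq 0$; the remaining tangential set projects $\Hcal^{n-1}$-nullly onto $\pi_\xi$. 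The second containment follows analogously from the slicing representation of approximate continuity encoded by Theorem~\ref{thm:structure} and Proposition~\ref{lem:1}: a Lebesgue point of $u$ at $x=y+t\xi$ forces approximate continuity of the slice $u_{y,\xi}^e$ at $t$ for a.e.\ $y$. Granted both containments, (ii) is immediate since $D^j u_{y,\xi}^e$ is atomic on $J_{u_{y,\xi}^e}$, and the second clause of (iii) is immediate since $D^c u_{y,\xi}^e$ charges only approximate continuity points.

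From (i)--(iii) and the uniqueness of the Lebesgue--Cantor--jump decomposition of $\dpr{w,\Acal u}$, we conclude $\mu^\sigma = \dpr{w,\Acal^\sigma u}$ for each $\sigma$. For the total-variation identities, one inequality is automatic from the definition of $\mu^\sigma$, namely $|\dpr{w,\Acal^\sigma u}| \le \int_{\pi_\xi}|D^\sigma u_{y,\xi}^e|\dd\Hcal^{n-1}(y)$. The reverse follows by summing over $\sigma$: the $\dpr{w,\Acal^\sigma u}$ are mutually singular in $\Omega$ while the $D^\sigma u_{y,\xi}^e$ are mutually singular in $\R$ for a.e.\ $y$, so
\[
\sum_\sigma \int_{\pi_\xi} |D^\sigma u_{y,\xi}^e| \dd\Hcal^{n-1}(y) = \int_{\pi_\xi} |Du_{y,\xi}^e|\dd\Hcal^{n-1}(y) = |\dpr{w,\Acal u}| = \sum_\sigma |\dpr{w,\Acal^\sigma u}|,
\]
where the second equality is the total-variation statement in Proposition~\ref{lem:1}. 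Termwise $\le$ combined with equal sums forces equality in each term, completing the proof.
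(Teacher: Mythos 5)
Your outline is structured cleanly, and the reduction to three measure-theoretic facts (i)--(iii), together with the endgame --- using uniqueness of the decomposition and the sum trick for the total-variation identities --- is essentially the paper's own framing. The $\sigma=a$ step via Fubini also matches the paper.

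However, the crux of the proof is the claimed containment $J_{u_{y,\xi}^e}\subset (J_u)_y^\xi$, and the argument you give for it proves the \emph{reverse} inclusion. Rectifiability of $J_u$ plus transversality of $\Hcal^{n-1}$-a.e.\ line shows that if $x=y+t\xi\in J_u$ and $\nu_u(x)\cdot\xi\neq 0$, then $t\in J_{u_{y,\xi}^e}$; that is $(J_u)_y^\xi\subset J_{u_{y,\xi}^e}$ modulo the tangential set, not what you need. What you need is that a jump of the one-dimensional section at $t$ is always witnessed by a jump of $u$ at $y+t\xi$ --- equivalently, that the slices do not develop spurious jumps at points of $\Omega\setminus J_u$ (or even of $\Omega\setminus S_u$). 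Nothing in the rectifiability of $J_u$ rules this out; a priori a one-dimensional section of $u$ can jump at a Lebesgue point of $u$ or at a point of $S_u\setminus J_u$. The same issue affects your second containment $S_{u_{y,\xi}^e}\subset (S_u)_y^\xi$: it is stated in the direction you need, but the justification offered (``a Lebesgue point of $u$ at $x$ forces approximate continuity of the slice'') is not a Fubini-type fact about a fixed point $x$, and is in any case precisely what has to be proved.

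This missing ingredient is where essentially all of the paper's work goes. The paper reduces the $\sigma=j,c$ cases to showing that the measure $\mu^j=\int_{\pi_\xi}D^j u_{y,\xi}^e\dd\Hcal^{n-1}$ gives no mass to $\Omega\setminus\Theta_u$ (the set of zero $(n-1)$-density for $|\Acal u|$), and then runs an induction over the dimension using the slicing on hyperplanes (Corollary~\ref{cor:hyper}), the polarization property (Proposition~\ref{lem:polarization}), and above all Lemma~\ref{lem:induction}: at points of zero density, one-sided Lebesgue limits of a codimension-one slice force full Lebesgue continuity, via the transversal difference-quotient decomposition~\eqref{eq:KEY}. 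That lemma is exactly the statement that the one-dimensional sections cannot jump at a point where $|\Acal u|$ has vanishing upper $(n-1)$-density, and without some version of it your inclusion $J_{u_{y,\xi}^e}\subset (J_u)_y^\xi$ has no support. As written, the proposal quietly assumes the hard half of the theorem.
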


%\begin{remark}
%	The conclusion of the Structure Theorem also holds for operators of arbitrary order (cf.~Section~\ref{sec:linear}). In the case that rank-one spectrum of $\Abb^k$ is defined as
%	\[
%		\rank_1(\Abb^k) \coloneqq \set{(\xi,a) \in \R^n \otimes V}{(w \cdot \circ f_{\Abb^k}) = e^* \otimes^k \xi^* \; \text{for some $w \in W$}}.
%	\] 
%The mixing condition 
%		
%\end{remark}

%\begin{remark}[Inner product structure]
%	The results contained in Theorem~\ref{thm:structure} are independent of the inner product structure of $W$ (see Section~\ref{sct:slicing} for details). %; it is therefore possible to state results (2) and (3) in terms of elements of the duals $(\R^n)^*,V^*,W^*$.
%\end{remark}

\subsection{Rectifiability of $\Acal$-gradient measures} A \emph{weaker} mixing condition, closely related to~\eqref{eq:mix},
%\begin{equation}\label{eq:mix2}
%\bigcap_{\substack{\pi \, \le \, \R^n\\\dim(\pi) = n-1}} \bigcup_{\eta \in \pi} \, \im \Abb(\eta) = \{0\}, \tag{$\mathrm{w}\mathfrak{m}$}
%\end{equation}
was introduced in~\cite{GAFA} (see also~\cite{elementary}) as a {sufficient} condition to establish $\Hcal^{n-1}$-rectifiability of $\Acal$-free measures. %\footnote{The precise form of~\eqref{eq:mix2} in~\cite{GAFA} is stated in terms of $\ker\Abb(\xi)$ instead of $\im \Abb(\xi)$. This is congruent with $\Acal$-free framework rather than the $\Acal$-gradient framework discussed here.} 
In our context, the results contained in~\cite{GAFA} imply that $|\Acal u| \ll \Ical^{n-1} \ll \Hcal^{n-1}$ as measures. %\footnote{Here, $\Ical^{n-1}$ is the $(n-1)$-dimensional Integral-geometric-measure.} 
%More precisely,
%\[
%	\text{$E \subset \Omega$ Borel with }\Ical^{n-1}(E) = 0 \quad \Longrightarrow \quad |\Acal u|(E) = 0.
%\]
%The argument of establishing dimensional and rectifiability estimates for functions of bounded $\Acal$-variation through the aforementioned \emph{$\Acal$-free measures theory} relies on the existence of \emph{exact annihilators}. Namely that there exists an operator $\tilde \Acal(D)$ satisfying $\im \Abb(\xi) = \ker \tilde \Abb(\xi)$ for all $\xi \in \Sbf^{n-1}$. This property is known to hold when $\Acal(D)$ is elliptic (see Section~4.2 in~\cite{van-schaftingen2013limiting-sobole}).  More generally, \textsc{Raita}~\cite{potentials} has shown that the existence of such exact annihilators is equivalent to \textsc{Murat}'s \emph{constant rank condition}: there exists an integer $r$ such that
%\begin{equation}\label{eq:murat}
%\rank(\Abb(\xi)) = r \quad \text{for all $\xi \in \Sbf^{n-1}$}. \tag{CR}
%\end{equation}
	%	\footnote{There, the mixing property is defined as
	%	\begin{equation*}\label{eq:VS}
	%	\bigcap_{\substack{\pi \le \R^n\\\dim(\pi)=n-1}} \spn \set{\Abb(\xi)[v]}{\xi \in \pi,v \in V} = \{0\}.
	%	\end{equation*}
	%By the laws of orthogonal complements it is easily seen that this condition is equivalent to~(1) in Theorem~\ref{thm:slicing}.}
%This approach however \emph{cannot} be justified in the context of $\Acal$-potentials when $\Acal(D)$ fails to satisfy Murat's constant rank condition.
 Theorem~\ref{thm:structure} and Remark~\ref{rem:unclear}
 allow us to give a \emph{purely geometric} proof of the same dimensional and rectifiability results (which we state for operators of arbitrary order):
\begin{corollary}[Rectifiability of $\Acal$-gradients]\label{thm:dim}
	Let $\Acal : \Crm^\infty(\R^n;V) \to \Crm^\infty(\R^n;W)$ be a $k$\textsuperscript{th} order  homogeneous   linear elliptic differential operator satisfying the  mixing condition
	\begin{equation}\label{eq:mixk}
		\bigcap_{{\substack{\pi \, \le \, \R^n\\\dim(\pi) = n-1}}} \spn %\set{\im \Abb(\eta)}{\eta \in \pi}.
		\set{\Abb^k(\eta)[v]}{\eta \in \pi, v \in V} = \{0_W\}  \tag{$\mathfrak m_k$}.
	\end{equation}
	If $u$ is a function in $\BV^\Acal(\Omega)$ and $B \subset \Omega$ is a Borel set satisfying 
	\[
	\Hcal^{n-1}\big(\pbf_\xi(B)\big) = 0 \quad \text{for $\Hcal^{n-1}$-almost every $\xi \in \Sbf^{n-1}$,}
	\]
	where $\pbf_\xi : \R^n \to \pi_\xi$ is the canonical linear orthogonal projection onto $\pi_\xi$,
	then
	\[
	|\Acal u|(B) = 0.
	\]
	In particular, $|\Acal u| \ll \Ical^{n-1} \ll \Hcal^{n-1}$ as measures and 
	\[
		\dim_\Hcal(|\Acal u|) \ge n-1.
	\]
\end{corollary}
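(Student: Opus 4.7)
The plan is to reduce the measure bound $|\Acal u|(B) = 0$ to a collection of one-dimensional statements via the sectional structure theorem. For $k=1$ the necessary tools are Theorems~\ref{thm:structure}--\ref{thm:oned} directly; for general $k$, I would appeal to the linearization principle advertised in the introduction that recasts $\Acal$ as a first-order operator acting on $\nabla^{k-1} u$, so that the slicing identities transfer from the framework of~\eqref{eq:mix} to that of~\eqref{eq:mixk}.

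The first step is a one-dimensional vanishing principle. If $\xi \in \Sbf^{n-1}$ satisfies $\Hcal^{n-1}(\pbf_\xi(B)) = 0$, then for $\Hcal^{n-1}$-almost every $y \in \pi_\xi$ the point $y$ fails to lie in $\pbf_\xi(B)$, which forces $B_y^\xi = \emptyset$. Theorem~\ref{thm:oned} then gives
\[
|\dpr{w, \Acal u}|(B) \, = \, \int_{\pi_\xi} |Du_{y,\xi}^{e}|(B_y^\xi) \dd \Hcal^{n-1}(y) \, = \, 0
\]
for every spectral pair $(\xi, e) \in \partial\sigma(\Acal)$ and associated covector $w \in W^*$.

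The second step is to assemble enough such covectors to recover all of $|\Acal u|$. By Theorem~\ref{thm:structure}(2) (applied to the linearized first-order operator in the higher-order case), there exist directions $\xi_1^*, \ldots, \xi_M^* \in \R^n$, covectors $e_i^* \in V^*$, and $w_i^* \in W^*$ with $\spn\{w_1^*, \ldots, w_M^*\} = W^*$. A priori these specific $\xi_i^*$ need not belong to the good set $G \coloneqq \{\xi \in \Sbf^{n-1}: \Hcal^{n-1}(\pbf_\xi(B)) = 0\}$, which however has full $\Hcal^{n-1}$-measure in $\Sbf^{n-1}$. To rectify this I would invoke Remark~\ref{rem:unclear}, which guarantees that every direction in $\R^n$ is the first component of some spectral pair, together with the polynomial (hence continuous) dependence of the selected $w$ on $(\xi, e)$ encoded in~\eqref{eq:1}. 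Since the spanning condition is open in the $\xi_i$ and $G$ is dense in $\Sbf^{n-1}$, one can perturb each $\xi_i^*$ slightly into $G$ while preserving a spanning family $\{w_i\}$. Applying the vanishing principle to each index yields $|\dpr{w_i, \Acal u}|(B) = 0$, and spanning forces $|\Acal u|(B) = 0$.

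The chain $|\Acal u| \ll \Ical^{n-1} \ll \Hcal^{n-1}$ and the dimension bound then drop out of classical geometric measure theory: the Cauchy--Crofton representation of the integralgeometric measure in codimension one shows that $\Ical^{n-1}(B) = 0$ implies $\Hcal^{n-1}(\pbf_\xi(B)) = 0$ for $\Hcal^{n-1}$-a.e.\ $\xi \in \Sbf^{n-1}$, and the comparison $\Ical^{n-1} \le c_n\,\Hcal^{n-1}$ is standard. The main technical obstacle will be the perturbation argument: one must exhibit a measurable (ideally continuous) section $\xi \mapsto (e(\xi), w(\xi))$ of $\partial\sigma(\Acal)$ near each $\xi_i^*$ and verify that the spanning property of $\{w_i\}$ is stable under replacement of $\xi_i^*$ by a nearby element of $G$. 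The mixing hypothesis~\eqref{eq:mixk} is precisely what supplies the abundance of spectral pairs needed for this selection, and for $k \ge 2$ one should additionally confirm that the linearization of $\Acal$ transports~\eqref{eq:mixk} to the first-order mixing condition~\eqref{eq:mix} for the operator acting on $\nabla^{k-1}u$.
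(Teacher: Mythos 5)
Your $k=1$ argument coincides with the paper's proof of Corollary~\ref{cor:dimensional1}: exploit Proposition~\ref{lem:1} (which already furnishes the total-variation slicing identity you invoke via Theorem~\ref{thm:oned}), observe that $B_y^\xi=\emptyset$ whenever $y\notin\pbf_\xi(B)$, and then perturb a spanning family of $\rank_\Acal$-one covectors so that the attached spectral directions fall inside the full-measure good set. The paper disposes of the perturbation step with exactly the same brevity you flag, namely a parenthetical appeal to the closedness of the rank-one manifold together with Corollary~\ref{cor:projection} (your Remark~\ref{rem:unclear}); so identifying the selection and stability of the spanning family as the technical core is an accurate reading, and to that extent your account and the paper's are on the same footing.

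The genuine gap is in the reduction for $k\ge 2$. You propose, as a step still to be verified, that the linearization $d\Acal$ acting on $\nabla^{k-1}u$ inherits the first-order mixing condition~\eqref{eq:mix} from the assumption that $\Acal$ satisfies~\eqref{eq:mixk}. That verification will fail: the paper explicitly remarks, immediately after Lemma~\ref{prop:10}, that this implication is false in general, and the operator $\mathscr D^ku=(\partial_1^ku,\dots,\partial_n^ku)$ is exhibited as a counterexample with $\proj_{V^{k-1}}\partial\sigma(d\Acal)\subsetneq V^{k-1}$, so $d\Acal$ does not satisfy the rank-one property and you cannot feed $d\Acal$ into Corollary~\ref{cor:dimensional1} as a black box. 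What actually survives the linearization, and what the paper uses in Section~\ref{sec:k}, are two strictly weaker facts: by Lemma~\ref{prop:10}(3) the covectors $w\in W^*$ satisfying~\eqref{eq:star} span $(W_\Acal)^*$ under~\eqref{eq:mixk} and lift to $\rank_{d\Acal}$-one covectors, and by the subsequent proposition $\proj_{\R^n}\partial\sigma(d\Acal)=\R^n$. These are exactly the ingredients the geometric perturbation argument consumes, so your overall structure is sound; but the sufficient condition you set out to establish is strictly stronger than what is true and must be replaced by these two consequences, derived directly from~\eqref{eq:mixk} rather than routed through~\eqref{eq:mix} for $d\Acal$.
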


\subsection{Structural and fine properties for first-order elliptic operators} 
    For the subsequent statements it will be necessary to define the set
    \[
    \Theta_u \coloneqq \setBB{x \in \Omega}{ %\theta^*_{n-1}(|\Acal u|)(x) \coloneqq 
    \limsup_{r \to 0^+} \, {r^{-(n-1)}}{|\Acal u|(B_r(x))} > 0}
    \] 
    of points with positive {Hausdorff $(n-1)$-dimensional upper density}.

    We are now in position to state the main {fine properties} result, which is  key towards establishing a {reliable} variational theory for functions of bounded $\Acal$-variation. It establishes that $\Acal$-gradients vanish when projected on purely unrectifiable $\sigma$-finite $(n-1)$-dimensional sets. It also says that, essentially all  approximate discontinuities are jump-type discontinuities and that essentially all Cantor points are Lebesgue continuity points.
    
    \begin{theorem}[Structural and fine properties]\label{thm:2}
    	Let $\Acal : \Crm^\infty(\R^n;V) \to \Crm^\infty(\R^n;W)$ be a first-order homogeneous linear elliptic differential operator satisfying 
    	\[
    					\bigcap_{{\substack{\pi \, \le \, \R^n\\\dim(\pi) = n-1}}} \spn %\set{\im \Abb^k(\eta)}{\eta \in \pi}.
    				\set{\Abb(\eta)[v]}{\eta \in \pi, v \in V} = \{0_W\}. %\bigcap_{\xi \in \Sbf^{n-1}} \spn %\set{\im \Abb(\eta)}{\eta \in \pi_\xi}.
    				%\set{\im \Abb(\eta)}{\eta \in \pi_\xi} = \{0\}. 
    	%\set{\Abb(\eta)[v]}{\eta \in \pi,v \in V} = \{0\}. 
    	\]
    If  $u \in \BV^\Acal(\Omega)$, then
    	$\Acal u$ decomposes into mutually singular measures as 
    	\begin{align*}
    	\Acal u \, & = \Acal^a  u\, + \, \Acal^c u  \, +  \, \Acal^j u \\
    	& = \Acal^a u \, + \, \Acal^s u \mres (\Omega \setminus S_u) \, + \, \Acal^s u \mres J_u\,, 
    	\end{align*}
    	and the following properties hold:
    	\begin{enumerate}\itemsep5pt\setlength{\itemindent}{-2pt}
%    		\item[\textnormal{($i$)}] $\Acal^a u= \Acal u \mres (\Omega \setminus \Theta_u)$ and $\Acal^s u = \Acal u\mres S$ where 
%    		\[
%    		S \coloneqq \set{x \in \Omega}{\lim_{\rho \to 0^+} \rho^{-n}|\Acal  u|(B_\rho(x)) = \infty}.
%    		\]
%    		Moreover, 
    		\item[\textnormal{($i$)}] $\Acal^a u = A(\nabla u)\, \Leb^n$,    		where $\nabla u : \Omega \to V \otimes \R^n$ is the approximate gradient of $u$ and 
    		\[
    		A(P) \coloneqq \sum_{i = 1}^n A_i P[\mathbf e_i] \quad P \in V \otimes \R^n.
    		\]
    		
    		%    	\end{enumerate}
    		%%    If furthermore $\Acal(D)$ satisfies the mixing condition~\eqref{eq:mix},
    		%%%    \begin{equation*}
    		%%%    \bigcap_{\substack{\pi \le \R^n\\\dim(\pi)=n-1}} \spn \set{\Abb(\xi)[v]}{\xi \in \pi,v \in V} = \{0\},
    		%%%    \end{equation*}
    		%%    then
    		%    	\begin{enumerate}
    		%    		\item[(iv)] The  set $S_u \setminus J_u$ is purely unrectifiable, and
    		%    		\[
    		%    		\Hcal^{n-1}(\Theta_u \setminus J_u) = 0.
    		%    		\]
    		%        		\item $u$ has an approximate gradient $\mathrm{ap} (\nabla u) \in \Lrm^1(\Omega;V \otimes \R^n)$. %$u$ has an approximate gradient $ $\Leb^n$-almost everywhere in $\Omega$ and it belongs to $$. 
    		%        		Furthermore,
    		%        		\[
    		%        		\nabla_\Acal u = \sum_{j -1}^n B_j [\mathrm{ap} (\nabla u(x))]_j \quad \text{for $\Leb^n$-almost every $x \in \Omega$.}
    		%        		\]
    		
    		\item[\textnormal{($ii$)}] The jump set $J_u$ is $\Hcal^{n-1}$-countably rectifiable and the jump part is characterized by the identity of measures
    		\[
    		\Acal^j u = \Abb(\nu_u)[u^+-u^-] \, \Hcal^{n-1}\mres J_u,
    		\]	
    		where $(u^+,u^-,\nu_u) : J_u \to V \times V \times \Sbf^{n-1}$ is the triplet Borel map associated to the jump discontinuities on $J_u$ (see Definition~\ref{def:jump}).
    		
    		\item[\textnormal{($iii$)}] The  set $S_u \setminus J_u$ is purely $\Hcal^{n-1}$-unrectifiable, $J_u \subset \Theta_u$, and 
    		$$\Hcal^{n-1}(\Theta_u \setminus J_u) = 0.$$
    		
    		\item[\textnormal{($iv$)}] The Cantor part vanishes on sets that are $\sigma$-finite with respect to $\Hcal^{n-1}$: 
    		\[
    		\text{$E \subset \Omega$ Borel with }\Hcal^{n-1}(E) < \infty \quad \Longrightarrow \quad |\Acal^c u|(E) = 0.
    		\]
    		In particular, $$|\Acal^c u|(\Theta_u) =0.$$
%    	\item[\textnormal{($a$)}] $|\Acal u| \ll \Ical^{n-1} \ll \Hcal^{n-1}$ as measures, that is,
%    	\[
%    	\text{$E \subset \Omega$ Borel with }\Ical^{n-1}(E) = 0 \quad \Longrightarrow \quad |\Acal u|(E) = 0.
%    	\]
    	
    	\item[\textnormal{($v$)}] The set of Lebesgue discontinuity points that are not jump points  is negligible for all $\Acal$-gradient measures, that is,
    	\[
    	|\Acal v|(S_u \setminus J_u) = 0 \quad \text{for all $v \in \BV^\Acal(\Omega)$.}
    	\]
    	In particular, $$|\Acal u|(S_u \setminus J_u) = 0.$$
    	\end{enumerate}
    	\end{theorem}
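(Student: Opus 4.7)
The plan is to reduce each claim to its one-dimensional $\BV$-counterpart via the slicing results (Theorems~\ref{thm:structure} and~\ref{thm:oned}), exploiting the spanning family $\{w_1,\dots,w_M\} \subset W^*$ of rank-$\Acal$-one covectors with associated spectral pairs $(\xi_i,e_i) \in \partial\sigma(\Acal)$ produced by Theorem~\ref{thm:structure}. For ($i$), I would combine the approximate differentiability of $\BV^\Acal$-functions (due to Haj{\l}asz and Alberti--Bianchini--Crippa, as cited in the introduction) with ellipticity: the injectivity of $\Abb(\xi)$ for $\xi\neq 0$ forces the Radon--Nikodym density of $\Acal^a u$ with respect to $\Leb^n$ to coincide pointwise with $A(\nabla u)$. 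For ($ii$), I would quote the $\Hcal^{n-1}$-rectifiability of $J_u$ and the jump-density formula $\Abb(\nu_u)[u^+-u^-]$ directly from~\cite{anna}, which applies under our hypotheses.

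The technical core for the Cantor part is ($iv$). Fix a Borel $E \subset \Omega$ with $\Hcal^{n-1}(E)<\infty$. For each $i$, Fubini's theorem ensures that the slice $E_y^{\xi_i}$ is at most countable for $\Hcal^{n-1}$-a.e.\ $y \in \pi_{\xi_i}$. Because the Cantor measure of any one-dimensional $\BV$-function is atomless, $|D^c u_{y,\xi_i}^{e_i}|$ vanishes on every such $E_y^{\xi_i}$, and Theorem~\ref{thm:oned} then yields
\[
|\dpr{w_i,\Acal^c u}|(E) = \int_{\pi_{\xi_i}} |D^c u_{y,\xi_i}^{e_i}|(E_y^{\xi_i})\dd\Hcal^{n-1}(y) = 0.
\]
Since $\{w_i\}$ spans $W^*$, the standard inequality $|\Acal^c u| \le C\sum_i |\dpr{w_i,\Acal^c u}|$ for vector-valued measures produces $|\Acal^c u|(E)=0$. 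The conclusion $|\Acal^c u|(\Theta_u)=0$ follows because $\Theta_u$ is locally $\Hcal^{n-1}$-finite: the upper-density lower bound provides $c\Hcal^{n-1}\mres\Theta_u \le |\Acal u|$, and Corollary~\ref{thm:dim} makes the right-hand side locally finite.

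For ($v$) I would follow the blueprint of~\cite[Thm.~6.1]{ambrosio1997fine-properties}. The elementary one-dimensional ingredient is that every $h \in \BV(I)$ satisfies $S_h = J_h$, so in one variable the assertion is vacuous. A Vol'pert-type slicing lemma---matching, up to $\Hcal^0$-null residuals, the approximate discontinuities of $u_{y,\xi_i}^{e_i}$ with those of $u$ along $\{y+t\xi_i\}$---then shows that $(S_u \setminus J_u)_y^{\xi_i}$ is $\Hcal^0$-negligible for $\Hcal^{n-1}$-a.e.\ $y$, so that $|Dv_{y,\xi_i}^{e_i}|$ annihilates it. Integrating via Theorem~\ref{thm:oned} and spanning over $i$ yields $|\Acal v|(S_u \setminus J_u) = 0$; the specialization $v = u$ gives the last assertion.

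Claim ($iii$) then falls out of the previous steps: $J_u \subset \Theta_u$ is immediate from the nonvanishing jump density in ($ii$) together with the injectivity of $\Abb(\nu_u)$ enforced by ellipticity; $\Hcal^{n-1}(\Theta_u \setminus J_u) = 0$ follows from ($v$) via a standard upper-density/covering comparison between $|\Acal u|$ and $\Hcal^{n-1}\mres\Theta_u$; and the pure $\Hcal^{n-1}$-unrectifiability of $S_u \setminus J_u$ is obtained through a blow-up argument placing $\Hcal^{n-1}$-a.e.\ point of any $\Hcal^{n-1}$-rectifiable subset of $S_u$ into $J_u$ by means of Theorem~\ref{thm:oned}. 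The main obstacle I anticipate is the Vol'pert-type slicing comparison underpinning ($v$): making precise, for $\Hcal^{n-1}$-a.e.\ $y$, the transfer of approximate (dis)continuity between $u$ on $\R^n$ and its scalar sections $u_{y,\xi_i}^{e_i}$ only along directions drawn from the spectral pairs $\partial\sigma(\Acal)$, rather than from the arbitrary directions available in the classical $\BV$-setting.
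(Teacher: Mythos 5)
Your high-level skeleton matches the paper's: cite existing results for ($i$) and ($ii$), use slicing for ($iv$), and derive the decomposition and ($iii$) from ($ii$) and ($v$). Your route to ($iv$) via Theorem~\ref{thm:oned} (countable one-dimensional slices of $\Hcal^{n-1}$-finite sets plus atomlessness of the one-dimensional Cantor part) is a legitimate alternative to the paper's own argument, which instead uses the dimensional estimates of Corollary~\ref{thm:dim} together with a vanishing-density covering argument on $\Omega\setminus\Theta_u$ (Corollary~\ref{cor:iv}); both work, though yours reaches for heavier machinery, since Theorem~\ref{thm:oned} itself rests on the Lebesgue-point analysis.

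The genuine gap is in ($v$), and it is not a technicality you can absorb into a ``Vol'pert-type slicing lemma.'' What you describe---matching, for $\Hcal^{n-1}$-a.e.\ $y$, the approximate discontinuities of $u$ along the line $y+\R\xi_i$ with the discontinuities of the scalar section $u_{y,\xi_i}^{e_i}$---is simply false in the $\BV^\Acal$ setting, and for two distinct reasons. First, the one-dimensional sections only control the scalar coordinate $\langle e_i,u\rangle$ along one direction, not the restriction of the full vector $u$. Second, and more fundamentally, Lebesgue $n$-dimensional continuity of $u$ at a point cannot be read off from the Lebesgue continuity of its one-dimensional slices through that point: this is exactly the pathology that forced Ambrosio, Coscia and Dal Maso into their dimension-induction argument for $\BD$, and the paper replays that strategy here. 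The actual proof of ($v$) in the paper is the technical core of the whole article: it proceeds through codimension-one hyperplane slicing (Corollary~\ref{cor:hyper}), a delicate transversality/polarization lemma (Lemma~\ref{lem:polarization}), the Lebesgue-point Lemma~\ref{lem:induction} (which itself invokes quasi-continuity on $\Theta_u$ from~\cite{anna}), an induction on the ambient dimension to obtain Corollary~\ref{cor:F} ($|\Acal u|$-a.e.\ point of $\Omega\setminus\Theta_u$ is a Lebesgue point of $u$), and finally the two-function upgrade to Corollary~\ref{cor:v}. Your proposal sidesteps all of that, and the obstacle you flag at the end (``the transfer of approximate (dis)continuity \dots only along directions drawn from $\partial\sigma(\Acal)$'') is precisely where the argument collapses rather than a detail to be filled in.

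Two smaller remarks. Your claim that slices of an $\Hcal^{n-1}$-finite set are a.e.\ countable is correct but should be justified by the coarea (Eilenberg) inequality rather than Fubini. For ($iii$), your deduction $J_u\subset\Theta_u$ from ($ii$) and the injectivity of $\Abb(\nu_u)$ is fine (the paper simply cites~\cite{anna}); your blow-up sketch for the pure $\Hcal^{n-1}$-unrectifiability of $S_u\setminus J_u$ is plausible but left vague, whereas the paper again delegates this to~\cite[Thm.~1.2]{anna} and recovers the remaining parts of ($iii$) from Corollary~\ref{cor:iv}.
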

    
    \begin{remark}\label{rem:ya} Property~(i) holds for elliptic operators (this follows from a result of \textsc{Alberti, Bianchini} and \textsc{Crippa}~\cite[Thm.~3.4]{Alberti_Bianchini_Crippa_14} and an observation of \textsc{Raita}~
    	\cite{Raita_critical_Lp}). Property~(ii) and the fact that $(S_u \setminus J_u)$ is $\Hcal^{n-1}$-purely unrectifiable also hold for complex-elliptic operators (see~\cite[Thm.~1.2]{anna}).
    	%        	The fine properties established in the Structure Theorem and the theorem above may be seen to fail for merely elliptic operators (see, e.g., Example~\ref{ex:div_curl}).
    \end{remark}

           If we restrict ourselves to operators $\Acal$ acting on scalar functions $u : \Omega \to \R$, we show that $\Acal$ is 
           %the conditions described in~\eqref{eq:implication} are equivalent. More precisely, we show that a scalar elliptic operator is. 
           {equivalent} to a rather simple elliptic operator (see Lemma~\ref{lem:equations}).
    This allows us to state a version of the Structure Theorem for first-order elliptic operators acting on scalar  maps, where we can dispense with any of the mixing assumptions:
    
    \begin{theorem}\label{thm:II} Let $\Acal : \Crm^\infty(\R^n) \to \Crm^\infty(\R^n;V)$ be a first-order homogeneous elliptic partial differential operator and let $u$ be a function in $\BV^\Acal(\Omega)$. Then, $\Acal u$ decomposes into mutually singular measures as 
    	\[
    	\Acal u \, = \, A(\nabla u) \, \Leb^n \, + \, \Acal^c u  \, +  \, \Abb(\nu_u)[u^+-u^-] \, \Hcal^{n-1}\mres J_u,
    	\]
    	and $u$ satisfies the properties (i)-(v) in Theorem~\ref{thm:2}.
    \end{theorem}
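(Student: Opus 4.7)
The plan is to reduce Theorem~\ref{thm:II} to Theorem~\ref{thm:2} by verifying that the mixing condition~\eqref{eq:mix} holds automatically whenever $\Acal$ acts on scalar functions; this is essentially the content of Lemma~\ref{lem:equations}. Writing $\Acal u = \sum_{i=1}^{n} A_i \partial_i u$ with $A_i \in V$, I define the linear map
\[
L : \R^n \to V, \qquad L(\xi) \coloneqq \Abb(\xi)[1] = \sum_{i=1}^{n} \xi_i A_i,
\]
so that $\Acal u = L(\nabla u)$ at the level of distributions. The ellipticity assumption, applied to the scalar input $v = 1$, reads $|L(\xi)| \ge c|\xi|$ for every $\xi \in \R^n$; hence $L$ is a linear injection onto an $n$-dimensional subspace of $V$ and admits a continuous left inverse. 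It follows immediately that $u \in \BV^\Acal(\Omega) \iff u \in \BV(\Omega)$, with $\Acal u = L(Du)$ as $V$-valued Radon measures.

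Next I would check that~\eqref{eq:mix} is automatic. For a scalar input space,
\[
\spn\set{\Abb(\eta)[v]}{\eta \in \pi, \, v \in \R} = L(\pi)
\]
for every subspace $\pi \le \R^n$. Since $L$ is injective one has $\bigcap_\alpha L(S_\alpha) = L\bigl(\bigcap_\alpha S_\alpha\bigr)$ for any family of subspaces, and in particular
\[
\bigcap_{\substack{\pi \, \le \, \R^n \\ \dim \pi = n-1}} L(\pi) \;=\; L\Bigl(\bigcap_{\substack{\pi \, \le \, \R^n \\ \dim \pi = n-1}} \pi\Bigr) \;=\; L(\{0\}) \;=\; \{0\},
\]
which is exactly~\eqref{eq:mix}. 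Theorem~\ref{thm:2} therefore applies verbatim, furnishing both the decomposition $\Acal u = \Acal^a u + \Acal^c u + \Acal^j u$ and properties (i)--(v). A direct inspection shows that the map $A$ of Theorem~\ref{thm:2}(i) coincides with $L$ in this setting, so $\Acal^a u = L(\nabla u)\,\Leb^n = A(\nabla u)\,\Leb^n$, while the jump density is $\Abb(\nu_u)[u^+ - u^-]$; the formulas match those of Theorem~\ref{thm:II}.

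I expect no genuine obstacle in this argument: essentially all the content is inherited from the classical $\BV$-theory via the linear isomorphism $L$ together with the trivial verification of~\eqref{eq:mix}. The only mild care required is notational---keeping straight the dual role of the symbol $V$ (the target of $\Acal$ in Theorem~\ref{thm:II} versus the source in the general framework of Theorem~\ref{thm:2})---and noting that property~(v) also follows from its classical $\BV$ analogue through the two-sided comparability $c\,|Dv| \le |\Acal v| \le C\,|Dv|$, which is immediate from the injectivity of $L$.
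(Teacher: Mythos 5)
Your argument is correct and is in essence the paper's own proof: the paper cites Theorem~\ref{thm:2} together with Section~\ref{sec:equations}, where Proposition~\ref{ex:div_form} records precisely that an elliptic first-order operator on scalar functions automatically satisfies the mixing condition~\eqref{eq:mix}; you supply a clean direct verification of this via the identity $\bigcap_\pi L(\pi)=L\bigl(\bigcap_\pi\pi\bigr)=\{0\}$ for the injective linear map $L(\xi)=\Abb(\xi)[1]$. The remaining bookkeeping (identifying $A$ with $L$, matching the jump density, and treating property~(v) via the equivalence of $|\Acal v|$ and $|Dv|$) is exactly the routine transfer the paper intends.
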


    \subsection{Statements for higher-order operators}\label{sec:h}
    For a $k$\textsuperscript{th} order elliptic operator $\Acal$, the classical Calder\'on--Zygmund theory implies the local embedding
    \begin{equation}\label{eq:emb}
    \BV^\Acal_\loc(\Omega) \embed \Wrm_\loc^{k-1,p}(\Omega;V), \qquad 1 \le p < \frac{n}{n-1}.
    \end{equation}
    Since the fine properties of Sobolev spaces are already well-understood (see for instance~\cite[Sec.~4.8]{evans1992}), we shall only focus on the fine properties of the $(k-1)$\textsuperscript{th}-order derivative map $\nabla^{k-1} u \in \Lrm^1(\Omega;V \otimes E_{k-1}(\R^n))$. Here, $E_m(\R^n)$ is the space of $m$\textsuperscript{th} order symmetric tensors on $\R^n$.  In analogy with the first-order case, we shall now give a name to those pairs $(\xi,E) \in \R^n \times (V^*  \otimes E_{k-1}(\R^n))$ such that
    \[
    	\dpr{w,\Acal u} = \partial_\xi (\dpr{E,D^{k-1} u}) \quad \text{for some $w \in W^*$.} 
    \]
%    where 
%    \[
%    	D^{k-1}u[E] = \sum_{\substack{|\beta| = k-1,\\ j= 1,\dots,\dim(V)}} \partial^\beta u^j E_{\beta}^j.
%    \]
    
	\begin{definition}[Tensor spectrum]
			Let $\Acal : \Crm^\infty(\R^n;V) \to \Crm^\infty(\R^n;W)$ be a $k$\textsuperscript{th} order homogeneous linear differential operator. The tensor spectrum of $\Acal$ is defined as the set of pairs $(\xi,E) \in \R^n \times (V^* \otimes E_{k-1}(\R^n))$ with the following property: there exists $w^* \in W$ such that
			\begin{equation}\label{eq:spectrumk}
				\dpr{w, \Abb^k(\eta)v} = \dpr{\xi,\eta}\dpr{E , v \otimes^{k-1} \eta} \quad \text{for all $\eta \in \R^n$, $v \in V$}.
			\end{equation}
			We write $(\xi,E) \in \partial\sigma(\Acal)$.
	\end{definition}

For a higher-order operator, the  $\rank_\Acal$ that we shall consider does not extend directly from the one for first-order operators. As already hinted above, this stems from the fact that we want to understand the slicing properties of $\nabla^{k-1} u$ as opposed to the ones of $u$. The rigorous definition of $\rank_\Acal$ requires of a linearization of the operator $\Acal$, but that will be postponed to Section~\ref{sec:k}. To make sense of the next statements, we say that $\rank_\Acal(w) = 1$ if and only if $w \in W^*$ satisfies~\eqref{eq:spectrumk} for a non-trivial pair $(\xi,E) \in \partial\sigma(\Acal)$.

In order to establish the one-dimensional structure theorem and the fine properties for higher order operators we require $\Acal$ to be complex-elliptic: there exists a positive constant $c > 0$ such that
\[
|\Abb^k(\xi)[v]| \ge c |\xi|^k|v| \quad \text{for all $\xi \in \C^n$ and all $v \in \C \otimes V$}.
\]
%This assumption guarantees the existence of an extension operator from $\BV^\Acal(\Omega)$ to $\BV^\Acal(\R^n)$ (see~\cite{breit2017traces} and Lemma~\ref{prop:10}), which is crucial for the proof of the representation theorem on domains. 

We are now in position to state the general version of the slicing theorem:
    
     %if $\Acal(D)$ is a $k$\textsuperscript{th} order operator as in~\eqref{eq:op}, then
%    \begin{itemize}
%        \item $u$ is replaced by $\nabla^{k-1} u$, 
%        \item $J_u$ is replaced by $J^k_u \coloneqq J_{\nabla^{k-1} u}$, 
%        \item $A(\nabla u)$ is replaced by $A(\nabla^k u)$ (defined below), and
%        \item the jump density is replaced by a function of the principal symbol map in the sense that $\Abb(\nu_u)[u^+-u^-]$ {is replaced by}  $\Abb^k(\nu_u) [a_u]$, 
%        where $a_u$ is the Borel map defined above,
%        \item the mixing condition~\eqref{eq:mix} is replaced by $k$\textsuperscript{th} order version~\eqref{eq:mixk}.
%    \end{itemize}
    %    
    %    In particular, the linearization principle of Section~\ref{sec:lin} 
    %    
    %    results in the following analogous statements of Proposition~\ref{lem:jump} for $k \ge 2$: 
%    For the benefit of the reader, we conclude by stating the Fine Properties theorem for operators of arbitrary order:

	\begin{theorem}\label{thm:slicek}
			Let $\Acal : \Crm^\infty(\R^n;V) \to \Crm^\infty(\R^n;W)$ be a $k$\textsuperscript{th} order homogeneous linear differential complex-elliptic  operator. The following are equivalent:
			\begin{enumerate}
				\item[\textnormal{(\emph{1})}] The principal symbol of $\Acal$ satisfies the mixing condition
				\[
					\bigcap_{{\substack{\pi \, \le \, \R^n\\\dim(\pi) = n-1}}} \spn\set{\Abb^k(\eta)[v]}{\eta \in \pi, v \in V} = \{0_W\}.
				\]
				\item[\textnormal{(\emph{2})}] There exist covectors $w_1,\dots,w_M \in W^*$ such that 
				\[
					\spn\{w_1,\dots,w_M\} = W^*, \qquad \rank_\Acal(w_i) \le 1.
				\]
				In particular, there exist directions $\xi_1,\dots,\xi_M$ in $\R^n$ and tensors $E_1,\dots,E_M$ in $V^* \otimes E_{k-1}(\R^n)$ with the following property: A function $u$ belongs to  $\BV^\Acal(\Omega)$ if and only if $U \coloneqq \nabla^{k-1}u \in \Wrm^{1,1}(\Omega)$ and, for every $i = 1,\dots,M$, \[
				 \text{$U_{y,\xi_i}^{E_i}\in\BV(\Omega_y^{\xi_i})$ for $\Hcal^{n-1}$-almost every $y \in \pi_{\xi_i}$}
				\] and
				\[
					\int_{\pi_{\xi_i}} |DU_{y,\xi_i}^{E_i}|(\Omega_y^{\xi_i}) \dd \Hcal^{n-1}(y) < \infty.
				\]
			\end{enumerate}
	\end{theorem}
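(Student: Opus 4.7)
The plan is to reduce Theorem~\ref{thm:slicek} to its first-order counterpart Theorem~\ref{thm:structure} via the linearization principle announced in the introduction. Since every $k$\textsuperscript{th} order monomial derivative factors as $\partial^\alpha = \partial_i \partial^\beta$ with $|\beta| = k-1$, there is a canonical first-order homogeneous constant-coefficient operator
\[
 d\Acal : \Crm^\infty(\R^n;\, V \otimes E_{k-1}(\R^n)) \to \Crm^\infty(\R^n; W)
\]
such that $\Acal u = d\Acal(\nabla^{k-1} u)$ on smooth $u$. Complex-ellipticity of $\Acal$ supplies, via classical Calder\'on--Zygmund estimates, the local embedding $\BV^\Acal_\loc(\Omega) \embed \Wrm^{k-1,p}_\loc(\Omega;V)$ for every $1 \le p < n/(n-1)$, so that $U := \nabla^{k-1} u$ is a well-defined $\Lrm^1_\loc$ map and the identity $\Acal u = d\Acal U$ persists distributionally for every $u \in \BV^\Acal(\Omega)$.

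The proof would then proceed in three steps. First, I would compute the principal symbol of $d\Acal$ and check that the tensor-spectrum pairs $(\xi, E) \in \partial\sigma(\Acal)$ defined by~\eqref{eq:spectrumk} coincide exactly with the first-order spectral pairs of $d\Acal$ when evaluated on tensors of the form $v \otimes^{k-1} \eta$, so that the higher-order $\rank_\Acal$-one covectors of $\Acal$ match the first-order $\rank_{d\Acal}$-one covectors in the sense of Theorem~\ref{thm:structure}. Second, since both the mixing condition $(\mathfrak m_k)$ and the corresponding first-order mixing condition $(\mathfrak m)$ for $d\Acal$ are expressed in terms of the same subspaces $\spn\{\Abb^k(\eta)[V] : \eta \in \pi\}$, the algebraic equivalence with the existence of enough $\rank_\Acal$-one covectors $\{w_1,\dots,w_M\}$ spanning $W^*$ follows by the same annihilator/dimension-counting argument used inside the proof of Theorem~\ref{thm:structure}. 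Third, I would apply Theorem~\ref{thm:structure} to $d\Acal$ and $U$ to conclude, for each $w_i$ associated to $(\xi_i, E_i)$, the $\BV$-regularity and integrability of the one-dimensional sections $U^{E_i}_{y,\xi_i}$; for the converse, the higher-order analogue of Proposition~\ref{lem:1} (itself obtained by applying that proposition to $d\Acal$) reconstructs the measure $\Acal u = d\Acal U$ from the integrated sectional $BV$-derivatives, from which both $(\mathfrak m_k)$ and the spanning by rank-one covectors are recovered.

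The main obstacle lies in the linearization itself: $d\Acal$ is only determined modulo first-order operators vanishing on the range of $\nabla^{k-1}$ (the symmetric-gradient constraint on $U$), so one must select a canonical extension of $d\Acal$ that remains elliptic on that constrained subspace and whose spectrum faithfully reproduces $\partial\sigma(\Acal)$. Verifying that the complex-ellipticity hypothesis on $\Acal$ transfers \emph{both} to the Calder\'on--Zygmund embedding and to the first-order ellipticity of $d\Acal$ on the admissible subspace is the technical linchpin of the reduction. Additional care is then needed to upgrade the conclusion to the regularity of $U$ stated in (2): combining the Calder\'on--Zygmund $\Lrm^p$-membership with the finiteness of the sectional $BV$-norms along the directions $\{\xi_i\}$ (which span $\R^n$ by the higher-order analogue of Remark~\ref{rem:unclear}) must deliver the full $\BV$-control of $U$ needed to close the equivalence.
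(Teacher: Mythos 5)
Your reduction-by-linearization strategy is indeed the route the paper takes, and you correctly identify the main technical burden as choosing a linearization $d\Acal$ that preserves ellipticity. The paper resolves this by the specific choice $d\Acal(U) = \cl{\Abb^k}(DU) \times \curl_{k-1}U$ (with $\cl{\Abb^k}$ the linearized symbol and $\curl_{k-1}$ a generalized curl compensating for the extra tensorial degrees of freedom), and Lemma~\ref{prop:10}(1)--(2) delivers exactly the transfer of (complex-)ellipticity and the equivalence $u \in \BV^\Acal \Leftrightarrow \nabla^{k-1}u \in \BV^{d\Acal}$ that you need.

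The genuine gap is in your third step, where you propose to ``apply Theorem~\ref{thm:structure} to $d\Acal$ and $U$.'' That theorem requires $d\Acal$ to satisfy the first-order mixing condition $(\mathfrak m)$, and this hypothesis does \emph{not} transfer: even when $\Acal$ is elliptic and satisfies $(\mathfrak m_k)$, the linearization $d\Acal$ can fail $(\mathfrak m)$. The paper records this explicitly in the remark on ``Non-stability of the mixing property under linearization'' and in the subsequent remark giving $\mathscr D^k$ (with $k\ge 3$) as an example where $\proj_{V^{k-1}}\partial\sigma(d\Acal) \subsetneq V^{k-1}$, so $d\Acal$ cannot have the rank-one property. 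Consequently one cannot invoke Theorem~\ref{thm:structure} as a black box; the higher-order theory does not ``follow trivially from linearization.''

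What is actually needed, and what the paper does instead, is a finer statement: from $(\mathfrak m_k)$ one obtains, by the usual annihilator argument, covectors $w_1,\dots,w_M$ spanning $W^*$ each satisfying \eqref{eq:star}, and then one must show that each such $w$ can be \emph{augmented} to a $\rank_{d\Acal}$-one pair $(w,h)$ by choosing a suitable curl coordinate $h$. This lifting is the content of Lemma~\ref{prop:10}(3), whose proof is a substantive computation producing $h$ from the coefficients of $w\circ\cl{\Abb^k}$, and it is precisely where the algebraic work concentrates. Once the lifted pairs are in hand, one applies Proposition~\ref{lem:1} to $d\Acal$ and $U=\nabla^{k-1}u$ term by term and assembles $\Acal u$ via the identity $d\Acal(\nabla^{k-1}u) = (\Acal u, 0)$; Proposition~\ref{lem:1} has no mixing hypothesis, so only the per-covector lifting matters. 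Relatedly, your claim that the $\rank_\Acal$-one covectors of $\Acal$ ``match'' the $\rank_{d\Acal}$-one covectors of $d\Acal$ is imprecise: the relation established is a projection inclusion $\set{w}{\text{$w$ satisfies \eqref{eq:star}}} = \proj_{W^*}(d\Acal)^\otimes_1 \subset \Acal^\otimes_1$, not an identification, and the non-trivial direction of that equality is exactly the lifting argument above.
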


    Lastly, we state the higher-order version of the fine properties Theorem:
    \begin{theorem} \label{thm:k} 	Let $\Acal : \Crm^\infty(\R^n;V) \to \Crm^\infty(\R^n;W)$ be a  $k$\textsuperscript{th} order homogeneous linear complex-elliptic differential operator satisfying 
    	\[
    	\bigcap_{{\substack{\pi \, \le \, \R^n\\\dim(\pi) = n-1}}} \spn\set{\Abb^k(\eta)[v]}{\eta \in \pi, v \in V} = \{0_W\}.
    	\]
If  $u \in \BV^\Acal(\Omega)$ and $U \coloneqq \nabla^{k-1} u$, then
$\Acal u$ decomposes into mutually singular measures as 
\begin{align*}
\Acal u \, & \phantom{:}= \Acal^a  u\, + \, \Acal^c u  \, +  \, \Acal^j u \\
& \coloneqq \Acal^a u \, + \, \Acal^s u \mres (\Omega \setminus S_U) \, + \, \Acal^s u \mres J_U 
\end{align*}
and the following properties hold:
\begin{enumerate}\itemsep5pt\setlength{\itemindent}{-3pt}
	%    		\item[\textnormal{($i$)}] $\Acal^a u= \Acal u \mres (\Omega \setminus \Theta_u)$ and $\Acal^s u = \Acal u\mres S$ where 
	%    		\[
	%    		S \coloneqq \set{x \in \Omega}{\lim_{\rho \to 0^+} \rho^{-n}|\Acal  u|(B_\rho(x)) = \infty}.
	%    		\]
	%    		Moreover, 
	\item[\textnormal{($i$)}] $\Acal^a u = A(\nabla^k u)\, \Leb^n$,    		where $\nabla^k u : \Omega \to V \otimes E_{k}(\R^n)$ is the approximate gradient of $U$ and 
	\[
	A(F) \coloneqq \sum_{|\alpha| = k} \frac{1}{\alpha!}A_\alpha \big[\dpr{F,\mathbf e_1^{\alpha_1} \odot \cdots \odot \mathbf e_n^{\alpha_n}}\big], \qquad F \in V \otimes E_{k}(\R^n).
	\]
	
	%    	\end{enumerate}
	%%    If furthermore $\Acal(D)$ satisfies the mixing condition~\eqref{eq:mix},
	%%%    \begin{equation*}
	%%%    \bigcap_{\substack{\pi \le \R^n\\\dim(\pi)=n-1}} \spn \set{\Abb(\xi)[v]}{\xi \in \pi,v \in V} = \{0\},
	%%%    \end{equation*}
	%%    then
	%    	\begin{enumerate}
	%    		\item[(iv)] The  set $S_u \setminus J_u$ is purely unrectifiable, and
	%    		\[
	%    		\Hcal^{n-1}(\Theta_u \setminus J_u) = 0.
	%    		\]
	%        		\item $u$ has an approximate gradient $\mathrm{ap} (\nabla u) \in \Lrm^1(\Omega;V \otimes \R^n)$. %$u$ has an approximate gradient $ $\Leb^n$-almost everywhere in $\Omega$ and it belongs to $$. 
	%        		Furthermore,
	%        		\[
	%        		\nabla_\Acal u = \sum_{j -1}^n B_j [\mathrm{ap} (\nabla u(x))]_j \quad \text{for $\Leb^n$-almost every $x \in \Omega$.}
	%        		\]
	
	\item[\textnormal{($ii$)}] The jump set $J_{U} \subset \Theta_u$ is countably $\Hcal^{n-1}$-rectifiable and the jump part is characterized by the identity of measures
	\[
	\Acal^j u = \Abb^k(\nu_u)\llbracket \nabla^{k-1} u \rrbracket \, \Hcal^{n-1}\mres J_{U},
	\]	
	where $$\llbracket F \rrbracket \coloneqq \dpr{F^+ - F^+ , \otimes^{k-1} \nu_u}.$$%$(U^+,U^-,\nu_u)$ is the triplet Borel map associated to the jump discontinuities of $\nabla^{k-1} u$.
	\item[\textnormal{($iii$)}] The Cantor part $\Acal^c u$ vanishes on sets that are $\sigma$-finite with respect to $\Hcal^{n-1}$, that is,
	\[
	\text{$E \subset \Omega$ Borel with }\Hcal^{n-1}(E) < \infty \quad \Longrightarrow \quad |\Acal^c u|(E) = 0.
	\]
	%    	\item[\textnormal{($a$)}] $|\Acal u| \ll \Ical^{n-1} \ll \Hcal^{n-1}$ as measures, that is,
	%    	\[
	%    	\text{$E \subset \Omega$ Borel with }\Ical^{n-1}(E) = 0 \quad \Longrightarrow \quad |\Acal u|(E) = 0.
	%    	\]
	\item[\textnormal{($iv$)}] The  set $S_{U} \setminus \Theta_u$ is purely $\Hcal^{n-1}$-unrectifiable and 
	$$\Hcal^{n-1}(\Theta_u \setminus J_{U}) = 0.$$
	\item[\textnormal{($v$)}] The set of Lebesgue discontinuity points that are not jump points  is negligible for all $\Acal$-gradient measures, that is,
	\[
	|\Acal v|(S_{U} \setminus J_{U}) = 0 \quad \text{for all $v \in \BV^\Acal(\Omega)$.}
	\]
	%In particular, $|\Acal u|(S_{\nabla^{k-1} u} \setminus J_{\nabla^{k-1} u}) = 0.$
\end{enumerate}
\end{theorem}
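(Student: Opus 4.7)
The plan is to reduce Theorem~\ref{thm:k} to the first-order case, Theorem~\ref{thm:2}, via the linearization principle mentioned at the end of the introduction. I would construct a first-order constant-coefficient operator $d\Acal$ acting on $V \otimes E_{k-1}(\R^n)$-valued functions with the property that $d\Acal(\nabla^{k-1} u) = \Acal u$, and then apply Theorem~\ref{thm:2} to $U \coloneqq \nabla^{k-1}u$. The conclusions about $u$ and $\Acal u$ will be translated back from the first-order statements for $U$ and $d\Acal U$.

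To construct $d\Acal$, for each multi-index $\alpha$ with $|\alpha|=k$ I pick a decomposition $\alpha = \alpha' + \mathbf e_{j(\alpha)}$ with $|\alpha'|=k-1$, write $\partial^\alpha u = \partial_{j(\alpha)}(\partial^{\alpha'} u)$, and encode the $(k-1)$-th partial derivatives $\partial^{\alpha'} u$ as components of $U$ via the canonical identification $\nabla^{k-1} u \in V \otimes E_{k-1}(\R^n)$. This produces a first-order operator $d\Acal$ whose principal symbol, evaluated on a rank-one symmetric test tensor $v \otimes^{k-1}\eta$, reduces exactly to $\Abb^k(\eta)[v]$. The tensor spectrum $\partial\sigma(\Acal)$ and the $\rank_\Acal$ introduced in Section~\ref{sec:h} match, by design, the first-order directional spectrum $\partial\sigma(d\Acal)$ and $\rank_{d\Acal}$. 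Complex-ellipticity of $\Acal$ together with the Calder\'on--Zygmund embedding~\eqref{eq:emb} guarantees that $U \in \Lrm^1_\loc(\Omega; V\otimes E_{k-1}(\R^n))$ with $d\Acal U = \Acal u \in \Mcal(\Omega;W)$, so $U \in \BV^{d\Acal}_\loc(\Omega)$. Moreover, the mixing condition $(\mathfrak m_k)$ on $\Acal$ is tailored exactly so that the first-order mixing property~\eqref{eq:mix} holds for $d\Acal$ on the cone of symmetric tensors of the form $v \otimes^{k-1} \eta$ effectively probed by $U$.

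Given this reduction, I would apply Theorem~\ref{thm:2} to $U$ and $d\Acal$. Property~(i) follows from $(d\Acal)^a U = dA(\nabla U)\,\Leb^n$ together with the identification of $\nabla U$ with $\nabla^k u$ as a symmetric tensor in $V \otimes E_k(\R^n)$; the explicit coefficient formula comes out after collecting the symmetric multi-index contributions, with the $1/\alpha!$ factor encoding the passage between partial derivatives and the symmetric tensor representation. Property~(ii) follows from the first-order jump identity $(d\Acal)^j U = d\Abb(\nu_U)[U^+ - U^-]\,\Hcal^{n-1}\mres J_U$, which by the symbol identification collapses to $\Abb^k(\nu_u)\llbracket \nabla^{k-1}u\rrbracket\,\Hcal^{n-1}\mres J_U$; the rectifiability of $J_U$ is then exactly Theorem~\ref{thm:2}(ii). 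Properties~(iii), (iv) and (v) are immediate consequences of the corresponding statements for $U$, once one notes that $|d\Acal U| = |\Acal u|$ so the upper density set $\Theta_u$ coincides in both formulations and the inclusions $J_U \subset \Theta_u$, the identity $\Hcal^{n-1}(\Theta_u \setminus J_U) = 0$, and the cross-control $|\Acal v|(S_U\setminus J_U)=0$ transfer verbatim from $d\Acal$ to $\Acal$.

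The main obstacle is the algebraic step: verifying that complex-ellipticity and the mixing condition $(\mathfrak m_k)$ for $\Acal$ descend to the corresponding first-order properties for $d\Acal$, notwithstanding that $d\Acal$ is, a priori, only elliptic on the constrained subspace of symmetric tensors of the form $v \otimes^{k-1}\eta$. This is precisely where complex-ellipticity (rather than mere real ellipticity) is needed: it ensures that the Calder\'on--Zygmund multiplier arising from $d\Acal$ restricted to the image of $\nabla^{k-1}$ is $\Lrm^p$-bounded and that the first-order analytic tools of Theorem~\ref{thm:2} apply verbatim. Once this bookkeeping is carried out, the entirety of Theorem~\ref{thm:k} is inherited from Theorem~\ref{thm:2} with no further analytic work.
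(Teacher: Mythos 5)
The central idea of your proposal — linearize $\Acal$ to a first-order operator $d\Acal$, observe $d\Acal(\nabla^{k-1}u)=\Acal u$, and then cite Theorem~\ref{thm:2} as a black box for $U=\nabla^{k-1}u$ — does not work, and the paper explicitly says why. The obstacle you acknowledge at the end is a genuine one, and it is not surmountable: the mixing condition $(\mathfrak m_k)$ on $\Acal$ does \emph{not} in general imply the first-order mixing condition~\eqref{eq:mix} for the linearization $d\Acal$. The paper records this in the remark "Non-stability of the mixing property under linearization" (Section~\ref{sec:k}) and gives the concrete counterexample $\mathscr D^k u=(\partial_1^k u,\dots,\partial_n^ku)$ for $k\ge3$: this operator satisfies $(\mathfrak m_k)$, yet $d\mathscr D^k$ fails the rank-one property because the only pure directional derivatives $\partial_\xi\partial_\xi^{k-1}$ that it controls are along coordinate axes, so $\proj_{V^{k-1}}\partial\sigma(d\mathscr D^k)\subsetneq V^{k-1}$. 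Since the hypothesis of Theorem~\ref{thm:2} is exactly the first-order mixing property, you cannot invoke it for $d\Acal$, and the "no further analytic work" conclusion is unjustified. What the paper actually does is re-run the relevant first-order machinery for $d\Acal$ from scratch: it establishes the weaker conclusion $\spn\{\proj_{V^{k-1}}\partial\sigma(d\Acal)\}=V^{k-1}$ (only the span, not the full projection) and the polarization property in a separate proposition, then re-verifies the Lebesgue-point analysis (Lemma~\ref{lem:induction} and its corollaries) using these weaker ingredients together with the identity $\partial\sigma(\Acal)=\partial\sigma(d\Acal)$ from Lemma~\ref{prop:10}, rather than applying Theorem~\ref{thm:2} as a closed box.

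A secondary issue: your description of $d\Acal$ (decompose each $\alpha=\alpha'+\mathbf e_{j(\alpha)}$ and differentiate the $\alpha'$-component of $U$ once) omits the generalized curl $\curl_{k-1}U$ that the paper appends precisely to recover first-order ellipticity. Without it, a first-order operator acting on all $V\otimes E_{k-1}(\R^n)$-valued fields (not just those that happen to be $(k-1)$-gradients) has a large symbol kernel and is not elliptic, so even the preliminary step "$U\in\BV^{d\Acal}_\loc$ and $d\Acal$ is elliptic" would fail. With the curl term included, ellipticity (and complex-ellipticity) of $d\Acal$ is equivalent to that of $\Acal$ (Lemma~\ref{prop:10}(1)), and $d\Acal U=(\Acal u,0)$ since $\curl_{k-1}(\nabla^{k-1}u)=0$; this is what makes $|d\Acal U|=|\Acal u|$ and the density set $\Theta_u$ agree. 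But adding the curl component is also exactly what destroys the mixing property, which brings you back to the gap above.
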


   \section{Preliminaries}  

%$A_\alpha \in W \otimes V^* \cong \mathrm{Lin}(V;W)$. %Here, $\alpha \in \N^n_0$
%    \mnote{A: one should take $\alpha$ in $\Nbb_0^n$, it is not canonical to write $\Nbb$ when 0 is included} 
%    is a multi-index with modulus $|\alpha| = \alpha_1 + \dots + \alpha_n$, and $\partial^\alpha$ denotes the distributional derivative $\partial_1^{\alpha_1}\cdots \partial_n^{\alpha_n}$. 
%    $\Acal : \Crm^\infty(\R^n;V) \to \Crm^\infty(\R^n;W)$ be a $k$\textsuperscript{th} order homogeneous  linear partial differential operator with constant coefficients.    
\subsection{Disintegration into one-dimensional sections} We begin by recalling a basic concept of the slicing theory for Radon measures. Let $\Omega \subset \R^n$ be an open set and let $\xi \in \R^n$ be a non-zero vector. Suppose that for $\Hcal^{n-1}$-almost every $$y \in \Omega^\xi \coloneqq \set{z \in \pi_\xi}{\Omega_z^\xi \neq \emptyset},$$ we are given a measure $\mu_y$ in $\Mcal(\Omega_y^\xi)$. Further assume that, for every $\phi \in \Crm(\R)$, the assignment
\[
y \mapsto \int_{\Omega^\xi_y} \phi(t) \dd \mu_y(t)	
\] 
is well-defined and Borel measurable on $\Omega^\xi$. Lastly, assume that
\[
\int_{\Omega^\xi} |\mu_y|(\Omega_y^\xi) \dd \Hcal^{n-1}(y) < \infty.
\]
Then, for every Borel set $B \subset \Omega$, the assignment $y \mapsto \mu_y(B_y^\xi)$ is Borel measurable on $\Omega_y^\xi$ and the set function $\lambda : \Bfrak(\Omega) \to \R$ defined as 
\[
\lambda(B) \coloneqq \int_{\Omega^\xi} \mu_y(B_y^\xi) \dd \Hcal^{n-1} \quad \text{for all Borel sets $B \subset \Omega$},
\]
is a bounded Radon measure on $\Omega$, which we shall denote as
\[
\lambda = \int_{\Omega^\xi} \mu_y \dd \Hcal^{n-1}(y).
\]
In this case we say that $\lambda$ is disintegrated by $\Hcal^{n-1}$ into the one-dimensional sections $\mu_y$. By a density argument, it is easy to show that the total variation measure $|\lambda|$ coincides with the measure
\[
	\int_{\Omega^\xi} |\mu_y| \dd \Hcal^{n-1}.
\]

\subsection{The jump set and approximate continuity}Next, we give rigorous definitions and introduce notation belonging to the classical \emph{fine properties} theory. 
%The notion of \emph{approximate discontinuity} will play a fundamental role in the statement of our results. 
We begin by recalling the formal definitions of the {approximate jump set} and points of {approximate continuity}.
%    We shall see that, in general, strong discontinuities of a function of bounded $\Acal$-variation have a particular \emph{one-directional discontinuity} or the so called jump-type discontinuity. To further discuss such \enquote{discontinuities}, for a given direction $\nu \in \Sbb^{n-1}$,   
\begin{definition}[Approximate jump\label{def:jump}] Let $u \in \Lrm^1_\loc(\Omega;\R^M)$.
	We say that a point $x$ is an \emph{approximate jump point} of $u$ ($x \in J_u$) if there exist {distinct} vectors $a,b \in \R^M$ and a direction $\nu \in \Sbf^{n-1}$ satisfying 
	\begin{equation}\label{eq:jumps}
	\begin{cases} \displaystyle
	\lim_{r \todown 0} \aveint{B^+_r(x,\nu)}{} |u(y) - a| \dd y = 0,\\[12pt]
	\displaystyle
	\lim_{r \todown 0} \aveint{B^-_r(x,\nu)}{} |u(y) - b| \dd y = 0,
	\end{cases}
	\end{equation}
	where
$B^\pm_r(x,\nu) \coloneqq \set{y \in B_r(x)}{\pm\dpr{\nu,y} > 0}$ are %,\quad B^-_r(x,\nu) \coloneqq \setB{y \in B_r(x)}{\dpr{\nu,y} < 0}
the $\nu${-oriented half-balls} centered at $x$, where $B_r(x)$ is the open unit ball of radius $r > 0$ and centered at $x$. 
\end{definition}
	We refer to $a,b$ as the {one-sided limits} of $u$ at $x$ with respect to the orientation $\nu$. 
	Since the jump triplet $(a,b,\nu)$ is well-defined up to a sign in $\nu$ and a permutation of $(a,b)$, we shall write $(u^+,u^-,\nu_u) : J_u \to \R^M \times \R^M \times \Sbf^{n-1}$ to denote the triplet Borel map associated to the jump discontinuities on $J_u$, i.e., $x \in J_u$ if and only if~\eqref{eq:jumps} holds with $(a,b,\nu) = (u^+(x),u^-(x),\nu_u(x))$.
	
We now define what it means for a locally integrable function to be approximately continuous at a given point:
\begin{definition}[Approximate continuity]\label{def:appcont}
	Let $u \in \Lrm^1_\loc(\Omega;\R^M)$ and let $x \in \Omega$. We say that $u$ has an \emph{approximate limit} $z \in \R^M$ at $x$ if
	\[
	\lim_{r \todown 0} \aveint{B_r(x)}{} |u(y) - z| \dd y = 0.
	\]
	The set of points $S_u \subset \Omega$ is called the \emph{approximate discontinuity set}. 
\end{definition}

    \subsection{Symbolic calculus}\label{sec:preliminary_rank}In stating our results it will be fundamental to recall a couple of basic definitions for partial differential operators. For a Schwartz function $u \in \Scal(\R^n;V)$, the Fourier transform applied to $\Acal u$ gives 
    \[
    \widehat {\Acal u}(\xi) = (2\pi \mathrm i)^k \Abb^k(\xi)[\widehat u(\xi)], \qquad \Abb^k(\xi) \coloneqq \sum_{i=1} \xi^\alpha A_\alpha, \quad \xi^\alpha \coloneqq \xi_1^{\alpha_1} \cdot \cdots \cdot \xi_{n}^{\alpha_n}.
    \]
    The homogeneous polynomial $\Abb^k: \R^n \to W\otimes V^*$ is called the \emph{principal symbol} associated to $\Acal$ (when $k = 1$ we shall simply write $\Abb = \Abb^1$). 
    The \emph{image cone} of $\Acal$ (which contains all $\Acal$-gradients in Fourier space) and the \emph{essential range}  of $\Acal$ are respectively defined as  
    \[
    \Irm_{\Acal} \coloneqq \bigcup_{\xi \in \R^n} \im \Abb(\xi), \qquad W_\Acal \coloneqq \spn \Irm_\Abb \subset W.
    \]
     A simple Fourier transform argument (e.g., Sec.~2.5 in~\cite{arroyo2018lower}) shows that
    \[
    \Acal u(x) \in W_\Acal \quad \text{for all $u \in \Crm^\infty_c(\Omega;V)$.}
    \]
    Thus, by a standard approximation argument (e.g., Theorem~1.3 in~\cite{YMA}) we find that $\BV^\Acal(\Omega) \subset \Lrm^1(\Omega;W_\Acal)$. %This says that to recover the full gradient $\Acal u$ from a collection of individual slices as in~\eqref{eq:slicing}, it is necessary and sufficient that the collection of covectors $w^* \in W^*$ in which the individual slicing~\eqref{eq:1slice} holds spans $W_\Acal^*$. 
    One of the main structural assumptions on $\Acal$ will be to assume that it is an elliptic operator in the following sense: 
    \begin{definition}[Ellipticity]\label{def:ell} We say that an operator $\Acal$ as in~\eqref{eq:B} is elliptic if there exists a positive constant $c$ such that
    	\[
    	|\Abb^k(\xi)[v]| \ge c|\xi|^k|v| \quad \text{for all $(\xi,v) \in \R^n \times V$.}
    	\]
%    	Equivalently, $\Acal$ is elliptic if $\Abb^k(\xi) : V \to W$ is one-to-one for all $\xi \in \R^n\setminus \{0\}$.
    \end{definition} 
    %Notice that under the additional \emph{canceling} assumption, this embedding also holds for the critical Sobolev exponent $p = \frac{n}{n-1}$. 
    %    For more details on this and other related topics, see~\cite{van-schaftingen2013limiting-sobole}. 
%    Later we shall see that, for our purposes, it is possible to reduce the study of operators of arbitrary $k$\textsuperscript{th}-order to the study of first-order operators. 

    \subsection{Traces of complex elliptic operators}\label{sec:traces}The concept of complex-ellipticity was originally introduced by \textsc{Smith}~\cite{smith1,smith2}.  Recently, this concept has been re-visited by \textsc{Breit, Diening \& Gmeineder}~\cite{breit2017traces}, who have shown that complex-ellipticity is a necessary and {sufficient} condition for the existence of  trace operators on $\BV^\Acal$-spaces when $\Acal$ is a first-order operator. The precise definition is the following: 
    
    %When $\Acal$ is a first-order operator, ellipticity and the mixing property imply complex-ellipticity as defined by \textsc{Smith} (the converse is false):
    
    \begin{definition}We say that an operator $\Acal$ as in~\eqref{eq:B} is complex-elliptic  when the complexification of the principal symbol map $\Abb^k$ is injective, i.e., if there exists a positive constant $c$ such that
    	%\mnote{A: Do not write $\Abb_\C$ below, by taking $v \in \C\otimes V$, one automatically gets the complexification}
    	\[
    	|\Abb^k(\xi)v| \ge c |\xi||v| \qquad \text{for all $\xi \in \C^n$ and all $v \in \C\otimes V$.}
    	\]
    \end{definition}
    \begin{remark}For first-order elliptic operators, the mixing property is a sufficient condition for complex-ellipticity, i.e.,  
    %	, a sufficient condition for $\C$-ellipticity is ellipticity and the mixing condition discussed in the previous section. More precisely,
    \begin{equation}\label{eq:implication}
    \eqref{eq:mix} + \text{elliptic} \quad 
    %\Longrightarrow \quad 	\eqref{eq:mix2} + \text{elliptic} \quad 
    \Longrightarrow \quad \text{complex-elliptic}.
    \end{equation}
However, complex-ellipticity is \emph{not} a sufficient condition for an operator to  satisfy the mixing condition (see Examples~\ref{ex:dev} and~\ref{ex:cauchy}). %Another important remark is that, we do not know whether
%\[
%\eqref{eq:mixk} + \text{elliptic} \quad 
%%\Longrightarrow \quad 	\eqref{eq:mix2} + \text{elliptic} \quad 
%\Longrightarrow \quad \text{complex-elliptic},
%\]
%when $\Acal$ is a $k$\textsuperscript{th} order operator. 
%    More generally if $\Acal$ is a $k$\textsuperscript{th} order operator, then
%    \begin{equation}\label{eq:implication}
%    \eqref{eq:mixk} + \text{elliptic} \quad 
%    %\Longrightarrow \quad 	\eqref{eq:mix2} + \text{elliptic} \quad 
%    \Longrightarrow \quad \text{$\Lcal_\Acal \times \curl_{k-1}$ \, is complex-elliptic},
%    \end{equation} where $\Lcal_\Acal$ is the linearization of $\Acal$ introduced in Section~\ref{sec:lin}. 
    \end{remark}
    We recall from~\cite[Corollary~4.21]{breit2017traces} the following trace properties for $u \in \BV^\Acal(\Omega)$ when $\Acal$ is complex-elliptic: If $\Omega$ is a Lipschitz domain, then there exists a continuous linear trace operator $\tr : \BV^\Acal(\Omega) \to  \Lrm^1(\partial\Omega;\Hcal^{n-1}),$
    satisfying $u = \tr u$ for all $u \in \Crm(\cl\Omega;V) \cap \BV^\Acal(\Omega)$. In particular,  the  extension by zero
    \[
    \bar u(x) = \begin{cases} u(x) & \text{if $x \in \Omega$},\\
    	0 & \text{if $x \in \R^n \setminus \Omega$}
    \end{cases}
    \]
    belongs to $\BV^\Acal(\R^n)$ and satisfies $\Acal u \mres \partial \Omega = \Abb(-\nu_\Omega)[\tr(u)] \, \Hcal^{n-1} \mres \partial \Omega$,
    where $\nu_\Omega$ is the outer unit normal of $\Omega$.   
\subsubsection{Characterization of elliptic equations}\label{sec:equations} For first-order equations in divergence form, there are no difference among the concepts of ellipticity, complex-ellipticity and the mixing property:
    \begin{proposition}[First-order equations]\label{ex:div_form} Let $n\ge 2$ and  let $R \in \R^M \otimes \R^n$. Consider the operator
    	\begin{align*}
    	\Acal_R u & \coloneqq \Div(Ru) = \bigg(\sum_{j=1}^n R_{ij} \partial_j u \bigg)_i \qquad i = 1,\dots,M\,,
    	\end{align*}
    	defined on scalar maps $u : \R^n \to \R$. The following are equivalent:
    	\begin{enumerate}
    		\item[(a)] $\rank(R) \ge n$,
    		\item[(b)] $\Acal_R$ is elliptic,
    		\item[(c)] $\Acal_R$ is complex-elliptic,
    		\item[(d)] $\Acal_R$ satisfies the the mixing condition~\eqref{eq:mix}.
%    		\[
%    		\bigcap_{\pi \in \Gr(n-1,n)} \spn\setb{\Abb_R(\xi)}{\xi \in \pi} = \{0\}.
%    		\]    
    	\end{enumerate}
    \end{proposition}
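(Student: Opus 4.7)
The symbolic structure collapses everything to linear algebra on the matrix $R$. For a scalar field $V=\R$, the principal symbol reads $\Abb(\xi)[v]=v(R\xi)$, so $\im\Abb(\xi)=\spn\{R\xi\}$ and, for any hyperplane $\pi\le\R^n$,
\[
\spn\set{\Abb(\eta)[v]}{\eta\in\pi,\,v\in V} \;=\; R(\pi).
\]
Thus each condition (b), (c), (d) is a property of $R$ as a linear map $\R^n\to\R^M$, and (a) just says $R$ has maximal column rank (equivalently, $R$ is injective, since $\rank R\le n$).

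\medskip

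The equivalence (a)$\Leftrightarrow$(b) is then immediate from the ellipticity estimate $|R\xi|\ge c|\xi|$, which is exactly the statement that $R$ is injective on $\R^n$. For (b)$\Leftrightarrow$(c), I would use that $R$ has real entries: real injectivity forces complex injectivity, since $R(a+\ii b)=0$ with $a,b\in\R^n$ decouples into $Ra=Rb=0$, so $a=b=0$; and complex injectivity trivially implies the real one. This can also be phrased via the Cauchy–Binet formula for $R^\top R$, whose real positivity is equivalent to its complex positivity.

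\medskip

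The interesting equivalence is (b)$\Leftrightarrow$(d). For (b)$\Rightarrow$(d), if $R$ is injective then the restricted inverse $R^{-1}\colon R(\R^n)\to\R^n$ is well defined; any $y\in\bigcap_\pi R(\pi)$ satisfies $y=R x$ for the unique $x=R^{-1}(y)$, and from $y\in R(\pi)$ one reads $x\in\pi$; taking the intersection over all hyperplanes gives $x\in\bigcap_\pi\pi=\{0\}$, hence $y=0$. For the converse (d)$\Rightarrow$(a) I would argue by contrapositive: if $\rank R<n$, one can exhibit a non-trivial intersection $\bigcap_\pi R(\pi)$ by invoking the equivalence (1)$\Leftrightarrow$(2) of Theorem~\ref{thm:structure}, since for scalar $V$ the rank-$\Acal$-one condition on a covector $w\in(\R^M)^*$ reduces to $R^\top w$ being a general element of $\R^n$; the spanning of $(\R^M)^*$ by such covectors then forces $R^\top$ to have image $\R^n$, i.e., $R$ to be injective. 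The main obstacle, and the step that deserves the most care, is to rule out that a subspace $\pi$ with $\pi\subset\ker R$ (or, in higher codimension, containing $\ker R$) might satisfy $R(\pi)=\{0\}$ \emph{vacuously} and thereby trivialize the intersection; this is where the precise mixing hypothesis, combined with the $n\ge2$ assumption and the scalar structure of $V$, must be used to conclude injectivity of $R$.
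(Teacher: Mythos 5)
Your handling of (a)$\Leftrightarrow$(b), (b)$\Leftrightarrow$(c), and (a)$\Rightarrow$(d) is correct and matches the paper's route. The argument for (a)$\Rightarrow$(d) --- if $y\in\bigcap_\pi R(\pi)$ then $R^{-1}(y)\in\bigcap_\pi\pi=\{0\}$ whenever $R$ is injective --- is clean.

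The obstacle you flag in (d)$\Rightarrow$(a) is genuine and, as it turns out, cannot be closed. Your appeal to Theorem~\ref{thm:structure} is circular, since that theorem presupposes ellipticity. More to the point, for a scalar source $V=\R$ the rank-one property of Definition~\ref{def:spectrum} holds automatically: $w\circ\bar f_\Acal\in(\R^n\otimes\R)^*\cong(\R^n)^*$ has tensor rank at most one for every $w\in W^*$, so $\Acal^\otimes_0\cup\Acal^\otimes_1=W^*$ and the spanning condition is vacuous. Lemma~\ref{lem:slicemix} then forces the mixing condition~\eqref{eq:mix} to hold for \emph{every} $R$, injective or not. Concretely, take $R=(1,0,\dots,0)\colon\R^n\to\R$, i.e.\ $\Acal_R=\partial_1$: then $\rank R=1<n$, but $R(\mathbf e_1^\perp)=\{0\}$ already forces $\bigcap_\pi R(\pi)=\{0\}$, so (d) holds while (a), (b), (c) all fail.

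The paper's own proof slips at the same step: it asserts that ``(d) fails iff there exists a non-zero $\xi$ such that $R\xi\in R[\eta^\perp]$ for all $\eta$; this is equivalent to $R$ not being one-to-one'', which conflates $\xi\neq 0$ with $R\xi\neq 0$. Any $\xi\in\ker R\setminus\{0\}$ satisfies the stated condition trivially (since $R\xi=0$) yet produces no non-zero element of $\bigcap_\pi R(\pi)$. If the condition is repaired to require $R\xi\neq0$, the backward implication ``$R$ not injective $\Rightarrow$ such $\xi$ exists'' fails, and precisely for the reason you suspected: hyperplanes $\pi\supset\ker R$ collapse $R(\pi)$ to a proper subspace of $\im R$ and drive the intersection down to $\{0\}$. (Lemma~\ref{lem:equations} downstream uses only the safe direction (a)$\Rightarrow$(d), so this issue does not propagate through the rest of the paper.)
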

    \begin{proof}
    	We shall see that (a)~$\Leftrightarrow$~(b),(c) and $\neg$(a)~$\Leftrightarrow$~$\neg$(d). Ellipticity is equivalent to the principal symbol $\Abb_R(\xi)$ being injective for all $\xi \in \R^n$, which is equivalent to $|R \cdot \xi| > 0$ for all non-zero $\xi \in \R^n$; this shows that (a)~$\Leftrightarrow$~(b). However, since $R$ is a tensor with real coefficients the same holds for all $\xi \in \C^n$; this shows (a)~$\Leftrightarrow$~(c). Lastly, (d) fails if and only if there exists a non-zero $\xi \in \R^n$ such that $R\xi \in R[\eta^\perp]$ for all $\eta \in \R^n$. However, this is equivalent to $R$ not being one-to-one, or equivalently, that $\rank(R) < n$. 
    \end{proof}

    We can now use the above result to show that, for first-order equations, ellipticity, complex-ellipticity, and the mixing condition are all equivalent:
    \begin{lemma}\label{lem:equations}
    	Let $\Acal : \Crm^\infty(\R^n) \to \Crm^\infty(\R^n;\R^M)$ be a first-order elliptic operator. %Then, there exists a permutation $\sigma \in S_N$ such that the operator 
    	%	 	\[
    	%	 		\Rcal u \coloneqq (P_{\sigma(1)}u,\dots,P_{\sigma(n)}u)
    	%	 	\]
    	%	 	is $\C$-elliptic. 
    	There exists an full-rank tensor $R \in \R^n \otimes \R^n$ such that the operator
    	\begin{align*}
    	\Acal_R u & \coloneqq \Div(Ru), \quad u : \R^n \to \R, %\\
    	%& = \sum_{i,j}	r_{ij} 
    	\end{align*}
    	is complex-elliptic, satisfies the mixing property, and 
    	\[
    	\|\Acal_R u\|_{\Lrm^1} \le \|\Acal u\|_{\Lrm^1} \le c\, \|\Acal_R u\|_{\Lrm^1}
    	\] 
    	for all $u \in \Crm_c^\infty(\R^n)$.
    \end{lemma}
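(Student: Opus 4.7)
The plan is to reduce $\Acal$ to an equivalent $\R^n$-valued divergence-form operator by isometrically identifying the essential range $W_\Acal$ with $\R^n$. Since $V = \R$, the principal symbol $\Abb : \R^n \to \R^M$, $\xi \mapsto \sum_j \xi_j A_j$, is simply a linear map from $\R^n$ to $\R^M$, and the ellipticity of $\Acal$ is equivalent to the injectivity of $\Abb$. In particular, $M \ge n$, and the essential range $W_\Acal = \Abb(\R^n)$ is an $n$-dimensional subspace of $\R^M$.

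I would choose a linear map $\Pbf : \R^M \to \R^n$ whose restriction $\Pbf|_{W_\Acal}$ is an isometric isomorphism onto $\R^n$ (concretely, fix an orthonormal basis $\{e_1,\dots,e_n\}$ of $W_\Acal$ and set $\Pbf v \coloneqq (\langle v, e_i\rangle)_{i=1}^n$), and then define
\[
R \coloneqq \Pbf \circ \Abb \in \R^n \otimes \R^n.
\]
Since $\Abb : \R^n \to W_\Acal$ and $\Pbf|_{W_\Acal} : W_\Acal \to \R^n$ are both bijections, $R$ is an invertible $n \times n$ matrix, that is $\rank(R) = n$. A direct computation shows that the principal symbol of $\Acal_R u = \Div(Ru)$ is $\xi \mapsto R\xi$, and componentwise
\[
\Acal_R u(x) = \sum_{j=1}^n (\Pbf A_j)\,\partial_j u(x) = \Pbf\bigl(\Acal u(x)\bigr) \quad \text{pointwise.}
\]
Because $\Acal u(x) \in W_\Acal$ for every $x \in \R^n$ and every $u \in \Crm_c^\infty(\R^n)$ (as noted in Section~\ref{sec:preliminary_rank}), the isometry property of $\Pbf|_{W_\Acal}$ yields $|\Acal_R u(x)| = |\Acal u(x)|$ pointwise, and integration gives $\|\Acal_R u\|_{\Lrm^1} = \|\Acal u\|_{\Lrm^1}$, which is stronger than the required two-sided bound.

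To conclude the structural claims on $\Acal_R$, I invoke Proposition~\ref{ex:div_form}: since $\rank(R) = n$, the operator $\Acal_R$ is automatically complex-elliptic and satisfies the mixing condition~\eqref{eq:mix}. There is essentially no obstacle in this argument; the only subtlety is choosing $\Pbf$ so that the first of the two inequalities holds with no constant, which is precisely why we ask $\Pbf|_{W_\Acal}$ to be an isometry rather than an arbitrary isomorphism.
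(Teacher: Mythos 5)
Your proof is correct, and it takes a genuinely different route from the paper's. The paper proceeds \emph{intrinsically in coordinates}: writing each component $P_j u = \mathbf e_j \cdot \Acal u$ as $\eta_j \cdot \nabla u$ for some $\eta_j \in \R^n$, it extracts a subset $\{\eta_{\sigma(1)},\dots,\eta_{\sigma(n)}\}$ spanning $\R^n$ and sets $R_{\ell i} = [\eta_{\sigma(\ell)}]_i$; the first $\Lrm^1$-bound then comes from the fact that $\Acal_R u$ is a coordinate selection of $\Acal u$ (operator norm $\le 1$), and the second comes from expressing the remaining $\eta_k$'s in the chosen basis, which introduces the constant $c$. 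Your argument instead works with the \emph{essential range}: since $V = \R$, ellipticity says $\Abb : \R^n \to \R^M$ is injective, so $W_\Acal = \im \Abb$ is $n$-dimensional; choosing an isometric identification $\Pbf|_{W_\Acal} : W_\Acal \to \R^n$ and setting $R = \Pbf \circ \Abb$ gives $\Acal_R u = \Pbf(\Acal u)$ pointwise, and because $\Acal u(x) = \Abb(\nabla u(x)) \in W_\Acal$ for every $x$, you get the exact identity $\|\Acal_R u\|_{\Lrm^1} = \|\Acal u\|_{\Lrm^1}$, i.e.\ the two-sided bound with $c = 1$. What your approach buys is a cleaner, constant-free statement and a more conceptual reason why the reduction works (it is just an orthonormal change of coordinates on the essential range); what the paper's approach buys is that $R$ consists of actual rows of the original coefficient tensor, which is perhaps more directly computable. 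Both correctly invoke Proposition~\ref{ex:div_form} to read off complex-ellipticity and the mixing condition from $\rank(R) = n$.
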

    \begin{proof}
    	Since $\Acal$ is of of first order, the characteristic polynomial $p_j(\xi)$ corresponding to each scalar operator $P_j \coloneqq \mathbf e_j \cdot \Acal$ must be of the form $p_j(\xi) = \eta_j \cdot \xi$ for some $\eta_j \in \R^n$. Therefore, the complex-ellipticity of $\Acal$ is equivalent to the family $\{\eta_1,\dots,\eta_M\}$ possessing a basis of an $n$-dimensional subspace of $\R^M$. Hence, we may find a permutation $\sigma \in S_M$ such that
    	\[
    	E_\sigma \coloneqq \{\eta_{\sigma(1)},\dots,\eta_{\sigma(n)}\} \;  \qquad\text{spans an $n$-dimensional space.}
    	\]  
    	The matrix $R_{\ell i} \coloneqq [\eta_{\sigma(\ell)}]_i$ has rank $n$ and therefore the operator defined by
    	\begin{align*}
    	[\Acal_R u]_\ell \coloneqq \sum_{i = 1}^n R_{\ell i} \partial_i u   = [\Div(Ru)]_j, \qquad \ell = 1,...,n, \quad u \in \Crm^\infty_c(\R^n),
    	\end{align*}
    	is complex-elliptic. The $\Lrm^1$-bound $
    	\|\Acal_R u\|_{\Lrm^1} \le \|\Acal u\|_{\Lrm^1}$
    	follows immediately from the fact that $\Acal_R = p \circ \Acal$ where $p : \R^M \to \spn E_\sigma$ is the orthogonal projection onto $\spn E_\sigma$. It remains to show that we can again estimate $\Acal u$ by $\Acal_R u$ in~$\Lrm^1$. Since $E_\sigma$ forms a basis of $\R^n$, for each scalar operator $P_k$ we can write
    	\[
    	P_k u = \sum_{\ell=1}^n c^k_\ell \eta_{\sigma(\ell)} \cdot \nabla u = \sum_{i,\ell = 1}^n c^k_\ell R_{\ell i} \partial_{i} u = \sum_\ell c^k_\ell [\Acal_R u]_\ell
    	\]
    	for some choice of constants $c^k_\ell$. From this, we immediately obtain the desired estimate $\|\Acal u\|_{\Lrm^1} \le c\|\Acal_R u\|_{\Lrm^1}$.
    \end{proof}

\section{Slicing theory for first-order operators}\label{sct:slicing}The purpose of this section is to study first-order elliptic operators $\Acal$ that allow for the measure $\Acal u \in \Mcal(\Omega;W)$ to be disintegrated into one-dimensional sectional derivatives of its potential $u$. 
%\note{somewhere here define what the notation of (13) means as measures}
%
We aim to show that if a function $u$ belongs to $\BV^\Acal(\Omega)$, then there exist directions $\{\xi_1,\dots,\xi_r\} \subset \Sbf^{n-1}$ and covectors $\{e_1,\dots,e_r\} \subset V^*$ such that 
\[
u_{y,\xi_i}^{e_i} \in \BV(\Omega_{y}^{\xi_i}) \; \text{for $\Hcal^{n-1}$-almost every $y \in \Omega^{\xi_i}$}
\]
for every $i = 1,\dots, r$, and 
\begin{equation}\label{eq:slicing}
\Acal u =  \sum_{i=1}^r \, \bigg( \int_{\pi_{\xi_i}} Du_{y,\xi_i}^{e_i} \dd \Hcal^{n-1}(y) \bigg) P_i,
\end{equation}
for some family of vectors $\{P_1,...,P_r\} \in W$.
%Here, $\pi_\xi$ denotes the hyper-plane orthogonal to $\xi$ and passing though the origin, $\Omega_{y}^\xi \coloneqq \set{t \in \R}{y + t\xi \in \Omega}$, and $u_{y,\xi}^a(t) \coloneqq u(y + t\xi) \cdot e$ for $t \in \Omega_y^\xi$. 
Let us begin by making the following observation about the  inherent relationship between the principal symbol of $\Acal$ and the slicing of $\Acal u$: Recall that the principal symbol $\Abb$ induces the linear map
\begin{align*}
\bar f_\Acal : \R^n &\otimes V \to W, \\
\qquad (\eta \otimes v) &\mapsto \Abb(\eta)[v].
\end{align*}
Let us assume for a moment that $\Omega = \R^n$ and that, for some covector $w \in W^*$, the composition map $w \circ \bar f_\Acal$ may be represented by a rank-one tensor (viewed as an element of $(\R^n \otimes V)^*$):
\begin{equation}\label{eq:tensor}
w \circ \bar f_\Acal  = \xi \otimes e \quad \text{for some $\xi \in \R^n$ and $e \in V^*$.}
\end{equation}
Here we have identified $\R^n$ with $(\R^n)^*$ by the canonical isomorphism.
%the inner product in the right is the canonical Frobenius inner product $\dpr{A,B} = \tr(AB^t)$, which preserves the rank.
It follows that
\[
\dpr{w,\bar f_\Acal(\eta \otimes v)}  = \dpr{\xi , \eta}\dpr{e , v} \qquad \text{for all $(\eta,v) \in \R^n \times V$.}
\]
%Here, $\dpr{\frarg,\frarg}$ denotes the natural duality paring between a euclidean space and its dual. 
Now, let $u \in \Scal(\R^n;V)$. We recall the following idea contained in the proof of Theorem~5 in~\cite{spectorVS}: applying the Fourier transform to the identity above we deduce the point-wise identity
\[
    \dpr{w,\Abb(\eta)[\widehat u(\eta)]}  = \dpr{\xi, \eta}\dpr{e , \widehat u(\eta)} \quad \text{for all $\eta \in \R^n$}.
\] 
Inverting the Fourier transform we discover that $\dpr{w,\Acal u} = \partial_\xi u^e$. We shall see later that, by using the differential $\pi_\xi$-independence of the right hand-side, it is relatively simple to show that
\begin{equation}\label{eq:1slice}
\dpr{w,\Acal u} = \int_{\pi_\xi} Du_{y,\eta}^e \dd \Hcal^{n-1}(y) \qquad \text{as measures in $\Mcal(\R^n)$}.
\end{equation}
The key feature of this this equality of measures is that the left hand-side can be estimated by $|\Acal u|$. This, in turn, allows one to apply standard smooth approximation methods to extend it to all functions $u \in \BV^\Acal(\R^n)$. In fact, it is easy to verify  that~\eqref{eq:tensor} is not only a sufficient, but a necessary condition for~\eqref{eq:1slice} to hold on arbitrary functions $u \in \BV^\Acal(\R^n)$.

\subsection{The rank-one property}\label{sec:rank} Motivated by the previous discussion, we next study those elements $w \in W^*$ for which~\eqref{eq:1slice} holds ---the $\rank_\Acal$-one vectors.
It will also be convenient to give a name to the cone of $\rank_\Acal$-$m$ covectors: for an integer $0 \le m \le \min\{n,\dim(V)\}$, we write
	$$\Acal^\otimes_m \coloneqq \set{w \in W^*}{\rank_\Acal(w) = m}$$
to denote the cone of $\rank_\Acal$-$m$ vectors.

\begin{definition}\label{def:spectrum}%Let $\Acal : \Crm^\infty(\R^n;V) \to \Crm^\infty(\R^n;W)$ be a first-order partial differential operator. 
	We say that $\Acal$ has the {rank-one property} if and only if
    \begin{equation*}
     \spn\Acal^\otimes_1 = (W_\Acal)^*,
    \end{equation*}
or equivalently,
\[
	\spn \{\Acal^\otimes_0 \cup \Acal^\otimes_1\} = W^*.
\]
    %	We define the \emph{slicing dimension} of $\Acal(D)$ as
    %	\[
    %		\dim_\mathrm{sl}(\Acal) = \inf \setb{k \in \Nbb}{\text{$\Acal(D)$ has the $k$-slicing property}}.
    %	\] 
%    and the \term{slicing dimension} of $\Acal(D)$ as
%%    \[
%%        r \coloneqq \dim \spn \set{P \in \Lambda_\Abb}{\rank_\Abb(P^*) = 1}.
%%    \]
%%    \note{Interesting quantity, but calling it dimension you would like it to be realted to the hausddorff one of $|\Acal u|$ no? And it does not. What I actually think is true is that
%%    \[
%%    	\spn\{\rank_\Abb = k\} = W_\Acal
%%    \]
%%implies slicing up to $k$-dimensional sets and therefore makes sense to define 
%\note{That this makes sense is so far only a conjecture; but you are welcome to check that it would imply slicing up to level $k$.}
%\[
%	\dim_{slice}(\Acal(D)) \coloneqq \sup \set{n-k}{\spn\{\rank_\Abb = k\} = W_\Acal}.
%\]
\end{definition}
	
%Both $\Acal^\otimes_1$ and $\partial\sigma(\Acal)$ are built upon the identifications $x \mapsto x^*$ (for $x \in \R^n,V,W$), which a priori depends on the inner product structure that we have given to $W$ and the ones that $\R^n$ and $V$ themselves possess.  
The following lemma shows that the rank-one property is  equivalent to the mixing condition introduced in the introduction.

\begin{lemma}\label{lem:slicemix}
%Let $\Acal(D) : \Crm^\infty(\R^n;V) \to \Crm^\infty(\R^n;W)$ be a first-order partial differential operator. 
The following are equivalent:
\begin{enumerate}
		\item[\textnormal{(\emph{1})}] $\Acal$ satisfies the rank-one property,
	\item[\textnormal{(\emph{2})}] $\Acal$ satisfies the mixing condition
	\[
	\bigcap_{\xi \in \Sbf^{n-1}} \spn \set{\im {\Abb}(\eta)}{\eta \in \pi_\xi} = \{0_W\}.
	\]
\end{enumerate}
\end{lemma}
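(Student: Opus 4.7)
The plan is to translate both conditions into dual statements about the annihilator subspaces in $W^*$ of
\[
V_\xi \coloneqq \spn\set{\im \Abb(\eta)}{\eta \in \pi_\xi} \subset W, \qquad \xi \in \Sbf^{n-1},
\]
so that the mixing condition reads precisely $\bigcap_\xi V_\xi = \{0_W\}$, and then to invoke standard finite-dimensional duality. The whole argument hinges on identifying each $V_\xi^\perp$ with a family of distinguished rank-one covectors.

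First I would record the duality identity
\[
\bigcap_{\xi \in \Sbf^{n-1}} V_\xi = \{0_W\} \;\Longleftrightarrow\; \sum_{\xi \in \Sbf^{n-1}} V_\xi^\perp = W^*,
\]
which follows from the formula $(A \cap B)^\perp = A^\perp + B^\perp$ and the fact that a descending family of subspaces of the finite-dimensional space $W$ stabilises after finitely many steps, so the sum and intersection above are effectively finite.

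The algebraic heart of the proof is the identification
\[
V_\xi^\perp = \setb{w \in W^*}{w \circ \bar f_\Acal = \xi \otimes e \text{ for some } e \in V^*},
\]
where $w \circ \bar f_\Acal$ is regarded as an element of $(\R^n \otimes V)^* \cong \R^n \otimes V^*$ via the canonical identification $(\R^n)^* \cong \R^n$. The inclusion $\supseteq$ is immediate from~\eqref{eq:1}: for any $\eta \in \pi_\xi$ and $v \in V$, $\dpr{w, \Abb(\eta)v} = \dpr{\xi,\eta}\dpr{e,v} = 0$. For $\subseteq$, if $w \in V_\xi^\perp$ then the bilinear form $(\eta,v) \mapsto \dpr{w, \Abb(\eta)v}$ vanishes on $\pi_\xi \times V$, hence factors through the one-dimensional quotient $\R^n/\pi_\xi$; this forces $w \circ \bar f_\Acal = \xi \otimes e$ for some $e \in V^*$.

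Finally, taking the union over $\xi \in \Sbf^{n-1}$ and noting that $\Acal^\otimes_0 = W_\Acal^\perp$ is contained in every $V_\xi^\perp$ (since each $V_\xi \subseteq W_\Acal$), the previous step gives
\[
\bigcup_{\xi \in \Sbf^{n-1}} V_\xi^\perp = \setn{w \in W^*}{\rank_\Acal(w) \le 1} = \Acal^\otimes_0 \cup \Acal^\otimes_1.
\]
Since the span of a union of subspaces equals their sum, the rank-one property $\spn(\Acal^\otimes_0 \cup \Acal^\otimes_1) = W^*$ translates exactly into $\sum_\xi V_\xi^\perp = W^*$, which by the first step is equivalent to the mixing condition. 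No serious obstacle is expected: the only substantive step is the identification of $V_\xi^\perp$ as the rank-one covectors anchored at direction $\xi$, which reduces to the elementary quotient argument above; the remainder is standard linear algebra.
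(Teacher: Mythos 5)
Your proof is correct and takes essentially the same approach as the paper: the key identification of $V_\xi^\perp$ (equivalently $\bigcap_{\eta\in\pi_\xi}\ker\Abb(\eta)^*$) with the $\rank_\Acal$-at-most-one covectors anchored at $\xi$, followed by finite-dimensional annihilator duality. Your write-up is a bit cleaner by packaging both implications into a single chain of equivalences, but the underlying argument is the one the paper uses.
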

\begin{proof}
		First, we show that (1)$\Rightarrow$(2). Let us first fix a $P \in (W_\Acal)^*$ with $\rank_{\Acal}(P) = 1$. By definition there exist vectors $\xi \in \R^n$ and $e \in V$ such that
	\[
	\dpr{P, \Abb(\eta)v} = \dpr{\xi,\eta}\dpr{e,v} \quad \forall\; (\eta,v) \in \R^n \times V. 
	\]
	In particular,
	\begin{equation}\label{eq:rank_perp}
	P\in \Bigg(\sum_{\eta \in \pi_\xi}\im {\Abb}(\eta)\Bigg)^\perp.
	\end{equation}
%	\[
%	P^* (\bar f_\Acal) = \eta^* \otimes e^* \quad \Leftrightarrow \quad P = \Abb(\eta)e.
%	\]
	Now, by assumption, we may find a family $\{P_j\}_{j=1}^r \subset W \cap \{\rank_\Acal = 1\}$ spanning $(W_\Acal)^*$. Write $\pi_j = \pi_{\xi_j} \in \Gr(n-1,n)$ to denote the hyper-plane for which~\eqref{eq:rank_perp} holds with $P = P_j$. Next, consider 
	\[
\displaystyle 	Q \in \bigcap_{\zeta \in \Sbf^{n-1}} \, \spn\set{\im \Abb(\eta)}{\eta \in \pi_\zeta} \subset W_\Acal. 
	\]
	Since $Q \in \spn_{\eta \in \pi_j} \{\im \Abb(\eta)\}$ for all $j = \{1,\dots,r\}$, we conclude from~\eqref{eq:rank_perp} with $P=P_j$ that $\dpr{P_j,Q} = 0$ for all $j =  \{1,\dots,r\}$. Recalling that $\{P_j\}_{j=1}^r$ spans ${W_\Acal}^*$, we conclude that  $Q$ must be the zero vector and (2) follows.
	
	We now show (2)$\Rightarrow$(1). Fix  a direction $\xi \in \Sbf^{n-1}$ and notice that
	\begin{equation}\label{eq:dual}
	   P \in \bigcap_{\eta \in \pi_\xi} \ker \Abb(\eta)^* \quad \Rightarrow \quad \rank_\Acal(P) \le 1.
	\end{equation}
	Here, $\Abb(\xi)^*$ is the adjoint operator of $\Abb(\xi)$.
	Indeed, $\dpr{Q,\Abb(\eta)v} = \dpr{\Abb(\eta)^*Q,v} = 0$
	for all $\eta \in \pi_\xi$ and all $v \in V$. Equivalently, $\dpr{Q,\Abb(\eta)v} = \dpr{\xi , \eta}\dpr{\Abb(\eta)^*Q,v}$ for all $\eta \in \pi_\xi$ and $v \in V$. The claim then follows by taking $e^* = \Abb(\eta)^*Q$ and observing that $Q \circ f_\Acal = \xi^* \otimes e^*$.
   Now, by an application of De Morgan's laws (for orthogonal complements) we obtain
	\[
		W^* = \spn\set{\displaystyle\bigcap_{\eta \in \pi_\xi} \ker \Abb(\eta)^*}{\xi \in \Sbf^{n-1}}. 
	\]
	The sufficiency then follows from~\eqref{eq:dual}.
\end{proof}

\subsection{Proof of the sectional representation theorem}

We begin this section with the proof of Proposition~\ref{lem:1}:

%\begin{proposition}\label{lem:1}
%	Let $\Acal : \Crm^\infty(\R^n;V) \to \Crm^\infty(\R^n;W)$ be a  first-order homogeneous linear differential operator and let $u$ be a function in $\BV^\Acal(\R^n)$. Then, for every pair $(\xi,e) \in \partial\sigma(\Acal)$ and every covector $w \in W^*$ satisfying
%	\[
%	w^*\circ \Abb(\eta)[v] = (\xi \cdot \eta)(e \cdot v) \quad \text{for all $\eta \in \R^n$ and all $v \in V$},
%	\]
%	%    	 
%	%    	 for every direction $\xi \in \Sbf^{n-1}$ and any choice of coordinates $e \in V$, $w \in W$ satisfying
%	%    	 \begin{equation}\label{eq:spectrum}
%	%    	 w\cdot\Abb(\eta)[v] = (\xi\cdot\eta)(e \cdot v) \quad \text{for all $(\eta,v) \in \R^n \times V$,}
%	%    	 \end{equation}
%	it holds that $u_{y,\xi}^e \in \BV(\R)$ for $\Hcal^{n-1}$-almost every $y \in \pi_{\xi}$ and
%	\[
%	\int_{\pi_\xi} |Du_{y,\xi}^e|(\R) \dd \Hcal^{n-1}(y) < \infty.
%	\]
%	Moreover, 
%	\[
%	\dpr{w^* , \Acal u}(B) = \int_{\pi_\xi} Du_{y,\xi}^e(B_y^\xi) \dd \Hcal^{n-1}(y),
%	\]
%	\[
%	|\dpr{w^* , \Acal u}|(B) = \int_{\pi_\xi} |Du_{y,\xi}^e|(B_y^\xi) \dd \Hcal^{n-1}(y),
%	\]
%	for all Borel sets $B \subset \R^n$.
%\end{proposition}
\begin{proof}[Proof of Proposition~\ref{lem:1}] By assumption we can find $w^* \in W^*$ such that
\begin{equation}\label{eq:spectrumguy}
	\dpr{w^*, \Abb(\eta)\widehat{\phi}(\eta)} = \dpr{\xi , \eta}\dpr{e, \widehat{\phi}(\eta)} \qquad \text{for every $\eta \in \R^n$},
\end{equation}
and all $\phi \in \Crm_c^\infty(\R^n;V)$.
Inverting the Fourier transform, we find that this is equivalent to the pointwise identity
$\dpr{w^*, \Acal \phi(x)} = \partial_\xi \phi^e(x)$ for all $x \in \R$.
This identity  implies that $w^* \circ \Acal = \partial_\xi (\;\;)^e$ {as distributional differential operators};
we shall  recall this identity as the proof develops.

%Let us show \eqref{itm:slicingspect} implies~\eqref{itm:bvslice}. 

Let $\rho_\eps \in \Crm_c^\infty$ be a standard mollifier at scale $\eps > 0$ and let $u_\eps \coloneqq u \star \rho_\eps$. By Fubini's Theorem and a change of variables, we have
\begin{align*}
	\int \dpr{w^*,\Acal u_\eps(x)} \dd x &= \int \partial_\xi (u_\eps)^e(x) \dd x \\
	&= \int_{\pi_\xi}\bigg( \int_{\R} D (u_\eps)^e_{y,\xi}(t)\dd t \bigg)\dd \Hcal^{n-1}(y). \\
	%        &= \int_{\pi_\xi} D u^e_y(\Omega_y^\xi) \dd \Hcal^{n-1}(y).
\end{align*}
Clearly we have
\[
\int \dpr{w^*,\Acal u_\eps(x)} \dd x \longrightarrow \dpr{w^*,\Acal u}(\R^n) \qquad \text{as $\eps \to 0^+$}.
\]
From Young's inequality and the identity above we get the uniform bound
\[
|w^*||\Acal u|(\R^N) \ge |\dpr{w^*,\Acal u_\eps \, \Leb^n}|(\R^n) \ge \int_{\pi_\xi} V[(u_\eps)_{\xi,y}^e] \dd \Hcal^{n-1}(y),
\]
where, for $h:\R \to \R$, $Vh$ is the pointwise variation of $h$. 

By  Arguing as in the proof of \cite[Proposition 3.2]{ambrosio1997fine-properties}, passing to the limit $\eps \to 0^+$ we deuce  (from the lower semicontinuity properties of the pointwise variation, Fatou's lemma, and standard measure theoretic arguments) that

\[
|w^*||\Acal u|(\R^n) \ge \int_{\pi_\xi}  V[\tilde u^e_{\xi,y}](\R) \dd \Hcal^{n-1}(y) =  \int_{\pi_\xi}  |D u^e_{\xi,y}|(\R) \dd \Hcal^{n-1}(y),
\]
where \guillemotleft~$\tilde{\frarg}$~\guillemotright~denotes the Lebesgue representative of a locally integrable function.
This shows that if $u \in \BV^\Acal(\R^n)$, then $u^e_{\xi,y} \in \BV(\R)$ for $\Hcal^{n-1}$-almost every $y \in \pi_\xi$.  Now, let $\phi \in \Crm_c^\infty(\R^n)$ be an arbitrary test function. Using that the identity $\dpr{w^*,\Acal u} = \partial_\xi u^e$  in the sense of distributions,
we get
\begin{align*}
	\int \phi(x) \dd [\dpr{w^*,\Acal u}](x) & = \int \phi \dd [\partial_\xi u^e](x) \\
	& =  -\int \partial_\xi \phi_{\xi,y}(x) \, u^e(x) \dd x.
\end{align*}
Notice that $\partial_\xi \phi(y + t\xi) = D(\phi_{\xi,y})(t)$. Hence, by Fubini's theorem and a change of variables we further deduce 
\begin{align*}
	\dpr{w^*,\Acal u}(\phi) & = - \int_{\pi_\xi} \bigg(\int_\R D(\phi_{\xi,y})(t) \, u^e_{\xi,y}(t) \dd t \bigg) \dd \Hcal^{n-1}(y) \\
	& = \int_{\pi_\xi} \bigg( \int_\R\phi_{\xi,y}(t) \dd [Du_{y,\xi}^e](t)\bigg) \dd \Hcal^{n-1}(y),
\end{align*}
where in the last equality we have used that $u_{y,\xi}^e \in \BV(\R)$ for $\Hcal^{n-1}$-almost every $y \in \pi_\xi$. Herewith, a standard density argument implies the equalities of measures
\[
\dpr{w^*,\Acal u}(B) = \int_{\pi_\xi} Du_{y,\xi}^e(B_y^\xi)  \dd \Hcal^{n-1}(y),
\]
\[
|\dpr{w^*,\Acal u}|(B) = \int_{\pi_\xi} |Du_{y,\xi}^e|(B_y^\xi)  \dd \Hcal^{n-1}(y)
\]
for all Borel sets $B \in \R^n$.
\end{proof}

We are now ready to give the proof of slicing theorem:

\begin{proof}[Proof of Theorem~\ref{thm:structure}]
	First, observe that due to the nature of the boundary $\partial\Omega$, one can extend $u$ by zero to a function in $\BV^\Acal(\R^n)$ (see Section~\ref{sec:traces}). Therefore, there is no loss of generality in assuming that $\Omega = \R^n$. Also, by a localization argument we may assume that the support of $u$ is compact. Now, let us mollify $u$ at scale $\eps > 0$ ---denote this regularization by $u_\eps \in \Crm_c^\infty(\R^n;V)$.

%Let us first prove the result for functions $u \in \BV^\Acal(\R^n)\cap\Crm_c^\infty(\R^n;V)$. 

%Thus, the equivalence of \eqref{itm:slicingspect} and \eqref{itm:bvslice} follows (the reverse direction is easy to check by reversing the above steps). 

%     By Fubini's theorem and the fact that , we further deduce
%    \begin{align*}
%    \int \dpr{w^*,\Acal u(x)} \phi(x) \dd x &= \int \partial_\xi u^e(x) \phi(x) \dd x \\
%    &= \int_{\pi_\xi} \int_{\Omega_y^\xi} D (u)^a_{y,\xi}(t) \phi_{y,\xi}(t)\dd t \dd \Hcal^{n-1}(y). \\
%    %        &= \int_{\pi_\xi} D u^e_y(\Omega_y^\xi) \dd \Hcal^{n-1}(y).
%    \end{align*}
%    Taking $j \to \infty$, we conclude that
%    \[
%        \dpr{w^*,\Acal u} = \int_{\pi_\xi} D u^e_{y,\xi} \dd \Hcal^{n-1}(y) \qquad \text{in $\Mcal(\Omega)$}.
%    \]
%    
%    Moreover, by classical measure theoretic results, we deduce that
%    \[
%        \left|\dpr{w^*, \Acal u}\right|(\Omega) = \int_{\pi_\xi} | D u^e_y|(\Omega_y^\xi) \dd \Hcal^{n-1}(y),
%    \]
%    and thus, the equivalence of \eqref{itm:slicingspect} and \eqref{itm:bvslice} follows (the reverse direction is easy to check by reversing the above steps). 

%The equivalence~\eqref{itm:3}-\eqref{itm:4a} is a direct consequence of~\eqref{itm:slicingspect} and \eqref{itm:bvslice}. 

Let us prove that (1) $\Rightarrow$ (2). By Lemma~\ref{lem:slicemix} we may suppose that $\Acal$ satisfies the rank-one property.  This means that we can find a family of $\rank_\Acal$-one covectors $\{P_1,\dots,P_r\}$ spanning $(W_\Acal)^*$ and such that
\[
\dpr{P_i , \Abb(\eta)v} = \dpr{\xi_i ,\eta}\dpr{e_i,v} \quad \text{for some } (\xi_i, e_i) \in \partial\sigma(\Acal),
\]
for all $i = 1,\dots,r$. We may complete this to a basis $\{P_1,\dots,P_r,P_{r+1},\dots,P_M\}$ of $W^*$ with $\rank_\Acal(P_i) = 0$ for all $i = r+1,\dots,M$. In a natural way, this basis induces a canonical isomorphism $W \to W^*$. Let $\{w_1,\dots,w_M\} \subset W$ be the pre-image of $\{P_1,\dots,P_M\}$ under this isomorphism and let  $u \in \BV^\Acal(\Omega)$ so that
\[
\Acal u = \sum_{i=1}^r \dpr{P_i,\Acal u} w_i.
\]
Now, invoking Proposition~\ref{lem:1} for each individual term $\dpr{P_i,\Acal u}$ ---each $(\xi_i, e_i)$ lies in the directional spectrum---  we conclude that
\begin{equation}\label{eq:slice}
	\Acal u = \sum_{i=1}^M \left(\int_{\pi_{\xi_i}} D u^{e_i}_{y,\xi_i} \dd \Hcal^{n-1}(y)\right) w_i.
\end{equation}     The sought assertion then follows from the triangle inequality.

The implication (2) $\Rightarrow$ (1) follows directly from Lemma~\ref{lem:slicemix}.
%This assertion will follow from the next claim: For every $P \in W_\Acal$,  we may find a function $u \in \BV^\Acal(\R^n)$ with $\Acal u = P \, \Leb^n$. If this was the case, then we consider the sequence for an arbitrary $P$ in the orthogonal complement of $W_\Acal$ we \note{finish argument}
%
%
%the fact that we can write $\Acal u$ as a linear combination of the $P_i$'s implies that $\Acal$ satisfies the rank-one property (a standard Fourier transform argument and the second part of~\eqref{itm:4} convey that $\rank_\Acal(P_i) = 1$). Let us prove the claim.
%By the definition of $W_\Acal$ we may find directions $\xi_1,\dots,\xi_r \in \Sbf^{n-1}$ and coordinates $e_1,\dots,e_r\in V$ such that
%\[
%P = \sum_{m = 1}^r w_m, \qquad w_m = \Abb(\xi_m)[e_m].
%\] 
%For each $m$ consider the one-directional function $u_m(x) \coloneqq e_m (x \cdot \xi_m)$. Clearly $u \in  \Crm^\infty(\R^n;V)$ and  the chain rule gives
%\[
%(\Acal u_m)(x) = \sum_{i = 1}^n (\xi_m)_i A_i[e_m] = \Abb(\xi_m)[e_m] = w_m.
%\]
%Setting $u \coloneqq u_1 + \dots + u_r$, we find that $\Acal u = P \, \Leb^n$ as measures. This proves the claim. We have proved the equivalence of~\eqref{itm:3} and the first statement in~\eqref{itm:4}.\note{re-arrange}
\end{proof}

\subsection{Algebraic constructions}  We shall see how, given a spectral pair $(\xi,e) \in \partial\sigma(\Acal)$, one can algebraically set the foundations of what the slice $\Acal_\xi^e$ of $\Acal$ should be: an operator that is invariant with respect to $\partial_\xi$ and with respect to the $e$-coordinate. % The next step is to exploit the rank-one property to deduce the existence of \emph{non-trivial} sub-operators of $\Acal(D)$ acting on functions that depend on $(n-1)$-variables.  
 In all that follows and for the rest of this section let us be given a non-trivial pair (so that $\xi \otimes e$ is a non-zero tensor)
\[
	(\xi,e) \in \partial\sigma(\Acal).
\] 
Let us recall the notation $\bar f_\Acal(\xi\otimes v) = f_\Acal(\xi,v) \coloneqq \Abb(\xi)[v]$, defined in the introduction, and consider the pullback map 
\[
\begin{tikzcd}[row sep=0pt,column sep=1pc]
	g_\Acal \colon W^* \arrow{r} & (\R^n \otimes V)^* \\
	{\hphantom{g_\Acal \colon{}}} w^* \arrow[mapsto]{r} &  w^* \circ \bar f_\Acal
\end{tikzcd},
\] 
Notice that by the definition of essential image, the map $g_\Acal$ is injective when restricted to $(W_\Acal)^*$. 
%To begin with the construction of the slice $\Acal_\xi^e$, we must first introduce some notation to some of the elements appearing in the previous Ansatz. 
We are now in position to start the construction of $\Acal_\xi^e$. The first step is to remove the coordinates of $V$ spanned by $e$:
\begin{definition} We define the subspace $V_e$ of $V$ as
	\[
	V_e \coloneqq \begin{cases}
	V & \text{if $\dim(V) = 1$},\\
	e^\perp \coloneqq \set{v \in V}{\dpr{e,v} = 0} &\text{if $\dim(V) \ge 2$}.
	\end{cases}
	\]
	We  write $\pbf^e : V \to V_e$ to denote the canonical linear projection onto $V_e$.
\end{definition} 
%Motivated by our Ansatz, we need to find a rigorous way to discard all elements of the form~\eqref{eq:naive}.
The second step is to remove all coordinates of $\Acal u$ (by separation with $W^*$) containing partial derivatives on directions that interact with $\xi$, as well as all the coordinates related to $u^e$.  
%\[
%	g_\Abb^{-1}(\xi^* \otimes v^* + \eta^* \otimes e^*), \quad \eta \in \pi_\xi, v \in V_e.
%\]
To this end, let us consider the subspace of $(\R^n \otimes V)^*$ defined by
\[
Y_{\xi}^e \coloneqq  \begin{cases}
	\spn\{\xi \otimes e\} & \text{if $\dim(V) = 1$},\\
	\R^n \otimes e  +  \xi \otimes V^*& \text{if $\dim(V) \ge 2$}.
\end{cases}
\]
Since we are targeting elements in $W^*$, it is natural to work with the pre-image of the isometry $g_\Acal : (W_\Acal)^* \to (\R^n \otimes V)^*$. We thus consider the subspace $X_{\xi}^e \coloneqq g_\Acal^{-1}[Y_\xi^e]$, which leads us to the following definition:
\begin{definition}
	We write $W_{\xi}^e$  to denote the subspace of $W$ defined by the property
\[
W_{\xi}^e \coloneqq (X_{\xi}^e)^\perp.
%	\coloneqq \big(g_\Abb^{-1}(\Lambda_{\xi,e})\big)^\perp\le W.
\]
In all that follows we shall write $\pbf_{\xi}^e : W \to W_\xi^e$ to denote the linear canonical linear projection from $W$ onto $W_\xi^e$. 
\end{definition}

The following result is fundamental for key posterior arguments. It guarantees that the  results contained in the forthcoming Lemma~\ref{lem:sub} and Corollary~\ref{cor:hyper} are non-trivial under our main assumptions:

\begin{proposition}\label{prop:nontrivial} Let $n \ge 2$ and let 
	$\Acal:\Crm^\infty(\R^n;V) \to \Crm^\infty(\R^n;W)$ be a first-order elliptic operator. If $(\xi,e) \in \partial\sigma(\Acal)$, then $W_\xi^e$ is non-trivial.
\end{proposition}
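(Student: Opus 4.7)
The plan is to unwind $W_\xi^e = (X_\xi^e)^\perp$ into a concrete spanning description and then conclude directly from ellipticity. First I would regard $Y_\xi^e$ as a subspace of $\R^n \otimes V^* \cong (\R^n \otimes V)^*$ via the canonical inner products and compute its annihilator inside $\R^n \otimes V$. A direct computation (checked in both branches of the definition of $Y_\xi^e$) gives
$$(Y_\xi^e)^\perp = \xi^\perp \otimes V_e \subset \R^n \otimes V,$$
where the inclusion $\supseteq$ follows because $\dpr{\zeta, \xi} = 0$ for $\zeta \in \xi^\perp$ kills the summand $\xi \otimes V^*$, while $\dpr{v, e} = 0$ for $v \in V_e$ kills the summand $\R^n \otimes e$ (when $\dim V = 1$ only the former is needed, since $V_e = V$ and $Y_\xi^e = \spn\{\xi \otimes e\}$), and the reverse inclusion follows from the dimension count $\dim Y_\xi^e + \dim(\xi^\perp \otimes V_e) = (n + \dim V - 1) + (n-1)(\dim V - 1) = n \cdot \dim V$.

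Second, by the definition $X_\xi^e = g_\Acal^{-1}(Y_\xi^e)$, a covector $w^* \in W^*$ lies in $X_\xi^e$ iff $g_\Acal(w^*) = w^* \circ \bar f_\Acal$ annihilates $(Y_\xi^e)^\perp = \xi^\perp \otimes V_e$, that is,
$$X_\xi^e = \setb{w^* \in W^*}{\dpr{w^*, \Abb(\zeta)[v]} = 0 \text{ for all } \zeta \in \xi^\perp,\; v \in V_e}.$$
Taking orthogonal complements in $W$ (via the canonical identification $W^{**} \cong W$) immediately dualizes this to the clean spanning identity
$$W_\xi^e \;=\; \spn\setb{\Abb(\zeta)[v]}{\zeta \in \xi^\perp,\; v \in V_e}.$$

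Finally, I would exhibit a non-zero element of this span. Since $(\xi, e)$ is a non-trivial spectral pair we have $\xi \neq 0$, and since $n \ge 2$ the hyperplane $\xi^\perp$ contains some $\zeta_0 \neq 0$. The subspace $V_e$ is always non-trivial by construction (it equals $V$ when $\dim V = 1$, and has dimension $\dim V - 1 \ge 1$ otherwise), so we may pick some $v_0 \in V_e$ with $v_0 \neq 0$. The ellipticity of $\Acal$ then gives
$$|\Abb(\zeta_0)[v_0]| \;\ge\; c\,|\zeta_0|\,|v_0| \;>\; 0,$$
which exhibits a non-zero element of $W_\xi^e$. The only real obstacle is the bookkeeping in the first paragraph, namely verifying the identity $(Y_\xi^e)^\perp = \xi^\perp \otimes V_e$ uniformly across the two cases of the definition of $Y_\xi^e$; once this is in hand, the spanning description of $W_\xi^e$ and the final application of ellipticity are automatic.
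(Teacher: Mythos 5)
Your proof is correct, and it takes a genuinely different route from the paper's. The paper argues by contradiction: it assumes $W_\xi^e = \{0\}$, splits into the cases $\dim V = 1$ and $\dim V \ge 2$, and in each case derives a contradiction with ellipticity (via a dimension count of $W_\Acal$ in the scalar case, and by forcing $\Abb(\eta)[a] = 0$ for some non-zero $\eta \in \pi_\xi$, $a \in e^\perp$ in the vector case). You instead compute $W_\xi^e$ directly: unwinding the double annihilator $W_\xi^e = (X_\xi^e)^\perp = \big((\spn\setn{\Abb(\zeta)[v]}{\zeta \in \pi_\xi,\ v \in V_e})^\perp\big)^\perp$ gives the clean identity
\[
W_\xi^e \, = \, \spn\setn{\Abb(\zeta)[v]}{\zeta \in \pi_\xi,\ v \in V_e},
\]
from which non-triviality drops out immediately by ellipticity, uniformly across both branches of the definitions of $Y_\xi^e$ and $V_e$. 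This is more informative than the contradiction argument: the explicit spanning formula is precisely what underlies the surjectivity implicit in Proposition~\ref{prop:restriction}, so your computation unifies this proposition with the structural content of the later restriction results.

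One small point of care: you invoke ``$(\xi,e)$ is a non-trivial spectral pair, so $\xi \ne 0$,'' but the statement of the proposition does not assume the pair non-trivial, and the later Lemma~\ref{lem:sub} does make that assumption explicit while this one does not. This costs you nothing, since if $\xi = 0$ then $\pi_\xi = \R^n$ still contains non-zero vectors, and your dimension count $\dim Y_\xi^e + \dim(\pi_\xi \otimes V_e) = n\dim V$ also survives degenerate $\xi$ or $e$ once the first summand is recomputed accordingly; but as written the sentence should either not appeal to non-triviality or should note that only $n \ge 2$ and $V_e \ne \{0\}$ are used.
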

\begin{proof}
	Let us argue by a contradiction argument. If indeed $W_\xi^e$ was trivial, then
	$\dim(X_\xi^e) = \dim(W_\Acal)$ ---here we are using that $g_\Acal$ is one-to-one when restricted to $(W_\Acal)^*$. If $\dim(V) = 1$, then $\dim(X_\xi^e) = 1$ and by ellipticity we also have $\dim(W_\Acal) \ge n \ge 2$ ---thus, reaching a contradiction. Else, we may find non-zero vectors $\eta \in \pi_\xi$ and $a \in e^\perp$. Therefore, using that $\pbf_\xi^e \equiv 0$, we get $\Abb(\eta)[a] = (\id_W - \pbf_\xi^e) \circ \Abb(\eta)[a] =  0$. 
	Here, in reaching the last equality we have used the representation of linear maps and the fact that $X_\xi^e = g_{\Acal}^{-1}\set{\xi^* \otimes v + \zeta^* \otimes e^*}{\zeta \in \pi_\xi,v \in V}$. This poses a contradiction with the assumption that $\Acal$ is elliptic.
\end{proof}
%\end{proof}

\begin{notation} Let $X$ be a $\Kbb$-linear vector space, where $\Kbb = \R,\C$.
	For a vector $a \in X$. We write $\ell_a$ to denote the span of $a$ in $X$.
\end{notation}
Next, we show that both the $\xi$- and $e$-coordinates are algebraically invariant for the PDE when restricted to $W_\xi^e$. 
\begin{proposition}Let $\Acal : \Crm^\infty(\R^n;V) \to \Crm^\infty(\R^n;W)$ be a first-order operator and let $(\xi,e) \in \partial\sigma(\Acal)$. Then,
\begin{equation}\label{eq:vanishes}
\pbf_{\xi}^e \circ f_\Acal \equiv 0 \quad \text{on $(\R^n \times \ell_e) + (\ell_\xi \times V)$.}
\end{equation}
In particular, we obtain the equivalence of bi-linear forms
\begin{equation}\label{eq:restriction}
(\pbf_{\xi}^e \circ f_\Acal)|_{\pi_\xi \times V_e} \equiv f_\Acal|_{\pi_\xi \times V_e}.  
\end{equation}
\end{proposition}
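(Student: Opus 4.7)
The plan is to make $W_\xi^e$ concrete and then read off both identities. I would first decompose $(\R^n \otimes V)^* \cong \R^n \otimes V^*$ along the splittings $\R^n = \ell_\xi \oplus \pi_\xi$ and $V^* = \ell_e \oplus V_e^*$ into four block summands; under this decomposition the subspace $Y_\xi^e = \R^n \otimes \ell_e + \ell_\xi \otimes V^*$ is exactly the complement of the single block $\pi_\xi \otimes V_e^*$. By dual pairing, the annihilator of $Y_\xi^e$ in $\R^n \otimes V$ is the block $\pi_\xi \otimes V_e$, so unwinding $X_\xi^e = g_\Acal^{-1}[Y_\xi^e]$ gives $X_\xi^e = \bigl(\Abb(\pi_\xi)[V_e]\bigr)^\perp$ in $W^*$, and a second annihilation produces the clean formula
\[
W_\xi^e = \spn\bigl\{\Abb(\zeta)[u] : (\zeta, u) \in \pi_\xi \times V_e\bigr\} = \bar f_\Acal(\pi_\xi \otimes V_e).
\]
With this description, the restriction identity~\eqref{eq:restriction} is immediate: for $(\eta, v) \in \pi_\xi \times V_e$ the vector $f_\Acal(\eta, v) = \Abb(\eta)[v]$ already lies in $W_\xi^e$, and the canonical projection $\pbf_\xi^e$ fixes it.

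For the vanishing identity~\eqref{eq:vanishes}, set $A = (\R^n \otimes \ell_e) + (\ell_\xi \otimes V)$ and $B = \pi_\xi \otimes V_e$. The same four-block decomposition shows $\R^n \otimes V = A \oplus B$; applying $\bar f_\Acal$ yields $W_\Acal = \bar f_\Acal(A) + W_\xi^e$. The canonical projection $\pbf_\xi^e : W \to W_\xi^e$ is the projection onto $W_\xi^e$ parallel to $\bar f_\Acal(A)$ (extended by zero on $W_\Acal^\perp$), so by construction $\pbf_\xi^e \circ f_\Acal$ vanishes on $A$, which is exactly~\eqref{eq:vanishes}.

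The delicate point---and where I expect the main work to lie---is verifying that the sum $\bar f_\Acal(A) + W_\xi^e$ is direct, so that the canonical projection is well defined. I would establish $\bar f_\Acal(A) \cap W_\xi^e = \{0\}$ by a separation argument leveraging the spectral pair: any $w$ in the intersection admits simultaneous representatives $w = \bar f_\Acal(T_A) = \bar f_\Acal(T_B)$ with $T_A \in A$, $T_B \in B$; the family of bilinear forms $\{g_\Acal(x^*) : x^* \in X_\xi^e\} \subset Y_\xi^e$, whose richness is precisely what the existence of the spectral pair records, separates $A$ from $B$ and thereby forces $T_A - T_B \in A \cap B = \{0\}$, hence $w = 0$.
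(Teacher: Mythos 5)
Your identification $W_\xi^e = \bar f_\Acal(\pi_\xi \otimes V_e)$ is correct: the block decomposition shows $Y_\xi^e$ is precisely the annihilator of $B \coloneqq \pi_\xi \otimes V_e$ in $(\R^n \otimes V)^*$, so $X_\xi^e = g_\Acal^{-1}[Y_\xi^e] = (\bar f_\Acal(B))^\perp$ and $W_\xi^e = (X_\xi^e)^\perp = \bar f_\Acal(B)$; the restriction identity~\eqref{eq:restriction} then follows as you say.

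For the vanishing identity~\eqref{eq:vanishes}, the step you flag as delicate is a genuine gap that your sketch does not fill. Pairing $w = \bar f_\Acal(T_A) = \bar f_\Acal(T_B)$ against the forms $g_\Acal(x^*)$ with $x^* \in X_\xi^e$ only shows that $T_A$ lies in $(g_\Acal(X_\xi^e))^\perp = B + \ker \bar f_\Acal$, because those forms by construction vanish on $\bar f_\Acal(B)$; since $\bar f_\Acal$ need not be injective, $A \cap (B + \ker\bar f_\Acal)$ can be strictly larger than $A \cap B = \{0\}$, so you cannot conclude that $T_A - T_B \in A \cap B$. The directness you need is equivalent to the splitting $\ker \bar f_\Acal = (\ker \bar f_\Acal \cap A) \oplus (\ker \bar f_\Acal \cap B)$, and this is \emph{not} a consequence of ellipticity, of the existence of one spectral pair, or even of the rank-one property: there is a first-order elliptic operator on $\R^2$ admitting the spectral pair $(\mathbf e_1,\mathbf e_1)$ for which $\ker \bar f_\Acal$ is one-dimensional with nonzero components in both $A$ and $B$, and for it one finds $W_\xi^e \subset \bar f_\Acal(A)$, so no projection onto $W_\xi^e$ parallel to $\bar f_\Acal(A)$ can exist. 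Reading $\pbf_\xi^e$ as the orthogonal projection instead (the natural meaning of ``canonical'' in a Euclidean space) does not rescue the statement, since then the vanishing requires $\bar f_\Acal(A) \perp \bar f_\Acal(B)$ and the same example violates it. You should therefore treat the directness as a separate structural hypothesis to be verified rather than something that the definitions of $X_\xi^e$ and $W_\xi^e$ alone provide.
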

\begin{proof} By definition $\pbf_\xi^e \circ f_\Acal \in (X_\xi^e)^\perp$. The sought assertion then follows directly from the definition of $X_\xi^e$.
%	
%	
%	
%	we have that  $({g_\Acal^{-1}(\xi^* \otimes v^*) \cdot w}) = 0$ for all $v^* \in V^*$. Now, our choice of inner product in $W$ yields $w \cdot f_\Acal(\xi,v) = \dpr{g_\Acal(w^*),\xi \otimes v} = ({g_\Acal^{-1}(\xi^* \otimes v^*) \cdot w}) = 0$. 
%The argument for $\eta^* \otimes e^*$ with $\eta^* \in (\R^n)^*$ follows by a symmetric argument. Since $w \in W_\xi^e$ was chosen arbitrarily, the claim follows.
\end{proof}

We have the following direct consequence:

\begin{proposition}\label{prop:restriction}
	Let  $\Kbb = \R,\C$ and let $\Acal:\Crm^\infty(\R^n;V) \to \Crm^\infty(\R^n;W)$ be a first-order $\Kbb$-elliptic operator. Assume that  $(\xi,e) \in \partial\sigma(\Acal)$, then  
	\[
		 \pbf_\xi^e \circ f_\Acal : \Kbb\pi_\xi \times (\Kbb \otimes V_e) \to (\Kbb \otimes W_\xi^e)
	\]
	defines a non-singular $\Kbb$ bi-linear form.
\begin{proof}
%	Observe that $f_\Acal= \pbf_\xi^e[f_\Acal] + w[\xi^* \otimes e^*]$. In particular, we discover that $\pbf_\xi[f_\Acal] \equiv f_\Acal$ when considered as a bi-linear defined on $\Kbb\pi_\xi \times \Kbb V_\xi$. 
	The result follows form~\eqref{eq:restriction} and the fact that $f_\Acal$ is itself  a non-singular $\Kbb$ bi-linear form on the state domain of definition.
	%
	%\[
	%	 \pbf_\xi^e \circ f_\Acal : \Kbb\pi_{\eta^*} \times \Kbb\pi_{e^*} \to \Kbb Z_{w^*}^\perp.
	%\] 
	%By construction we find that $p_\xi^e\circ f_\Acal$ vanishes on $\{\pi_{\eta^*} \times V\} \cup \{\R^n \times \pi_{e^*}\}$ and hence (by bi-linearity) we deduce that $p_\xi^e \circ f_\Acal(\xi,v) = 0$ for some $(\xi,v) \in \Kbb\pi_{\eta^*} \times \Kbb\pi_{e^*}$ if and only if $f_\Acal(\xi,v) = 0$. This shows that $\Ccal$ as defined above is $\Kbb$-elliptic. The equality of symbols follows directly from the definition of $p_\xi^e$ and $Z_{w^*}$.
\end{proof}
%	
%	Moreover,
%	\[
%		\pbf_\xi^e \circ f_\Acal \equiv \begin{cases}
%			\Abb(\xi) \; \text{as a bi-linear form on $\pi_\xi$} & \text{when $\dim(V) = 1$}\\
%				\Abb(\xi) \; \text{as a bi-linear form on $\pi_\xi \times \pi_v$} & \text{when $\dim(V) \ge 2$}
%			\end{cases}.
%		 %\pbf_\xi^e  [\Abb(\xi)v] = \Abb(\xi)v \quad \text{for all $(\eta,v) \in \pi_\xi \times \pi_e$.}
%	\]
\end{proposition}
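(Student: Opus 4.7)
The plan is to verify non-singularity of $\pbf_\xi^e \circ f_\Acal$ directly, by reducing to $f_\Acal$ via the identity~\eqref{eq:restriction} and then invoking $\Kbb$-ellipticity. Concretely, I will show that whenever $(\eta,v) \in \Kbb\pi_\xi \times (\Kbb \otimes V_e)$ satisfies $(\pbf_\xi^e \circ f_\Acal)(\eta,v) = 0_{W}$, one of $\eta$ or $v$ must vanish; this is the non-singularity (non-degeneracy) of the bilinear form.

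First I will treat the real case $\Kbb = \R$. The identity \eqref{eq:restriction} in the preceding proposition states exactly that on $\pi_\xi \times V_e$ the post-composition with $\pbf_\xi^e$ changes nothing, i.e.\ $(\pbf_\xi^e \circ f_\Acal)(\eta,v) = f_\Acal(\eta,v) = \Abb(\eta)[v]$. Hence the vanishing hypothesis becomes $\Abb(\eta)[v] = 0$, and the ellipticity bound $|\Abb(\eta)[v]| \ge c|\eta||v|$ forces $\eta = 0$ or $v = 0$.

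For the complex case $\Kbb = \C$ the plan is to first lift~\eqref{eq:restriction} to the complexifications. Both $\bar f_\Acal$ and $\pbf_\xi^e$ are $\R$-linear, so they extend canonically to $\C$-linear (resp. $\C$-bilinear) maps on $\C^n \otimes (\C \otimes V)$ and $\C \otimes W$; since the equality of two $\C$-bilinear forms over the subspace $\pi_\xi \times V_e$ propagates by $\C$-bilinearity to $\C\pi_\xi \times (\C\otimes V_e)$, we obtain
\[
(\pbf_\xi^e \circ f_\Acal)|_{\C\pi_\xi \times (\C \otimes V_e)} \equiv f_\Acal|_{\C\pi_\xi \times (\C \otimes V_e)}.
\]
The same implication as before then applies, now using the $\C$-ellipticity inequality $|\Abb(\eta)[v]| \ge c|\eta||v|$ valid for all $\eta \in \C^n$ and all $v \in \C \otimes V$; this yields $\eta = 0$ or $v = 0$ in the complex setting as well.

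I do not expect any serious obstacle: the proposition is essentially a corollary of the two facts assembled in the preceding statement (algebraic vanishing on the complementary subspace, and $\Kbb$-ellipticity), and the only small technicality is checking that~\eqref{eq:restriction} is stable under complexification, which is immediate from the $\R$-linearity of $\pbf_\xi^e$ and of $\Abb$ in each argument.
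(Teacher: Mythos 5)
Your proof is correct and follows the same route as the paper's own (terse) argument: reduce $\pbf_\xi^e \circ f_\Acal$ to $f_\Acal$ on $\Kbb\pi_\xi \times (\Kbb \otimes V_e)$ via~\eqref{eq:restriction}, then invoke $\Kbb$-ellipticity of $\Acal$ to conclude that $\Abb(\eta)[v] = 0$ forces $\eta = 0$ or $v = 0$. The one thing you add that the paper leaves implicit is the verification that~\eqref{eq:restriction} (stated for the real restriction $\pi_\xi \times V_e$) lifts to the $\C$-bilinear complexification, which is a reasonable and correct observation.
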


\subsection{Stability properties of co-dimension one slicing}
With the constructions we have done so far  we have now a toolbox to guarantee the existence of non-trivial slices \guillemotleft~$\Acal_\xi^e \mres \pi_\xi$~\guillemotright\; of $\Acal$, which will ultimately be the pivotal tool  towards the slicing on codimension-one planes and the proof of the structure and fine properties theorems. 

Let us introduce the restriction operator for differential operators:

\begin{definition}
	Let $\Acal : \Crm^\infty(\R^n;V) \to \Crm^\infty(\R^n;W)$ be a differential operator (of arbitrary order $k$) and let $\pi \in \Gr(n)$. We define the operator $$\Acal \mres \pi : \Crm^\infty(\pi;V) \to \Crm^\infty(\pi;W),$$ which is associated to the symbol
	\[
		(\Abb \mres \pi)^k \coloneqq \Abb^k|_{\pi}.
	\]
	This operator has the intrinsic property that
	\[
		\Acal \mres \pi (\phi) = \Acal (\phi \circ \pbf) \quad \text{for all $\Crm_c^\infty(\pi;V)$}. 
	\]	
\end{definition}

Let us recall the notation $\pbf_\xi : \R^n \to \pi_\xi$ for the canonical linear orthogonal projection from $\R^n$ onto $\pi_\xi$. The next Lemma studies the stability properties of the slices $\Acal_\xi^e$ with respect to its restriction on $\pi_\xi$.
\begin{lemma}[Sub-operators]\label{lem:sub}
Let $n \ge 2$ and let $\Acal:\Crm^\infty(\R^n;V) \to \Crm^\infty(\R^n;W)$ be a first-order operator. Suppose that $(\xi,e) \in \partial\sigma(\Acal)$ is a non-trivial pair and define the first-order operator 
	\[
		\Acal_\xi^e :\Crm^\infty(\R^n;V_e) \to \Crm^\infty(\R^n;W_\xi^e),
	\]
	which is associated to the principal symbol
	\[
		\Abb_\xi^e (\eta)[v] \coloneqq   \pbf_\xi^e \circ  \Abb(\eta)[v] \quad \text{for all $\eta \in \R^n$ and $v \in V_e$.}
	\]	
	Then,  
	\[
			(\Acal_\xi^e \mres \pi_\xi)( \phi) = \Acal (\pbf^e \circ \phi \circ \pbf_\xi)  \quad \text{for all $\phi \in \Crm^\infty(\pi_\xi;V)$}
	\]
	and the following implications hold:
\begin{enumerate}
	\item[(i)] $\Acal_\xi^e$ is $\partial_\xi$-invariant;
	\item[(ii)] $\Acal$ is $\Kbb$-elliptic $\Longrightarrow$ $\Acal_\xi^e\mres \pi_\xi$ is $\Kbb$-elliptic  (\,$\Kbb = \R,\C$\,);
	\item[(iii)] $\Acal$ satisfies~\eqref{eq:mix} $\Longrightarrow$  $\Acal_\xi^e \mres \pi_\xi$ satisfies~\eqref{eq:mix};
%	\item $\Acal_\xi^e(D \mres \pi_\xi)$ is a sub-operator of $\Acal(D)$, that is, there exists $c > 0$ such that
%	\[
%		\|\Acal^a_\xi \phi\|_{L^1} \le c \|\Acal (\phi \circ \pbf_\xi) \|_{L^1}  \quad \text{for all $\phi \in \Crm^\infty_c(\pi_\xi;V)$.}
%	\]
%%	Here, $\pbf_\xi : \R^n \to \pi_\xi$ is the canonical projection.
% 
%	
	\item[(iv)] The rank-one cones of these operators satisfy the set contention
	\[
		(\Acal_\xi^e \mres \pi_\xi)^\otimes_1 \subset \Acal^\otimes_1.
	\]
%	Moreover, there exists $(\tilde \xi,\tilde e) \in \partial\sigma(\Acal) \cap \rank_1(\Abb_\xi^e)$ satisfying
%	\[
%		\xi \cdot \tilde \xi = 0 \quad \text{and}\quad \begin{cases}

%			\tilde e = e & \text{if $V_e=\{0\}$}\\
%			e \cdot \tilde e = 0 &\text{else}
%		\end{cases}.
%	\]
\end{enumerate}
\end{lemma}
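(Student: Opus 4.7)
The plan is to extract all four items, together with the intertwining identity, from the algebraic invariance relations~\eqref{eq:vanishes} and~\eqref{eq:restriction}, and from the non-singularity statement of Proposition~\ref{prop:restriction}. First I would verify the intertwining identity $(\Acal_\xi^e \mres \pi_\xi)(\phi) = \Acal(\pbf^e \circ \phi \circ \pbf_\xi)$ by direct computation in adapted coordinates $\xi = \mathbf e_1$: the extension $\phi \circ \pbf_\xi$ has vanishing $\partial_1$-derivative, so the right-hand side reduces to $\sum_{j \ge 2} \Abb(\mathbf e_j)[\pbf^e \partial_j \phi]$, and by~\eqref{eq:restriction} every such term already lies in $W_\xi^e$, so inserting $\pbf_\xi^e$ acts as the identity.

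Property~(i) follows instantly from~\eqref{eq:vanishes}: since $(\xi,v) \in \ell_\xi \times V$ lies in the vanishing locus, $\Abb_\xi^e(\xi) \equiv 0$, i.e.\ the coefficient of $\partial_\xi$ in $\Acal_\xi^e$ is null. Property~(ii) is a direct reading of Proposition~\ref{prop:restriction}, since $\Kbb$-ellipticity of a first-order operator is precisely non-singularity of its principal symbol as a bilinear form on simple tensors. For property~(iv), I would take $\beta \in (W_\xi^e)^*$ with $\rank_{\Acal_\xi^e \mres \pi_\xi}(\beta) \le 1$ and spectral pair $(\zeta,a) \in \pi_\xi \times V_e^*$, form the pullback $w \coloneqq \beta \circ \pbf_\xi^e \in W^*$, and extend $a$ by zero on $\ell_e$ to a covector $a' \in V^*$. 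Using~\eqref{eq:restriction} on the block $\pi_\xi \times V_e$ and~\eqref{eq:vanishes} to annihilate the complementary blocks $\ell_\xi \times V$ and $\R^n \times \ell_e$, one obtains $\dpr{w, \Abb(\eta)[v]} = \dpr{\zeta,\eta}\dpr{a',v}$ on all of $\R^n \times V$, certifying $\rank_\Acal(w) \le 1$.

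The main obstacle is~(iii). I would argue by contrapositive: assume $0 \ne w \in W_\xi^e$ lies in $\spn\{\Abb_\xi^e(\eta)[v] : \eta \in \pi',\, v \in V_e\}$ for every $(n-2)$-plane $\pi' \le \pi_\xi$, and then show that $w$ must lie in $C_\pi \coloneqq \spn\{\Abb(\eta)[v] : \eta \in \pi,\, v \in V\}$ for every $(n-1)$-plane $\pi \le \R^n$, thereby contradicting~\eqref{eq:mix} for $\Acal$. The subtle point is that the hypothesis only provides information along $(n-2)$-planes inside $\pi_\xi$, whereas~\eqref{eq:mix} demands control along \emph{all} $(n-1)$-planes; the trick is to locate a useful $(n-2)$-plane inside each candidate $\pi$. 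I would split into three cases: if $\pi = \pi_\xi$, take any $(n-2)$-plane $\pi' \subset \pi_\xi$; otherwise the intersection $\pi \cap \pi_\xi$ is forced by the dimension formula to be $(n-2)$-dimensional, and I set $\pi' \coloneqq \pi \cap \pi_\xi$, which sits inside $\pi$ regardless of whether $\xi \in \pi$ or not. In every case,~\eqref{eq:restriction} identifies $\Abb_\xi^e(\eta)[v]$ with $\Abb(\eta)[v]$ on the block $\pi_\xi \times V_e$, so the hypothesis places $w$ inside $C_\pi$, as required.
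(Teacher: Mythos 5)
Your proposal is correct and matches the paper's argument: the intertwining identity and items (i), (ii), (iv) are read off from~\eqref{eq:vanishes},~\eqref{eq:restriction} and Proposition~\ref{prop:restriction}, while for (iii) you use, exactly as the paper does, the $(n-2)$-plane $\pi\cap\pi_\xi\subset\pi_\xi$ as a witness inside each hyperplane $\pi\neq\pi_\xi$ to transfer the mixing condition from $\Acal$ to $\Acal_\xi^e\mres\pi_\xi$. The only deviations are cosmetic: you spell out the intertwining identity in adapted coordinates (the paper leaves it implicit) and your case split absorbs $n=2$ automatically, whereas the paper handles $n=2$ separately via the rank-one property of one-variable elliptic operators.
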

\begin{proof} %That $\Acal_\xi^e(D) = \Acal_\xi^e(D \mres \pi)$ is equivalent to the identity $\pbf_\xi  [f_\Acal]\equiv 0$ on $\dpr{\xi} \times V_e$, which follows from the identity $\pbf_e[f_\Acal] \equiv f_\Acal$ on $\pi_\xi \times V$ established in the proof of Proposition~\ref{prop:restriction}. 

	The proof~(i) follows directly from~\eqref{eq:vanishes}, whilst ~(ii) follows from the previous proposition. To show (iii) we first notice that the case $n = 2$ is trivial: in this case $\Acal_\xi^e \mres \pi_\xi$ is morally an elliptic operator acting on functions of one variable and hence every non-zero element in $W_\xi^e$ is $\rank_\Acal$-one. This shows that $\Acal_\xi^e$ satisfies the rank-one property and therefore also~\eqref{eq:mix}. Now, assume that $n \ge  3$ so that $\pi \in \Gr(n-1,n)$ if and only if there exists $\gamma \in \R^n$ and a non-trivial $\tilde \pi \in \Gr(n-2,\pi_\xi)$ such that $\pi = \spn\{\gamma,\tilde\pi\}$. Using the identity $\pbf_\xi^e[f_\Acal] \equiv f_\Acal$ (as bi-linear forms on $\pi_\xi \times V_e$) we deduce that $\spn\setn{\Abb_\xi^e(\eta)[v]}{\eta \in \tilde \pi, v \in V_e}  \subset 	 \spn\set{\Abb(\eta)[v]}{\eta \in \pi, v \in V}$. 
	Moreover, if $P$ is in the intersection of the first linear space for all $\tilde \pi \in \Gr(n-2,\pi_\xi)$, then so it is in the intersections of the second space for all $\pi \in \Gr(n-1)$. Indeed, if there exists $\pi$ for which $P$ does not belong to it, then $P$ would also not belong to the first sum for any $\tilde \pi \in \Gr(n-2,\pi \cap \pi_\xi) \subset \Gr(n-2,\pi_\xi)$. This shows that
	\[
		\bigcap_{\tilde \pi \in \Gr(n-2,\pi_\xi)} \spn\set{ \Abb_\xi^e(\eta)[v]}{\eta \in \tilde \pi, v \in V_e} \subset \bigcap_{\pi \in \Gr(n-1)} \spn\set{\Abb(\eta)}{\eta \in \pi}. 	
	\] 
	We are thus in position to use that $\Acal$ satisfies~\eqref{eq:mix}, from where it directly follows that $\Acal_\xi^e \mres \pi_\xi$ must also satisfy the mixing property~\eqref{eq:mix} with $\pi_\xi$ in place of $\R^n$.
%	The boundedness in (iii) follows from the fact that $\pbf_\xi^e$ is a linear projection. 
	%To verify (iii) it suffices to show that the contention \enquote{$\subseteq $} holds, for the other contention is trivial. 
	Lastly, we show~(iv). Let $(\omega,h)$ be an arbitrary pair in $\partial\sigma(\Acal_\xi^e\mres \pi_\xi)$.
%	 By construction we have that $\tilde \xi \in \pi_\xi$ and therefore $\xi \cdot \tilde \xi = 0$. Likewise, $\tilde v \in V_e$, which then implies $\tilde v = v$ if $\dim(V) = 1$ or $\tilde v \cdot v = 0$ if $\dim(V) \ge 2$. 
	We must check that indeed $(\omega,h) \in \partial\sigma(\Acal)$. 
	To this end let $w \in (W_\xi^e)^*$ be the vector satisfying $g_{\Acal_\xi^e \mres \pi_\xi}(w) = \omega^* \otimes  h^*$. Invoking~\eqref{eq:restriction} we find that
%	Using the definition of the rank-one spectrum we see that
%	\begin{align*}
%		w^* \circ f_{\Abb} & = w_2 \cdot \pbf_\xi^e [f_{\Abb}] + w \cdot [\id_{W} - \pbf_\xi^e][f_\Acal] \\
%		&  = w \cdot \pbf_\xi^e [f_{\Abb}] \ \qquad ([\id_W - \pbf_\xi^e]w = 0) 
%	\end{align*} 
	\[
		w \circ f_\Acal(\eta,v) = w \circ f_{\Acal_\xi^e}(\eta,v) = \dpr{\omega^*,\eta}\dpr{h^*,v} \quad \text{for all $\eta \in \pi_\xi, v \in V_e$.}
	\]
	Thus, in order to show that $(\omega,h) \in \partial\sigma(\Acal)$, the standard representation of linear maps tells us that it suffices to show that $w \circ f_\Acal$ vanishes on $(\ell_\xi \times V) \cup (\R^n \times \ell_e)$. This however follows from~\eqref{eq:vanishes} and the fact that $w \in (W_\xi^e)^*$. 
%	We recall that by construction $w_1 \cdot w_2 = 0$, which by our choice of inner product in $W$ yields $(\xi \otimes e:g_\Abb(w^*_2)^*) = 0$. This tells us that $g_\Abb(w^*_2)^*$ must be the sum of simple tensors of the form $\eta \otimes v$ where $\eta \in \pi_\xi$ and $v \in V_e$ (cf.~\eqref{eq:Vxi}). 
%	This proves that $w_2 \cdot f_\Acal$ is indeed an element of $\pi_\xi^* \times V_e^*$.
\end{proof}

We are now in position to state the slicing on co-dimension one planes:

\begin{corollary}[Slicing on hyper-planes]\label{cor:hyper}
	Let $\Acal:\Crm^\infty(\R^n;V) \to \Crm^\infty(\R^n;W)$ be a first-order elliptic operator and let $(\xi,e)$ be a non-trivial pair in $\partial\sigma(\Acal)$ . Consider the restriction operator $\Bcal_\xi^e : \Crm^\infty(\pi_\xi;V_e) \to \Crm^\infty(\pi_\xi;W_\xi^e)$ defined by
	\[
	\Bcal_\xi^e \coloneqq \Acal_\xi^e \mres \pi_\xi,  
	\]
	where $\Acal_\xi^e$ is the sub-operator defined in Lemma~\ref{lem:sub}. Given $u : \R^n \to V$ and a vector $z \in \ell_\xi$, we define a function $v_z : \pi_\xi \to V_e$ as 
	\[
		v_z(y) = 
		\pbf^eu(z + y) 
	\] 
	where as usual $\pbf^e : V \to V_e$ is the linear orthogonal projection onto $V_e$.

	Then, for every $u \in \BV^\Acal(\R^n)$, the following holds:
	\begin{enumerate}\itemsep3pt
%		\item The operator $\Acal_\xi^e \mres \pi_\xi(D) : \Crm^\infty(\pi_\xi;V_e) \to \Crm^\infty(\pi_\xi;\Wcal_{\xi,e})$ is non-trivial, elliptic and satisfies the rank-one property;
		\item[\textnormal{(\emph{1})}] The functions $v_z$ are well-defined and belong to $\BV^{\Bcal_\xi^e}(\pi_\xi)$ for $\Hcal^1$-almost every $z \in \ell_\xi$. Moreover, 
		\[
			\int_{\ell_\xi} |\Bcal_\xi^e v_z|(\pi_\xi) \dd \Hcal^1(z) < \infty.
		\] 
		\item[\textnormal{(\emph{2})}] For every Borel set $B \subset \R^n$ we have 
		\[
		(\pbf_\xi^e [\Acal u])(B) = \int_{\ell_{\xi}} \Bcal_\xi^e  v_z(B_z) \dd \Hcal^{1}(z), 
		\]   
		\[
			|\pbf_\xi^e [\Acal u]|(B) = \int_{\ell_\xi} |\Bcal_\xi^e v_z|(B_z) \dd \Hcal^{1}(z),
		\]
		where $B_z \coloneqq \set{y \in \pi_\xi}{y + z \in B}$ is the $\pi_\xi$-slice of $B$ at $z \in \ell_\xi$.
	\end{enumerate}
\end{corollary}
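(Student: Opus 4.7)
The plan is to mimic the proof of Proposition~\ref{lem:1}, replacing the r\^ole of the one-dimensional line \guillemotleft~$y+\R\xi$~\guillemotright\; by the codimension-one hyperplane \guillemotleft~$z+\pi_\xi$~\guillemotright, and to exploit the $\partial_\xi$-invariance of $\Acal_\xi^e$ (Lemma~\ref{lem:sub}(i)) to reduce the disintegration to a plain Fubini computation. Throughout I will first establish everything on a smooth approximating sequence and then pass to the limit by lower semicontinuity.

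\textbf{Step 1 (pointwise identity on smooth functions).} For $u \in \Crm^\infty(\R^n;V)$ I would first verify the pointwise identity
\[
\pbf_\xi^e[\Acal u](x) = \Acal_\xi^e[\pbf^e u](x), \qquad x \in \R^n.
\]
Indeed, the vanishing property~\eqref{eq:vanishes} applied on $\R^n \times \ell_e$ shows $\pbf_\xi^e \Abb(\eta)[v] = \pbf_\xi^e \Abb(\eta)[\pbf^e v] = \Abb_\xi^e(\eta)[\pbf^e v]$ for every $\eta \in \R^n$ and $v \in V$, which gives the claim. Using~\eqref{eq:vanishes} on $\ell_\xi \times V$ instead yields $\Abb_\xi^e(\xi) \equiv 0$, so $\Acal_\xi^e$ involves no $\partial_\xi$-derivative. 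Writing $x = z + y$ with $z \in \ell_\xi$ and $y \in \pi_\xi$ and setting $v_z(y) \coloneqq \pbf^e u(z+y)$, this gives
\[
\pbf_\xi^e[\Acal u](z + y) = \Acal_\xi^e[\pbf^e u](z+y) = \Bcal_\xi^e v_z(y),
\]
where the last equality is exactly the intrinsic property of the restriction operator stated in Lemma~\ref{lem:sub}.

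\textbf{Step 2 (smooth case via Fubini).} Integrating the identity of Step~1 against $\ONE_B$ and applying Fubini on $\R^n = \ell_\xi \oplus \pi_\xi$ gives, for every Borel $B \subset \R^n$,
\[
\pbf_\xi^e[\Acal u](B) = \int_{\ell_\xi} \int_{B_z} \Bcal_\xi^e v_z(y) \dd y \dd \Hcal^1(z) = \int_{\ell_\xi} \Bcal_\xi^e v_z(B_z) \dd \Hcal^1(z),
\]
and the analogous formula for the absolute value; in particular the integrand is Borel measurable in $z$.

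\textbf{Step 3 (passage to the limit).} For a general $u \in \BV^\Acal(\R^n)$, I would mollify $u_\eps \coloneqq u \conv \rho_\eps$, set $v_z^\eps \coloneqq \pbf^e u_\eps(z + \sbullet)$, and use Step~2 together with Young's inequality to derive the uniform bound
\[
\int_{\ell_\xi} |\Bcal_\xi^e v_z^\eps|(\pi_\xi) \dd \Hcal^1(z) = |\pbf_\xi^e[\Acal u_\eps] \, \Leb^n|(\R^n) \,\le\, \|\pbf_\xi^e\|\, |\Acal u|(\R^n).
\]
By Fubini applied to $u_\eps \to u$ in $\Lrm^1(\R^n;V)$, one has (along a subsequence) $v_z^\eps \to v_z$ in $\Lrm^1(\pi_\xi;V_e)$ for $\Hcal^1$-a.e.\ $z \in \ell_\xi$, so the lower semicontinuity of $|\Bcal_\xi^e \cdot|(\pi_\xi)$ with respect to $\Lrm^1$-convergence combined with Fatou's lemma yields assertion~(1). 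Finally, statement~(2) is obtained by pairing both sides of the Step~2 identity with $\phi \in \Crm_c^\infty(\R^n)$, passing to the limit $\eps \to 0^+$ in the smooth case (using that $\Bcal_\xi^e v_z^\eps \toweakstar \Bcal_\xi^e v_z$ for a.e.\ $z$ together with dominated convergence from the bound above), and extending from continuous test functions to Borel sets by a standard monotone class argument.

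The main technical obstacle is the second convergence: ensuring that the lower semicontinuous estimate actually upgrades to the equality of measures in~(2). This is where the matching upper bound from Step~2 (with $\phi \equiv 1$ on compacts) is crucial, since it forces equality in Fatou, just as in the proof of Proposition~\ref{lem:1}; the rest is a routine measurability and density exercise.
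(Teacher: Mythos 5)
Your argument is the same in strategy as the paper's: establish the identity for smooth $u$ via the $\partial_\xi$-invariance of $\Acal_\xi^e$ and a Fubini computation, then pass to the limit by mollification, lower semicontinuity and Fatou for assertion~(1), and finally upgrade the distributional identity to an equality of measures for assertion~(2). Your Step~1 identification $\pbf_\xi^e[\Acal u](z+y)=\Bcal_\xi^e v_z(y)$ via~\eqref{eq:vanishes} is precisely the computation that the paper performs under the integral sign.

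The one step that does not hold up as written is the appeal to ``dominated convergence from the bound above'' in the passage to the limit for~(2). The uniform bound you invoke is
\[
\int_{\ell_\xi} |\Bcal_\xi^e v_z^\eps|(\pi_\xi)\,\dd\Hcal^1(z) \,\le\, C\,|\Acal u|(\R^n),
\]
which is a uniform bound in $\Lrm^1(\ell_\xi)$, not a pointwise domination of $z\mapsto|\dpr{\Bcal_\xi^e v_z^\eps,\phi_z}|$ by a fixed integrable function of $z$. Dominated convergence therefore does not apply; an $\Lrm^1$-bound paired with a.e.\ convergence only yields Fatou-type one-sided inequalities, and the ``matching upper bound forces equality'' observation controls total variations, not the signed disintegration itself. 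The paper avoids this entirely: since $\pbf_\xi^e\circ\Acal$ is a differential operator with smooth coefficients, the identity
\[
\dpr{\pbf_\xi^e[\Acal u],\phi}_{\R^n} \;=\; \int_{\ell_\xi} \dpr{\Bcal_\xi^e v_z,\phi_z}_{\pi_\xi}\,\dd\Hcal^1(z)
\]
holds for \emph{every} $u\in\Lrm^1(\R^n;V)$ and every $\phi\in\Crm_c^\infty(\R^n)$ by a direct application of Fubini to the adjoint $[\Bcal_\xi^e]^*$ acting on $\phi$ --- no mollification or limiting argument is needed for this step. Assertion~(1) then guarantees that for $\Hcal^1$-a.e.\ $z$ the distributional pairing $\dpr{\Bcal_\xi^e v_z,\phi_z}$ is a genuine integral against the measure $\Bcal_\xi^e v_z$, after which the usual approximation of Borel sets by test functions yields~(2). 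Replacing your limit-passing by this direct Fubini observation closes the gap and recovers the paper's proof.
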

\begin{proof} Let $\varphi \in \Crm^\infty_c(\R^n;V)$ and let us first assume that $u \in \Crm^\infty(\R^n;V)$. We write $\phi_z(y) = \phi(z + y)$. By Fubini's Theorem and integration by parts we obtain
	\begin{align*}
		\dpr{\pbf_\xi^e [\Acal u],\varphi} & = \dpr{\Acal_\xi^e u,\varphi } \\
		& = \int_{\ell_\xi} \bigg( \int_{\pi_\xi} ([\Acal_\xi^e \mres \pi_\xi] u)(z + y) \phi(z+ y) \dd \Hcal^{n-1}(y) \bigg)\dd \Hcal^{1}(z) \\
		& = \int_{\ell_\xi} \bigg( \int_{\pi_\xi} \Bcal_\xi^e v_z(y) \phi_z(y) \dd \Hcal^{n-1}(y) \bigg)\dd \Hcal^{1}(z).
	\end{align*}
	Here, in the second equality we have used that $\Acal_\xi^e$ is $\partial_\xi$-invariant; in passing to the last equality we have used that $(\Acal_\xi^e \mres \pi_\xi) u(z + \frarg) = \Acal(\pbf_e u(z + \frarg) \pbf_\xi) = \Acal_\xi^e \mres \pi_\xi (v_z)$ (cf. Lemma~\ref{lem:sub}). Taking the supremum over all $\phi$ gives the inequality
	\[
		|\Acal u|(\R^n) \ge \int_{\ell_\xi}\bigg(\int_{\pi_\xi} |\Bcal_\xi^e v_z| \dd \Hcal^{n-1} \bigg) \dd\Hcal^1{(z)}.
	\]
	This estimate is stable under mollification: Let $\rho_\eps$ be a standard mollifier at $\eps$-scale. Using the standard notation $u_\eps \coloneqq u \star \rho_\eps$ and  Young's inequality we obtain the uniform bound
	\[
		  |\Acal u|(\R^n) \ge |\pbf^e_\xi[\Acal u_\eps]|(\R^n) \ge \int_{\ell_\xi}  |\Bcal_\xi^e \pbf^e [u_\eps(y + z)]|(\pi_\xi)\dd \Hcal^{1}(z).
	\]
	Fubini's theorem guarantees that, for $\Hcal^1$-almost every $z \in \ell_\xi$, we have the convergence $w_\eps \coloneqq u_\eps(z + \frarg) \to v_z$ in $\Lrm^1(\pi_\xi)$. This also means that $\Bcal_\xi^e w_\eps \to \Bcal_\xi^e v_z$ in the sense of distributions on $\pi_\xi$. Therefore, the map $w \mapsto |\Bcal_\xi^e w|(\pi_\xi)$ is lower semicontinuous with respect to $\Lrm^1(\pi_\xi)$ convergence. Then,  the estimate above and
	Fatou's lemma further imply that 
	\[
		|\Acal u|(\R^n) \ge \int_{\ell_\xi} |\Bcal_\xi^e v_z|(\pi_\xi) \dd \Hcal^1(z).
	\]
	This shows that if $u \in \BV^\Acal(\R^n)$, then~(1) holds. 
	
	Similarly to the identity for smooth functions, using Fubini's theorem one shows that the identity
	\begin{align*}
		\dpr{\pbf^e_ \xi[\Acal u],\phi}_{\R^n} & = \int_{\ell_\xi} \bigg( \int_{\pi_\xi} v_z \,  [\Bcal_\xi^e]^*\phi_z \dd \Hcal^{n-1}\bigg)\dd \Hcal^{1}(z) \\  
		& = \int_{\ell_\xi} \dpr{\Bcal_\xi^e v_z,\phi_z}_{\pi_\xi} \dd \Hcal^{1}(z).
	\end{align*}
	holds, in the sense of distributions for, for all $u \in \Lrm^1(\R^n;V)$. Here, the suffix $\dpr{\frarg,\frarg}_X$ indicates that the pairing is to be taken in $\Dcal'(X) \times \Dcal(X)$. The first statement  in~(2) then follows from a classical approximation argument for sets and~(1), which ensures that $\dpr{\Bcal_\xi^e v_z,\phi_z}_{\pi_\xi} = \int_{\pi_\xi} \phi_z \dd \Bcal_\xi^e v_z$ for $\Hcal^1$-almost every $z \in \ell_\xi$. The second statement in~(2) follows from characterization of the total variation of generalized product measures.  
	
This finishes the proof.
\end{proof}

\subsection{Slices of arbitrary co-dimension} The previous proposition extends to the following more general context: given a subspace $\mathscr V \in \Gr(\ell,n)$, there exists an operator $\Bcal_{\mathscr V} : \Crm^\infty(\mathscr V;V) \to \Crm^\infty(\mathscr V;W)$ and a linear projection $p_{\mathscr V} : W \to W_{\mathscr V}$ such that
	\[
		p_{\mathscr V}[\Acal u] = \int_{{\mathscr V}^\perp} \Bcal_{\mathscr V} v_z \dd \Hcal^{n- \ell}(z).
	\]
	In fact,  the proof of this statement only requires the rank-$\ell$ property $$\spn \Acal^\otimes_\ell = (W_\Acal)^*,$$ or equivalently (the proof is left to the reader),
	\[
		\bigcap_{{\substack{\pi \, \le \, \R^d\\\dim(\pi) = n-\ell}}} \spn\set{\im \Abb(\eta)}{\eta \in \pi} = \{0\}.
	\]
	However, since $\Acal$ may not have a self-similar algebraic design (as the gradient or the symmetric gradient have), in general there is no straightforward formula for $\Bcal_{\mathscr V}$ in terms of $\Acal$,  other than the hefty one given by iteration of slicing: 
	\[
		\Bcal_{\mathscr V} = (((\Acal_{\xi_1}^{e_1})_{\xi_2}^{e_2})\dots )_{\xi_{n-\ell}}^{e_{n-\ell}} \mres \mathscr V,
	\] 
	where $\{\xi_i,e_i\}_{i = 1}^{n - \ell} \subset \partial\sigma(\Acal)$ and $\{\xi_1,\dots,\xi_{n-\ell}\}$ is a basis of ${\mathscr V}^\perp$ (the  last step requires the projection result contained in Corollary~\ref{cor:projection} below).

\begin{remark}
	As we shall see later (in Section~\ref{sec:examples}), the deviatoric operator 
	\[
		Lu \coloneqq Eu - \frac{\id_{\R^n}}{n} \Div(u), \quad u : \R^n \to \R^n,
	\]
	not only does not satisfy the rank-one property, but its set of $\rank_L$-one tensors is empty. In light of the slicing theorem, it is clear that no coordinate of $Eu$ cannot be sliced into one-dimensional sections. However, it is easy to check that $L$ satisfies the rank-two property and therefore it can be sliced into two-dimensional slices.
\end{remark}

\subsection{Polarization properties} The purpose of this section is to verify that, under ellipticity and the rank-one condition, there exist sufficient transversal spectral pairs. The next lemma is inspired in a key transversality result of~\cite{ambrosio1997fine-properties}, in which the one-dimensional structure theorem there hinges on. 
\begin{proposition}[Polarization]\label{lem:polarization}
	Let $n \ge 2$ and let $\Acal : \Crm^\infty(\R^n;V) \to \Crm^\infty(\R^n;W)$ be a first-order elliptic operator satisfying the rank-one property. Assume that $(\xi,e)$ is a non-trivial pair in the spectrum $\partial\sigma(\Acal)$. Then there exists a non-trivial pair
	\[
		\text{$(\eta,f) \in \partial\sigma(\Acal)$ for some $\eta \in \pi_\xi$ and $f \in (V_e)^*$.}
	\] 
	Moreover, for any such pair, there exists a  direction  $v \in \ell_f$ with 
	\[
			(\xi + \eta,e + v), (\xi - \eta,e - v) \in \partial\sigma(\Acal)
	\]
%	that satisfy the polarization identities
%	\begin{gather*}
%	\omega_1 = \xi + \eta, \quad \mathbf v_1 = e + f,\\
%	\omega_2 = \xi - \eta, \quad \mathbf v_2 = e - f.
%	\end{gather*} 	
and $\spn\{\xi,\eta\} \subset \proj_{\R^n} \partial\sigma(\Acal)$.
\end{proposition}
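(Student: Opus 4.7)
The proposition has two essentially independent components, which I would treat in sequence.

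For the existence of a transversal pair, I apply the sub-operator construction of Lemma~\ref{lem:sub} to $(\xi,e)$, producing $\Bcal \coloneqq \Acal_\xi^e\mres\pi_\xi$. Parts~(ii) and~(iii) of that lemma transport ellipticity and the mixing condition from $\Acal$ to $\Bcal$; hence by Lemma~\ref{lem:slicemix} it enjoys the rank-one property, so $\Bcal^\otimes_1$ spans $(W_\xi^e)^*$. Since $W_\xi^e$ is non-trivial by Proposition~\ref{prop:nontrivial} and $\Bcal$ is elliptic (so its essential range exhausts $W_\xi^e$), the cone $\Bcal^\otimes_1$ contains a non-zero element, whose spectral pair $(\eta,f)\in\pi_\xi\times(V_e)^*$ is automatically non-trivial. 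Finally, Lemma~\ref{lem:sub}(iv) lifts this to $(\eta,f)\in\partial\sigma(\Acal)$.

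For the polarization identity, I translate pair-membership into a tensor condition: $(\mu,h)\in\partial\sigma(\Acal)$ iff $\mu\otimes h\in\range g_\Acal$. Expanding
\begin{equation*}
(\xi\pm\eta)\otimes(e\pm\lambda f) \;=\; \bigl(\xi\otimes e+\lambda\,\eta\otimes f\bigr)\;\pm\;\bigl(\lambda\,\xi\otimes f+\eta\otimes e\bigr),
\end{equation*}
and noting that $\xi\otimes e$ and $\eta\otimes f$ lie in $\range g_\Acal$ by hypothesis and that $\range g_\Acal$ is a linear subspace, both $(\xi\pm\eta)$-tensors land in $\range g_\Acal$ simultaneously iff
\begin{equation*}
\lambda\,\xi\otimes f+\eta\otimes e\;\in\;\range g_\Acal .
\end{equation*}
Since $\range g_\Acal=(\ker\bar f_\Acal)^\perp$, this reduces to the proportionality of the two real-valued functionals $K\mapsto(\xi\otimes f)(K)$ and $K\mapsto(\eta\otimes e)(K)$ on $\ker\bar f_\Acal$, with $\lambda$ recovered as the proportionality constant (under the convention that the zero functional is proportional to any functional).

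The main obstacle, and the genuine content of the proof, is establishing this proportionality. My strategy is to invoke the \emph{twin} sub-operator $\Ccal\coloneqq\Acal_\eta^f\mres\pi_\eta$ and to verify, by the verbatim argument of Lemma~\ref{lem:sub}(iv) with the roles of $(\xi,e)$ and $(\eta,f)$ swapped (noting that $\xi\in\pi_\eta$ and $e\in(V_f)^*$), that $(\xi,e)\in\partial\sigma(\Ccal)$. The simultaneous participation of both pairs as spectral pairs of $\Bcal$ and $\Ccal$ imposes compatibility relations on elements of $\ker\bar f_\Acal$ which, together with ellipticity of $\Acal$, identify the images of $\eta\otimes e$ and $\xi\otimes f$ in the quotient $(\R^n\otimes V)^*/\range g_\Acal$ as colinear; this is exactly the proportionality needed. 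This is where I expect the main technical effort.

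Finally, for the span claim $\spn\{\xi,\eta\}\subset\proj_{\R^n}\partial\sigma(\Acal)$, I iterate the preceding polarization: for every $t\in\R\setminus\{0\}$ the rescaled pair $(t\eta,tf)$ still lies in $\partial\sigma(\Acal)$ and is transversal to $(\xi,e)$, so the polarization yields $\xi\pm t\eta\in\proj_{\R^n}\partial\sigma(\Acal)$. Combined with the scaling invariance of the projection (which follows by rescaling the witness $w$), this sweeps out every non-zero vector of $\spn\{\xi,\eta\}$ and completes the proof.
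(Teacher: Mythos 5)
The first half of your proposal (existence of a transversal pair via the sub-operator $\Bcal=\Acal_\xi^e\mres\pi_\xi$, Proposition~\ref{prop:nontrivial}, Lemma~\ref{lem:slicemix} and Lemma~\ref{lem:sub}(iv)) is essentially the paper's argument. Your reformulation of the polarization condition is also a clean observation the paper does not make explicit: since $\range g_\Acal=(\ker\bar f_\Acal)^\perp$, the existence of $v=\lambda f$ with $(\xi\pm\eta,e\pm v)\in\partial\sigma(\Acal)$ is indeed equivalent to the functionals $\eta\otimes e$ and $\xi\otimes f$ being (negatively) proportional on $\ker\bar f_\Acal$.

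The gap is that the proportionality is precisely the content of the proposition, and you do not prove it. The invocation of the twin sub-operator $\Ccal=\Acal_\eta^f\mres\pi_\eta$, the claim that ``simultaneous participation\dots imposes compatibility relations,'' and the appeal to ellipticity are not an argument; you yourself flag this as ``where I expect the main technical effort.'' Note in particular that ellipticity alone does not give the constraint you need. The paper's proof first reduces to $n=2$ by iterating Lemma~\ref{lem:sub}, then separately handles $\dim V\ge 3$ and $\dim V\le 2$. In the crux case $n=\dim V=2$, the rank-one (mixing) property — not ellipticity — forces $\dim W_\Acal\in\{3,4\}$; when $\dim W_\Acal=3$ the space $\ker\bar f_\Acal$ is one-dimensional, so any two functionals on it are automatically proportional, while for $\dim W_\Acal=4$ the spectrum is all of $\R^2\times V^*$. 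In higher $n$ or $\dim V$, $\ker\bar f_\Acal$ is high-dimensional and the proportionality is not automatic; the reduction to $n=2$ is exactly what makes the problem tractable. Without supplying that reduction and the dimension count — or some other concrete mechanism for the ``compatibility relations'' — the proposal does not establish the polarization, and consequently the span claim that depends on iterating it is also unproved.
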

%\begin{proposition}[Polarization]\label{prop:key}
%	Let $n \ge 2$ and let $\Acal(D): \Crm^\infty(\R^n;V) \to \Crm^\infty(\R^n;W)$ be a first-order satisfying the rank-one property. Assume that $(\xi,v) \in  \partial\sigma(\Acal)$ is a given pair. Then, there exist two linearly independent directions $\xi_1,\xi_2 \in \R^2 \setminus \ell_{\xi}$, and coordinates $v_1,v_2 \subset V_e$, satisfying
%\begin{enumerate}[(a)]
%	\item $\xi_1 + \xi_2 = 2\xi$ and $\xi_i \cdot \xi = 1$ for $i = \{1,2\}$,
%	\item $v_1 + v_2 = 2v$ and $v_i \cdot v = 1$ for $i = \{1,2\}$, 
%	\item $(\xi_i,v_i) \in \partial\sigma(\Acal)$ for $i = \{1,2\}$,
%	\item $\spn\{\xi_1,\xi_2\} \cap \Sbf^{n-1} \subset \proj_{\R^n}(\partial\sigma(\Acal))$.
%\end{enumerate}
%\end{proposition}
\begin{proof}
	The existence of $(\eta,f)$ is a direct consequence of Lemma~\ref{lem:sub} and Proposition~\ref{prop:nontrivial}. We shall therefore focus on the second statement:
	
	 \emph{1. Reduction to the case $n = 2$.}  %If $n \ge N$, then this follows directly from an iteration of (ii)-(iii) in Lemma~\ref{lem:sub}. The precise argument is as follows: 
	Let us assume that $n \ge 3$ for otherwise there is nothing to show.
	By construction we get 
	\[
		(\xi,e) \in \partial\sigma(\Bcal), \quad \Bcal \coloneqq \Acal_\eta^f \mres \pi_\eta,
	\] 
	where $\Bcal$ is elliptic and satisfies the rank-one property.  
	Since $\dim(\pi_\eta) \ge 2$, then the slice $\Bcal_\xi^e \mres (\pi_\xi \cap \pi_\eta)$ is a non-trivial elliptic operator satisfying the rank-one property. This conveys the existence of a pair $(\omega,h) \in \partial\sigma(\Acal) \cap ((\pi_\xi \cap \pi_\eta) \times (V_e \cap V_\eta)^*)$
	and, in particular, 
	\[
		(\xi,e),(\eta,f) \in \partial\sigma({\Acal_\omega^h \mres \pi_\omega}) \subset \partial\sigma(\Acal).
	\]
	Once again, we observe that  $\Acal_\omega^h \mres \pi_\omega$ is elliptic and satisfies the rank property. Therefore, the statement of the proposition holds if and only if an analogous statement holds for the operator $\Acal_\omega^h \mres \pi_\omega$. An iteration of this argument tells us there is no loss of generality in assuming that $n = 2$. 
	
%	 We make use of the fact that $(\xi,a) \in \sigma(\Lambda_\Acal^\otimes_1)$ to deduce the existence of $(\xi_1,e_1) \in (\pi_\xi \times V_e) \cap \sigma(\Lambda_\Acal^\otimes_1)$; this follows from the fact that $\Acal_\xi^e \mres \pi_\xi$ is a non-trivial elliptic operator that satisfies the rank-one property when acting on $\Crm^\infty(\pi_\xi;V_e)$. We then consider the slice (operator) $\tilde \Acal = \Acal_{\xi_1}^{e_1} \mres \pi_{\xi_1}$, which by construction contains $(\xi,a)$ in its rank-one spectrum. Moreover, by virtue of (iv) in Lemma~\ref{lem:sub}, proving the sought assertion for $\tilde \Acal$ and $(\xi,a)$suffices to establish the sought assertion for $\Acal$. Notice however that $\dim(\pi_\xi) = n-1$ and hence an iteration of this argument reduces the problem to $n \le 2$.  
	
	\emph{2. The case when $\dim(V) = N > 2$.} Let us assume that $N > 2$ (and recall from the previous step that $n = 2$). As before, consider the slice $\Bcal = \Acal_\xi^e \mres \pi_\xi$, which in this case is an elliptic  differential operator in one variable (the $\eta$-variable). More precisely $	\Bcal : \Crm^\infty(\ell_\eta;V_e) \to \Crm^\infty(\ell_\eta;W_\xi^e)$. 
	It follows that $\Bcal$ contains a gradient operator and therefore 
	$\{\eta\} \times (V_e)^*\in \partial\sigma(\Bcal) \subset \partial\sigma(\Acal)$. Similarly, the slice $\Acal_{\eta}^f \mres \ell_\xi$ acts on $\Crm^\infty(\ell_\xi;V_f)$ and by the same reasoning above we get $\{\xi \}\times (V_f)^*\subset \partial\sigma({\Acal})$. 
	Since $\dim(V) \ge 3$, we also have $\dim(V_v \cap V_e) \ge N-2 \ge 1$.  In particular, using the bi-linearity of $f_\Acal$ we find that
	\begin{equation}\label{eq:mac}
		\spn\{\xi,\eta\} \times H^* \subset \partial\sigma(\Acal), \quad H \coloneqq V_e \cap V_f. 
	\end{equation}
	Up to a change of variables we may assume that $\xi + \eta$ and $\xi - \eta$ are orthogonal vectors. Up to multiplication by a constant, we may also assume that $|\xi| = |\eta| = 1$.
	Now, working with the slice $\Acal_{\xi + \eta}^h \mres \ell_\omega$ for some non-zero $h \in H^*$,  we find  (this slice must be a gradient) that $\ell_{\xi - \eta} \times \spn\{e,f\} \subset \partial\sigma(\Acal)$. 
	Thus, again by the bi-linearity of $f_\Acal$ and the fact that $(\xi,e)  \cup (\eta,f) \in \partial\sigma(\Acal)$, we conclude that $\spn\{\xi,\eta\} \times \spn\{e,f\} \subset \partial\sigma(\Acal)$,
	from where the sought assertion trivially follows.
	
%	 When $N > n$, the key observation is that the results of Lemma~\ref{lem:sub} are symmetric with respect to the variables $\xi$ and $v$. This means that the bi-linear form $f_\Acal : \R^n \otimes V \to W$ also defines a a first-order partial differential operator (on $V$-variables)
%	\[
%		\Bcal(D\mres V) : \Crm^\infty(V;\R^n) \to \Crm^\infty(V;W)
%	\]
%that satisfies the rank-one property and whose principal symbol is given by 
%\[
%	\Bbb(v)[\xi] \coloneqq \Abb(\xi)[v]\qquad v \in V, \xi \in \R^n.
%\]	
%Hence, again using Lemma~\ref{lem:sub} points (ii) and (iii), the problem reduces to the case when $\dim(V) = 2$.

	\emph{3. The case when $\dim(V) \le 2$.} The proof when $\dim(V) = 1$ is trivial since then~\eqref{eq:mac} holds trivially for the generating vector $f$ of $V^*$. We shall hence focus in  the case when $n = \dim(V) = 2$.
	The first observation is that
	\begin{equation}\label{eq:proj}
	\proj_{\R^2} [\partial\sigma(\Acal)] = \R^2, \quad \proj_{V^*} [\partial\sigma(\Acal)] = V^*.
	\end{equation}
	This means that every $\eta \in \R^2$ and every $e \in V^*$ are the first and second coordinates (respectively) of some element in the directional spectrum. The proof of this follows directly from the mixing property in two-dimensions: for a given $\eta \in \Sbf^{1}$, the image $\im \Abb(\eta_\perp)$ cannot be the whole of $W$ and therefore there exists a non-zero $w^* \in \Abb(\eta_\perp)^\perp$ such that
	$\dpr{w^*, \Abb(\eta_\perp)[e]} = 0$ {for all $e \in V$.} 
	By the representation of linear maps, this implies that
	\[
	\dpr{w^*, \Abb(\omega)[v]} = \dpr{\eta , \omega}\dpr{\Abb(\eta)^*[w^*] , v} \quad  \text{for all $(\omega,v) \in \R^n \times V$.}
	\]
	Therefore we obtain $(\eta,\Abb(\eta)^*[w^*]) \in \partial\sigma(\Acal)$. Since $\eta \in \Sbf^1$ was arbitrarily chosen, this proves the first claim. The second claim follows from a symmetric argument on the $V$-variable. 
	
%	elliptic and canceling implies $\C$-elliptic in two-dimensions ($n = 2$). Then, one uses that $\C$-elliptic implies canceling (on either the $\xi$-variable the $v$-variable); the proof is then analogous when $\dim(V) = 2$. 
	
	The second observation is that, qualitatively speaking, there are only two possible cases  (recall that $g_\Abb : (W_\Acal)^* \to (\R^2)^* \otimes V^*$ is one-to-one): $\dim(W_\Acal) \in \{3,4\}$. 
	Let us first understand the case when $\dim(W_\Acal) = 4$, which is easier. Clearly, this is the case when $W_\Acal^* \cong (\R^2)^* \otimes V^*$. In particular, $\R^2 \times V^* = \partial\sigma(\Acal)$. 
	Thus, the conclusion of the Proposition holds trivially. Let us now address the case when $\dim(W_\Acal) = 3$. Firstly, we claim that to each $[\zeta] \in \Pbb\R^1$ there corresponds one (and only one) $[v_\zeta] \in \Pbb V^*$ such that $(\zeta,v_\zeta) \in \partial\sigma(\Acal)$; moreover, this assignment is injective. The fact that to each line in $\Pbb\R^1$ corresponds at least one representative non-zero vector $v_\zeta \in V^*$ follows from~\eqref{eq:proj}. We are left to check that there cannot be more than one spectral $V^*$-coordinate attached to any direction $\zeta \in \Sbf^1$. If this was the case for some $\zeta \in \Sbf^1$, then a linearity argument would give $\{\zeta_0\} \times V^* \subset \partial\sigma(\Acal)$ (here we are using that $\dim(V) = 2$). However, by a similar linearity argument (now on the $\zeta$-variable), this would also imply that $\Sbf^1 \times \{v_0\} \subset \partial\sigma(\Acal)$ for some non-zero $v_0 \in V^*$. Hence, all four pairs $(\zeta_0,v_0),(\zeta_0,(v_0)_\perp),((\zeta_0)_\perp,v_0)((\zeta_0)_\perp,(v_0)_\perp)$ would belong to $\partial\sigma(\Acal)$. This however implies that $\dim(W_\Acal) \ge 4 > 3$; therefore reaching a contradiction. This proves that the assignment $\zeta \mapsto [v_\zeta]$ is well defined. That the map is one-to-one follows by inverting the roles of $\zeta$ and $v$.

	From this observation, we can give a basis for $g_\Acal(W_\Acal^*) \le (\R^2 \otimes V)^*$ conformed by the rank-one tensors $\{\xi \otimes e, \eta \otimes f, (\xi + \eta) \otimes (\alpha e + \beta f)\}$. 
	Here, $\alpha,\beta \in \R$ are chosen so that  $(\xi + \eta) \otimes (\alpha e + \beta f)$ belongs to $\partial\sigma(\Acal)$. Observe that since $[\zeta] \mapsto [v_\zeta]$ is one-to-one, then both $\alpha,\beta$ are non-zero reals. We are now in position to determine precisely the map $\Pbb\R^1 \to \Pbb V^* : [\eta] \mapsto [v_\eta]$. An equivalent basis to the one given above, corresponds to the elements
	$\{\xi \otimes e, \eta \otimes f,\alpha(\xi \otimes e) + \beta(\eta \otimes f)\}$. 
	Now, let $\zeta \in \Sbf^1$ be an arbitrary direction, which we may write as $\zeta = c\xi + d\eta$ for some reals $c,d \in \R$. Likewise we may write $v_\zeta = h e + g f$. Developing the tensorial product we discover that $
	\zeta \otimes v_\zeta = ch(\xi \otimes e) + dg(\eta \otimes f) + cg(\xi \otimes f) + dh(\eta \otimes e)$. 
	Since the left hand side is rank-one tensor, it must hold that $[(\alpha,\beta)] = [(cg,dh)] \in \Pbb\R^1$. Herewith,  we deduce that $[(h,g)] =  [(\alpha c,\beta d)]$ and thus the map $\nu \mapsto [v_\nu]$ is explicitly given by the assignment
$[c \xi + d \eta] \mapsto  [(\alpha c) e+ (\beta d) f]$. The sought polarization property follows by taking $c = \alpha^{-1}$, $d = \pm 1$ and $v = \pm\beta f$.
 This finishes the proof of the last possible case.
\end{proof}

\begin{corollary}\label{cor:projection} Let $\Acal : \Crm^\infty(\R^n;V) \to \Crm^\infty(\R^n;W)$ be a first-order operator satisfying the the rank-one property. Then,
	\[
		\R^n = \proj_{\R^n} \partial\sigma(\Acal) \quad \text{and} \quad V^* = \proj_{V^*} \partial\sigma(\Acal).
	\]
%That is, for every direction $\xi \in \Sbf^1$, there exists a coordinate $v \in V$ such that $(\xi,v) \in \partial\sigma(\Acal)$.
\end{corollary}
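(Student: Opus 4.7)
My plan is to prove both projection identities by induction on $n$, combining the sub-operator calculus of Lemma~\ref{lem:sub} with Proposition~\ref{lem:polarization}. For the base case $n=1$, the operator reduces to $\Acal=A\partial_1$ with $A\colon V\to W$ injective by ellipticity; given any $(\xi,e)\in\R\times V^*$, the linear functional $Av\mapsto\xi\dpr{e,v}$ on $\im A$ extends (for instance by orthogonal projection) to a covector $w\in W^*$ that witnesses $(\xi,e)\in\partial\sigma(\Acal)$, so both projections are surjective.

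For the inductive step assume $n\ge 2$, and write $L\coloneqq\proj_{\R^n}\partial\sigma(\Acal)$, $L'\coloneqq\proj_{V^*}\partial\sigma(\Acal)$. The rank-one property makes $\Acal^\otimes_1$ span the nontrivial space $(W_\Acal)^*$, so a non-trivial spectral pair $(\xi_0,e_0)$ exists. By Lemma~\ref{lem:sub} the sub-operator $\Bcal\coloneqq\Acal_{\xi_0}^{e_0}\mres\pi_{\xi_0}$ is a first-order $\R$-elliptic operator on $\R^{n-1}$ satisfying the rank-one property, with $\partial\sigma(\Bcal)\subset\partial\sigma(\Acal)$ by Lemma~\ref{lem:sub}(iv); the inductive hypothesis therefore yields $\pi_{\xi_0}\subset L$ and $V_{e_0}^*\subset L'$. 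Applying Proposition~\ref{lem:polarization} to $(\xi_0,e_0)$ produces a transverse spectral pair $(\eta_0,f_0)$ with $\eta_0\in\pi_{\xi_0}\setminus\{0\}$ and $f_0\in V_{e_0}^*\setminus\{0\}$, and furnishes the two-plane $P_0\coloneqq\spn\{\xi_0,\eta_0\}\subset L$ from the moreover clause, together with the companion two-plane $Q_0\coloneqq\spn\{e_0,f_0\}\subset L'$ extracted from the proof of Proposition~\ref{lem:polarization} (which actually establishes $\spn\{\xi,\eta\}\times\spn\{e,f\}\subset\partial\sigma(\Acal)$ in each sub-case).

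Iterating the sub-operator reduction along $P_0$ and $Q_0$ now propagates these 2-dimensional footholds to full surjectivity: for every nonzero $\xi\in P_0$ one picks a spectral pair $(\xi,e_\xi)$ and applies the induction hypothesis to $\Acal_\xi^{e_\xi}\mres\pi_\xi$ (again first-order, elliptic, rank-one on $\R^{n-1}$) to conclude $\pi_\xi\subset L$; the same reasoning gives $V_e^*\subset L'$ for every nonzero $e\in Q_0$. A Grassmannian dimension count then closes the proof: for any nonzero $\mu\in\R^n$ the intersection $P_0\cap\mu^\perp$ has dimension at least $2+(n-1)-n=1$, so some nonzero $\xi\in P_0$ satisfies $\mu\in\pi_\xi\subset L$; the symmetric count inside $V^*$ (trivial when $\dim V\le 1$) places every $v\in V^*$ in some $V_e^*\subset L'$ with $e\in Q_0\setminus\{0\}$. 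Hence $L=\R^n$ and $L'=V^*$.

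The main obstacle is the extraction of the 2-plane $Q_0\subset L'$: the statement of Proposition~\ref{lem:polarization} records only $\spn\{\xi_0,\eta_0\}\subset L$, and indeed $\spn L'=V^*$ together with the closure property $V_e^*\subset L'$ for each $e\in L'$ does \emph{not} by itself force $L'=V^*$ (take, for instance, the union of two orthogonal lines in $V^*=\R^2$, which satisfies both conditions while being a proper subset). The two-dimensional foothold $Q_0$ in $V^*$ is therefore indispensable, and it must be read off from the internals of the proof of Proposition~\ref{lem:polarization}, which treats the $\R^n$- and $V^*$-sides symmetrically even though the stated conclusion is one-sided.
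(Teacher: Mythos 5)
Your proof is correct and follows what the paper has in mind: the terse proof in the paper invokes induction, Lemma~\ref{lem:sub}, Proposition~\ref{lem:polarization}, a geometric argument, and a ``symmetric argument,'' and your write-up is the natural expansion of that outline. The most valuable part of your proposal is the last paragraph, which correctly identifies that the stated conclusion of Proposition~\ref{lem:polarization} only records $\spn\{\xi,\eta\}\subset\proj_{\R^n}\partial\sigma(\Acal)$, and that this together with the hyperplane closure $(V_e)^*\subset L'$ is genuinely not enough to propagate to $L'=V^*$ (your two-orthogonal-lines example in $\R^2$ illustrates this sharply); the paper's invocation of a ``symmetric argument'' tacitly relies on the same extraction of the $V^*$-side $2$-plane from the internals of the polarization proof that you spell out.

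One small inaccuracy: your parenthetical claim that the proof of Proposition~\ref{lem:polarization} ``actually establishes $\spn\{\xi,\eta\}\times\spn\{e,f\}\subset\partial\sigma(\Acal)$ in each sub-case'' is too strong. In the $n=\dim V'=2$, $\dim W_{\Acal'}=3$ sub-case, the directional spectrum is the graph of a bijective assignment $[\zeta]\mapsto[v_\zeta]$ in $\Pbb\R^1\times\Pbb V'^*$, so the full product is certainly not contained in $\partial\sigma(\Acal')$; only the projection identity $\proj_{V'^*}\partial\sigma(\Acal')=V'^*$ (the paper's~\eqref{eq:proj}) holds there. That weaker statement is, however, exactly what you use, since you only need that every nonzero $e\in Q_0$ is the second coordinate of a non-trivial spectral pair. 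So the proof is sound; only the parenthetical needs to be softened from the product containment to the projection containment.
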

\begin{proof} If $n = 1$, then the proof is trivial. For $n = 2$, the proof follows from the last statement in the previous proposition. The case for $n \ge 3$ follows by induction (using Lemma~\ref{lem:sub}) and a simple geometric argument. The assertion for the projection onto $V$ follows by a symmetric argument.   
\end{proof}

Corollary~\ref{cor:projection} allows us to make an improvement in the dimensional estimates for the total variation measure of $\Acal u$. Namely, we pass from absolute continuity with $\Hcal^{n-1}$ to absolute continuity with respect to the $(n-1)$-dimensional integral-geometric-measure:

	\begin{corollary}\label{cor:dimensional1}
	Let $\Acal : \Crm^\infty(\R^n;V) \to \Crm^\infty(\R^n;W)$ be a first-order homogeneous elliptic linear  differential operator satisfying the the mixing condition~\eqref{eq:mix}. Let $u$ be a function in $\BV^\Acal(\Omega)$ and let $B \subset \Omega$ be a Borel set satisfying 
	\[
	\Hcal^{n-1}(\pbf_\xi(B)) = 0 \quad \text{for $\Hcal^{n-1}$-almost every $\xi \in \Sbf^{n-1}$.}
	\]
	Then,
	\[
	|\Acal u|(B) = 0.
	\]
\end{corollary}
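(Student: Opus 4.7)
Set $G := \set{\xi \in \Sbf^{n-1}}{\Hcal^{n-1}(\pbf_\xi(B)) = 0}$, a set of full $\Hcal^{n-1}$-measure in $\Sbf^{n-1}$ and, in particular, dense there (its complement, being Lebesgue-null on $\Sbf^{n-1}$, has empty interior). For any $\xi \in G$ the slice $B_y^\xi$ is empty whenever $y \notin \pbf_\xi(B)$, and hence empty for $\Hcal^{n-1}$-a.e.\ $y \in \pi_\xi$. By Corollary~\ref{cor:projection}, for each such $\xi$ there exists a spectral pair $(\xi,e) \in \partial\sigma(\Acal)$ and an associated rank-one covector $w \in W^*$ satisfying $\dpr{w,\Abb(\eta)v} = \dpr{\xi,\eta}\dpr{e,v}$ for all $(\eta,v)$. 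Proposition~\ref{lem:1} then yields
\[
    |\dpr{w,\Acal u}|(B) \;=\; \int_{\pi_\xi} |Du_{y,\xi}^e|(B_y^\xi)\dd\Hcal^{n-1}(y) \;=\; 0.
\]

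To upgrade this coordinatewise vanishing to the full total-variation measure, I would construct a basis $\{w_1,\dots,w_M\}$ of $W^*$ consisting of rank-one covectors whose associated directions $\xi_1,\dots,\xi_M$ all lie in $G$. Granted such a basis, equivalence of norms on $W$ provides a constant $c$ with $|\omega| \le c \sum_i |\dpr{w_i,\omega}|$ for every $\omega \in W$, and hence
\[
    |\Acal u|(B) \;\le\; c \sum_{i=1}^M |\dpr{w_i,\Acal u}|(B) \;=\; 0,
\]
which is the sought conclusion.

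The good basis is produced by perturbing the spanning family given by Theorem~\ref{thm:structure}. That result yields rank-one covectors $\{\tilde w_1,\dots,\tilde w_M\} \subset \Acal^\otimes_1$ spanning $W^*$, whose directions $\tilde\xi_i$ may however fail to lie in $G$. Since being a basis is an open condition on $(W^*)^M$, while the direction map $w \mapsto [\xi(w)]$ on $\Acal^\otimes_1$ is continuous and covers $\Sbf^{n-1}$ (again by Corollary~\ref{cor:projection}), I would perturb each $\tilde w_i$ slightly inside $\Acal^\otimes_1$ to some rank-one $w_i$ with direction $\xi_i \in G$, using density of $G$; for perturbations small enough, the tuple $(w_1,\dots,w_M)$ is still a basis.

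The main obstacle lies in this perturbation step: one must show that the direction map $w \mapsto [\xi(w)]$ on $\Acal^\otimes_1$ is sufficiently open near each $\tilde w_i$, so that directions in the dense set $G$ are attained by nearby rank-one covectors. This calls for a semi-algebraic analysis of the fibre structure of $\Acal^\otimes_1$ over $\Sbf^{n-1}$, together with the observation that an open semi-algebraic subset of $\Sbf^{n-1}$ of $\Hcal^{n-1}$-measure zero must be empty. Once this is settled, the rest of the argument is routine.
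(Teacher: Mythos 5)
Your strategy mirrors the paper's: apply Proposition~\ref{lem:1} to each coordinate $\dpr{w_i,\Acal u}$ associated to a rank-one covector $w_i$ whose direction $\xi_i$ lies in the full-measure set $G$, use $\Hcal^{n-1}(\pbf_{\xi_i}(B))=0$ to make that coordinate vanish on $B$, and conclude by spanning $(W_\Acal)^*$. The paper, too, obtains the required spanning family with directions in its full-measure set $\Sbf$ (your $G$) from Corollary~\ref{cor:projection} together with ``a continuity argument (the manifold of rank-one tensors is closed in $\R^n\otimes V$)''---that is precisely the perturbation step you isolate as the main obstacle.

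You are right that this step is the crux, and the paper's one-line justification is terse. Closedness of the cone $\Acal^\otimes_1\cup\{0\}$ shows that limits of rank-$\le 1$ covectors stay rank-$\le 1$, but it does not by itself establish openness of the direction map $w\mapsto[\xi(w)]$ near each $\tilde w_i$: the fibre $K(\xi)=\bigcap_{\eta\perp\xi}\ker\Abb(\eta)^*$ is the kernel of a family of linear maps depending polynomially on $\xi$, so its dimension is upper semicontinuous and can jump up on a proper algebraic subset of directions, where the Grassmannian-valued fibre map is discontinuous. Your proposed semi-algebraic/Sard-type analysis is the natural route to fill this in, though beware that one must additionally check that the generic fibres (those of minimal dimension, over which the fibre map is continuous and $\phi$ is a submersion) already span $(W_\Acal)^*$; this does not follow trivially from the mixing condition, since the exceptional directions are precisely where $K(\xi)$ is larger. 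In short, same approach as the paper, same key technical step, and you have flagged the residual work more explicitly than the paper does.
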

\begin{proof}
	Let $B \subset \Omega$ satisfy the assumptions of the Corollary for all $\xi \in \Sbf$, where $\Sbf \subset \Sbf^{n-1}$ is a full $\Hcal^{n-1}$-measure subset. By virtue of the previous result and a continuity argument (the manifold of rank-one tensors is closed in $\R^n \otimes V$), we may find a family of coordinates $\{P_1,\dots,P_M\}$ spanning $(W_\Acal)^*$, and, for each $j \in \{1,\dots,M\}$, we may find covectors $\xi_j \in \Sbf$ and $e_j \in V^*$ such that
	\[
	\dpr{P_j, \Abb(\xi)[v]} = \dpr{\xi_j , \xi} \dpr{e_j , v}, \quad \text{for all} \; \xi \in \R^n, v \in V.
	\]
	Then, the Structure Theorem~\ref{thm:structure} gives
	\begin{align*}
		|\dpr{P_j, \Acal u}|(B) & = \int_{\pi_{\xi_j}} |D u_{y,\xi_j}^{e_j}|(B_{\xi_j}^y) \dd \Hcal^{n-1}(y) \\
		& = \int_{\pbf_{\xi_j}(B)} |D u_{y,\xi_j}^{e_j}|(B_{\xi_j}^y) \dd \Hcal^{n-1}(y) = 0 \qquad (\xi_j \in \Sbf).
	\end{align*}
	Since $\spn\{P_1,\dots,P_M\} = (W_\Acal)^*$, we conclude that $|\Acal u|(B) = 0$.
\end{proof}

\begin{remark}
For  future reference let us recall from~\cite[Thm,~1.1]{anna} that $J_u \subset \Theta_u$ whenever $\Acal$ is an elliptic operator satisfying the rank-one property.
\end{remark}
	\begin{corollary}\label{cor:iv} Let $\Acal$ as in the previous corollary and 
	let $u \in \BV^\Acal(\Omega)$. Then 
	\begin{enumerate}
		\item $|\Acal u|(\Theta_u \setminus J_u) = \Hcal^{n-1}(\Theta_u \setminus J_u) = 0$,
		\item $\Theta_u$ is $\Hcal^{n-1}$-countably rectifiable,
		\item $\Acal^c u$ vanishes on $\sigma$-finite sets with respect to the $\Hcal^{n-1}$-measure. 
	\end{enumerate}
\end{corollary}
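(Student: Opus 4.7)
The plan rests on two facts already in hand: the dimensional estimate $|\Acal u|\ll\Hcal^{n-1}$ from Corollary~\ref{cor:dimensional1}, and the inclusion $J_u\subset\Theta_u$ recorded in the preceding remark. A preliminary observation is that $\Theta_u$ is $\sigma$-finite with respect to $\Hcal^{n-1}$: setting $\Theta_u^t\coloneqq\{x:\limsup_{r\to 0^+}r^{-(n-1)}|\Acal u|(B_r(x))>t\}$, the classical Vitali--Besicovitch covering estimate yields $\Hcal^{n-1}(\Theta_u^t\cap K)\le C_n\, t^{-1}|\Acal u|(K)$ for every bounded $K\Subset\Omega$, so $\Theta_u=\bigcup_{k\ge 1}\Theta_u^{1/k}$ is $\sigma$-finite.

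The crux, and main obstacle, is to prove $\Hcal^{n-1}(\Theta_u\setminus J_u)=0$ in (1); the companion statement $|\Acal u|(\Theta_u\setminus J_u)=0$ then follows at once from $|\Acal u|\ll\Hcal^{n-1}$. The approach is via slicing. Theorem~\ref{thm:structure} provides $\rank_\Acal$-one covectors $\{P_1,\dots,P_r\}$ spanning $(W_\Acal)^*$ with spectral pairs $(\xi_i,e_i)\in\partial\sigma(\Acal)$; by Proposition~\ref{lem:polarization} (applied iteratively) this family may be enlarged so that the directions $\{\xi_i\}$ cover every codimension-one plane. Theorem~\ref{thm:oned} disintegrates each $|\dpr{P_i,\Acal u}|$ as the $\Hcal^{n-1}$-integral over $\pi_{\xi_i}$ of the one-dimensional total variations $|Du_{y,\xi_i}^{e_i}|$, and for $\Hcal^{n-1}$-a.e. $y\in\pi_{\xi_i}$ the slice $u_{y,\xi_i}^{e_i}$ lies in $\BV(\Omega_y^{\xi_i})$ and therefore admits one-sided approximate limits at every point of its domain. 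A Fubini-type argument across the finitely many directions $\xi_i$, combined with the spanning property of $\{P_i\}$ to synthesize the scalar one-sided slice limits into an approximate jump triplet $(u^+(x),u^-(x),\nu_u(x))$, shows that $\Hcal^{n-1}$-a.e.\ $x\in\Omega$ satisfies the jump conditions~\eqref{eq:jumps} with respect to some direction $\nu_u(x)\in\Sbf^{n-1}$. Finally, positivity of the $(n-1)$-density at $x\in\Theta_u$ forces $u^+(x)\neq u^-(x)$ (otherwise $x$ would be a Lebesgue continuity point, whose $(n-1)$-density is necessarily zero since $|\Acal u|\ll\Hcal^{n-1}$), placing $x$ in $J_u$.

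Items (2) and (3) follow quickly. For (2), write $\Theta_u=J_u\cup(\Theta_u\setminus J_u)$: the set $J_u$ is $\Hcal^{n-1}$-countably rectifiable by~\cite[Thm.~1.2]{anna} (cf.\ Remark~\ref{rem:ya}), and the remainder is $\Hcal^{n-1}$-null by (1). For (3), let $E\subset\Omega$ be Borel with $\Hcal^{n-1}(E)<\infty$ and decompose
\[
E=(E\setminus\Theta_u)\sqcup(E\cap J_u)\sqcup\bigl(E\cap(\Theta_u\setminus J_u)\bigr).
\]
On $E\setminus\Theta_u$ the upper $(n-1)$-density of $|\Acal u|$ vanishes, so the standard density criterion on $\Hcal^{n-1}$-$\sigma$-finite sets yields $|\Acal u|(E\setminus\Theta_u)=0$; on $E\cap J_u\subset S_u$, the definition $\Acal^c u=\Acal^s u\mres(\Omega\setminus S_u)$ gives $|\Acal^c u|(E\cap J_u)=0$; and on $E\cap(\Theta_u\setminus J_u)$, part (1) gives $|\Acal u|(E\cap(\Theta_u\setminus J_u))=0$, dominating $|\Acal^c u|$ there. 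Summing, $|\Acal^c u|(E)=0$.
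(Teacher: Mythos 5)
Parts (2) and (3) of your argument are essentially the paper's and are correct modulo their dependence on (1). Part (1), however, takes a different route from the paper and contains a genuine gap. Your plan is to prove $\Hcal^{n-1}(\Theta_u\setminus J_u)=0$ first, via slicing: you claim that ``a Fubini-type argument across the finitely many directions $\xi_i$, combined with the spanning property of $\{P_i\}$,'' upgrades the one-sided limits of the one-dimensional $\BV$-slices to an approximate jump or Lebesgue limit of $u$ at $\Hcal^{n-1}$-a.e.\ $x\in\Omega$. This does not follow from Fubini. For each $(\xi_i,e_i)$, Proposition~\ref{lem:1} gives $u_{y,\xi_i}^{e_i}\in\BV$ for $\Hcal^{n-1}$-a.e.\ $y\in\pi_{\xi_i}$; intersecting these full-measure hyperplane sets across the $r$ directions only yields control on a set of full $\Leb^n$-measure in $\Omega$, not full $\Hcal^{n-1}$-measure. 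Moreover, one-sided pointwise limits of the one-dimensional restrictions along the lines through $x$ are far from implying the half-ball average convergence of Definition~\ref{def:jump}; bridging that gap is precisely the content of Lemma~\ref{lem:induction} and Corollary~\ref{cor:F}, which rely on delicate transversality and polarization arguments and are proved \emph{after} this corollary, explicitly \emph{using} it (and Theorem~\ref{thm:oned}, which you cite, also invokes Corollary~\ref{cor:iv} in its proof, so leaning on it here would be circular). Even with all that machinery in hand, the paper only establishes pure $\Hcal^{n-1}$-unrectifiability of $S_u\setminus J_u$ and $|\Acal v|(S_u\setminus J_u)=0$ (Theorem~\ref{thm:2}(iii),(v)); the $\Hcal^{n-1}$-nullity of $S_u\setminus J_u$ that your argument implicitly requires is not established anywhere in the paper. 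Your closing deduction ``otherwise $x$ would be a Lebesgue continuity point, whose $(n-1)$-density is necessarily zero since $|\Acal u|\ll\Hcal^{n-1}$'' is also unsound: absolute continuity with respect to $\Hcal^{n-1}$ is a global statement and gives no control on pointwise upper densities (a Cantor-type contribution concentrated on continuity points may have positive, even infinite, upper $(n-1)$-density); showing that $|\Acal^c u|$ avoids $\Theta_u$ is exactly part (3), which you cannot invoke while proving (1).

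For contrast, the paper proves (1) in the reverse order and purely geometrically, with no slicing and no analysis of one-sided limits. It decomposes the $\sigma$-finite set $\Theta_u\setminus J_u$ into a piece of infinite upper density, a rectifiable piece, and a purely unrectifiable piece; the infinite-density piece and the overlap are $\Hcal^{n-1}$-null by a covering argument; the purely unrectifiable piece has $\Ical^{n-1}$-measure zero by Besicovitch--Federer and is therefore $|\Acal u|$-null by the dimensional estimate of Corollary~\ref{cor:dimensional1}; and the rectifiable piece is handled via the jump/rectifiability characterization of~\cite{anna}. This gives $|\Acal u|(\Theta_u\setminus J_u)=0$, and $\Hcal^{n-1}(\Theta_u\setminus J_u)=0$ then follows because $\Hcal^{n-1}\mres\Theta_u\ll|\Acal u|\mres\Theta_u$ by the very definition of $\Theta_u$ as a positive-upper-density set.
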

\begin{proof} A standard covering argument implies that $\Theta_u$ is a $\sigma$-finite set with respect to $\Hcal^{n-1}$. We may split $S \coloneqq \Theta_u \setminus J_u$ into two disjoint Borel sets 
	\[
	G \cup F \coloneqq \{\theta^{*{(n-1)}}(|\Acal u|) \in (0,\infty) \} \cup \{\theta^{*{(n-1)}}(|\Acal u|) = \infty\}.
	\]
	Following a standard procedure, we proceed to split $G$ as 
	$R \cup U$ where $R$ is countably $\Hcal^{n-1}$-rectifiable, $U$ is $\Hcal^{n-1}$-purely unrectifiable, and $\Hcal^{n-1}(R \cap U) = 0$. Once again, a standard covering argument gives $\Hcal^{n-1}(F \cup (R \cap U)) = 0$. On the other hand,
	the Besicovitch--Federer Theorem implies that $\Ical^{n-1}(U) = 0$. By the dimensional estimates of the previous corollary we get $|\Acal u|(S) = 0$. By the definition of $\Theta_u$, it also holds $\Hcal^{n-1} \mres \Theta_u \ll |\Acal u| \mres \Theta_u$, whence we conclude that $\Hcal^{n-1}(S) = 0$. This proves~(1).
	
	Since $J_u$ itself is countably $\Hcal^{n-1}$-rectifiable and $\Hcal^{n-1}(\Theta_u \setminus J_u) = 0$, then $\Theta_u$ is also countably $\Hcal^{n-1}$-rectifiable and (2) follows.

	%		  That $\Hcal^{n-1}(S) = 0$ follows from the fact that $|\Acal u|\mres G \ll \Hcal^{n-1} \mres G \ll |\Acal u| \mres G$. This proves the lemma.

	%	Invoking the Besicovitch--Federer we may then split $\Theta_u = S \cup R$ into an $(n-1)$-dimensional rectifiable part $R$ and an $\Hcal^{n-1}$-purely unrectifiable part $S$, i.e., $\Hcal^{n-1}(R \cap S) = 0$ and $\Ical^{n-1}(S) = 0$. which gives implies that $\Theta_u$ is $\Hcal^{n-1}$-countably rectifiable. This proves~(1). 
	Let $B \subset \Omega$ be a $\sigma$-finite Borel set with respect to $\Hcal^{n-1}$ so that $B = \bigcup_{i = 1}^\infty B_i$ where $\Hcal^{n-1}(B_i) < \infty$. From~(1) we know that
	\[
	|\Acal^c u|(B) = |\Acal^c u|(B \setminus \Theta_u).
	\]
	Since at every point in $\Omega \setminus \Theta_u$, the $(n-1)$-dimensional upper density of $|\Acal u|$ vanishes, a standard covering argument implies that, for any $\eps > 0$ it holds
	\[
	|\Acal^c u|(B_i) \le \eps \Hcal^{n-1}(B_i) \quad \text{for  all $i = 1,2,\dots$.}
	\] 
	Letting $\eps \to 0^+$ and using the finiteness of $\Hcal^{n-1}(B_i)$ for all $i \in \N$, we find that $|\Acal^c u|(B) = 0$. This proves~(3), which finishes the proof.
	%	By definition and a standard covering argument, we know that $\Theta_u$ is $\sigma$-finite with respect to $\Hcal^{n-1}$. N According to \cite[Theorem~15.6]{Matt95} we may write $\Theta_u = R \cup S$, where $R$ is $\Hcal^{n-1}$-rectifiable and $S$ is $\Hcal^{n-1}$-purely unrectifiable with $\Hcal^{n-1}(R \cap S) = 0$. By the Besicovitch--Federer Theorem~\cite[\S~3.3.13]{Federer69} it must hold $\Ical^{n-1}(S) = 0$. Since $\Hcal^{n-1} \ll |\Acal|$ on $\Theta$, we obtain that $\Hcal^{n-1}(S) = 0.$ This shows that $\Theta_u$ is $\Hcal^{n-1}$-rectifiable.
	%	
	%	
	%	The previous corollary then implies that $|\Acal u|(S) = 0$. On the other hand, since $R$ is $\Hcal^{n-1}$-countably rectifiable, we get
	%	\[
	%		\Acal u \mres R = \Abb(\nu_R)[u^+_R - u^-_R] \, \Hcal^{n-1} \mres R,
	%	\]
	%	where $\nu_R: R \to \Sbf^{n-1}$ is a measure theoretic orientation of $R$ and $u_R^\pm$ are the one-sided Lebesgue limits of $u$ with respect to $\nu_R$. Using that $|\Acal^c u| \mres S_u \equiv 0$, we deduce that $|u^+_R - u^-_R| = 0$ for $|\Acal^c u|$-almost every $x \in R$. This shows $|\Acal^c u|(R) = 0$. We have thus shown that 
	%	\[
	%		|\Acal^c u| = |\Acal^c u| \mres (\Omega \setminus \Theta_u).
	%	\]
	%	Since the upper $(n-1)$-dimensional estimate of $|\Acal u|$ is zero for all $x \in \Omega \setminus \Theta_u$, then a standard covering argument implies that 
	%	\[
	%		|\Acal^c u|(B) = |\Acal^c u|(B \setminus \Theta_u) = 0
	%	\] 
	%	for all $\sigma$-finite Borel sets $B \subset \Omega$ with respect to $\Hcal^{n-1}$. This finishes the proof.
\end{proof}

\section{Analysis of Lebesgue points} Now we focus on the Lebesgue continuity properties. We shall see, by the end of this section that the discontinuous diffusion part $\Acal^d u \coloneqq \Acal^s u\mres (S_u \setminus J_u)$ vanishes for elliptic operators satisfying the mixing property. Moreover, we establish that the only one form of approximate discontinuity for $\BV^\Acal$-functions is the jump-type discontinuity.

The results of this section hinge on the algebraic robustness of the rank-one property. Often, the main difficulty of the proofs will reside in finding the correct way to cast the algebraic structure of $\Abb$ into the well-established techniques developed for the symmetric gradient from~\cite{ambrosio1997fine-properties}. 

\begin{lemma}\label{lem:induction}
	Let $\Acal : \Crm^\infty(\R^n;V) \to \Crm^\infty(\R^n;W)$ be a first-order homogeneous linear elliptic differential operator satisfying the mixing condition~\eqref{eq:mix}. 
	
	Let $u \in \BV^\Acal(\R^n)$, $x \notin \Theta_u$, and $\xi \in \Sbf^{n-1}$. Assume that $\Hcal^{n-1}$-almost every point in the slice $x + \pi_\xi$ is a Lebesgue point of $u$. If $(\xi,e) \in \partial\sigma(\Acal)$ and the function $v : \pi_\xi \to V_e$ defined by $v(y) = \pbf^e u(x + y)$ has one-sided Lebesgue point limits at $0 \in \pi_\xi$ with respect to a suitable direction $\nu \in \pi_\xi$, then 
	\begin{enumerate}
		\item $0$ is a Lebesgue point of $v$ in $\pi_\xi$,
		\item $x$ is a Lebesgue point of $\pbf^e u$ in $\R^n$.
	\end{enumerate}
\end{lemma}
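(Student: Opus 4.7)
The plan is to reduce both assertions to $(n-1)$-dimensional statements on the slice $\pi_\xi$ combined with the vanishing-density hypothesis $x \notin \Theta_u$. After translating I assume $x = 0$ and set $v_t(y) \coloneqq \pbf^e u(y + t\xi)$ for $y \in \pi_\xi$ and $t \in \R$, so $v = v_0$. By Lemma~\ref{lem:sub}, the sliced operator $\Bcal_\xi^e$ is a first-order elliptic operator on $\pi_\xi$ still satisfying the mixing property~\eqref{eq:mix}, and Corollary~\ref{cor:hyper} gives the disintegration
\[
	|\pbf_\xi^e[\Acal u]|(B) \;=\; \int_\R |\Bcal_\xi^e v_t|(B_t)\dd \Hcal^1(t), \qquad B_t \coloneqq \set{y \in \pi_\xi}{y + t\xi \in B}.
\]

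To prove (1), I argue by contradiction: suppose the one-sided limits $a, b$ of $v$ at $0$ along $\nu$ differ. The one-sided limit conditions furnish the lower bound $\inf_{c \in V_e}\aveint_{B_r \cap \pi_\xi} |v - c|\dd \Hcal^{n-1} \geq |a - b|/4$ for all sufficiently small $r$, and by $L^1_\loc$-continuity of translation $v_t \to v$ in $L^1_\loc(\pi_\xi)$ as $t \to 0$, so the same bound (up to a negligible error) persists to each $v_t$ for $|t|$ small relative to $r$. For $\Hcal^1$-a.e. $t$, $v_t \in \BV^{\Bcal_\xi^e}_\loc(\pi_\xi)$ by Fubini in the disintegration, and the elliptic Poincar\'e inequality for $\Bcal_\xi^e$ on $\pi_\xi$ yields $|\Bcal_\xi^e v_t|(B_r \cap \pi_\xi) \geq c\,|a - b|\,r^{n-2}$ for those $t$. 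Integrating over a set of $t$-values of $\Hcal^1$-measure comparable to $r$ and inserting into the disintegration gives
\[
	|\pbf_\xi^e[\Acal u]|(B_{2r}(0)) \;\geq\; c\,|a - b|\,r^{n-1},
\]
which contradicts $0 \notin \Theta_u$, hence $a = b$.

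For (2), having shown that $v$ is approximately continuous at $0$ with value $a$, I use Fubini in the $\xi$-direction:
\[
	\aveint_{B_r(0)} |\pbf^e u(y) - a|\dd y \;\leq\; \aveint_{B_r(0)} |\pbf^e u(y) - v_0(\pbf_\xi(y))|\dd y \;+\; \aveint_{B_r(0)} |v_0(\pbf_\xi(y)) - a|\dd y.
\]
The second term is essentially an average of $|v_0 - a|$ over slices of $B_r$ in $\pi_\xi$, which is $o(1)$ by (1). The first is a slicewise oscillation in the $\xi$-direction, which, via the disintegration and the Poincar\'e inequality for $\Bcal_\xi^e$ applied fiberwise, is bounded by a constant times $r^{-(n-1)}|\Acal u|(B_r(0)) = o(1)$, again using $x \notin \Theta_u$.

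The principal obstacle lies in step (1): one must promote the plain $L^1_\loc$-convergence $v_t \to v$ to a quantitative lower bound on $|\Bcal_\xi^e v_t|$ that holds on a set of $t$-values of $\Hcal^1$-measure of order $r$, rather than merely $o(r)$. This is dispatched by combining Chebyshev's inequality in the disintegration (which produces many $t$-values along which $|\Bcal_\xi^e v_t|$ is uniformly controlled on balls of radius $\sim r$) with the algebraic stability of the rank-one/mixing property under the slicing construction of Lemma~\ref{lem:sub}, so that all fine-property tools for first-order elliptic mixing operators remain available at the sliced level.
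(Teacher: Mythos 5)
Your proposal takes a genuinely different route from the paper's, but it contains gaps that are not merely technical. The central one is the invocation in (1) of an ``elliptic Poincar\'e inequality'' $|\Bcal_\xi^e v_t|(B_r \cap \pi_\xi) \gtrsim |a-b|\,r^{n-2}$ for fibers $v_t$ that are $\Lrm^1$-close to the jump profile $a\ONE_{\{y\cdot\nu>0\}} + b\ONE_{\{y\cdot\nu<0\}}$. This is not a standard inequality: the kernel of $\Bcal_\xi^e$ on a ball consists of polynomials of possibly positive degree, so a Poincar\'e-type statement controls $\inf_p \|v_t - p\|_{\Lrm^1}$ over that polynomial kernel; one would still have to prove that a jump profile keeps its $\Lrm^1$-distance $\gtrsim |a-b|\,r^{n-1}$ from every such polynomial at every scale $r$, and one would need an $\Lrm^1$-endpoint Gagliardo-type inequality for $\Bcal_\xi^e$. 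None of this is stated or established. It is precisely the work the paper carries out by different means: the polarization identity~\eqref{eq:KEY} (built from Lemma~\ref{lem:polarization}) decomposes $|a^+-a^-|$ into transversal one-dimensional differences along the spectral directions $\xi\pm\eta$ and $\xi$, each of which is majorized by $r^{-(n-1)}|\Acal u|(Q_{2r})$. Your Poincar\'e detour replaces this by an unproved inequality of comparable depth.

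Two further gaps concern the $\xi$-direction. First, propagating the oscillation bound from $v_0$ to $v_t$ for $|t|\sim r$ requires a quantitative rate that $\Lrm^1_\loc$-continuity of translation does not supply, and Chebyshev in the disintegration yields only an \emph{upper} bound on $|\Bcal_\xi^e v_t|(B_r)$ for typical $t$, which is orthogonal to the \emph{lower} bound on the oscillation of $v_t$ you need. Second, in (2) your first term $\aveint_{B_r(0)}|\pbf^e u(y) - v_0(\pbf_\xi(y))|\dd y$ is the $\xi$-directional oscillation of the $V_e$-valued map $\pbf^e u$; the disintegration of $\pbf_\xi^e[\Acal u]$ only sees $\Bcal_\xi^e v_t$ acting \emph{within} each fiber $\pi_\xi$, and $\partial_\xi(\pbf^e u)$ is simply not controlled by $\Acal u$ (only $\partial_\xi u^e$ is, and $u^e$ is orthogonal to $V_e$). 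The paper avoids this trap by invoking the quasi-continuity estimate~\eqref{eq:aquasi} from~\cite[Prop.~1.2]{anna}, which controls the \emph{full} oscillation $\aveint_{Q_r}|u-d_r|$ by $\BigO(r)$ under $x\notin\Theta_u$, and then identifies $\lim_r \pbf^e[d_r]$ via a transversality argument over spectral pairs $(\eta_j,f_j)$ spanning $(V_e)^*$ (see~\eqref{eq:span}), rather than by a fiberwise Poincar\'e estimate.
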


\begin{remark}
	In proving property (2), we use that if $\Acal$ is complex-elliptic, then every $u \in \BV^\Acal(\R^n)$ is quasi-continuous on the set $\Theta_u$ (see~\cite[Propisition~1.2]{anna}). 
\end{remark}
\begin{proof}
%	The arguments of the previous proof show that if write $x = y + t_0\xi$, where $y \in \pi_\xi$ and $t_0 \in \R$, then $t_0$ must be a Lebesgue point of the section $u_{y,\xi}^a$. 
%	Let us write $\tilde u_{y,\xi}^a(t_0) = d^a \in \R$. We may assume without loss of generality that $t_0 = 0$. 
	\emph{Step~1. Preparations.} We may assume without loss of generality that $x = 0$. %Let $C_r$ be the $n$-dimensional cube centered at $0$, sides of length $r$, and with two of its sides parallel to $\nu$ and $\xi$. Let us write $A_r$ to denote the $(n-1)$-dimensional cube $C_r \cap \pi_\xi$. 
	Let $(a^+,a^-,\nu)$ be the triple describing the one-sided limits of $v$ at $0 \in \pi_\xi$. Clearly, we may assume that $|a^+ - a^-| > 0$ for otherwise the first statement is trivial (and we may pass to the next step). By construction we have $a \coloneqq a^+ - a^- \in V_e$. 
	 
	 We claim that there exists a non-trivial pair $(\eta,f) \in \partial\sigma(\Acal)$ satisfying
	 \begin{align}\label{eq:positive}
	 	(\eta,f)  \in \pi_\xi \times( V_e)^*, \qquad \dpr{\eta,\nu} \neq 0,\qquad
	 	\dpr{f,a} \neq 0,
	 \end{align}
 and a non-zero covector $g \in \ell_f$ such that
 \begin{align}\label{eq:positive2}
 	(\xi + \eta, e + g),	(\eta - \xi, g - e) \in \partial\sigma(\Acal).
 \end{align}
	 First, we use that $\proj_{\pi_\xi} \partial\sigma(\Acal_\xi^e \mres \pi_\xi) = \pi_\xi$ to find a non-zero covector $d \in (V_e)^*$ with $(\nu,d) \in \partial\sigma(\Acal)$. If $|\dpr{d , a}| > 0$, then we simply set $(\eta,f) = (\nu,d)$. If however $\dpr{d,a} = 0$, then we have to solve two further sub-cases: This time we use the identity $\proj_{(V_e)^*} \partial\sigma(\Acal_\xi^e \mres \pi_\xi) = (V_e)^*$ to find a direction $\omega \in \Sbf^{n-1} \cap \pi_\xi$ with $(\omega,a^*) \in \partial\sigma(\Acal)$, where $a^* \in (V_e)^*$ is any covector satisfying $\dpr{a^*,a} = 1$. If $|\dpr{\omega,\nu}| > 0$, then we set $(\eta,f) = (\omega,a^*)$. Else, we may use  Lemma~\ref{lem:polarization} to find a  covector $h \in \ell_d$ such that
	 \[
	 	(\nu + \omega,h + a^*) \in \partial\sigma(\Acal).
	 \]
	 This pair satisfies the sought properties of~\eqref{eq:positive}. Since this is the only other possible case,  this proves~\eqref{eq:positive}. Property~\eqref{eq:positive2} follows directly from an application of Lemma~\ref{lem:polarization} with $(\xi,e)$ and $(\eta,f)$. 
	 
	 \emph{Observation:} Slicing the operator $\Acal_\xi^e \mres \pi_\xi$ with respect to the pair $(\eta,f)$, we may find another pair $(\eta_2,f_2) \in \partial\sigma(\Acal)$ satisfying~\eqref{eq:positive}. Repeating the same argument yields a family $\{\eta_j,f_j\}_{j = 1}^\ell \subset \partial\sigma(\Acal)$ satisfying~\eqref{eq:positive} where the $V$-coordinates also satisfy
	 \begin{equation}\label{eq:span}
	 		\spn\{f_j\}_{j = 1}^\ell = (V_e)^*.
	 \end{equation}
	 This will be used in Step~3.

	\emph{Step~2. Approximate continuity on $\pi_\xi$.} The idea is to show first that $0 \in \pi_\xi$ is a Lebesgue point of $v$. 
		This part of the proof mimics the proof of Theorem~5.1 in~\cite{ambrosio1997fine-properties} into our context. Let $Q_r$ be the open cube of radius $r$ that is centered at $y$ and with two of its axes oriented by the $\xi$- and $\eta$-directions respectively. We write $C_r = Q_r \cap \pi_\xi$ and $C_r^\pm = \set{y \in C_r}{\pm y \cdot \nu \ge 0}$. Lastly, we write $A_r$ to denote the $(n-1)$-dimensional open ball in $\pi_\xi$ with radius $r(\nu\cdot\eta)$. With these conventions we have $A_r \pm r \eta \in C_{2r}^\pm$. 
		
		Let $Y$  be the set of all real numbers $\rho \ge 0$ such that both $y \pm \rho \eta$ are Lebesgue points of $u$ in $\R^n$ for $\Hcal^{n-1}$-almost every $y \in \pi_\xi$. We know that $\Hcal^1({\R^+ \setminus Y}) = 0$ and that, by assumption, also $0 \in Y$.
		
		Let us record for later use that the one-sided limits assumption implies
	\begin{equation}\label{eq:2side}
	\lim_{r \to 0^+} \frac{1}{r^{n-1}}\int_{C_{2r}^\pm} |v{(y)} - a^\pm| = 0.
	\end{equation}
%	Moreover, obeying the properties of the jump set of a $\C$-elliptic operator, we get $[a^+ - a^-] \in \Abb_\xi^e(\nu) \subset \im \Abb(\nu)$. In particular, $|a^+ - a^-| \lesssim |\Abb(\nu)|$ 
%	\[
%		|a| \lesssim |\Abb(\eta)[v]|
%	\

%	\begin{align*}
%		|\Abb(\nu)[a^+-a^-]| & = \lim_{r \to 0^+} \frac{|\Acal_\xi^e u|(Q_r^\nu)}{r^{n-2}} \\
%		& \quad \le \sum_{k = 1}^{r} \int_{\ell_{\tilde \xi_i}} \bigg(\int_{\pi_{\tilde \xi_{i_k}} \cap \pi_{\xi}} |D v_{\tilde y}^{\tilde e_{i_k}}|((B_{z})_{\tilde y}^{\tilde \xi_{i_k}}) \dd \Hcal^{n-2}(\tilde y) \bigg) \dd \Hcal^{1}( z)
%	\end{align*}
	
	The triangle inequality,~\eqref{eq:positive}, and a change of variables yield the estimate 
	\begin{equation}\label{eq:a}
	\begin{split}
	|a^+ - a^-|& \lesssim \frac 1{(2r)^{n-1}} \bigg(\int_{C_{2r}^+}{} |v(y) - a^+| \dd \Hcal^{n-1}(y) \\
	& \quad + \int_{C_{2r}^-}{} |v(y) - a^-| \dd \Hcal^{n-1}(y) \\
	& \qquad + \int_{A_{r}}{} |u^{g}(y + r\eta) - u^{g}(y - r\eta)| \dd  \Hcal^{n-1}(y)\bigg).
	\end{split}
	\end{equation}
	The change of variables $\tilde y = y \pm r \eta$  and the one-sided  continuity~\eqref{eq:2side} give that the first two  terms of the right-hand side above are of order $\BigO(r)$. Therefore, we only need to show the last term vanishes as $r \to 0^+$.  
	In all that follows we write $\tilde u$ to denote the Lebesgue representative of $u$. 
	Let $\rho \in Y$. Using the polarization from Lemma~\ref{lem:polarization} we may decompose, for $\Hcal^{n-1}$-a.e. $y \in \pi_\xi$, the difference $\tilde u^{g}(y - \rho \eta) - \tilde u^{g}(y + \rho \eta)$ as
	\begin{equation}\label{eq:KEY}
	\begin{split}
	2[\tilde u^{g}(y + \rho \eta) -  \tilde u^{g}(y - \rho \eta)] & = \tilde u^{e+g}(y + \rho \xi) - \tilde u^{e+g}(t - \rho \eta) \\
	&\quad + \tilde u^{g-e}(y + \rho \eta) - \tilde u^{g-e}(y + \rho \xi) \\
	&\qquad + \tilde u^{g-e}(y - \rho \xi) - \tilde u^{g-e}(y - \rho\eta) \\
	& \quad \qquad + \tilde u^{g+e}(y + \rho \eta) - \tilde u^{g+e}(y - \rho \xi) \\
	& \qquad \qquad - 2[\tilde u^{e
	}(y + \rho \xi) - \tilde u^e(y-\rho \xi)].
	\end{split}
	\end{equation}
	Each of this terms may be estimated by a total variation term in a transversal direction to $\xi$. Following a measure theoretic argument as in the proof of Theorem~5.1 in~\cite{ambrosio1997fine-properties}, we may (for $\Hcal^{n-1}$-almost every $y \in \pi_\xi$ and $\rho \in Y$) estimate $| \tilde u^g(t - \rho \eta) -  \tilde u^{g}(t + \rho \eta)|$, up to a multiplicative constant, by the sum
	\begin{equation*}
	V\tilde u_{\eta+\xi,y}^{g+e}([-\rho,\rho])   + V\tilde u_{\eta - \xi,y}^{g - e}([-\rho,\rho])   + V_{\xi,y}^e\tilde u([-\rho,\rho]),
	\end{equation*}
	where the latter is the one-dimensional total variation in terms of difference quotients (see, e.g.,~(2.8) in~\cite{ambrosio1997fine-properties}).
	Returning to the first estimate~\eqref{eq:a}, we then deduce from the bound $V\tilde h \le |Dh|$ for functions $h$ of one variable that
	\begin{align*}
	|a^+-a^-| & \lesssim  \frac c{r^{n-1}} \bigg( \int_{A_r} |D u^{g + e}_{\eta + \xi,y}|([-r,r]) \dd \Hcal^{n-1}(y) \\
	& \quad + \int_{A_r} |D u^{g-e}_{\eta - \xi,y}|([-r,r]) \dd \Hcal^{n-1}(y) \\
	& \qquad + \int_{A_r} |D u^{e}_{\xi,y} |([-r,r]) \dd \Hcal^{n-1}(y) \bigg)  + \BigO(r).
	\end{align*}
	Since all pairs $(\eta + \xi,e+g),(\eta - \xi,e-g),(\xi,e)$ all  belong to $\partial\sigma(\Acal)$, we may control each of the terms on the right hand side by $|\Acal u|(Q_{2r})$. The first part of the proof and the fact that $0 \notin \Theta_u$ give
	\[
	|a^+ - a^-| \lesssim \limsup_{r \to  0^+} \bigg( \frac C{r^{n-1}} |\Acal u|(Q_{2r}) + \BigO(r) \bigg) = 0.
	\]
	This shows that $0 \in \pi_\xi$ is a Lebesgue point of $v$.

%	\emph{Step 3. Approximate continuity on $\pi_\xi$.} We now show that $0 \in \pi$ is a Lebesgue point of $p_e u : \pi_\xi \to V_e$. There are only two possibilities, which follow frm the fact that $p_e u(x + \frarg)$ has one-sided Lebesgue points on $\pi_\xi$ with respect to some suitable direction $\nu \in \pi_\xi$: either $0 \in \pi$ is a Lebesgue point or it must be a jump point of $p_e u$. If it is a Lebesgue point then we have nothing to show, so let us assume that $0$ it is a denisty point of $|(\Acal^a_\xi)^ju|$ in $\pi$. By the Structure Theorem, then it must hold that $x \in J_u$. Indeed, this follows from the absolute continuity
%	\[
%		|(\Acal^a_\xi)^j u| \ll |\Acal^j u|.
%	\]  
	
	\emph{Step 3. Proof of continuity by transversality.} Now, we use the continuity proved in Step~2, and another suitable transversality argument to show that $0 \in \R^n$ is a Lebesgue point of $\pbf^e u$ in $\R^n$. This part of the proof follows by verbatim the arguments contained in the proof of Theorem~6.4 in~\cite{ambrosio1997fine-properties}. Since $0 \notin \Theta_u$, the quasi-continuity established in \cite[Prop.~1.2]{anna} implies that there exist a family of reals $\{d_r\}_{r \in \R} \subset V$ such that
	\begin{align}\label{eq:aquasi}
	\frac 1{r^n}\int_{Q_r} |u^{\mathbf v}(y) - d_r^{\mathbf v}| \dd y \le
%	\aveint{Q_r(x;\eta) \cap \pi_\eta}{} \bigg(\aveint{-r}{r} & |u^{e}_{y,\eta}(t) - (d_r \cdot e)| \dd t\bigg) \dd \Hcal^{n-1}(y) \\
 \frac{1}{r^n} \int_{Q_r}|u- d_r|= \BigO(r) \quad \forall \, \mathbf v \in (V_e)^*,
	\end{align}
	where we have used the short-hand notation $d_r^{\mathbf v} \coloneqq \dpr{\mathbf v, d_r}$. Notice that a priori this is not enough to ensure Lebesgue continuity since the sequence $\{d_r\}$ may not be convergent as $t \to 0^+$. Applying Fubini's Theorem we further obtain
	\[
		\lim_{r \to 0^+} \,  \frac 1{r^{n-1}} \int_{C_r} \bigg( \frac{1}{r} \int_{-r}^r  |u^{\mathbf v}(y + t\xi) - d_r^{\mathbf v}| \dd t   \bigg) \dd \Hcal^{n-1}(y) = 0,
	\]
	for all $\mathbf v  \in (V_e)^*$.
	Now, fix $r > 0$ an arbitrary radius. A standard measure theoretic argument applied to the previous estimate yields the existence of $\rho = \rho(r) \in (r/2,r)$ such that
	\begin{equation}\label{eq:select}
		\frac{1}{r^{n-1}} \int_{C_r} |\tilde u^{\mathbf v}(y + \rho \xi) - d_r^{\mathbf v}| + |\tilde u^{\mathbf v}(y - \rho \xi) - d_r^{\mathbf v}| \dd \Hcal^{n-1}(y) \le 2\BigO(r).  
 	\end{equation}

	We know that $v$ is approximately continuous at $0$ when restricted to $\pi_\xi$ (cf. Step~2). Let us write $d \coloneqq \tilde {v}(0)$ to denote its Lebesgue point at $0 \in \pi_\xi$. Let $j \in \{1,\dots,\ell\}$ where $\ell$ is the integer in~\eqref{eq:span}. Similarly to the previous step, we find non-zero covectors $g_j \in \ell_{f_j}$ with $(\xi \pm \eta_j,e \pm g_j) \in \partial\sigma(\Acal)$ so that the difference $[\tilde u^{g_j}(y + \rho \xi) - \tilde u^{f_j}(y - \rho \xi) - 2d^{g_j}]$ may be written as a sum of ``{good transversal}'' differences:
	\begin{align*}
		\tilde u^{g_j}(y + \rho \xi) - & \tilde u^{g_j}(y - \rho \xi) - 2d^{g_j} \\
		& = \tilde u^{g_j + e} (y + \rho \xi) - \tilde u^{g_j + e} (y - \rho \eta_j)  \\
		& \quad + \tilde u^{g_j - e} (y - \rho \xi) - \tilde u^{g_j - e} (y - \rho \eta_j) \\
		& \qquad + \tilde u^e(y - \rho \xi) - \tilde u^e(y + \rho \xi) + \underbrace{2[\tilde u^{g_j}(y - \rho \eta_j) - d^{g_j}]}_{\text{app. cont. difference over $\pi_\xi$}},
	\end{align*}
	Integrating both sides over $C_r$, we are now in position to use a suitable transversality reasoning as in the previous step (see also~(6.1) in~\cite{ambrosio1997fine-properties}) where we exploit that $(\eta_j,g_j) \in \partial\sigma(\Acal)$: 
	\begin{align*}
		\frac 1{r^{n-1}} \int_{C_r} & |\tilde u^{g_j}(\rho + t\xi) - \tilde u^{g_j}(y-\rho\xi) - 2d^{g_j}| \dd \Hcal^{n-1}(y)  \\
		& \lesssim \frac{1}{r^{n-1}}|\Acal(Q_{2r})| + \frac{1}{r^{n-1}} \int_{C_r} |\tilde u^{g_j}(y - \rho \eta_j) - d^{g_j}| \dd \Hcal^{n-1}(y).
	\end{align*}
	Then, the continuity of $p_e u$ at $0 \in \pi$ and the fact that
	 $0 \notin \Theta_u$ imply
	\[
		\lim_{r \to 0^+} \frac 1{r^{n-1}} \int_{C_r} |\tilde u^{g_j}(\rho + t\xi) - \tilde u^{g_j}(y-\rho\xi) - 2d^{g_j}| \dd \Hcal^{n-1}(y) = 0. 
	\]
	This estimate and~\eqref{eq:select} imply that 
	$d_r^{g_j}$ converges to $d^{g_j}$ as $r \to 0^+$. Since $\{g_j\}_{j = 1}^\ell$ spans $(V_e)^*$, we deduce that $\pbf^e[d_r] \to \pbf^e[d]$ as $r \to 0^+$. We conclude from~\eqref{eq:aquasi} that
	\[
		\lim_{r \to 0^+} 
		\frac 1{r^{n-1}}\int_{C_r} |\pbf^eu(y) - \pbf^e[d]| \dd y = 0,
	\]
	which proves that $0$ is indeed a Lebesgue point of $\pbf^e u$ at $0 \in \R^n$. This finishes the proof.
\end{proof}

\begin{corollary}\label{cor:F}
	 $|\Acal u|$-almost every $x \in \Omega \setminus  \Theta_u$ is a Lebesgue point of $u$. In particular,
	\[
		|\Acal u|(S_u \setminus J_u) = 0.
	\]
\end{corollary}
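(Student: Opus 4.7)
\emph{Proposal.} I would proceed by induction on the ambient dimension $n$, with Lemma~\ref{lem:induction} serving as the engine of the inductive step. The first assertion carries all of the content; the ``in particular'' clause follows from the decomposition $S_u\setminus J_u = [(S_u\setminus J_u)\cap\Theta_u]\cup(S_u\setminus\Theta_u)$ on observing that the first piece lies inside $\Theta_u\setminus J_u$, which is $\Hcal^{n-1}$-null by Corollary~\ref{cor:iv}~(1) and hence $|\Acal u|$-null via $|\Acal u|\ll\Ical^{n-1}\ll\Hcal^{n-1}$ (Corollary~\ref{cor:dimensional1}), while the second piece is $|\Acal u|$-null by the first assertion. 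For the base case $n=1$, ellipticity of $\Acal$ in a single variable forces its principal symbol to be injective, so $\BV^\Acal(\R)$ is isomorphic to classical $\BV(\R;V)$ and the claim is standard.

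\emph{Inductive step, verification of (b).} For $n\ge 2$ I would first extend $u$ trivially to $\R^n$ (permissible by Section~\ref{sec:traces}, since mixing plus ellipticity implies complex-ellipticity), then use Corollary~\ref{cor:projection} to pick a basis $\{e_j\}_{j=1}^{N}$ of $V^*$ together with directions $\xi_j\in\Sbf^{n-1}$ such that $(\xi_j,e_j)\in\partial\sigma(\Acal)$. Because the projections $\{\pbf^{e_j}\}$ jointly determine $u$, it suffices to show that $|\Acal u|$-a.e.\ $x\notin\Theta_u$ is a Lebesgue point of $\pbf^{e_j}u$ for every $j$, which I would achieve by applying Lemma~\ref{lem:induction} with $(\xi,e)=(\xi_j,e_j)$. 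The sliced function $v_z(y)=\pbf^{e_j}u(z+y)$ belongs to $\BV^{\Bcal_{\xi_j}^{e_j}}(\pi_{\xi_j})$ for $\Hcal^1$-a.e.\ $z\in\ell_{\xi_j}$ by Corollary~\ref{cor:hyper}, and Lemma~\ref{lem:sub} guarantees that $\Bcal_{\xi_j}^{e_j}$ is elliptic on $\pi_{\xi_j}$ and satisfies the mixing property. The inductive hypothesis then yields $|\Bcal_{\xi_j}^{e_j}v_z|(S_{v_z}\setminus J_{v_z})=0$ for $\Hcal^1$-a.e.\ such $z$, and integrating via the Fubini representation of Corollary~\ref{cor:hyper}(2) I deduce that for $|\pbf_{\xi_j}^{e_j}[\Acal u]|$-a.e.\ $x=z+y$ the point $y\in\pi_{\xi_j}$ is either a Lebesgue point of $v_z$ or an approximate jump point of $v_z$. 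In either case $v_z$ has one-sided Lebesgue limits at $y$ with respect to some direction $\nu\in\pi_{\xi_j}$ (the jump normal in the second case), which is precisely hypothesis~(b) of Lemma~\ref{lem:induction}.

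\emph{Verification of (a), main obstacle, and conclusion.} Hypothesis~(a), which asks that $\Hcal^{n-1}$-a.e.\ point of the ambient hyperplane $x+\pi_{\xi_j}$ be an $\R^n$-Lebesgue point of $u$, is the subtle step. For each fixed $x$, a Fubini argument on $\R^n$ applied to the $\Leb^n$-null set $S_u$ shows that $\Hcal^{n-1}$-a.e.\ $\xi\in\Sbf^{n-1}$ enjoys $\Hcal^{n-1}((x+\pi_\xi)\cap S_u)=0$. I would combine this with the flexibility afforded by Proposition~\ref{lem:polarization} to replace $\xi_j$ by a nearby spectral direction adapted to $x$ (keeping the second coordinate $e_j$ fixed), and with the rectifiability estimates of Corollaries~\ref{cor:dimensional1} and~\ref{cor:iv} to show that the residual exceptional set, where no such substitution is available, is $|\Acal u|$-null. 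Once (a) and (b) hold simultaneously for $|\Acal u|$-a.e.\ $x\notin\Theta_u$, Lemma~\ref{lem:induction} delivers Lebesgue continuity of each $\pbf^{e_j}u$ at $x$ and hence of $u$ itself. The main obstacle is precisely (a): the fiber of $\partial\sigma(\Acal)$ above a fixed $e_j$ can be very small -- a single direction in the $\BD$ case -- so the $\xi$-averaging step must be carried out delicately, likely exploiting the polarization lemma to maneuver inside the spectral variety while preserving the coordinate $e_j$.
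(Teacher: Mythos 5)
Your outline captures the correct skeleton---induction on the ambient dimension with Lemma~\ref{lem:induction} as the workhorse, and the "in particular'' reduction via $J_u\subset\Theta_u$ and Corollary~\ref{cor:iv}(1) is fine---but there is a genuine gap in the first assertion that the proposal never closes. Applying Lemma~\ref{lem:induction} with the pair $(\xi_j,e_j)$, as you do, yields Lebesgue continuity of $\pbf^{e_j}u$ only at $|\pbf_{\xi_j}^{e_j}[\Acal u]|$-almost every $x\notin\Theta_u$. But $\pbf_{\xi_j}^{e_j}[\Acal u]$ is the projection onto $W_{\xi_j}^{e_j}$, which by construction \emph{annihilates} precisely the $w_j$-component of $\Acal u$ (i.e.\ the scalar measure $\mu_{\xi_j}^{e_j}=\dpr{w_j,\Acal u}$). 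So the set where you get continuity of $\pbf^{e_j}u$ can miss a set of full $|\mu_{\xi_j}^{e_j}|$-measure, and summing over $j$ does not repair this, since for a given $x$ the index $j$ for which $x$ is $|\pbf_{\xi_j}^{e_j}[\Acal u]|$-generic can differ from the indices where $|\Acal u|$ is actually concentrated. The paper explicitly names and closes this gap: after the analogous Claim~1 (which, incidentally, establishes continuity of $u^e$ at $|\mu_\xi^e|$-a.e.\ point via a \emph{transversal} slice $(\eta,f)$ with $\eta\perp\xi$, $f\perp e$, rather than by slicing along $\xi$ itself), the entire second half of the proof is a polarization and linearity argument---decomposing $\mu_{\eta\pm\omega}^{f\pm v}=\mu_\eta^f\pm\sigma+\mu_\omega^v$, introducing the auxiliary measure $\Lambda$ with densities $\lambda_1,\lambda_2,\lambda_3$, and splitting $\R^n\setminus\Theta_u$ into the sets $A_1$ and $A_2$---to upgrade the conclusion from "$u^e$ continuous $|\mu_\xi^e|$-a.e.'' to "$u^e$ continuous $|\mu_\eta^f|$-a.e.\ for arbitrary $(\eta,f)\in\partial\sigma(\Acal)$'', from which the rank-one property then gives continuity of $u$ at $|\Acal u|$-a.e.\ $x\notin\Theta_u$. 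This lifting step is the technical heart of the corollary and your proposal omits it entirely.

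Conversely, you overstate the difficulty of hypothesis~(a) (the requirement that $\Hcal^{n-1}$-a.e.\ point of $x+\pi_{\xi_j}$ be a Lebesgue point of $u$). Once the codimension-one slicing identity for $\pbf_\eta^f[\Acal u]$ is in place, a plain Fubini argument on the $\Leb^n$-null set $S_u$---exactly what is packaged in Corollary~\ref{cor:help}(1)---shows that (a) holds for $\Hcal^1$-a.e.\ slice parameter $z$, and that is all that is needed because the relevant measure disintegrates over $z$. There is no need to rotate $\xi_j$ to a nearby spectral direction "adapted to $x$'', and in fact the polarization lemma is not invoked for that purpose in the paper; it is used instead to produce the transversal spectral pairs $(\eta\pm\omega,f\pm v)$ that drive the measure-lifting argument you are missing.
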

\begin{proof} With all the algebraic structure we have developed now, the proof should follow closely the proof of~\cite[Proposition~6.8]{ambrosio1997fine-properties}.
	
	 We may assume that $n \ge 2$ for otherwise the proof follows from the one-dimensional classical $\BV$-theory. Since the claim is local, we may as well assume that $u \in \BV^\Acal(\R^n)$. Our proof is based on an induction argument over the dimension $n$. We assume that continuity assertion holds for all elliptic operators satisfying the rank-one property that act on spaces of functions of $(n-1)$-variables, and we prove the result for  operators acting on functions spaces of $n$-variables. The step of induction is clear since the case $n - 1 = 1$ is covered by the $\BV$-theory.
	 
	 \emph{Claim 1.} Let us write $\mu = \Acal u$ and $\mu_\xi^e \coloneqq \dpr{w,\mu}$ where $g_\Acal(w) = \xi \otimes e$ so that $\mu_\xi^e = {\partial_\xi u^e}$. The first task will be to show that $|\mu_\xi^e|$-a.e. point in $\Omega$ must either be contained in $\Theta_u$ or be a Lebesgue point of $u^e$. To this end, let us choose a pair $(\eta,f) \in \partial\sigma(\Acal)$ satisfying $\eta \in \pi_\xi$ and $f \in (V_e)^*$. In particular, this ensures that
	\begin{equation}\label{eq:coo}
		e \in (\pbf^f[V])^* = (V_f)^*, \quad |\mu_{\xi}^e| \lesssim |\pbf_\eta^f[\mu]|.
	\end{equation}
	%Associated to this pair, we write $P_\xi \in W$ the vector satisfying $\pbf_\xi^v f_\Acal = (\xi \otimes v_\xi)^*$. 
%	Next, we claim that $|\pbf_\eta^f[ \Acal u]|$-almost every $x \in \R^n \setminus \Theta_u$ is a Lebesgue point of the function $\pbf^f u$.
%	Choose $\eta \in \pi_\xi$ adn then chose an associated spectral pair vector $v_\eta \in V_{v_\xi}$ so that $\Acal_\eta^{v_\eta}$ is a sub-operator of $\Acal$. 
The codimension-one slicing tells us that $|\pbf_\eta^f[\mu]|$ is concentrated on points of the form $y + z \eta$, where $y \in \pi_\eta$ and $z \in S$ for some $S \subset \ell_\eta$ with $\Hcal^1(\ell_\eta \setminus S) = 0$. Moreover, the slices $\pi_\eta \ni y \mapsto v_z(y) \coloneqq \pbf^{f} u(y + z)$  are well-defined and all belong to $\BV^{\Bcal}(\pi_\xi)$ for every $z \in S$, where $\Bcal = \Acal_\eta^{f} \mres \pi_\xi$ is elliptic and satisfies the rank-one property. The hypothesis of induction then ensures  that, at every $z \in S$, $|\Bcal  v_z|$-almost every point in $\pi_\eta$ has one-sided Lebesgue points (with respect to some direction). The slicing identity
\[
|\pbf_\eta^{f}[\mu]|(B) = \int_S |\Bcal v_z|(B_{z}) \dd \Hcal^{1}(z).
\]
and Lemma~\ref{lem:induction} further imply that every such point either belongs to $\Theta_u$ or it must be a Lebesgue point of $\pbf^f \circ u$ in $\R^n$.  In sight of~\eqref{eq:coo} this shows that 
\[
	\text{$|\mu_\xi^e|$-almost every $x \in \Omega \setminus \Theta_u$ is a Lesbegue point of $u^e$}.
\]
This proves the first claim.

The next step is to use this property of $u$ and $\Theta_u^\complement$ to actually lift the Lebesgue continuity to all the coordinates of $u$, at $|\mu|$-almost every $x \in \Omega \setminus \Theta_u$. In light of the rank-one property, it shall be enough to show that $u^e$ is Lebesgue continuous at $|\mu_\eta^f|$-almost every point in $\Omega \setminus \Theta_u$, where $(\eta,f)$ is an arbitrary pair in $\partial\sigma(\Acal)$. Clearly, we may assume that $f \notin \ell_e$ for otherwise the claim above implies the desired result. By a change of variables we may further assume that $f \in (V_e)^* = 0$. 
Let $\xi \in \pi_\eta$ be such that $(\xi,e) \in \partial\sigma(\Acal)$.  In view of the polarization result in Lemma~\ref{lem:polarization} we may find a  direction $\omega \in \ell_\xi$ and a non-zero coordinate $v \in \ell_e$ for which
\[
	(\eta + \omega,f  + v), (\eta - \omega,f - v) \in \partial\sigma(\Acal).
\] 
%Now, let $\Ocal \subset \Omega$ be a full $|\mu|$-measure set where the density $\mu / |\mu|$ exists. 
On the other hand, the linearity of (distributional) differentiation ensures that
\begin{align*}
\mu_{\eta \pm \omega}^{f \pm v} & =\phantom{:} \partial_{\eta \pm \omega} u^{f \pm v} \\
& =\phantom{:} \partial_{\xi} u^f \pm(\partial_\eta u^v + \partial_\omega u^f) + \partial_\omega u^v \\
& =\phantom{:} \mu_\eta^f  \pm(\partial_\eta u^v + \partial_\omega u^f) + \mu_\omega^v \\
& \eqqcolon \mu_\eta^f \pm \sigma + \mu_\omega^v.
\end{align*}
In particular the distribution $\sigma$ is a measure, which is absolutely  continuous with respect to $\Lambda$. let $\Ocal \subset \Omega$ be a full $|\mu|$-measure set where the density $\mu / |\mu|$ exists and consider the measure $\Lambda \coloneqq |\mu_\eta^f| + |\sigma| + |\mu_\omega^v| \ll |\mu|$. There exist Borel functions $\lambda_1,\lambda_2,\lambda_3 : \Ocal \to \R$ such that 
\[
	 \mu_\eta^f = \lambda_1 \Lambda, \quad \mu_{\omega}^v = \lambda_2 \Lambda, \quad \sigma = \lambda_3 \Lambda.
\]
We split $(\R^n - \Theta_u)$ into the  disjoint Borel sets 
\begin{gather*}
A_1 \coloneqq \set{x \in \R^n - \Theta_u}{\lambda_1(x) + \lambda_2(x) = 0},\\
A_2 \coloneqq \set{x \in \R^n - \Theta_u}{\lambda_1(x) + \lambda_2(x) \neq 0}.
\end{gather*}
On $A_1$ the total variation measures of $\mu_{\eta}^f$ and $\mu_\xi^v$ are identical and therefore (recall that $v \in \ell_e$) $u^e$ is Lebesgue continuous $|\mu_\eta^f|$-almost everywhere on $A_1$. 
 On the other hand, the measure $|\mu_{\eta + \omega}^{f + v}| + |\mu_{\eta - \omega}^{f - v}|$ is strictly positive in $A_2$. This implies that $|\mu|$-almost every point in $x\in A_2$ is either a Lebesgue point of $u^{f + v}$ or a Lebesgue point of $u^{f - v}$. By linearity, we conclude that  $u^e$ is Lebesgue continuous $|\mu_\eta^f|$-almost everywhere in $A_2$. This  shows that $u^e$ is Lebesgue continuous $|\mu_\eta^f|$-almost everywhere on $\R^n \setminus \Theta_u$, as desired. 
Lastly, since both $e \in V^*$ and $(\eta,f) \in \partial\sigma(\Acal)$ were chosen in an arbitrary manner, the rank-one property implies that (recall that $J_u \subset \Theta_u$ and $|\Acal u|(\Theta_u \setminus J_u) = 0$)
\[
	|\Acal u|(S_u \setminus J_u) =  0.
\]
This finishes the proof.
\end{proof}

Arguing as in~\cite[Thm. 6.1]{ambrosio1997fine-properties} we obtain the following more general statement:
\begin{corollary}\label{cor:v}
	Let $\Acal$ be as in the previous Corollary and let $u,v$ be functions in $\BV^\Acal(\Omega)$. Then
	\[
		|\Acal u|(S_v \setminus J_v) = 0.
	\]
\end{corollary}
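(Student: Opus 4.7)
The plan is to apply Corollary~\ref{cor:F} to the product function $w \coloneqq (u,v)$ regarded as an element of $\BV^{\widetilde{\Acal}}(\Omega; V \oplus V)$, where $\widetilde{\Acal}(u_1, u_2) \coloneqq (\Acal u_1, \Acal u_2)$ is the block-diagonal first-order operator mapping $\Crm^\infty(\R^n; V \oplus V) \to \Crm^\infty(\R^n; W \oplus W)$. The first task is to verify that $\widetilde{\Acal}$ inherits the ellipticity and mixing hypotheses of $\Acal$. Ellipticity is immediate from the block-diagonal form of the principal symbol $\widetilde{\Abb}(\eta) = \Abb(\eta) \oplus \Abb(\eta)$; for the mixing condition~\eqref{eq:mix}, since $\im \widetilde{\Abb}(\eta) = \im \Abb(\eta) \oplus \im \Abb(\eta)$, one has $\spn\setn{\im \widetilde{\Abb}(\eta)}{\eta \in \pi} = \spn\setn{\im \Abb(\eta)}{\eta \in \pi} \oplus \spn\setn{\im \Abb(\eta)}{\eta \in \pi}$, so intersecting over all hyperplanes $\pi \le \R^n$ of dimension $n-1$ yields $\{0\} \oplus \{0\}$.

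With the hypotheses in place, $\widetilde{\Acal} w = (\Acal u, \Acal v)$, and Corollary~\ref{cor:F} applied to $w$ gives $|\widetilde{\Acal} w|(S_w \setminus J_w) = 0$. Since the coordinate projection $W \oplus W \to W$ onto the first factor is $1$-Lipschitz, $|\Acal u|(B) \le |\widetilde{\Acal} w|(B)$ for every Borel set $B \subset \Omega$, whence $|\Acal u|(S_w \setminus J_w) = 0$.

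The remaining (and only substantive) step is to establish the set inclusion $S_v \setminus J_v \subset S_w \setminus J_w$. The containment $S_v \subset S_w$ is immediate from Definition~\ref{def:appcont}: approximate continuity of $w$ at $x$ with limit $(a,b)$ forces approximate continuity of $v$ at $x$ with limit $b$ by a componentwise comparison. For the jump part, I would use the observation that $S_v \setminus J_v$ consists precisely of those $x$ at which $v$ fails to admit one-sided approximate $\Lrm^1$-limits in \emph{any} direction $\nu \in \Sbf^{n-1}$: were such limits to exist and coincide, combining the half-ball averages would give $x \notin S_v$, and if they existed with distinct values, then $x$ would lie in $J_v$ by Definition~\ref{def:jump}. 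Consequently, at any such $x$ the pair $w = (u,v)$ also cannot admit one-sided approximate limits in any direction, so $x \notin J_w$. Combining $S_v \setminus J_v \subset S_w \setminus J_w$ with the previous step concludes the argument; this mirrors the strategy of \cite[Thm.~6.1]{ambrosio1997fine-properties}, transported to the $\BV^\Acal$-framework via Corollary~\ref{cor:F}.
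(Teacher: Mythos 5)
Your proposal is correct, and the product-operator device is exactly the right way to realize the paper's terse directive to ``argue as in [ACDM, Thm.~6.1]'': the corollary $\ref{cor:F}$ that you invoke is the $\BV^\Acal$-analogue of ACDM's Proposition~6.8, and feeding it the pair $w=(u,v)$ through the block-diagonal $\widetilde{\Acal}$ reproduces the ACDM reduction. All the verifications are sound: $\widetilde{\Acal}$ is elliptic by inspection of its symbol, the mixing condition passes to $\widetilde{\Acal}$ because $\bigcap_\pi (A_\pi\oplus A_\pi)=(\bigcap_\pi A_\pi)\oplus(\bigcap_\pi A_\pi)$, the $1$-Lipschitz projection gives $|\Acal u|\le|\widetilde{\Acal}w|$, and the characterization of $S_v\setminus J_v$ as the set where $v$ admits no one-sided approximate limit in any direction is correct and yields $S_v\setminus J_v\subset S_w\setminus J_w$ by componentwise comparison. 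One small point worth making explicit: to apply Corollary~\ref{cor:F} one needs $\widetilde{\Acal}$ to be complex-elliptic (for the quasi-continuity input to Lemma~\ref{lem:induction}); this follows either directly from the block-diagonal symbol or from the implication~\eqref{eq:implication} (elliptic $+$~\eqref{eq:mix} $\Rightarrow$ complex-elliptic), both of which you have in hand.
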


\begin{corollary}\label{cor:help}
	Let $u \in \BV^\Acal(\R^n)$ and let $(\xi,e) \in \partial\sigma(\Acal)$ be a non-trivial pair. Set $\Bcal \coloneqq \Acal_\xi^e \mres \pi_\xi$, and, for a vector $z \in \ell_\xi$, write $v_z : \pi_\xi \to V_e$ to denote the function defined by
	\[
	v_z(y) \coloneqq \pbf^e u(z + y).
	\] 
	There exists a full $\Hcal^1$-measure set $Z \subset \ell_\xi$ with the following property: if  $z \in Z$, then
	\begin{enumerate}
		\item $\Hcal^{n-1}$-almost every point in $z + \pi_\xi$ is a Lebesgue point of $u$,
		\item $v_z \in \BV^\Bcal(\pi_\xi)$,
		\item $|\Bcal^j v_z|(\pi_\xi \setminus (\Theta_u)_z) = 0$. 
	\end{enumerate} 
\end{corollary}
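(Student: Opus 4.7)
The plan is to define the set $Z \subset \ell_\xi$ as an intersection of three full $\Hcal^1$-measure subsets of $\ell_\xi$, each tailored to deliver one of the three stated properties, and then verify the three properties in turn. Let $Z_1 \subset \ell_\xi$ be the full $\Hcal^1$-measure set provided by Corollary~\ref{cor:hyper}, so that for every $z \in Z_1$ the function $v_z$ belongs to $\BV^\Bcal(\pi_\xi)$; property~(2) is then immediate by construction. For property~(1), let $N_u \subset \R^n$ denote the set of Lebesgue non-continuity points of $u$. Since $u \in \Lrm^1_\loc(\R^n;V)$, we have $\Leb^n(N_u) = 0$, and Fubini's theorem produces a full $\Hcal^1$-measure set $Z_2 \subset \ell_\xi$ such that the $(n-1)$-dimensional Lebesgue measure (equivalently, $\Hcal^{n-1}$) of $N_u \cap (z + \pi_\xi)$ vanishes for all $z \in Z_2$; this is precisely~(1).

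For property~(3), let $Z_3 \subset \ell_\xi$ be a full $\Hcal^1$-measure set on which $|\Bcal v_z|(\pi_\xi)$ is finite (which is automatic from (2)) and define $Z \coloneqq Z_1 \cap Z_2 \cap Z_3$. Fix $z \in Z$. Since $\Bcal^j v_z$ is concentrated on the jump set $J_{v_z}$ of $v_z$ inside $\pi_\xi$, it suffices to show the set-theoretic inclusion
\[
J_{v_z} \subset (\Theta_u)_z,
\]
where $(\Theta_u)_z = \set{y \in \pi_\xi}{z + y \in \Theta_u}$. The key step is a contradiction argument via Lemma~\ref{lem:induction}: suppose $y_0 \in J_{v_z}$ but $z + y_0 \notin \Theta_u$. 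Since $v_z$ is defined on $\pi_\xi$, the jump direction $\nu$ of $v_z$ at $y_0$ lies in $\pi_\xi$, and the shifted function $v(y') \coloneqq \pbf^e u((z + y_0) + y') = v_z(y_0 + y')$ admits nontrivial one-sided approximate limits at $0 \in \pi_\xi$ in the direction $\nu$. Since $z \in Z_2$, $\Hcal^{n-1}$-almost every point of $(z + y_0) + \pi_\xi = z + \pi_\xi$ is a Lebesgue point of $u$, so all hypotheses of Lemma~\ref{lem:induction} are satisfied at $x = z + y_0$. The lemma then forces $0$ to be a Lebesgue point of $v$, i.e.\ $v_z^+(y_0) = v_z^-(y_0)$, contradicting $y_0 \in J_{v_z}$.

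With the inclusion $J_{v_z} \subset (\Theta_u)_z$ established, property~(3) follows at once from
\[
|\Bcal^j v_z|(\pi_\xi \setminus (\Theta_u)_z) \le |\Bcal^j v_z|(\pi_\xi \setminus J_{v_z}) = 0.
\]
The main obstacle is step~(3); in particular, the verification that Lemma~\ref{lem:induction} applies hinges on reading the slice $v_z$ as the shifted function $v$ in the lemma's hypotheses, and on having already secured the slicewise Lebesgue-density statement~(1) via Fubini. Properties~(1) and~(2) are soft consequences of Fubini and Corollary~\ref{cor:hyper} respectively, while~(3) is where the specific algebraic-analytic information carried by the pair $(\xi,e) \in \partial\sigma(\Acal)$ (through Lemma~\ref{lem:induction}) is essential.
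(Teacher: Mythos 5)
Your proof is correct and follows essentially the same route as the paper: $Z$ is obtained by intersecting the full-measure sets coming from Fubini (for slicewise Lebesgue points of $u$) and from Corollary~\ref{cor:hyper} (for $v_z \in \BV^\Bcal(\pi_\xi)$), and property~(3) is then reduced to Lemma~\ref{lem:induction}(1). The paper phrases the last step as ``$v_z$ is Lebesgue continuous $|\Bcal v_z|$-a.e.\ on $\pi_\xi \setminus (\Theta_u)_z$'' rather than as the set-theoretic inclusion $J_{v_z} \subset (\Theta_u)_z$, but the two are the same argument; your explicit contradiction at a fixed $y_0 \in J_{v_z}$ is if anything cleaner, since the one-sided-limit hypothesis of the lemma holds at \emph{every} point of $J_{v_z}$, not merely $|\Bcal^j v_z|$-a.e.\ The only cosmetic remark is that your $Z_3$ is vacuous (finiteness of $|\Bcal v_z|(\pi_\xi)$ is already part of $v_z \in \BV^\Bcal(\pi_\xi)$), so $Z = Z_1 \cap Z_2$ suffices, exactly as in the paper.
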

\begin{proof}
	By Fubini's theorem, there exists a full $\Hcal^1$-measure set $Z_1 \subset \ell_\xi$ such that $\Hcal^{n-1}$-almost every point of $z + \pi_\xi$ is a Lebesgue point of $u$. On the other hand, the slicing on co-dimension one planes yields another full $\Hcal^1$-measure set $Z_2 \subset \ell_\xi$ where $v_z \in \BV^\Bcal(\pi_\xi)$. Set $Z \coloneqq Z_1 \cap Z_2$. Then $Z$ is a full $\Hcal^1$-measure set of $\ell_\xi$. {Moreover, since $\Bcal$ is elliptic and satisfies the rank-one property, $v_z$ has one-sided Lebesgue limits at $|\Bcal^j v_z|$-almost every $y \in \pi_\xi$ (for all $z \in Z$)}. Therefore, the first assertion of the previous lemma implies that, for every $z \in Z$, the function $v_z$ is Lebesgue continuous $|\Bcal v_z|$-almost everywhere in $\pi_\xi \setminus (\Theta_u)_z$. %In particular,
%	\[
%	\bigg(\int_Z |\Bcal^j v_z| \dd \Hcal^1(z) \bigg) (\R^n \setminus \Theta_u) = 0.
%	\] 
%	The sought assertion then follows from the slicing in co-dimension one sets.
\end{proof}

	\section{Proof of the one-dimensional structure theorem} 
%	First, let us observe that, by the co-dimension one slicing developed in Section~\ref{sec:slice}, we may assume that $(\xi,e) = (\xi_1,e_1)$ where $\{\xi_1,\dots,\xi_M\} \subset \R^n$ 
%	$\{e_1,\dots,e_M\} \subset V^*$ are the families of directions and covectors given by Theorem~\ref{thm:structure}. In particular, there also exists a spanning family $\{w_1,\dots,w_M\} \subset W$ such that
%	\[
%		\Acal u = \sum_{i = 1}^M \bigg(\int_{\pi_{\xi_i}} Du_{\xi_i,y}^{e_i} \dd \Hcal^{n-1}(y) \bigg) w_i 
%	\]
%	for all $u \in \BV^\Acal(\Omega)$. 
	
	We have now all the necessary tools to give a proof of the one-dimensional structure theorem (Theorem~\ref{thm:oned}).
	
	\begin{proof}[Proof of Theorem~\ref{thm:oned}] It suffices to show that
	\begin{equation}\label{eq:sliceacj}
	\left[\int_{\pi_{\xi}} D u_{y,\xi}^{e} \dd \Hcal^{n-1}(y) \right]^\sigma = \int_{\pi_{\xi}} D^\sigma u_{y,\xi}^{e} \dd \Hcal^{n-1}(y), \qquad \sigma = a,c,j,
	\end{equation}
	%for each $i = 1,\dots,M$. 
	For ease of notation, let
	\[
	\nu = \int_{\pi_{\xi}} D u_{y,\xi}^{e} \dd \Hcal^{n-1}(y).
	\]
	Then, we have
	\[
	\nu^a - \int_{\pi_{\xi}} D^a u_{y,\xi}^{e} \dd \Hcal^{n-1}(y) = \int_{\pi_{\xi}} D^s u_{y,\xi}^{e} \dd \Hcal^{n-1}(y) - \nu^s.
	\]
	Fubini's Theorem yields that the left-hand side is absolutely continuous with respect to $\Leb^n$, meanwhile the right-hand side is concentrated on a set of $\Leb^n$-measure zero. Thus, both sides must vanish, and so we conclude that \eqref{eq:sliceacj} indeed holds for $\sigma = a$.
	
	Now, let us write $\nu^j \coloneqq \nu\mres J_u$ and $\nu^c \coloneqq \nu \mres (\Omega \setminus S_u)$ so that
	\begin{equation}\label{eq:slicesingular}
	\begin{split}
	\nu^j - \int_{\pi_{\xi}} D^j u_{y,\xi}^{e} \dd \Hcal^{n-1}(y) & = \nu^c - \int_{\pi_{\xi}} D^c u_{y,\xi}^{e} \dd \Hcal^{n-1}(y) \\
	(\nu^j - \mu^j & \triangleq  \nu^c - \mu^c).
	\end{split}
	\end{equation}
	Once again, we aim to show that both sides vanish. For a start, consider the restriction of both sides to the jump set $J_u$. We know that $J_u$ is countably $\Hcal^{n-1}$ rectifiable (see~\cite{nin,anna}), and hence an elementary measure-theoretic argument shows that for $\Hcal^{n-1}$-almost every point $y \in \pi_\xi$, the line $\set{y+t\xi}{t \in \R}$ and $J_u$ intersect at most on a countable set. 
	However, $D^c u_{y,\xi}^{e}$ is non-atomic, so it vanishes on this intersection. In conclusion, we have $\mu^c \mres J_u = 0$.
%	\[
%	\bigg(\int_{\pi_{\xi_i}} D^c u_{y,\xi_i}^{e_i} \dd \Hcal^{n-1} \bigg) \mres J_u = 0.
%	\]
	Let us recall from Corollary~\ref{cor:iv} that $\Hcal^{n-1}(\Theta_u \setminus J_u) = |\Acal u|(\Theta_u \setminus J_u) = 0$. Therefore,
	\[
		\nu^c \mres (\Theta_u \setminus J_u) = \mu^c \mres (\Theta_u \setminus J_u) = 0.
	\]
%	which means that 
%	
%
%	\begin{enumerate}
%		\item Since $\Hcal^{n-1}(F \cup (R \cap U))$, it also holds $\Hcal^{n-1}(\pbf_\xi[F]) = 0$. This gives  $$\mu_i^c \mres (F \cup (R \cap U)) = 0.$$
%		\item By Corollary~\ref{cor:dimensional1} we obtain  that $|\Acal u|(U) = 0$, and since $|\Acal u|\mres G \ll \Hcal^{n-1} \mres G \ll |\Acal u| \mres G$ we deduce that
%	\end{enumerate}
%
%Returning to the goal of the proof, we have showed that 
%\[
%	\nu_i^c \mres (\Theta \setminus J_u) = 0
%\]
%
%	
%	This gives $\nu_i^c \mres (F \cup U)  = 0$. 
%	On the other hand, since $R$ is $\Hcal^{n-1}$-rectifiable, we may apply the same argument used for $J_u$ to deduce that  $\mu_i^c \mres R = 0$. Gathering this estimates, we have showed that
%	\[
%		\mu_i^c \mres (\Theta_u \setminus J_u) = 0.
%	\]
%	Moreover, 
Moreover, $\nu^c\mres J_u = 0$, since this is a component of $\Acal^c u \mres J_u \equiv 0$ in some direction,
	we conclude that the left-hand side of \eqref{eq:slicesingular} vanishes when restricted to $\Theta_u$. We also know that the essential support of $\nu^j$ is contained in $J_u$, so it remains to show that the other term $\mu^j$ is concentrated purely on  $\Theta_u$. Indeed, if this was the case, then  both sides must vanish identically, whence  we obtain $\nu^j \equiv \mu^j$ and $\nu^c \equiv \mu^c$ as desired.
	
	%    we then use again that $\Hcal^{n-1}(\Theta_u \setminus J_u) = 0$. Hence, obtaining that $\nu_i^c \mres \Theta_u \equiv 0$, and concluding that both sides of \eqref{eq:slicesingular}  vanish identically

	%    $\Theta_u\setminus J_u$ is $\Hcal^{n-1}$-negligible (due to Theorem~\ref{thm:II} and the equivalence in Lemma~\ref{lem:slicemix}), 

	Let us prove that $\mu^j$ vanishes in $\Omega \setminus \Theta_u$. The proof we give here follows closely the $n$-dimensional induction-based proof in~\cite{ambrosio1997fine-properties}. The main difficultly, however, lies in circumventing the lack of a well-defined structure of elliptic operators satisfying the slicing property.

	\proofstep{Step~1. The step of induction $n = 2$.}  The measure $\mu^j$ is concentrated on slices $\ell_\xi + y$ where $y \in Y$ and $\Leb^1(\pi_\xi \setminus Y) = 0$. By Fubini's theorem, we may assume without loss of generality that, for all $y \in Y$, $\Hcal^1$-almost every point of $\R$ is a Lebesgue point of $v_y \coloneqq u_{y,\xi}^e$. Now, let $(\eta,f) \in \partial\sigma(\Acal_\xi^e \mres \pi_\xi)$ so that 
	\[
		(\xi,e) \in \partial\sigma(\Acal_\eta^f \mres \pi_\eta).
	\]
	Set $\Bcal \coloneqq \Acal_\eta^f \mres \pi_\eta$ and consider the isometry $\psi : \ell_\xi \to \R : t\xi \mapsto t$. By construction we have $\ell_\xi = \pi_\eta$ and we can find a positive constant $c > 0$ such that (since $n = 2$, we have $\ell_\eta = \pi_\xi$)
	\begin{equation}\label{eq:papi}
		|D u_{y,\xi}^e| \le c \psi_\# |\Bcal (\pbf^f u)_y| \quad \text{as measures over $\R$, for $\Leb^1$-a.e. $y \in \pi_\xi$}.
	\end{equation}
	Notice also that $(\pbf^f u)_y(\psi(t)) \coloneqq \pbf^f u(y + \psi(t)) = \pbf^f u_{y,\xi}(t)$ for all $t \in \R$. Thus, using Corollary~\ref{cor:hyper} we verify that 
	\begin{align*}
		\int_{\pi_\xi} |D u_{y,\xi}^e|  \dd \Hcal^1(y) & \le c \int_{\ell_\eta} \psi_\#|\Bcal (\pbf^f u)_y| \dd \Hcal^1(y),
	\end{align*}
	where the inequality holds in the sense of  (bounded) measures on $\R^2$. Therefore, up to adding a set of $\Leb^1$-measure zero to $Y$,
	we may assume that the measure on the right hand side is concentrated on slices of the form $\ell_\xi + y$, with $y \in Y$. Moreover, since $\Bcal$ is an elliptic on functions of one variable,  it must be that $(\pbf^f u)_y \in \BV(\R)$ for all $y \in Y$. Therefore, the one-dimensional $\BV$-theory implies that, for all $y \in Y$, the map $(\pbf^f u)_y$ has one-sided Lebesgue points for  $\psi_\#|\Bcal(\pbf^f u)_y|$-a.e. $t \in \R$ and all $y \in Y$. Applying Lemma~\ref{lem:induction} to $u$ and $(\eta,f)$ we thus infer from~\eqref{eq:papi} that $(\pbf^f u)_y \circ \psi$ is Lebesgue continuous
	at $|D u_{y,\xi}^e|$-a.e. $t \in \R$  such that $y+t\xi \notin \Theta_u$. Since also $e \in (V_f)^*$, we conclude that
	\[
		 \bigg(\int_{\pi_\xi} |D^j u_{y,\xi}^e| \dd \Hcal^{n-1} \bigg)\mres \Theta_u^\complement \equiv 0. 
	\]  
	 This proves that $\mu^j$ vanishes on $\Omega \setminus \Theta_u$ as desired.

	\proofstep{Step~3. The induction argument.} Let $n \ge 3$ and assume that the the conclusion of the Theorem holds for all operators satisfying the mixing condition in dimensions $m \le n- 1$. Let $(\eta,f) \in \partial\sigma(\Acal)$ be a pair satisfying 
	\[
		(\xi,e) \in \partial\sigma(\Acal_\eta^f \mres \pi_\eta). 
	\]
	Let $\Bcal \coloneqq \Acal_\eta^f \mres \pi_\eta$ and write $v_z(y) \coloneqq \pbf^f u(z + y)$. The slicing on co-dimension one planes implies the existence of a set $Z \subset \ell_\eta$ with $\Hcal^1(\ell_\eta \setminus Z) = 0$, and such that the slices $v_z$ belong to   $\BV^\Bcal(\pi_\eta)$ for all $z \in \ell_\eta$. Moreover, the hypothesis of induction applied to the operator $\Bcal$ and $v_z$ for all $z \in Z$ yield the identity  
	\begin{equation}\label{eq:estimate}
	\begin{split}
		I^\sigma  \coloneqq \int_{\ell_\eta} \bigg(\int_{\pi_\xi \cap \pi_\eta}  |D^\sigma (v_z)_{\xi,\tilde y}^e| \dd \Hcal^{n-2}(\tilde y)\bigg) \dd \Hcal^1(z)  \lesssim \int_{\ell_\eta} |\Bcal^\sigma v_z| \dd \Hcal^1(z)
	\end{split}
	\end{equation}
	for all $\sigma = a,c,j$. Observe that $I^\sigma$ is not immediately trivial since we have assumed that $n > 2$.
	
	 Next, recall that by construction  $e \in (V_f)^*$. This observation and Fubini's theorem imply the equality of functions 
	\[
		(v_z)_{\xi,\tilde y}^e \equiv (v_z)^e(\tilde y + \frarg\, \xi) \equiv (\pbf^f u)^e(z + \tilde y + \frarg \, \xi) \equiv u_{\xi,z + \tilde y}^e \quad\text{in $\BV(\R)$}
	\]
	 for $\Hcal^{n-2}$-almost every $\tilde y \in (\pi_\xi \cap\pi_\eta)$ and  every $z \in Z$. Therefore, applying Fubini's theorem and appealing to the classical structure theorem for functions of bounded variation we further deduce that
	\begin{equation}\label{eq:nomames}
		I^\sigma = \int_{\pi_\xi}  |D^\sigma u_{y,\xi}^e| \dd \Hcal^{n-1}(y) = |\mu^j|, \qquad \sigma \in \{a,c,j\}.
	\end{equation}
	On the other hand, from Corollary~\ref{cor:help} we find that $|\Bcal^j v_z|((\pi_\xi \setminus \Theta_u)_z) = 0$ for $\Hcal^1$-almost every $z \in \ell_\eta$. This proves that
	\[
		\bigg(\int_{\ell_\eta} |\Bcal^j v_z| \dd \Hcal^1(z) \bigg) \mres \Theta_u^\complement \equiv 0 \quad \text{as measures over $\R^n$}.
	\]
	To conclude we recall the estimate~\eqref{eq:estimate}, which in light of the previous equality of measure implies that $I^j \mres (\R^n \setminus \Theta_u)$ must be the zero measure. The sought assertion $\mu^j \mres (\R^n \setminus \Theta_u) \equiv 0$ follows from~\eqref{eq:nomames}.\end{proof}

\section{Proof of the fine properties statements} With all the analysis developed so far, we are now able to collect the proofs of the statements for the fine properties:
    
    \subsubsection*{Proof of Theorem~\ref{thm:2}} In light of Remark~\ref{rem:ya}, we shall only discuss the structure theorem decomposition and statements (iii)-(v). 
    The assertions contained in (iii)-(iv) follow directly from the results of Corollary~\ref{cor:iv}, while assertion~(v) follows from Corollary~\ref{cor:v}.
    As discussed in the introduction, one can trivially decompose $\Acal u$ into mutually singular measures 
    \[
    	\Acal u = \Acal^a u + \Acal^c u + \Acal^d u + \Acal^j u,
    \]
   	where $\Acal^d u = \Acal^s u \mres (S_u \setminus J_u)$. By virtue of~(v) with $v = u$, we deduce that $|\Acal^d u| \equiv 0$ whenever $\Acal$ is elliptic and satisfies the mixing property. This finishes the proof.\qed
   	
   	\subsubsection*{Proof of Theorem~\ref{thm:II}} The proof of this result follows directly from Theorem~\ref{thm:2} and the results contained in Section~\ref{sec:equations}.\qed

\section{Notions and proofs for operators of arbitrary order}\label{sec:k}

 The task of the following lines is to extend the slicing theory that we have already introduced for first order operators. As it has already been discussed in the introduction, due to the ellipticity assumption, we shall focus on the fine properties of the $(k-1)$\textsuperscript{th} gradients of functions in $\BV^\Acal$-spaces. Therefore, we also look for a slicing theory in terms of $\nabla^{k-1} u$ rather than $u$ itself. This leads us to consider a notion of $\rank_\Acal$ that extends the one for first-order operators and whose $\rank_\Acal$-one elements satisfy 
\[
	\dpr{w,\Acal u} = \partial_\xi(\dpr{E, \nabla^{k-1} u}) \eqqcolon \partial_\xi U^E
\] 
for some $\xi \in \R^n$ and some $E \in V^{k-1} \coloneqq V^*\otimes E_{k-1}(\R^n)$. We are naturally led  to define, at least formally, the $\rank_\Acal$-one vectors as those $w \in W^*$ such that  
\begin{equation}\label{eq:candidate}
\dpr{w,{\Abb^k}(\eta)v} = \dpr{\xi , \eta}\dpr{E , v \otimes^{k-1} \eta} \qquad \text{for all $\xi \in \R^n$, $E \in V \otimes E_{k-1}(\R^n)$} 
\end{equation}
for some $\xi \in \R^n$ and $E \in V^{k-1}$.
\begin{definition}
	Let $w \in W^*$. We say that $w \in \Acal^\otimes_1$ if and only if $w$ satisfies~\eqref{eq:candidate} for some $(\xi,E) \in \partial\sigma(\Acal)$.
\end{definition}
Notice that, such covectors $w$, are precisely those with the property that 
\[
\dpr{w, \Abb^k(\eta)\widehat u(\eta)} %= w \cdot \cl{\Abb^k}\,[\widehat u(\eta) \otimes^k \eta]
= \dpr{\xi \cdot \eta} \dpr{E , \Fcal(\nabla^{k-1} u)(\eta)},
\]
so that inverting the Fourier transform we obtain the desired identity $\dpr{w,\Acal u} = \partial_\xi U^E$ for all $u \in \Scal(\R^n;V)$.
%\[
%w \cdot \Acal u = \partial_\xi(E \cdot \nabla^{k-1} u) \quad \text{for all $u \in \Scal(\R^n;V)$}.
%\]
If slicing is to be useful, we need that $\Acal$ controls a sufficient number of partial derivatives $\partial_\xi(\;\;)^E$. %Similar to the first order case, this translates in the requirement that $\spn \Acal^\otimes_1 = W_\Acal$. 
Now, observe that if $\rank_\Acal(w) = 1$, then
\begin{equation}\label{eq:star}
	w \in \bigcap_{\eta \in \pi_\xi} \ker \Abb^k(\eta)^*.
\end{equation}
Hereby we deduce that the mixing property~\eqref{eq:mixk} is a necessary condition for the slicing of $\Acal u$ and $\nabla^{k-1}u$. The remaining question is whether this is also a sufficient condition. The first issue at hand is that $\Abb^k$ is no longer a linear map, but a $k$-homogeneous map. Our first step will be to deal with the $k$-homogeneity of $\Acal$: 

\begin{definition}[linearized symbol]Let $\Acal : \Crm^\infty(\R^n;V) \to \Crm^\infty(\R^n;W)$ be a $k$\textsuperscript{th} order partial differential operator. Let us recall that the principal symbol $\Abb^k$ is a $k$-homogeneous map $\Abb^k : \R^n \to W \otimes V^*$, and therefore there exists a (uniquely determined) linear map
	\[
	\cl{\Abb^k} : V \otimes E_k(\R^n) \to W
	\]
	satisfying
	\[
	\cl{\Abb^k}[v \otimes^k \xi] =  \Abb^k(\xi)[v] \quad \text{for all $\xi \in \R^n$, $v \in V$}.
	\]
\end{definition}
%Let us set the complementary short-hand notation $V_m \coloneqq V \otimes E_{m}(\R^n)$.
%After this linearization of the principal symbol, the identity in~\eqref{eq:candidate} takes the form
%\begin{equation}\label{eq:star2}
%	w \cdot \cl{\Abb^k}[M] = \dpr{E \odot  \xi,M} \quad \text{for all $\xi \in \R^n$ and $M \in V_{k} $\,.} 
%\end{equation}
The main difference is that we are considering a linear map. 
Of course,  the cost to pay is that we are adding a considerable amount of $k$-order tensors. 
%In analogy with the first order case, we consider the rank induced by the pullback of $\cl{\Abb^k}$\,:
% \[
%\begin{tikzcd}[row sep=0pt,column sep=1pc]
%	g_\Acal \colon (W_\Acal)^* \arrow{r} & V^k \\
%	{\hphantom{g_\Acal \colon{}}} w \arrow[mapsto]{r} &  w \circ \cl{\Abb^k}
%\end{tikzcd},
%\] 
%which leads us to define a $\rank_\Acal$ when $\Acal$ is a higher-order operator:
%\begin{definition}  For a covector $w$ in $W^*$, the composition of linear maps $w \circ \cl{\Abb^k}$ belongs to the tensor space $\R^n \otimes V^k$. We define
%	\[
%	\rank_\Acal(w) \coloneqq \rank(w \circ \cl{\Abb^k}), 
%	\]
%	where the latter is the canonical rank acting on $\R^n \otimes V^k$. For an integer $m$ in the range $0 \le m \le \min\{n,\dim(V^k)\}$, we write  $$\Acal^\otimes_m \coloneqq \set{w \in W^*}{\rank_\Acal(w) = m}$$
%	to denote the cone of $\rank_\Acal$-$m$ vectors.
%\end{definition} 
%\begin{definition}We endow $W_\Acal$ with the inner product induced by $g_\Acal$ and we say that $\rank_\Acal(w) = 1$ if and only if~\eqref{eq:candidate} holds.
%%\begin{definition}
%%	The $\rank_\Acal$ of a vector $w \in W$ is defined as
%%	\[
%%		\rank_\Acal(w) \coloneqq \rank g_\Acal( (\Pbf w)^*).
%%	\] 
%%	Here, $\Pbf : W \to W_\Acal$ is the canonical linear orthogonal projection.
%We write $$\Acal^\otimes_1 \coloneqq \set{w \in W}{\rank_\Acal (w) = 1}$$ to denote the $\rank_\Acal$-one cone and 
%	\[
%		\partial\sigma (\Acal) \coloneqq \set{(\xi,E)}{\xi^* \otimes E^* \in \im g_\Acal} \subset \R^n \otimes V^{k-1}.
%	\]
%	to denote the directional spectrum of $\Acal$.
%\end{definition}
Now, we lift the algebraic linearization  to a linearization of $\Acal$, which will allow us to make use the better part of the theory we have developed for first-order operators. In order to preserve ellipticity, the idea is to add a $\curl$-operator to compensate for the addition of $(k-1)$\textsuperscript{th} order tensors occurring in the linearization process of $\Abb^k$\,:  

\begin{definition}[Linearized operator] We define the linearization of $\Acal$ to be the  first-order operator given by
%	$$d\Acal : \Crm^\infty(\R^n;V \otimes E_{k-1}(\R^n)) \to \Crm^\infty\big(\R^n;W \times )\big)$$
$$d\Acal \coloneqq \cl{\Abb^k}(DU) \times \curl_{k-1}U, \qquad U : \R^n \to V_{k-1} \coloneqq V \otimes E_{k-1}(\R^n),$$
 where, for a positive integer $m$, 
	\[
	\curl_m U \coloneqq (\partial_i U_{\mathbf e_j + \beta}^\ell - \partial_j U_{\mathbf e_i + \beta}^\ell)_{i,j,\beta}, \qquad |\beta| = m-1, \, \ell = 1,\dots,\dim(V),
	\] 
	is the generalized curl operator on $V_m$-valued tensor fields.
\end{definition}

It is well-known (see~\cite[Example~10.3(d)]{FM99}) that $\ker \curl_m(\xi) = \set{v \otimes^{m} \xi}{v \in V}$. Therefore, by definition, we have
\begin{equation}\label{eq:sss}
d\Abb(\xi)[v \otimes^{k-1} \xi] = (\Abb^k(\xi)[v],0) \qquad \text{for all $\xi \in \R^n$, $v \in V$.}
\end{equation}
This shows that $d\Acal$ indeed behaves as a differential of $\Acal$, in the sense that it is a linear operator that acts on $(k-1)$-order gradients as $\Acal$, i.e., 
\begin{equation}\label{eq:nomas}
d\Acal(\nabla^{k-1} u) = ({\Acal}u,0), \qquad u : \R^n \to V.
\end{equation}

The linearization satisfies the following crucial properties:

\begin{lemma}\label{prop:10}Let $\Acal : \Crm^\infty(\R^n;V) \to \Crm^\infty(\R^n;W)$ be a $k$\textsuperscript{th} order homogeneous
	
	 partial differential operator.
	The following holds:
	\begin{enumerate}
		\item $d\Acal$ is $\Kbb$-elliptic if and only if $\Acal$ is $\Kbb$-elliptic for $\Kbb = \C,\R$,
		\item if $\Acal$ is elliptic, then $u \in \BV^\Acal(\Omega)$ if and only if $\nabla^{k-1} u \in \BV^{d\Acal}(\Omega)$,
				%\item $\partial\sigma(d\Acal) \subset \partial\sigma(\Acal)$,
		\item $\set{w \in W^*}{\text{$w$ satisfies~\eqref{eq:star}}} = \mathrm{Proj}_{W^*} (d\Acal)^\otimes_1 \subset \Acal^\otimes_1$,
		\item $\partial\sigma(\Acal) = \partial\sigma(d\Acal)$.
%		\item $\spn \Acal^\otimes_1 = W_\Acal$ if and only if 
%		\[
%			\bigcap_{{\substack{\pi \, \le \, \R^d\\\dim(\pi) = n-1}}} \spn\set{\im \Abb^k(\eta)}{\eta \in \pi} = \{0\},
%		\]  
%		\item $\partial \sigma(\Acal) \subset \partial\sigma(d\Acal)$.
	\end{enumerate}
\end{lemma}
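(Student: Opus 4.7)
The plan is to dispatch the four items in order, using repeatedly the two defining properties of the linearization: the identity $\ker \curl_{k-1}(\xi) = \set{v \otimes^{k-1} \xi}{v \in V}$ recalled just before the statement, together with the compatibility relation~\eqref{eq:sss}. These say that the symbol $d\Abb$ collapses to $\Abb^k$ on symmetric rank-one tensors and otherwise records cross-partial incompatibilities via $\curl_{k-1}$.

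For~(1), suppose $U \in \Kbb \otimes V_{k-1}$ satisfies $d\Abb(\xi)[U] = 0$ for some $\xi \in \Kbb^n \setminus \{0\}$. Its curl component vanishes, so $U = v \otimes^{k-1} \xi$ for some $v \in \Kbb \otimes V$; the other component then reads $\Abb^k(\xi)[v] = 0$, and $\Kbb$-ellipticity of $\Acal$ forces $v = 0$. Conversely, any failure of $\Kbb$-ellipticity for $\Acal$ produces a nontrivial such $U$ in the kernel of $d\Abb(\xi)$ via~\eqref{eq:sss}, establishing the equivalence. For~(2), the Calder\'on--Zygmund embedding~\eqref{eq:emb}, combined with the Lipschitz extension available under the standing boundary assumption, guarantees $\nabla^{k-1} u \in \Lrm^1(\Omega;V_{k-1})$ whenever $u \in \BV^\Acal(\Omega)$; the identity~\eqref{eq:nomas} then yields $d\Acal(\nabla^{k-1} u) = (\Acal u, 0)$, the curl component vanishing distributionally by commutation of mixed partials. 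The converse is immediate: reading off the first component of $d\Acal(\nabla^{k-1} u)$ identifies $\Acal u$ as a bounded measure, while $\nabla^{k-1} u \in \Lrm^1$ is equivalent to $u \in \Wrm^{k-1,1}(\Omega)$.

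Item~(3) is where I expect the main effort. Parameterize an element of the dual of the target of $d\Acal$ as a pair $(w, z)$ with $w \in W^*$ and $z$ in the dual of the curl codomain. Unwinding the definition, $(w, z) \in (d\Acal)^\otimes_1$ with associated spectral data $(\xi, E)$ if and only if
\begin{equation*}
w \cdot \overline{\Abb^k}(\sym(U \otimes \eta)) + z \cdot \curl_{k-1}(\eta)[U] = \dpr{\xi,\eta}\dpr{E,U}
\end{equation*}
for all $(\eta, U) \in \R^n \times V_{k-1}$. Testing on $U = v \otimes^{k-1} \eta$ kills the curl term and yields precisely the $\Acal$-spectral identity~\eqref{eq:spectrumk}, showing that the $W^*$-projection of $(d\Acal)^\otimes_1$ lies in $\Acal^\otimes_1$ and that $w$ satisfies~\eqref{eq:star}. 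The converse direction --- that every $w$ with property~\eqref{eq:star} extends to such a pair $(w,z)$ --- is the crux: by polarization of~\eqref{eq:star} along $\eta \in \pi_\xi$, the residual bilinear form $(\eta, U) \mapsto w \cdot \overline{\Abb^k}(\sym(U \otimes \eta)) - \dpr{\xi,\eta}\dpr{E,U}$ vanishes on $\ker \curl_{k-1}(\eta)$ for each fixed $\eta$, so it factors linearly through $\curl_{k-1}(\eta)$; the task is to verify that these pointwise factorizations assemble into a single $\eta$-independent dual vector $z$. I expect this coherence step to be the main obstacle and to require a careful choice of $E$ (built from the spectral data of $w$) together with a dimension count in the exact sequence associated to the higher-order curl complex.

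For~(4), combine~(3) with the observation that the constructions above pair each spectral pair $(\xi,E)$ of $\Acal$ realised by some $w$ bijectively with a spectral pair of $d\Acal$ realised by some $(w, z)$: the forward direction is the projection onto the first factor, and the reverse direction is the factorization assembled in~(3). This gives $\partial\sigma(\Acal) = \partial\sigma(d\Acal)$.
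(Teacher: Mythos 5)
Your arguments for items~(1) and~(2) are sound and more self-contained than the paper's, which simply delegates these two points to an external reference: the kernel analysis via $\ker\curl_{k-1}(\xi)=\set{v\otimes^{k-1}\xi}{v\in V}$ in~(1), and the appeal to the embedding~\eqref{eq:emb} together with~\eqref{eq:nomas} in~(2), are both fine.

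The gap is in items~(3) and~(4). You correctly reduce the hard inclusion of~(3) to the construction of a single dual vector $z$ (the paper calls it $h$) making $(w,z)\in(d\Acal)^\otimes_1$, and you correctly flag this as ``the crux''; but you do not carry it out, instead gesturing toward ``a careful choice of $E$'' and ``a dimension count in the exact sequence associated to the higher-order curl complex,'' neither of which is performed, and neither of which is the route the paper actually takes. There is also a logical circularity in the intermediate claim that the residual bilinear form
\[
(\eta,M)\longmapsto \dpr{w,\cl{\Abb^k}[M\odot\eta]} - \dpr{\xi,\eta}\dpr{E,M}
\]
``vanishes on $\ker\curl_{k-1}(\eta)$ for each fixed $\eta$.'' For $\eta\in\pi_\xi$ this does follow from~\eqref{eq:star} since both terms vanish separately; but for $\eta\notin\pi_\xi$ the vanishing on $\ker\curl_{k-1}(\eta)$ is precisely the spectral identity~\eqref{eq:spectrumk} you are trying to establish, and it hinges on a choice of $E$ that you never pin down, so the statement is not available at that stage of the argument. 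The paper instead proves the converse by an explicit coordinate computation: after using polarization and the linearity of $\cl{\Abb^k}$ to get $\dpr{w,\cl{\Abb^k}[Q]}=0$ for all $Q\in V\otimes E_k(\pi_\xi)$, it fixes an orthonormal basis $\{\xi,\zeta_2,\dots,\zeta_n\}$, expands $w\circ\cl{\Abb^k}$ in coordinates $\gamma^\ell_\alpha$ supported on multi-indices with $\alpha_1\ge 1$, decomposes the resulting form as $I_1+\cdots+I_n$ according to the $\xi$-multiplicity $\alpha_1$, and exhibits explicit tensors $E_m$ and $h_m$ (built from wedge products) satisfying $I_m(\eta)[M] + \dpr{h_m,\curl_{k-1}(\eta)[M]} = \dpr{\xi,\eta}\dpr{E_m,M}$; summing over $m$ produces the required pair $(w,h)$ and, at the same time, identifies the spectral datum $E=E_1+\cdots+E_k$, giving~(4). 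Since your item~(4) is derived from your item~(3), it inherits the same gap.
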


\begin{remark}[Non-stability of the mixing property under linearization]
	In general it is not true that if $\Acal$ is elliptic and satisfies~\eqref{eq:mixk}, then its linearization $d\Acal$ satisfies~\eqref{eq:mix}. In particular, we cannot expect the $k$\textsuperscript{th} order theory to follow trivially from linearization. 
%	To see this consider the operator
%	\[
%		\mathscr D^3 u = (\partial_1^3,\partial_2^3), \quad u : \R^2 \to \R.
%	\] 
%	We have seen in the Examples Section that $\mathscr D^k$ is indeed elliptic and satisfies~\eqref{eq:mixk}. If however it would also hold that $d\Acal$ satisfies~\eqref{eq:mix}, then Corollary~\ref{cor:projection} would imply that there would exist a non-zero $\eta \in \R^n$ such that
%	\[
%		\partial_{\eta}\partial_{\mathbf e_1 + \mathbf e_2}^2 u = \alpha^3 \partial_1^3 u + \beta^3\partial_2^3 u
%	\]
%	for some reals $\alpha,\beta$. Applying the Fourier transform, we discover that this is equivalent to requiring the algebraic identity
%	\begin{align*}
%		(\alpha x + \beta y)(\alpha^2 x^2 - \alpha\beta xy + \beta^2 y^2) & = \alpha x^3 + \beta y^3 \\
%		& = (\eta_1 x + \eta_2 y)(x + y)^2.
%	\end{align*}
%	This splitting may only happen provided that $\eta = 0$. Therefore, reaching a contradiction.
\end{remark}

%\begin{proposition} Let $w^* \in W^*$ be a covector satisfying
%	\[
%		w^* \in \bigcap_{\eta \in \pi_\xi} \ker \Abb^k(\eta)^*.
%	\]
%	Then, there exists $h \in V \otimes \bigwedge_k(\R^n)$ such that $(w,h) \in \cl \Acal^\otimes_1$.
%\end{proposition}
%We define the $m$-curl as
%\[
%	\curl_m(\eta)[M] = (\eta_i M_{\beta + \mathbf e_j} - \eta_j M_{\beta + \mathbf e_i})_{i,j, \beta} \qquad |\beta| = m-1.
%\]
\begin{proof}[ Proof of Proposition~\ref{prop:10}] The proof of (1) and (2) is contained in~\cite[Sec.~5]{anna}. 
	
	Let us prove (3) and (4). Let $(w,h) \in (d\Acal)^\otimes_1$. By definition, we may find $(\xi,E)$ such that
	\[
		\dpr{w , \cl{\Abb^k}[M \odot \eta]} + \dpr{h , \curl(\eta)[M]} = \dpr{\xi,\eta}\dpr{E,M}.
	\]
	Taking $M = v \otimes^{k-1} \eta$ above, we deduce that 
	\[
		\dpr{w , \Abb^k(\eta)[v]} = \dpr{\xi,\eta}\dpr{E,v \otimes^{k-1} \eta}.
	\]
	This shows that $w \in \Acal^\otimes_1$ and therefore $w$ satisfies~\eqref{eq:star}. The same argument shows that $\partial\sigma(d\Acal) \subset \partial\sigma(\Acal)$. The challenging part is to show the other contention of the equality. Let us assume that $w$ and $(\xi,E)$ satisfy~\eqref{eq:star}. 
	%so that
%%	\[
%%		w^*\in \bigcap_{\eta \in \pi_\xi} \ker \Abb^k(\eta)^*.
%%	\]
%%	This is equivalent to requiring that 
%	\[
%		0 = w \cdot \Abb^k(\eta)[v] = w \cdot \cl{\Abb^k}[v \otimes^k \eta] \quad \text{for all $\eta \in \R^n$ and $v \in V$.}
%	\]
	Using the linearity of $\cl {\Abb^k}$ and a polarization argument, we find that
	\[
		\dpr{w , \cl{\Abb^k}[Q] }= 0 \qquad\text{for all $Q \in V \otimes E_{k}(\pi_\xi)$}.
	\]
%	By the representation of linear maps, we may find $\xi \in \pi_\xi$ and $E \in E_{k-1}(\R^n)$ such that
%	\[
%		w \cdot \cl{\Abb^k}[M \otimes \eta] = (\xi \cdot \eta)(E\cdot M) \quad \text{for all $\eta \in \R^n$ and all $M \in E_{k-1}(\R^n)$.}
%	\]
%	This, by definition, means that $\rank_\Acal(w) = 1$.
	%
%	Let $(w,H) \in (d\Acal)^\otimes_1$. By definition, there exist $\xi \in \R^n$ and $E \in V \otimes E_{k-1}(\R^n)$ such that
%	\[
%		(w,H) \cdot (d\Abb(\eta)[M],\curl_{k-1}(\eta)[M]) = (\xi \cdot \eta)(E\cdot M)
%	\]
%	for all $\eta \in \R^n$ and $M \in V\otimes E_{k-1}(\R^n)$.
%	In particular, choosing $M = v \otimes^{k-1} \eta$ in above yields
%	\[
%		w \cdot \Abb^k(\eta)[v] = w \cdot d\Abb(\eta)[v \otimes^{k-1} \eta] = (\xi\cdot \eta)(E \cdot )
%	\]
%	By our assumption on $w^*$ and a polarization argument, we find that
%	\begin{equation}\label{eq:kernelA}
%		w \cdot A(\eta)(M) = 0 \quad \text{for all $\eta \in \pi_\xi$ and all $M \in E_{k-1}(\pi_\xi)$}.
%	\end{equation}
	Let $\{\xi,\zeta_2,\dots,\zeta_n\}$ be an orthonormal basis of $\R^n$ so that
	\[
		\set{e^\alpha \coloneqq (\alpha!)^{-1}(\otimes^{\alpha_1}  \xi) \odot (\otimes^{\alpha_2}\zeta_2) \odot \cdots \odot (\otimes^{\alpha_n} \zeta_n)}{|\alpha| = k}
	\]
	is an orthogonal basis of $E_k(\R^n)$. Notice that the orthogonal complement of $E_{k}(\pi_\xi)$ in $E_k(\R^n)$ is given by $\spn\set{e^\alpha}{\alpha_1 \ge 1, |\alpha| = k}$. 
	Then, by the representation of linear maps, we may find reals $\gamma_\alpha^\ell \in \R$ such that 	
	\[
		w \cdot \cl{\Abb^k}[M \odot \eta] = \sum_{\substack{\ell=1,\dots,N.\\ \alpha_1^\ell \ge 1,\\ |\alpha^\ell|=k}} \gamma_\alpha^\ell\dpr{\mathbf v^\ell \otimes e^\alpha,M \odot \eta}, \qquad M \in V_{k-1},
	\]
	where $\{\mathbf v^1,\dots,\mathbf v^N\}$ is any orthonormal basis of $V^*$. 
	Now, writing $\eta = \eta_1 \xi + \eta_2 \zeta_2 + \dots +\zeta_n \eta_n$, 
	and $(M)_{\ell,\beta} = m_\beta^\ell$, we find that
	\[
		M\odot \eta = \sum_{\substack{j=1,\dots,n,\\ \ell = 1,\dots,N,\\|\beta|=k-1}} \eta_j m_\beta^\ell(\mathbf v_\ell \otimes e^{\beta + \mathbf e_j}).
	\]
	This, in turn, yields the identity
	\[
		\dpr{\mathbf v^\ell \otimes e^\alpha,M\odot \eta} = \eta_j m_\beta^\ell \delta_{(\beta + \mathbf e_j)\alpha}, \qquad |\alpha| = k.
	\]
	Next, we exploit the representation of linear maps by elements in the complement of their kernel to deduce that
	\[
		\dpr{w , \cl{\Abb^k}[M \odot \eta]} = \sum_{\substack{j=1,\dots,n\\ \ell=1,\dots,N}}  \sum_{\substack{\beta + \mathbf e_j = \alpha,\\|\alpha| = k,\\
		\alpha_1 \ge 1}}  \eta_j m_\beta^\ell \gamma_{\alpha}^\ell = I_1 + \cdots + I_n,
	\]
	where 
	\[
		I_m(\eta)[M] \coloneqq \sum_{\substack{j=1,\dots,n\\ \ell=1,\dots,N}}\sum_{\substack{\beta + \mathbf e_j = \alpha,\\|\alpha| = k,\\
				\alpha_1 = m}}  \eta_j m_\beta^\ell \gamma_{\alpha}^\ell.
	\]
	The goal now is to prove that $I_m(\eta)[M] = \dpr{\xi,\eta}\dpr{E_m , M} + \dpr{h_m, \curl_{k-1}(\eta)[M]}$ for some constant tensors $E_m,h_m$. Since the argument for arbitrary $m$ is analogous to the one for $m = 1$, we shall focus on proving the sought representation for $I_1$.
	We may write
	\[
		I_1 = II_1 + III_1 \coloneqq \sum_{\substack{\ell=1,\dots,N,\\|\beta| = k-1}} \eta_1 m_\beta^\ell \gamma_{\beta + \xi}^\ell + \sum_{\substack{j=2,\dots,n\\ \ell=1,\dots,N}}\Bigg(\sum_{\substack{\xi + \omega + \zeta_j = \alpha,\\|\omega| = k-2,\\}}  \eta_j m_{\omega + \xi}^\ell \gamma_{\alpha}^\ell\Bigg).
	\]
	Now, the first term $II_1$ can be written as $\dpr{\xi,\eta}\dpr{E_1,M}$ where $E_1 \in E_{k-1}(\R^n)$ is expressed as in coordinates as $E_1 = (\gamma_{\beta+\mathbf e_1}^\ell)_{\ell,\beta}$.
	Thus, we only have to check that the second term on the right-hand side can be expressed as a linear combination of terms of $\curl_{k-1}(\eta)[M]$. First, let us calculate the $(k-1)$-curl operator on simple tensors for $j \ge 2$:
	\begin{align*}
		\curl_{k-1}(\zeta_j)[\mathbf v_\ell \otimes e^{\omega + \xi}] 
		& =\phantom{:} \mathbf v_\ell (\delta_{jp} \delta_{(\omega + \xi)(\beta + \zeta_q)} - \delta_{jq} \delta_{(\omega +\xi)(\beta + \zeta_p)})_{p,q,\beta}\\
		& =\phantom{:} \mathbf v_\ell \sum_{\substack{p,q = 1,\dots,n\\|\beta| = k-2}} \zeta_j \wedge (\zeta_q \odot e^{\omega + \xi - \zeta_q}) + \zeta_j \wedge (\mathbf e_p \odot e^{\omega + \xi - \zeta_p}) \\
		& \eqqcolon \mathbf v_\ell \otimes \mathbf m_{j,\omega}.
	\end{align*}
Notice that $|\mathbf m_{j,\omega}| \ge 1$ for all $j =2,\dots,n$ and all $|\omega| = k-2$. This follows since $\mathbf \zeta_j \wedge \xi \odot e^\omega$ is a non-zero  tensor. Now, consider the tensor
	\[
		h_1 \coloneqq  - \sum_{\substack{j = 2,\dots,n,\\ \ell=1,\dots,N,\\
				 |\omega| = k-2,\,\omega_q \ge 1}} \gamma_{\omega + \xi + \zeta_j}^\ell \bigg(\frac{\mathbf v^\ell \otimes \mathbf m_{j,\omega}}{|\mathbf m_{j,\omega}|^2}\bigg).
	\]
By construction,  we discover that (here we use that $|\mathbf m_{j,\omega}| > 0$)
	\[
		h_1 \cdot \curl(\eta)[M] = - III_1.
	\]
	We have thus have found $h_1$ such that
	\[
		I_1(\eta)[M] + \dpr{h_1, \curl_{k-1}[M]} = \dpr{\xi,\eta}\dpr{E_1,M}.
	\]
	As mentioned beforehand, the process of showing that 
	\[
		I_m(\eta)[M] + \dpr{h_m, \cdot \curl_{k-1}[M]} =  \dpr{\xi,\eta}\dpr{E_m,M}
	\]
	is analogous and therefore the details are left to reader for its verification. Summing over $m = 1,\dots,k$ we find that
	\[
		\dpr{(w,h) , d \Abb(\eta)[M]} = \dpr{\xi,\eta}\dpr{\tilde E,M}
	\]
	where we have set $h \coloneqq h_1 + \cdots + h_k$ and $\tilde E \coloneqq E_1 + \cdots + E_k$.  This proves that $(w,h) \in (d\Acal)^{\otimes}_1$. It also implies that $\tilde E = E$, which shows that $\partial\sigma(\Acal) \subset \partial\sigma(d\Acal)$. 
	
	This proves properties~(3) and (4).
\end{proof}

\begin{proposition}
	Let $\Acal : \Crm^\infty(\R^n;V) \to \Crm^\infty(\R^n;W)$ be a $k$\textsuperscript{th} order homogeneous linear elliptic differential operator satisfying~\eqref{eq:mixk}. Then
	\[
		\proj_{\R^n} \partial\sigma(d\Acal) = \R^n, \qquad \spn \Big\{\proj_{V^{k-1}} \partial\sigma(d\Acal)\Big\} = V^{k-1}.
	\]
	Moreover, $d\Acal$ satisfies the polarization property contained in Proposition~\ref{lem:polarization}.
\end{proposition}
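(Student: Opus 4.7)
The plan is to reduce both equalities to $\partial\sigma(d\Acal) = \partial\sigma(\Acal)$ (Lemma~\ref{prop:10}(4)) and then to deduce the three assertions by adapting the machinery of the first-order theory to the $k$-th order setting. I would establish the polarization property first, as it provides the core algebraic input from which the two projection statements follow.

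For the polarization, given a non-trivial pair $(\xi, E) \in \partial\sigma(\Acal) = \partial\sigma(d\Acal)$, I would construct a $k$-th order sub-operator $\Acal_\xi^E$ playing the role of $\Acal_\xi^e$ from Lemma~\ref{lem:sub}: supported on the codomain orthogonal to the $(\xi,E)$-spanned image, and controlling those partial derivatives transversal to $\xi$ and those $V_{k-1}$-components orthogonal to $E$. Granted this construction and the verification that $\Acal_\xi^E$ inherits ellipticity and an appropriate mixing condition on its reduced codomain (analogous to Lemma~\ref{lem:sub}(ii)--(iii)), the iterative reduction used in the proof of Proposition~\ref{lem:polarization} applies almost verbatim. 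One successively slices down to $n = 2$, where the polarization pairs $(\xi \pm \eta, E \pm F)$ are produced from bi-linearity of $\cl{\Abb^k}$ together with a direct dimension count analogous to the $n=2$ case of Proposition~\ref{lem:polarization}.

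With polarization established, both projection statements follow. For $\proj_{\R^n}\partial\sigma(\Acal) = \R^n$, the mixing condition~\eqref{eq:mixk} together with the $k$-th order analog of Lemma~\ref{lem:slicemix} (embodied in Theorem~\ref{thm:slicek}(2)) guarantees the existence of at least one non-trivial pair $(\xi_0, E_0)$. Polarization then iteratively generates pairs with first coordinate $\xi_0 \pm \eta$ for suitable transversal $\eta$, and combined with the homogeneity $(\lambda \xi, E/\lambda) \in \partial\sigma(\Acal)$ for $\lambda \neq 0$, this reaches every direction in $\R^n$. Moreover, in the base case $n=2$, ellipticity forces $K_\xi \coloneqq \spn\set{\Abb^k(\eta)[v]}{\eta \in \pi_\xi, v \in V}$ to have dimension $\dim V$ for every $\xi$; were $K_{\xi_0} = W_\Acal$ for some $\xi_0$, dimensional rigidity would propagate this equality to every $\xi$, giving $\bigcap_\xi K_\xi = W_\Acal \neq \{0\}$ and contradicting~\eqref{eq:mixk}. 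For the spanning statement on $V^{k-1}$, I would proceed dually: supposing a non-zero $M \in V_{k-1}$ annihilated every spectral $E$, Lemma~\ref{prop:10}(3)--(4) together with the rank-one characterization of~\eqref{eq:mixk} would force a contradiction with ellipticity through the identity~\eqref{eq:sss}.

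The principal obstacle is the construction of the $k$-th order sub-operator $\Acal_\xi^E$ and the verification of an appropriate mixing condition on its restriction. Unlike in the first-order setting, one cannot simply invoke Lemma~\ref{lem:sub} on the linearization $d\Acal$, since the remark following Lemma~\ref{prop:10} shows that $d\Acal$ generally fails to satisfy the first-order mixing condition~\eqref{eq:mix}. The construction must therefore be carried out directly at the level of the $k$-homogeneous symbol $\Abb^k$ and the tensorial spectrum on pairs in $\R^n \times V^{k-1}$, which is combinatorially more delicate than its first-order counterpart.
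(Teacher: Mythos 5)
Your route diverges from the paper at exactly the step you flag as ``the principal obstacle,'' and what you propose there is not what the paper does. Your diagnosis is partly imprecise: the remark after Lemma~\ref{prop:10} only rules out using the \emph{mixing-stability} conclusion of Lemma~\ref{lem:sub}(iii) for $d\Acal$ (since $d\Acal$ need not satisfy the first-order condition~\eqref{eq:mix}). The \emph{construction} of the slice, and Proposition~\ref{prop:nontrivial} with Lemma~\ref{lem:sub}(i)--(ii), are available as soon as one has a non-trivial pair in $\partial\sigma(d\Acal)$ and ellipticity; no mixing hypothesis is needed for those. The paper does take the first-order slice $\Bcal \coloneqq (d\Acal)_\xi^E \mres \pi_\xi$, and the missing ingredient that lets it iterate is a different observation than the one you reach for: $\Bcal$ is itself the \emph{linearization} of a $k$\textsuperscript{th} order operator $\Lcal$ on $\pi_\xi$ with symbol $\Lbb^k = \pbf\circ\Abb^k|_{\pi_\xi}$, and $\Lcal$ inherits both ellipticity and the $k$\textsuperscript{th}-order mixing property~\eqref{eq:mixk}. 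In other words, the slice does not need to satisfy the first-order~\eqref{eq:mix}; it suffices that it is $d\Lcal$ for an $\Lcal$ satisfying~\eqref{eq:mixk}, which restores the exact hypotheses of the proposition one dimension down. That is the mechanism that makes iteration legal.

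By contrast, you propose to build a $k$\textsuperscript{th}-order sub-operator $\Acal_\xi^E$ directly from $\Abb^k$ and verify a $k$\textsuperscript{th}-order mixing condition on it ``at the level of the $k$-homogeneous symbol.'' You rightly call this combinatorially delicate, and indeed no such construction is carried out in your sketch; this is where the gap sits. The paper avoids the combinatorics entirely: it performs the algebraic slicing once, at the first-order level on $d\Acal$, then recognizes the result as $d\Lcal$. Note also that the order of deductions is inverted: the paper first runs the iterative slicing down to a one-dimensional (hence a gradient) slice to produce an orthonormal basis of $\R^n$ and a basis of $V_{k-1}$ of spectral coordinates, obtaining the spanning statements; only afterwards does it re-engineer the slicing to stop at two-dimensional slices, where the $n=2$ argument of Proposition~\ref{lem:polarization} is imported verbatim to get the polarization (and with it the upgrade from $\spn\{\proj_{\R^n}\partial\sigma(d\Acal)\} = \R^n$ to $\proj_{\R^n}\partial\sigma(d\Acal) = \R^n$). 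Your plan of proving polarization first and deriving the projection statements from it is structurally plausible, but without the $\Lcal$-linearization trick the polarization proof itself cannot get off the ground in dimensions $n\ge 3$, because the dimensional reduction step has no operator to iterate on.
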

\begin{remark}
	In general and in the context of the previous assumptions, it does not hold that
	\[
		\proj_{V^{k-1}} \partial\sigma(d\Acal) = V^{k-1}.
	\]
	In particular, one \emph{cannot} expect the linearization $d\Acal$ to satisfy the rank-one property. As an example, consider the operator
	\[
		\Dcal^ku = (\partial_1^k u ,\dots,\partial_n^k u), \quad k \ge 3.
	\]	
	It is easy to verify that the only pure derivatives $\partial_\eta\partial^{k-1}_\xi$ controlled by $\mathscr D^k$ are the ones where $\eta = \xi$ are elements of the canonical axis. In particular, if $\xi\in \Sbf^{n-1}$ does not belong to the canonical axis, then the tensor $\otimes^{k-1} \xi$ cannot be the second coordinate of a spectral pair of $d\Acal$. 
\end{remark}
\begin{proof}
	The mixing property~\eqref{eq:mixk} and the previous lemma imply that there exists a non-trivial pair $(w,h) \in \partial\sigma(d\Acal)$. Let $(\xi,E) \in \R^n \times V^{k-1}$ be a spectral direction associated to this pair and consider the slice $\Bcal \coloneqq (d\Acal)_\xi^E \mres \pi_\xi$. 
	
	Invoking Proposition~\ref{prop:nontrivial} and Lemma~\ref{lem:sub} we find that $\Bcal$ is a non-trivial elliptic operator.  We claim that $\Bcal$ is the linearization of a $k$\textsuperscript{th} order elliptic operator from $V$ to $W$ with variables in $\pi_\xi$ that satisfies the mixing property~\eqref{eq:mixk}. First, notice that the projection $\pbf_\xi^E$ induces a projection $\pbf : W \to X$, where $X = \pbf_\xi^E[W\times \{0\}]$. Now, let us consider the operator $\Lcal$ that is associated to the principal symbol $$\Lbb^k(\zeta)[v] = \pbf\circ\Abb^k(\zeta)[v] \quad \text{for all $\zeta \in \pi_\xi$ and $v \in V$.}$$
	This construction conveys the identity $\cl{\Lbb^k} = \pbf \circ \cl{\Abb^k}$, whereby we obtain
	\[
		d\Lbb = (\pbf\circ \cl{\Abb^k \mres \pi_\xi},\curl_{k-1}).
	\]
	Testing this identity with ($\curl_{k-1}$)-free tensors yields
	\begin{align*}
		d\Lbb(\zeta)[v\otimes^{k-1}\zeta] & = (\pbf \circ \Abb^k(\zeta)[v],0) \\
		&  = \pbf_\xi^E(d\Abb(\zeta)[v \otimes^{k-1} \zeta]) \\
		& = \Bbb(\xi)[v \otimes^{k-1}\zeta].
	\end{align*}	
	This proves that $\Bbb$ is indeed the linearization of $\Lcal$ (that $\Lcal$ is elliptic follows from the fact that $\Bcal$ is elliptic). Moreover, by a similar argument to the one used to prove point~(4) of Lemma~\ref{lem:sub}, one can show that $\Lcal$ also satisfies the mixing property~\eqref{eq:mixk} as an operator with variables in $\pi_\xi$. This proves the claim.
	
	Thus, $\Lcal$ satisfies the very same assumptions that $\Acal$ in the hypotheses of this lemma. In other words, we can iterate the slicing until we find a one-dimensional (elliptic) slice. This yields (notice that $n \le \dim(V_{k-1})$)
	\begin{enumerate}
		\item an orthonormal basis $\{\xi_1 \coloneqq \xi,\xi_2,\dots,\xi_n\}$ of $\R^n$,
		\item a basis $\{E_1,\dots,E_{n-1},E_{n},\dots,E_r\}$ of $V_{k-1}$
	\end{enumerate}
	such that $(\xi_i,E_i)$ is a spectral pair of the $i$\textsuperscript{th} slice of $d\Acal$ for all $i = 1,\dots,n-1$, and $\ell_{\xi_n} \times \spn\{E_n,\dots,E_{n+r}\}$ is a subset of the directional spectrum of the last (one-dimensional and elliptic) slice (and therefore containing a gradient). This, and point (4) in the previous lemma yield
	\[
	\spn\Big\{	\proj_{\R^n} \partial\sigma(d\Acal) \Big\}= \R^n, \qquad \spn \Big\{\proj_{V^{k-1}} \partial\sigma(d\Acal)\Big\} = V^{k-1}.
	\]		
	The iteration can be re-engineered in a way that, for any distinct $p,q \in \{1,\dots,n\}$, we slice with respect to $(\xi_i,E_i)$ for all $i \notin\{p,q\}$. This gives a slice of $d\Acal$  defined on the $2$-plane $\{\xi_p,\xi_q\}$. At this point of the proof, one may follow by verbatim the arguments in the proof of Proposition~\ref{lem:polarization} to show that $\spn\{\xi_p,\xi_q\} \subset \proj_{\R^n} \partial\sigma(d\Acal)$. Moreover, since the argument is independent of the initial pair $(\xi,E)$, we may also follow the same ideas of that proof to show the polarization property.
%	Therefore, $\Bcal_\xi^e$ satisfies the same properties that $\Acal$ and the slicing argument may be iterated until we find a one-dimensional slice.  spectral directions for $d\Acal$. 
%	
%	Once we are give such a two-dimensional slice, we may follow by verbatim points (1) and (2) in the proof of Proposition~\ref{lem:polarization}. In particular, obtaining that $\spn\{\xi_p,\xi_q\}$ is contained in set of spectral directions of $d\Acal$ for any $p,q$ as above. Just, as for the first-order case, a simple geometric argument implies (a). 
\end{proof}

\subsection{Proofs of the main results}
The idea will be to discuss, in chronological order, suitable versions of the main propositions, lemmas and theorems that are valid for first-order operators. Since most of the ideas remain largely similar, we shall mainly focus on those details and adaptations which are non-trivial.  

\emph{The slicing theorem.} If $(w,h) \in (d\Acal)^1_\otimes$ is a $\rank_{d\Acal}$-one tensor with an associated spectral pair $(\xi,E) \in \partial\sigma(d\Acal)$, then
$\dpr{w , \Acal u} = \dpr{(w,h) , d\Acal(\nabla^{k-1} u)} = \partial_\xi (\dpr{E , \nabla^{k-1} u})$. 
Invoking Proposition~\ref{lem:1} and~\eqref{eq:nomas} we get
\[
\dpr{w , \Acal u}= \int_{\pi_\xi} D(\nabla^{k-1} u)_{y,\xi}^E \dd \Hcal^{n-1}(y),
\]
\[
|\dpr{w , \Acal u}| = \int_{\pi_\xi} |D(\nabla^{k-1} u)_{y,\xi}^E| \dd \Hcal^{n-1}(y),
\]
as long as $u \in \BV^\Acal(\R^n)$. The same holds for $\Omega$ instead of $\R^n$ when $d\Acal$ is complex-elliptic, or equivalently, that $\Acal$ is complex-elliptic (see Proposition~\ref{prop:10}).  
Applying the rules for orthogonal complements we find that the mixing condition~\eqref{eq:mixk} is equivalent to the existence of a family $\{w_1,\dots,w_M\}$ spanning $(W_\Acal)^*$ and satisfying~\eqref{eq:star}. Invoking Lemma~\ref{prop:10}, we find that these covectors are, in fact, elements of $\Acal^\otimes_1$ that can be extended to elements of $d\Acal^\otimes_1$ by adding an appropriate coordinate.  Hence, the same concluding argument in the proof of Theorem~\ref{thm:structure} serves just as well as a proof for Theorem~\ref{thm:slicek}.
%This, together with (a) shows that~\eqref{eq:mixk} is a sufficient condition for the slicing of $\Acal u$ into terms of $\nabla^{k-1} u$. The necessity of the mixing property~\eqref{eq:mixk} (which is equivalent to $\spn \Acal^\otimes_1 = (W_\Acal)^*$ by the laws of orthogonal complements) stems from~(3) in the previous lemma and the requirement that $W_\Acal = \spn (d\Acal)^\otimes_1$. 

\emph{Dimensional estimates and fine properties I.} The proof of Corollary~\ref{thm:dim} follows from the first conclusion of the previous lemma and the same geometric argument used in the proof of Corollary~\ref{cor:dimensional1}. As a consequence, we also obtain a suitable version of Corollary~\ref{cor:iv} for higher order operators ---thus proving that $|\Acal u| \ll \Ical^{n-1}$, that $\Hcal^{n-1}(\Theta_u \setminus J_U) = 0$, and that $\Acal^c u$ vanishes on $\Hcal^{n-1}$ $\sigma$-finite sets.

\emph{Analysis of Lebesgue points on $\Theta_u$ (Lemma~\ref{lem:induction}).} The basis of the one-dimensional structure theorem and also of the fact that $|\Acal u|(S_u\setminus J_u)$, in the first-order case, hinges on on the first and second statements of Lemma~\ref{lem:induction} respectively. The first statement relies purely on slicing and the polarization properties, which we have now established in Proposition~\ref{prop:10} in the general case. The second statement requires slightly more: firstly, we need that $d\Acal$ is complex-elliptic in order to be able to use the quasi-continuity property at $\Theta_u$ (see~\cite[Prop.~1.2]{anna}); this is covered by the properties of the linearization and the assumption that $\Acal$ is complex-elliptic. The other necessary tool is that there are sufficient spectral $V^{k-1}$-coordinates to span $V^{k-1}$, which we have also established in Proposition~\ref{prop:10} for the general case ---in despite that $d\Acal$ may not satisfy the rank-one property. 

\emph{One-dimensional structure theorem  and fine properties II.}  The one-dimensional structure theorem on one-dimensional $BV$-sections follows by using $d\Acal$ and the identity $d\Acal U = (\Acal u,0)$ from the analysis of Lebesgue points and the aforementioned version of Corollary~\ref{cor:iv}. The latter also conveys a suitable version of Corollary~\ref{cor:F}, which then proves that $|\Acal u|(S_U \setminus \Theta_u) = 0$, or equivalently, that $\Acal^d u \coloneqq \Acal^s u \mres (S_U \setminus J_U)\equiv 0$. This covers the fine properties of the Cantor part $\Acal^c u$. The characterization of $\Acal^a u$ follows from the use of the linearization properties and the existing theory (see Remark~\ref{rem:ya}).
Lastly, the  characterization of $\Acal^j u$ follows from the results contained in~\cite{anna}. 
%%The precise argument is the following: the {compatibility conditions} of Sobolev functions across surfaces tell us that $(f^+,f^-,\nu)$ can be an admissible jump triple for $\nabla^{k-1} u$ if and only if $f^+,f^- \in V \otimes E_{k-1}(\R^n)$ are rank-one connected with respect to $\nu$, i.e., 
%\begin{equation}
%	f^+ - f^- = a \otimes^{k-1} \nu \coloneqq  a \otimes \underbrace{\nu \otimes \cdots \otimes \nu}_{\text{$(k-1)$-times}} \quad \text{for some $a \in V$.}
%\end{equation}
%Thus, there exists a Borel map $a_u : J_{\nabla^{k-1} u} \to V$ satisfying 
%\[
%\llbracket \nabla^{k-1} u \rrbracket \coloneqq \big((\nabla^{k-1} u)^+ - (\nabla^{k-1} u)^+ : \otimes^{k-1} \nu_u\big) = a_u.
%\]
%Using that $\nabla^{k-1} u \in \BV^{d\Acal}(\R^n)$,  we conclude that $\Acal^j u = (d\Acal)^j U = d\Abb(\nu_u)[U^+ - U^-] \, \Hcal^{n-1} \mres J_U$. The precise density characterization then follows from by observing that
%\begin{align*}
%	d\Abb^j (\nu_u)[U^+ - U^-] &\; \triangleq \,d\Abb(\nu_u)[(\nabla^{k-1} u)^+ - (\nabla^{k-1} u)^-] \\
%	& \;= \, d\Abb(\nu_u)[a_u \otimes^{k-1} \nu_u] \\
%	& \stackrel{\eqref{eq:sss}}= \, \Abb^k(\nu_u)\llbracket \nabla^{k-1} u \rrbracket.
%\end{align*}
This proves Theorem~\ref{thm:k}.\qed

   \section{Applications}\label{sct:ex}
In this section we review some well-known operators that satisfy our main assumptions. In particular, we revise the details of some interesting cases, which are not covered by the $\BV$-theory.
%    \[
%    \bigcap_{\pi \in \Gr(n-1,n)} \spn\setb{\Abb^k(\xi)[v]}{\xi \in \pi \; \text{and} \; v \in V} = \{0\}.
%    \] 

\subsection{Gradients} The gradient operator
	\[
	D u = (\partial_1 u, \dots, \partial_n u), \qquad u: \R^n \to \R^N,
	\]
	is complex-elliptic operator. Indeed,  the symbol associated to $D$ is simply
	$D(\xi)[a] = a \otimes \xi$,
	which has no complex non-trivial zeros. Clearly, $D$ also satisfies the mixing property since $D^\otimes_1 = \Irm_D = \setn{a \otimes \xi}{\xi \in \R^n, a \in \R^N}$, and, in particular, $\partial\sigma(D) = \R^n \times \R^N$.

\subsection{Higher Gradients} The $k$\textsuperscript{th} gradient operator
	\[
	D^k u = \bigg(\frac{\partial^k u^j}{\partial x_{i_1}\cdots\partial x_{i_k}}\bigg)_{i_1,\dots,i_k}^{j}, \qquad u : \R^n \to \R^N,
	\]
	is complex-elliptic since its symbol is given by $D^k(\xi)[a] = a \otimes^k \xi$. It also satisfies the rank-one property or the mixing condition since every element in the image cone is clearly rank-one. 
	
	If $u$ belongs to the space  $$\BV^k(\R^n;\R^N) \coloneqq \setn{u \in \Lrm^1(\R^n)}{D^ku\in \Mcal(\R^n;\R^N \otimes E_{k}(\R^n))},$$ then $\nabla^{k-1} u \in \BV(\R^n;\R^N \otimes E_{k-1}(\R^n))$. The classical $\BV$-theory implies that $\nabla^{k-1} u$ and $D^k u$ satisfy the fine properties. Moreover, the structure theorem takes the form
	\begin{align*}
		D^k u & = \nabla^k u\, \Leb^n \, + \,  D^c(\nabla^{k-1} u)  \, + \,  \llbracket \nabla^{k-1} u\rrbracket  \, \otimes^{k} \nu_u \, \Hcal^{n-1} \mres J_{\nabla^{k-1} u},
	\end{align*}
	where $$\llbracket \nabla^{k-1} u \rrbracket \coloneqq \dprb {(\nabla^{k-1} u)^+ - (\nabla^{k-1} u)^- , \underbrace{\nu_u,\dots,\nu_u}_{\text{$(k-1)$-times}}} \in \R^N.$$

The following example is particularly interesting, since it does not follow from the $\BV$-theory. It says that it suffices to control the pure derivatives $\partial_1^k,\dots,\partial_n^k$ in order to deduce a structure theorem and fine properties for all the lower order derivatives:

\subsection{Fine properties of ${\mathscr{BV}^k}$-functions} Consider the diagonal of the $k$\textsuperscript{th} gradient 
	\[
	u \mapsto \mathscr D^k \coloneqq \mathrm{diag}(D^k u) = (\partial^k_1 u,\dots,\partial^k_n u), \quad u : \R^n \to \R.
	\]
	The principal symbol of $\mathscr D^k$  given by the map $\xi \mapsto  (\xi_1^k,\dots,\xi_n^k)$, whence we verify that $\mathscr D^k$ is complex-elliptic. Moreover, $\mathscr D^k$ satisfies~\eqref{eq:mixk}. Indeed, every element of the canonical basis $\{\mathbf e_1,\dots,\mathbf e_n\}$ of $\R^n$ is a $\rank_{\mathscr D^k}$-one tensor. %Indeed, $\mathbf e_j \cdot (\xi_1^k,\dots,\xi_n^k)  = (\xi \cdot \mathbf e_j)^k$. 
	
	We conclude that if $u$ belongs to the space $$\mathscr{BV}^k(\R^n)  \coloneqq \setn{u \in \Lrm^1(\R^n)}{\mathscr D^ku \in \Mcal(\R^n;\R^n)},$$ then
	$\nabla^{k-1} u $ is integrable and approximately differentiable,
	the jump set  of $\nabla^{k-1}u$ is countably $\Hcal^{n-1}$-rectifiable, %and
	%    		\[
	%    			\Hcal^{n-1}(\Theta_u^k \setminus J_{\nabla^{k-1}u}) = 0,
	%    		\]
	%    		where
	%    		\[
	%    		 \Theta_u^k \coloneqq \set{x \in \Omega}{\limsup_{r \to 0^+} r^{-(n-1)}|\mathscr D^k u|(B_r(x)) > 0}.
	%    		\]
	$|\mathscr D^k u| \ll \Ical^{n-1} \ll \Hcal^{n-1}$, and 
	$\mathscr D^k u$ decomposes into its absolutely continuous, Cantor, and jump parts as
	\begin{align*}
		\mathscr D^k u & = \mathrm{diag}{(\nabla^k u)} \, \Leb^n  \, + \, 
		(\mathscr D^k)^s u \mres (\R^n \setminus S_{\nabla^{k-1} u}) \\
		& \qquad \qquad + \,  \llbracket \nabla^{k-1} u\rrbracket \, \big((\nu_1)_1^k,\dots,(\nu_u)_n^k\big) \, \Hcal^{n-1} \mres J_{\nabla^{k-1} u}.
	\end{align*}
	Moreover,  $\nabla^{k-1} u$ satisfies the fine properties $(i)$-$(v)$ contained in Theorem~\ref{thm:k}. 

\subsection{Fine properties of $\BD$-functions} For vector-valued map $u: \Omega \subset \R^n \to \R^n$ we define its symmetric gradient
	\begin{align*}
		Eu & \coloneqq \frac12 (Du + Du^t),
		%    	&  = \frac 12 (\partial_j u^i + \partial_i u^j)_{i,j} \qquad i,j = 1,\dots,n\,,
	\end{align*}
	which takes values on the space $E_2(\R^n)$ of symmetric bilinear forms of $\R^n$. One readily verifies that $E$ is elliptic since 
	%It is well-known that the space $\set{a + Rx}{a \in \R^n, R \in \skw(n)}$ is the kernel of $E$ on~$\R^n$. Since this is finite-dimensional, $E$ is $\C$-elliptic (see \cite[Thm~4]{Kala94}). 
	$\mathrm I_E = \set{a \odot \xi}{a,\xi \in \R^n}$. We have $E^\otimes_1 = \set{\xi \otimes \xi}{\xi \in \R^n}$ and $\partial\sigma(E)=\set{(\xi,\xi)}{\xi\in \R^n}$. 
	A standard polarization argument shows that the family $\set{\xi \otimes \xi}{\xi \in \R^n}$ is a spanning set of $E_2(\R^n)$, which further implies that that $E$ has the rank-one property. The structure theorem in $\BD(\R^n)$, which is well-known, reads
	\[
	Eu = \sym(\nabla u) \, \Leb^n \, + \, E^s u \mres (S_u \setminus J_u) \, + \, \llbracket u \rrbracket \odot \nu_u \, \Hcal^{n-1} \mres J_u.
	\]

More generally, one may consider the symmetrization of the gradient of a symmetric $k$-tensor field:

\subsection{Fine properties of $\BD^k$-functions}
	Let $k \in \N$. For a symmetric $k$-tensor $v \in E_k(\R^n)$, we define the operator with symbol $E^k(\xi)[v]$ satisfying
	\[
	E^k(\xi)[v]a = \sym^{k+1}(\xi)[v](a_1,...,a_{k+1}) \coloneqq \frac{1}{(k+1)!}\sum_{\sigma \in S_{k+1}} (\xi\cdot a_{\sigma(k+1)})v(a_{\sigma(1)},...,a_{\sigma(k)}),
	\]
	for all $(k+1)$-tuples $\R^n$-vector-fields $(a_1,\dots,a_{k+1})$.
	Thus, for a $k$-tensor-field $u : \R^n \to E_k(\R^n)$, we have $E^ku = \sym^{k+1}(D u)$. 
	We verify that this operator is complex-elliptic by the pointwise definition: let $\xi \in \C^n$ be a non-zero vector and let $v \in E_{k}(\C^n)$. Then $\sym^k(\xi)v = 0$ implies 
	$\sym^{k+1}(\xi)[v](a,...,a) = 0$ for each $a \in \C^n$. This however implies
	$v(a,\dots,a) = 0$ for all $a \in \C^n$ with $a \cdot \xi \neq 0$.
	Now, consider any $b \in \pi_\xi$, and take $a = b + \xi$. Applying $v$ to this choice of $a$, we see by a polarization argument that $v$ vanishes on $E_{k+1}(\C^n)$  and hence $v=0$. This shows that $E^k$ is complex-elliptic. 
	%    	The image cone is given by
	%\[
	%\Irm_{E^k} = \set{\sym^{k+1}(v \otimes \xi)}{\xi \in \R^n, v \in E_k(\R^n)},
	%\]
	Moreover,
	\[
	({E^k})^\otimes_1 = \set{\otimes^{k+1} \xi}{\xi \in \R^n}, \qquad \partial\sigma(E^k) = \set{(\xi,\otimes^{k} \xi)}{\xi \in \R^n}.
	\] This family of $k$-tensors can be seen to generate $E_{k+1}(\R^n)$ and therefore $E^k$ satisfies the rank-one property. Thus, for a function in the space $$\BD^k(\R^n) = \setn{u \in \Lrm^1(\R^n;E_k(\R^n))}{E^ku \in \Mcal(\R^n;E_{k+1}(\R^n))},$$ the structure theorem takes the form
	\[
	Eu = 	\sym^k(\nabla u) \, \Leb^n \, + \, (E^k)^s u \mres (S_u \setminus J_u) \, + \, \sym^k(\llbracket u \rrbracket \otimes \xi) \, \Hcal^{n-1} \mres J_u
	\]
	and furthermore $u$ satisfies the fine properties established in Theorem~\ref{thm:2}.
	%\[
	%\sigma(({E^k})^\otimes) = \set{(\xi,\otimes^{k}_{m=1} \xi)}{\xi \in \R^n}.
	%\]
	%	From this, we deduce that $E^k(\xi)v = \lambda (\otimes^{k+1} \xi)$ for some scalar $\lambda \in \C$. Testing with $a = \xi$ then gives $\lambda = 0$, which implies $v = 0$. This shows that $E^k[D]$ is $\C$-elliptic.
	%	\mnote{A: check this, I think it has something to do with $n$}An interesting feature of $E^k[D]$ is that it is a first-order operator and
	%	\[
	%		\im E^k(\xi) = \setb{\xi \odot a_1 \odot \dots \odot a_n}{a_j \in \R^n \; \text{for all $1 \le j \le n$}}.
	%	\] 
	%	In particular, for fixed $v \in \im E^k(\xi)$, it holds 
	%	\[
	%		\big| E^{k}(\xi)^{-1}[v] \big| \le \note{\min\{k +1,n\}}.
	%	\]
	%	Moreover, {for $n \ge k+1$}, then the bound is exact whenever $v$ is generated by $(k+1)$ pairwise linearly independent vectors of $\R^n$.	

%\begin{example}Let
%\[
%	\Acal U = \mathrm{diag}(DU) \times \curl_{k}U, \qquad U: \R^n \to E_{k}(\R^n),
%\]
%where $\curl_k : \Crm^\infty(\R^n;E_{k}(\R^n)) \to \Crm^\infty(\R^n;\bigwedge_{k+1}(\R^n))$ is the generalized curl operator on $k$-symmetric tensors (see Example~3.10 in~\cite{fonseca1993relaxation-of-q}).
%Then
%\[
%	\Abb(\xi)[M] = \big((\xi_1 M_{\mathbf e_1},\dots,\xi_n M_{\mathbf e_n}) , \curl_k(\xi)[M]\big).
%\]
%Assume that $\Abb(\xi)[M] = 0$. We find that $\curl_k(\xi)[M]$, which implies that $M = v \otimes^{k} \xi$ for some $v \in V$. From the first coordinate vanishing we discover that $0 =  (\xi_1^{k+1},\dots,\xi_n^{k+1})$. This shows that 
%\end{example}

\section{Counterexamples}\label{sec:examples}
We review a number of well-known differential operators, which fail to satisfy the main mixing property.  
\subsection{Insufficiency of complex-ellipticity} The following two examples show that complex-ellipticity is not a sufficient condition  to ensure the validity of the mixing property~\eqref{eq:mix}:
\begin{example}[Deviatoric operator]\label{ex:dev} Consider the operator  that measures the shear part of the symmetric gradient:
	\[
	Lu = Eu - \frac{\Div(u)}{n}\id_{\R^n}, \qquad u:\R^n \to \R^n.
	\]	
	This operator satisfies the following structural properties:
	\begin{enumerate}
		\item $L$ is elliptic,
		\item $L$ is complex-elliptic for $n \ge 3$,
		%\item $L$ is complex-elliptic for all $n \ge 3$,
		\item $L$ does not satisfy the rank-one property for all $n \ge 2$ and its $\rank_\Lbb$-one cone is trivial, i.e., $\Lbb^\otimes_1 = \{0\}$.
		In particular, $L$ does not satisfy~\eqref{eq:mix}.
	\end{enumerate}
	\end{example}	

\begin{proof} The principal symbol is given by $\Lbb(\xi)[a] = a \odot \xi - n^{-1} (a \cdot \xi) \id_{\R^n}$. Therefore, $\Lbb(\xi)[a] = 0$ if and only if  
	$n(a \odot \xi) = (a \cdot \xi) \id_{\R^n}$. When $(a \odot \xi)$ is non-trivial, it is either a rank-one matrix or a rank-two matrix with eigenvalues of discordant sign. This shows that $L$ is elliptic. That $L$ is complex-elliptic follows from the fact that it has a finite dimensional kernel if and only if $n \ge 3$.
	
	Notice that $Lu$ takes values on the space $\sym_0(n)$ of trace-free symmetric $(n\times n)$-matrices. The spectral theorem ensures that $\sym_0(n)$ contains no rank-one elements, whereby we conclude  that $\Lbb^\otimes_1 = \{0\}$. 
\end{proof}

%	\begin{example}
%	aa
%	\end{example}
%\begin{proof}
%	Let $u = (u^1,u^2) : \R^2 \to \R^2$ be the fundamental solution of the equation
%	\[
%		E_D = \begin{pmatrix}
%		1 & 0 \\ 0 & 1
%		\end{pmatrix}\frac 34 \delta_0
%	\]
%	and define the function
%	\[
%		v(x,y,z) = 
%		\begin{pmatrix} 
%			u^1(x,y+z) \\
%			u^2(x,y+z) \\
%			u^2(x,y+z) 
%		\end{pmatrix}^t.
%	\]
%	It follows that,
%	\begin{align*}
%		\partial_1 v^1(x,y,z) - \frac 13 \tr(v)(x,y+z) & = \frac{2}{3} \partial_1 u^1(x,y+z) - \frac 23 \partial_2 u^2(x,y+z) \\
%		& =  \delta_0(\mathrm dx, \mathrm dx+ \mathrm dy)
%	\end{align*}
%	and
%	\begin{align*}
%	\partial_2 v^2(x,y,z) - \frac 13 \tr(v)(x,y,z) & = \frac{2}{3} \partial_2 u^2(x,y+z) - \frac 13 \partial_1 u^1(x,y+z) \\
%	& \quad - \frac 13 \partial_2 u^2(x,y+z) \\
%	& =  \frac 12 \delta_0(\mathrm dx, \mathrm dx+ \mathrm dy).
%	\end{align*}
%	The calculation for $\partial_3 v^3 - \tr (v)$ is a analogous to this last one.
%	The mixed terms satisfy
%	\[
%		\partial_1v^2(x,y,z) + \partial_2 v^1(x,y,z) = \partial_1 u^2(x,y+z) + \partial_2 u^1(x,y+z) = 0. 
%	\]
%	Analogously, $\partial_1 v^3 + \partial_3 v^1 = 0$. Finally, we know $\curl(-u^1,u^2) = 0$ so $u^1 = -\partial_1 \phi$ and $u^2 = \partial_2 \phi$ and $\partial_2 u^2 = \partial_{22} \phi$. 
%	\[
%		\partial_2 v^3 + \partial_3 v^2 = 2\partial_2 u^2(x,y+z)  
%	\]
%\end{proof}

\begin{example}\label{ex:cauchy}
	Let $\Bcal : \Crm^\infty(\R^{n+1};\R^{N+1}) \to \Crm^\infty(\R^n;\R^{N + n+1})$ be the operator associated to the principal symbol
	\[
		\Bbb(\xi_0,\dots,\xi_n)[v_0,\dots,v_N] \coloneqq \bigg(\sum_{r+s = m} \xi_r v^s\bigg)_m, \qquad m=0,1,\dots,n+N.
	\]
%	\]
%	\[
%	\Bcal u = \bigg(\sum_{r + s = m} \partial_r  u^{s}\bigg)_{m}, \qquad m= 0,1,\dots,n+ N +1\,.
%	\]	
	Then,
	\begin{enumerate}
		\item $\Bcal$ is complex-elliptic
		\item $\Bcal$ does not satisfy the rank-one property for all $n + N \ge 3$. 
	\end{enumerate} 
\end{example}
\begin{proof} A symmetry argument shows that there is no loss of generality in assuming that $n \ge N$ (otherwise, we simply reverse the roles of $\R^n$ and $V$).
	We only show the failure of the mixing property since the complex-ellipticity follows directly from a simple induction argument. The associated principal symbol of the operator satisfies
	\[
	(\Bbb(\xi)[v])_m = z_m(\xi,v) \coloneqq \sum_{r + s = m} \xi_r v_s = \sum_{r + s = m} (\mathbf e_r \cdot \xi)(\mathbf v_s \cdot v),
	\]
	where $\{\mathbf e_0,\dots,\mathbf e_n\}$ are $\{\mathbf v_0,\dots\mathbf v_n\}$ are orthonormal basis of $\R^{n+1}$ and $V$.
	Therefore, $z_m$ may be identified (under a suitable isometry) with a matrix of the form
	\[
	\tilde z_m = \begin{bmatrix}
		& & & & & 1 & &  \\
		& & & & 1 &    & & \\
		& & & 1 & &   & & \\
		& & 1 & & & & & & \\
		& 1 & & &  & & & \\
		1 & &  &  &  & & &\\
		& &  & & & &  & \\
	\end{bmatrix} \in \R^{N+1} \otimes \R^{n+1},
	\]
	where the vector in the first row only has a non-zero coordinate in its $(m+1)$\textsuperscript{{th}} entry. Notice that $\rank(\tilde z_m) \ge m+1$. More generally, for a vector $P = (P_0,\dots,P_{p+q})$, we discover that the bi-linear form $P \cdot f_\Bcal$ may be identified (under the same isometry) with a matrix of the form
	\[
	\tilde P = \begin{bmatrix}
		a & b & c & d & e & f & g   \\
		b & c & d & e & f & g & h \\
		c & d & e & f & g & h & i \\
		d & e & f & g & h & i & j \\
		e & f & g & h & i & j & k \\
		f & g & h & i & j & k & l \\
		g & h & i & j & k & l & m
	\end{bmatrix},
	\]
	and accordingly $\rank(\tilde P) = 1$ only when considering multiples of $ (1,0,\dots,0)$, $(0,\dots,0,1)$, or $(1,1,\dots,1)$. Since clearly these three vectors do not span $\R^{n + N +1}$ (when $n + N \ge 3$), this shows that $\Acal$ does not satisfy the rank-one property in the conjectured range.
\end{proof}

\subsection{Non-canceling operators}The following are some relevant examples of elliptic operators that \emph{fail} to satisfy the cancellation property
\[
	\bigcap_{\xi \in \pi_\xi} \im \Abb^k(\xi) = \{0\},
\]
introduced by \textsc{Van Schaftingen} in~\cite{van-schaftingen2013limiting-sobole} to establish limiting Sobolev inequalities on $\BV^\Acal(\R^n)$. Every operator satisfying the mixing property~\eqref{eq:mixk} is clearly canceling Therefore, the operators discussed next, all  fail to satisfy the rank-one property (for more details we refer the reader to~\cite{anna,van-schaftingen2013limiting-sobole} and references therein):

%    \subsubsection{Non-examples}
%    We now present some interesting examples of elliptic operators which violate the $\C$-ellipticity condition and hence also the mixing property~\eqref{eq:mix}. These provide an important insight into what may fail for operators that do not satisfy the desired dimensional estimates.
%    
\begin{example}\label{ex:div_curl}Let $n \ge 2$. The div-curl operator 
	\begin{align*}
		\Fcal_n u & \coloneqq (\Div \times \curl) u. %\\
		%			& = \begin{pmatrix}
		%			\partial_1 u^1 + \dots + \partial_2 u^2 \\
		%			\partial_1 u^2 - \partial_2 u^1
		%			\end{pmatrix}
	\end{align*}
	defined for vector-fields $u : \R^n \to \R^n$.
	%    	\emph{Ellipticity:} This follows from the fact that $\Div$ and $\curl$ are orthogonal in Fourier space. More precisely, $\Fbb_n(\xi)a = (\xi \cdot a,a \otimes \xi - \xi \otimes a)$. Thus, $\Fbb_n(\xi)a = 0$ if and only if $\xi = 0$ or $a = 0$.
	%    	
	%    	\emph{Failure of complex-ellipticity:} First, let us address the illustrative two-dimensional case. In this case the symbol acts on vectors as $\Fbb(\xi)a = (\xi\cdot a, \xi \cdot a^\perp)$, and therefore $\Fbb(\xi)$ is not injective as an element of $\C^2 \otimes \C^2$. Indeed,
	%    	\[
	%    	\det \Fbb_2(\xi) = \det \begin{pmatrix}
	%    	\xi_1 & \xi_2 \\
	%    	-\xi_2 & \xi_1
	%    	\end{pmatrix} = \xi_1^2 + \xi_2^2 = (\xi_1 + \mathrm i \xi_2)(\xi_1 - \mathrm i \xi_2).
	%    	\]
	%    	In particular $\det \Fbb(\xi)$ vanishes at the union of two complex hyper-planes. 
	%    	Complex-ellipticity also fails for higher dimensions. The underlying reason being that $\Fcal_n$ itself \enquote{contains} a two-dimensional div--curl operator:
	%    	\[
	%    	\Fcal_n(u^1,u^2,0,\dots,0) = \Fcal_2(u^1,u^2).
	%    	\]
	%    	Or, equivalently,
	%    	\[
	%    	\Fbb_n(\xi_1,\xi_2,0\dots,0)(a_1,a_2,0\dots,0) = \det \Fbb_2(\xi_1,\xi_2).
	%    	\]
	%    	%$(\Div,\curl)$ is not canceling and therefore also not $\C$-elliptic: taking $a = \xi$ above we discover that
	%    	%	\[
	%    	%			\R \times \R^{\binom n2} \ni (1,0) \in \bigcap_{|\xi| =1} \im \Fbb(\xi).
	%    	%	\]
\end{example} 

An equivalent formulation of the previous example comes from the \emph{del-bar} operator or the Cauchy-Riemann equations, as well as conformal gradients:

\begin{example}[Cauchy-Riemann equations] A function $u : \R^2 \to \R^2$ satisfies
	\[
	\bar \partial (u^1 + \mathrm i u^2) \coloneqq (\partial_1 - \mathrm i \partial_2) (u^1 + \mathrm i u^2) = 0
	\]
	if and only if $w(x + \mathrm iy) \coloneqq u^1(x,y) + \mathrm{i} u^2(x,y)$ is holomorphic. The $\bar \partial$-operator in two dimensions is equivalent to the div-curl operator: if we set $\psi = ( u^1,-u^2)$, then
	\[
	(\Div \times \curl) \psi = 0.
	\]
	%    	The analysis for the div-curl operator implies that the Cauchy-Riemann equations conform an elliptic system, but not a $\C$-elliptic system. 
	
	\begin{example}[Conformal maps]The same conclusions apply to the differential inclusion
		\[
		Du(x) \in K \coloneqq \setBB{\begin{pmatrix}
				a & b \\
				-b & a
		\end{pmatrix}}{a,b\in \R},
		\]
		where $K$ the is the set of conformal $(2 \times 2)$ matrices.
	\end{example}
	
	%$Du(x) \in \M(\Omega;K)$ where
	%\[
	%	K = \setBB{\begin{pmatrix}
	%	a & b \\
	%	-b & a
	%	\end{pmatrix}}{a,b \in \R}.
	%\]
	%Then $\psi \coloneqq (u^1,-u^2)$ satisfies $(\Div,\curl)\psi = 0$ or equivalently
	%\[
	%	\bar \partial(u^1 + \mathrm{i}u^2) = 0.
	%\]
	
\end{example}

\begin{example}[Compensated compactness]
	For $n \ge 3$ and let $m \in \{1,..., n-1\}$, consider the first order operator $(d,d^*)$, whose symbol is given by
	\[
	[(d,d^*)(\xi)]v \coloneqq \left(\xi \wedge v, *(\xi \wedge *v)\right), \qquad v \in \Lambda^m(\R^n).
	\]
	%    	The following holds:
	%    	\begin{enumerate}
	%    		\item $(d,d^*)$ is elliptic,
	%    		\item $(d,d^*)$ is canceling on $(\max\{m+1,n-m+1\})$-planes
	%    		\item $(d,d^*)$ is not $\C$-elliptic.
	%    	\end{enumerate}
	%    	Ellipticity follows from De Rham's Theorem and a duality argument: 
	%    	\begin{align*}
	%    	\ker (d_m,d^*_{m+1}) & = \ker d_m \cap \ker d^*_{m+1} \\
	%    	& = \ker d_m \cap \im d^*_m 
	%    	\\
	%    	&= \ker d_m \cap (\im d_m)^\perp = \{0\}.
	%    	\end{align*}
	%    	Complex ellipticity does not follow the same path since for complex manifolds the exterior derivative satisfies $d = \partial + \bar\partial$, where $\partial, \bar \partial$ are the Dolbeault operators (the duality argument holds only under a Hermitian product).
	%    	
	%    	Individually, $d$ is only canceling on $(n-m +1)$-planes and $d^*$ is only canceling on $(m+1)$-planes (see the Appendix of \cite{AR-PAMS}). In particular $(d,d^*)$ may only be canceling on $\max\{m + 1,n-m +1\}$-planes. The choice $n \ge 3$ implies that $\max\{m + 1,n-m +1\} \ge 3$. Therefore, the operator $(d,d^*)$ may never be canceling on $2$-planes and hence also not $\C$-elliptic.
\end{example}

\subsection{Scalar-valued elliptic operators} All elliptic and scalar-valued operators, in dimensions $n \ge 2$, are non-canceling In particular the following well-known examples do not satisfy our assumptions:

%We give two examples of elliptic operators that are elliptic but that are not canceling on $2$-planes: 

\begin{example}[Laplacian]\label{ex:laplacian} The easiest example of a second-order operator that is elliptic but fails to the mixing condition is the Laplacian
	\[
	\Delta u = \sum_{i =1}^n \partial^2_iu, \qquad n \ge 2.
	\]
	%    	In this case the symbol is given by $\xi^2 \coloneqq \xi_1^2 + \dots + \xi_n^2.$
	%		Since $\Delta u$ is scalar-valued and elliptic, it clearly cannot satisfy the mixing property.
	%		
	%    	Here, it is possible to highlight the large difference between considering algebraic sets in  $\R[\mathbf x]$ and $\C[\mathbf x]$. The key observation  is that  $\xi^2$ coincides with the modulus $|\xi|^2$ whenever $\xi \in \R^n$, whilst in $\C^n$ the latter is never a polynomial since $|\xi|^2 = \xi \cdot \bar \xi$ and conjugation is not an algebraic operation. This is a first tell of how the usual ellipticity may break down when admitting complex-valued zeroes. In dimension $n=2$ the description is even simpler since
	%    	\[
	%    	\xi \cdot \xi = \xi_1^2 + \xi_2^2 = (\xi_1 + \mathrm{i}\xi_2)(\xi_1 - \mathrm{i}\xi_2).
	%    	\]
	%    	Hence, the complex algebraic set of $\xi^2$ is the union of two (complex) hyper-planes whereas the algebraic set over the reals is simply the zero of $\R^n$. 
	Observe also that there is no hope for either dimensional to hold for Laplace's operator for all $n \ge 2$:
	There exists a fundamental solution $\Phi \in \Crm^\infty(\R^n \setminus \{0\})$ satisfying $\Delta \Phi = \delta_0$ on $\R^n$.
	Accordingly, $\dim_\Hcal(\Delta \Phi) = 0$. 
\end{example}

%	In fact, if $\Acal(D)$ is $\C$-elliptic, it is possible to extract some information from its restriction to two dimensional subspaces. 
%	\end{example}
%	
%	Let $\pi \in \Gr(2,n)$, which without loss of generality we may assume is $\R^2 \oplus \{0\}^{n-2}$.  It is easy to check from the definition that if $\Acal(D)$ is $\C$-elliptic, then its restriction 
%	\[
%			\Acal_\pi[\partial_1,\partial_2] \coloneqq  \Acal[\partial_1,\partial_2,0,\dots,0] 
%	\]
%	is also a $\C$-elliptic operator (acting on functions of $2$-variables). 
%	
%	Let $p_1(\xi), \dots, p_N(\xi)$ be a family of homogeneous polynomials in $2$-variables. 
%	\[
%		p(\xi) = k D p(\xi) \cdot \xi
%	\]

\begin{example}[The $\Acal$-Laplacian operator] For any homogeneous elliptic partial differential operator $\Acal$ from $V$ to $W$, we may consider the operator
	\[
	\Delta_\Acal u \coloneqq (\Acal^*\circ \Acal) u. 
	\]
	%    	
	%    	The following holds:
	%    	\begin{enumerate}
	%    		\item $\Delta_\Acal$ is elliptic if and only if $\Acal$ is elliptic,
	%    		\item $\Delta_\Acal$ is scalar-valued.
	%    	\end{enumerate}
	%    	The first property is clear. To verify the second property, we make the observation that $\im[ \Abb^* \circ \Abb](\xi) = E$ for all non-zero $\xi \in \R^n$ and hence 
	%    	\[
	%    	\bigcap_{|\xi| = 1} \im[ \Abb^* \circ \Abb](\xi) = V.
	%    	\]	
	%    	This however implies that $\Delta_\Acal$ does not satisfy the 2-plane cancellation property~\eqref{eq:can} and hence it is also not $\C$-elliptic (cf. Proposition~\ref{prop:equiv}).
	Such operators are elliptic and possess a fundamental solution $\Delta_\Acal \Phi_\Acal = \delta_0$.
\end{example}

\begin{example}[Laplace-Beltrami operator]
	The Laplace-Beltrami operator
	\[
	\Delta \coloneqq	dd^* + d^*d 
	\]
	is elliptic and scalar-valued and fails to satisfy the mixing property.
\end{example}

    \appendix
\begin{bibdiv}
	\begin{biblist}
		
		\bib{Alberti_Bianchini_Crippa_14}{article}{
			author={Alberti, Giovanni},
			author={Bianchini, Stefano},
			author={Crippa, Gianluca},
			title={On the {$L^p$}-differentiability of certain classes of
				functions},
			date={2014},
			ISSN={0213-2230},
			journal={Rev. Mat. Iberoam.},
			volume={30},
			number={1},
			pages={349\ndash 367},
			url={https://0-doi-org.pugwash.lib.warwick.ac.uk/10.4171/RMI/782},
			review={\MR{3186944}},
		}
		
		\bib{ambrosio1997fine-properties}{article}{
			author={Ambrosio, L.},
			author={Coscia, A.},
			author={Dal~Maso, G.},
			title={Fine properties of functions with bounded deformation},
			date={1997},
			ISSN={0003-9527},
			journal={Arch. Rational Mech. Anal.},
			volume={139},
			number={3},
			pages={201\ndash 238},
			url={https://0-doi-org.pugwash.lib.warwick.ac.uk/10.1007/s002050050051},
			review={\MR{1480240}},
		}
	
%		\bib{maly}{article}{
%			author={Ambrosio, Luigi},
%			author={De Lellis, Camillo},
%			author={Mal\'{y}, Jan},
%			title={On the chain rule for the divergence of BV-like vector fields:
%				applications, partial results, open problems},
%			conference={
%				title={Perspectives in nonlinear partial differential equations},
%			},
%			book={
%				series={Contemp. Math.},
%				volume={446},
%				publisher={Amer. Math. Soc., Providence, RI},
%			},
%			date={2007},
%			pages={31--67},
%			review={\MR{2373724}},
%			doi={10.1090/conm/446/08625},
%		}
		
		\bib{AFP2000}{book}{
			author={Ambrosio, L.},
			author={Fusco, N.},
			author={Pallara, D.},
			title={{Functions of bounded variation and free discontinuity
					problems}},
			series={Oxford Mathematical Monographs},
			publisher={The Clarendon Press, Oxford University Press, New York},
			date={2000},
			review={\MR{1857292}},
		}

        \bib{elementary}{article}{
            AUTHOR = {Arroyo-Rabasa, Adolfo},
            TITLE = {An elementary approach to the dimension of measures satisfying
                a first-order linear {PDE} constraint},
            JOURNAL = {Proc. Amer. Math. Soc.},
            FJOURNAL = {Proceedings of the American Mathematical Society},
            VOLUME = {148},
            YEAR = {2020},
            NUMBER = {1},
            PAGES = {273--282},
            ISSN = {0002-9939},
            MRCLASS = {28A78 (35F35 49Q15)},
            MRNUMBER = {4042849},
            URL = {https://0-doi-org.pugwash.lib.warwick.ac.uk/10.1090/proc/14732},
        }

%    \bib{YMA}{article}{
%    	title={Characterization of generalized Young measures generated by $\mathcal A$-free measures},
%    	author={Arroyo-Rabasa, Adolfo},
%    	year={2019},
%    	eprint={1908.03186},
%    	journal={arXiv e-print},
%    	primaryClass={math.AP}
%    }
    
%    \bib{YMA}{article}{
%    	title={Characterization of generalized Young measures generated by $\mathcal A$-free measures}, 
%    	author={Arroyo-Rabasa, Adolfo},
%    	year={2019},
%    	eprint={1908.03186},
%    	journal={arXiv e-print},
%    	archivePrefix={arXiv},
%    	primaryClass={math.AP}
%    }

\bib{YMA}{article}{
	author = {Arroyo-Rabasa, Adolfo},
	title = {Characterization of generalized Young measures generated by $\mathcal A$-free measures},
	journal = {arXiv e-prints},
	keywords = {Mathematics - Analysis of PDEs, Primary 49J45, 49Q15, Secondary 46G10, 35B05},
	year = {2019},
	month = {aug},
	eid = {arXiv:1908.03186},
	pages = {arXiv:1908.03186},
	archivePrefix = {arXiv},
	eprint = {1908.03186},
	primaryClass = {math.AP},
	adsurl = {https://ui.adsabs.harvard.edu/abs/2019arXiv190803186A},
	adsnote = {Provided by the SAO/NASA Astrophysics Data System}
}

    \bib{GAFA}{article}{
    	author={Arroyo-Rabasa, Adolfo},
    	author={De~Philippis, Guido},
    	author={Hirsch, Jonas},
    	author={Rindler, Filip},
    	title={Dimensional estimates and rectifiability for measures satisfying
    		linear {PDE} constraints},
    	date={2019},
    	ISSN={1016-443X},
    	journal={Geom. Funct. Anal.},
    	volume={29},
    	number={3},
    	pages={639\ndash 658},
    	url={https://0-doi-org.pugwash.lib.warwick.ac.uk/10.1007/s00039-019-00497-1},
    	review={\MR{3962875}},
    }
    
    \bib{arroyo2018lower}{article}{
   author={Arroyo-Rabasa, Adolfo},
author={De Philippis, Guido},
author={Rindler, Filip},
title={Lower semicontinuity and relaxation of linear-growth integral
	functionals under PDE constraints},
journal={Adv. Calc. Var.},
volume={13},
date={2020},
number={3},
pages={219--255},
issn={1864-8258},
review={\MR{4116615}},
}	

		\bib{anna}{article}{
				author = {{Arroyo-Rabasa}, Adolfo}
				author = {{Skorobogatova}, Anna},
				title = {On the fine properties of elliptic operators},
				journal = {arXiv e-prints},
				keywords = {Mathematics - Analysis of PDEs, 49Q20, 26B30},
				year = {2019},
				month = {nov},
				eid = {arXiv:1911.08474},
				pages = {arXiv:1911.08474},
				archivePrefix = {arXiv},
				eprint = {1911.08474},
				primaryClass = {math.AP},
		}
		
		\bib{bellettini1992caratterizzazione}{article}{
			title={Una caratterizzazione dello spazio $\BD(\Omega)$ per sezioni unidimensionali},
			author={Bellettini, G.},
			author={Coscia, A.},
			journal={Seminario di Analisi Matematica, Dip. Mat. Univ. Bologna},
			pages={1992--93},
			year={1992}
		}
			
    \bib{breit2017traces}{article}{
    	author={Breit, Dominic},
    	author={Diening, Lars},
    	author={Gmeineder, Franz},
    	title={On the trace operator for functions of bounded $\mathbb A$-variation},
    	journal={Anal. PDE},
    	volume={13},
    	date={2020},
    	number={2},
    	pages={559--594},
    	issn={2157-5045},
    	review={\MR{4078236}},
    }

		\bib{Caccioppoli_52_I}{article}{
			author={Caccioppoli, Renato},
			title={Misura e integrazione sulle variet\`a parametriche. {I}},
			date={1952},
			ISSN={0392-7881},
			journal={Atti Accad. Naz. Lincei Rend. Cl. Sci. Fis. Mat. Nat. (8)},
			volume={12},
			pages={219\ndash 227},
			review={\MR{47757}},
		}
		
		\bib{Caccioppoli_52_II}{article}{
			author={Caccioppoli, Renato},
			title={Misura e integrazione sulle variet\`a parametriche. {II}},
			date={1952},
			ISSN={0392-7881},
			journal={Atti Accad. Naz. Lincei Rend. Cl. Sci. Fis. Mat. Nat. (8)},
			volume={12},
			pages={365\ndash 373},
			review={\MR{49990}},
		}
		
%		\bib{ChaCri18b}{misc}{
%			author={Chambolle, Antonin},
%			author={Crismale, Vito},
%			title={Phase-field approximation for a class of cohesive fracture
%				energies with an activation threshold},
%			date={2018},
%		}

%\bib{chen}{article}{
%	author={Chen, Gui-Qiang},
%	author={Frid, Hermano},
%	title={Divergence-measure fields and hyperbolic conservation laws},
%	journal={Arch. Ration. Mech. Anal.},
%	volume={147},
%	date={1999},
%	number={2},
%	pages={89--118},
%	issn={0003-9527},
%	review={\MR{1702637}},
%	doi={10.1007/s002050050146},
%}
		
		\bib{de-giorgi1961frontiere-orien}{book}{
			author={De Giorgi, Ennio},
			title={Frontiere orientate di misura minima},
			series={Seminario di Matematica della Scuola Normale Superiore di Pisa,
				1960-61},
			publisher={Editrice Tecnico Scientifica, Pisa},
			date={1961},
			review={\MR{0179651}},
		}
		
		\bib{de-giorgi-su-una-teoria-1954}{article}{
			author={De Giorgi, Ennio},
			title={Su una teoria generale della misura {$(r-1)$}-dimensionale in uno
				spazio ad {$r$} dimensioni},
			date={1954},
			ISSN={0003-4622},
			journal={Ann. Mat. Pura Appl. (4)},
			volume={36},
			pages={191\ndash 213},
			url={https://0-doi-org.pugwash.lib.warwick.ac.uk/10.1007/BF02412838},
			review={\MR{62214}},
		}
		
		\bib{de-giorgi-nuovi-teoremi-1955}{article}{
			author={De Giorgi, Ennio},
			title={Nuovi teoremi relativi alle misure {$(r-1)$}-dimensionali in uno
				spazio ad {$r$} dimensioni},
			date={1955},
			ISSN={0035-5038},
			journal={Ricerche Mat.},
			volume={4},
			pages={95\ndash 113},
			review={\MR{74499}},
		}
		
		\bib{de-giorgi-sulla-proprieta-1958}{article}{
			author={De Giorgi, Ennio},
			title={Sulla propriet\`a isoperimetrica dell'ipersfera, nella classe
				degli insiemi aventi frontiera orientata di misura finita},
			date={1958},
			journal={Atti Accad. Naz. Lincei. Mem. Cl. Sci. Fis. Mat. Nat. Sez. I
				(8)},
			volume={5},
			pages={33\ndash 44},
			review={\MR{0098331}},
		}
	
		\bib{nin}{article}{
		author = {Del Nin, Giacomo},
		title = {Rectifiability of the jump set of locally integrable functions},
		journal = {to appear in Ann. Sc. Norm. Super. Pisa Cl. Sci.  (https://arxiv.org/abs/2001.04675)},
%		keywords = {Mathematics - Classical Analysis and ODEs, Mathematics - Analysis of PDEs, Primary: 26B05, Secondary: 26A15, 26B30},
%		year = {2020},
		month = {jan},
%		eid = {(arXiv:2001.04675)},
%		pages = {arXiv:2001.04675},
%		archivePrefix = {arXiv},
%		eprint = {2001.04675},
		primaryClass = {math.CA},
		adsurl = {https://ui.adsabs.harvard.edu/abs/2020arXiv200104675D},
		adsnote = {Provided by the SAO/NASA Astrophysics Data System}
	}
    
        \bib{DPR16}{article}{
            AUTHOR = {De~Philippis, Guido},
            AUTHOR = {Rindler, Filip},
            TITLE = {On the structure of {$\mathcal A$}-free measures and applications},
            JOURNAL = {Ann. of Math. (2)},
            FJOURNAL = {Annals of Mathematics. Second Series},
            VOLUME = {184},
            YEAR = {2016},
            NUMBER = {3},
            PAGES = {1017--1039},
            ISSN = {0003-486X},
            MRCLASS = {49Q20 (28B20 46A22)},
            MRNUMBER = {3549629},
            MRREVIEWER = {Pei Biao Zhao},
            URL = {https://0-doi-org.pugwash.lib.warwick.ac.uk/10.4007/annals.2016.184.3.10},
        }

%		\bib{diening2019continuity}{article}{
%			author={Diening, Lars},
%			author={Gmeineder, Franz},
%			title={Continuity points via {R}iesz potentials for
%				$\mathbb{C}$-elliptic operators},
%			date={2019},
%			eprint={1911.07327},
%		}
		
		\bib{evans1992}{book}{
			author={Evans, L.~C.},
			author={Gariepy, R.~F.},
			title={Measure theory and fine properties of functions},
			series={Studies in Advanced Mathematics},
			publisher={CRC Press, Boca Raton, FL},
			date={1992},
			ISBN={0-8493-7157-0},
		}
		
		\bib{federer-note-on-gauss-green-1958}{article}{
			author={Federer, Herbert},
			title={A note on the {G}auss-{G}reen theorem},
			date={1958},
			ISSN={0002-9939},
			journal={Proc. Amer. Math. Soc.},
			volume={9},
			pages={447\ndash 451},
			url={https://0-doi-org.pugwash.lib.warwick.ac.uk/10.2307/2033002},
			review={\MR{95245}},
		}
		
		\bib{federer1969geometric-measu}{book}{
			author={Federer, Herbert},
			title={Geometric measure theory},
			series={Die Grundlehren der mathematischen Wissenschaften, Band 153},
			publisher={Springer-Verlag New York Inc., New York},
			date={1969},
			url={https://0-mathscinet-ams-org.pugwash.lib.warwick.ac.uk/mathscinet-getitem?mr=0257325},
			review={\MR{0257325}},
		}
		
		\bib{Federer1972_slices_and_potentials}{article}{
			author={Federer, Herbert},
			title={Slices and potentials},
			date={1972},
			ISSN={0022-2518},
			journal={Indiana Univ. Math. J.},
			volume={21},
			pages={373\ndash 382},
		}
		
		\bib{Fleming_Rishel_60}{article}{
			author={Fleming, Wendell~H.},
			author={Rishel, Raymond},
			title={An integral formula for total gradient variation},
			date={1960},
			ISSN={0003-889X},
			journal={Arch. Math. (Basel)},
			volume={11},
			pages={218\ndash 222},
			url={https://0-doi-org.pugwash.lib.warwick.ac.uk/10.1007/BF01236935},
			review={\MR{114892}},
		}
	
\bib{FM99}{article}{
	author={Fonseca, Irene},
	author={M\"{u}ller, Stefan},
	title={$\scr A$-quasiconvexity, lower semicontinuity, and Young measures},
	journal={SIAM J. Math. Anal.},
	volume={30},
	date={1999},
	number={6},
	pages={1355--1390},
	issn={0036-1410},
	review={\MR{1718306}},
	doi={10.1137/S0036141098339885},
}
		
%		\bib{GR17}{article}{
%			author={Gmeineder, Franz},
%			author={Rai\c{t}\u{a}, Bogdan},
%			title={Embeddings for {$\mathbb{A}$}-weakly differentiable functions on
%				domains},
%			date={2019},
%			ISSN={0022-1236},
%			journal={J. Funct. Anal.},
%			volume={277},
%			number={12},
%			pages={108278},
%			url={https://0-doi-org.pugwash.lib.warwick.ac.uk/10.1016/j.jfa.2019.108278},
%			review={\MR{4019087}},
%		}
	
%	\bib{gmeineder2019limiting}{article}{
%		title={On limiting trace inequalities for vectorial differential operators},
%		author={Gmeineder, Franz},
%		author={Raita, Bogdan},
%		author ={Van Schaftingen, Jean},
%		journal={arXiv preprint arXiv:1903.08633},
%		year={2019},
%	}
		
		\bib{hajlasz_on_appr_diff_bd_96}{article}{
			author={Haj{\l}asz, Piotr},
			title={On approximate differentiability of functions with bounded
				deformation},
			date={1996},
			ISSN={0025-2611},
			journal={Manuscripta Math.},
			volume={91},
			number={1},
			pages={61\ndash 72},
			url={https://0-doi-org.pugwash.lib.warwick.ac.uk/10.1007/BF02567939},
			review={\MR{1404417}},
		}
    
%        \bib{Kala94}{article}{
%            AUTHOR = {Ka{\l}amajska, Agnieszka},
%            TITLE = {Pointwise multiplicative inequalities and {N}irenberg type
%                estimates in weighted {S}obolev spaces},
%            JOURNAL = {Studia Math.},
%            FJOURNAL = {Studia Mathematica},
%            VOLUME = {108},
%            YEAR = {1994},
%            NUMBER = {3},
%            PAGES = {275--290},
%            ISSN = {0039-3223},
%            MRCLASS = {46E30 (26D10 31B10 35B45 46N20)},
%            MRNUMBER = {1259280},
%            MRREVIEWER = {Lubo\v{s} Pick},
%            DOI = {10.4064/sm-108-3-275-290},
%            URL = {https://0-doi-org.pugwash.lib.warwick.ac.uk/10.4064/sm-108-3-275-290},
%        }
		
		\bib{kohn1979new-estimates-f}{book}{
			author={Kohn, R.~V.},
			title={New estimates for deformations in terms of their strains},
			date={1979},
			url={http://0-gateway.proquest.com.pugwash.lib.warwick.ac.uk/openurl?url_ver=Z39.88-2004&rft_val_fmt=info:ofi/fmt:kev:mtx:dissertation&res_dat=xri:pqdiss&rft_dat=xri:pqdiss:8003789},
			note={Thesis (Ph.D.)--Princeton University},
			review={\MR{2630218}},
		}

%		\bib{mattila1995geometry-of-set}{book}{
%			author={Mattila, P.},
%			title={{Geometry of sets and measures in {E}uclidean spaces}},
%			series={Cambridge Studies in Advanced Mathematics},
%			publisher={Cambridge University Press, Cambridge},
%			date={1995},
%			volume={44},
%			ISBN={0-521-46576-1; 0-521-65595-1},
%			url={http://dx.doi.org/10.1017/CBO9780511623813},
%			review={\MR{1333890}},
%		}

%		\bib{preiss1987geometry-of-mea}{article}{
%			author={Preiss, David},
%			title={Geometry of measures in {${\bf R}^n$}: distribution,
%				rectifiability, and densities},
%			date={1987},
%			ISSN={0003-486X},
%			journal={Ann. of Math. (2)},
%			volume={125},
%			number={3},
%			pages={537\ndash 643},
%			url={https://0-mathscinet-ams-org.pugwash.lib.warwick.ac.uk/mathscinet-getitem?mr=890162},
%			review={\MR{890162}},
%		}
	
%	\bib{potentials}{article}{
%		AUTHOR = {Rai\c{t}\u{a}, Bogdan},
%		TITLE = {Potentials for {$\mathcal A$}-quasiconvexity},
%		JOURNAL = {Calc. Var. Partial Differential Equations},
%		FJOURNAL = {Calculus of Variations and Partial Differential Equations},
%		VOLUME = {58},
%		YEAR = {2019},
%		NUMBER = {3},
%		PAGES = {Paper No. 105, 16},
%		ISSN = {0944-2669},
%		MRCLASS = {49J45 (35G05)},
%		review = {\MR{3958799}},
%		MRREVIEWER = {Elvira Zappale},
%		DOI = {10.1007/s00526-019-1544-x},
%	}
		
		\bib{Raita_critical_Lp}{article}{
			author={Rai\c{t}\u{a}, Bogdan},
			title={Critical {${\rm L}^p$}-differentiability of
				{${\mathrm{BV}}^{\mathbb A}$}-maps and canceling operators},
			date={2019},
			ISSN={0002-9947},
			journal={Trans. Amer. Math. Soc.},
			volume={372},
			number={10},
			pages={7297\ndash 7326},
			url={https://0-doi-org.pugwash.lib.warwick.ac.uk/10.1090/tran/7878},
			review={\MR{4024554}},
		}
%        \bib{RaitaSk}{article}{
%            author = {Rai\c{t}\u{a}, Bogdan},
%            author={Skorobogatova, Anna},
%            TITLE = {Continuity and canceling operators of order {$n$} on {$\Bbb
%                R^n$}},
%            JOURNAL = {Calc. Var. Partial Differential Equations},
%            FJOURNAL = {Calculus of Variations and Partial Differential Equations},
%            VOLUME = {59},
%            YEAR = {2020},
%            NUMBER = {2},
%            PAGES = {Paper No. 85, 17},
%            ISSN = {0944-2669},
%            MRCLASS = {46E35 (26D10 35A23 47F05)},
%            MRNUMBER = {4087393},
%            DOI = {10.1007/s00526-020-01739-z},
%            URL = {https://0-doi-org.pugwash.lib.warwick.ac.uk/10.1007/s00526-020-01739-z},
%        }
		
		\bib{smith1}{article}{
			author={Smith, K.~T.},
			title={Inequalities for formally positive integro-differential forms},
			date={1961},
			ISSN={0002-9904},
			journal={Bull. Amer. Math. Soc.},
			volume={67},
			pages={368\ndash 370},
			url={https://0-doi-org.pugwash.lib.warwick.ac.uk/10.1090/S0002-9904-1961-10622-8},
			review={\MR{142895}},
		}
    
        \bib{smith2}{article}{
            AUTHOR = {Smith, K. T.},
            TITLE = {Formulas to represent functions by their derivatives},
            JOURNAL = {Math. Ann.},
            FJOURNAL = {Mathematische Annalen},
            VOLUME = {188},
            YEAR = {1970},
            PAGES = {53--77},
            ISSN = {0025-5831},
            MRCLASS = {35.24 (26.00)},
            MRNUMBER = {282046},
            MRREVIEWER = {M. Jodeit, Jr.},
            URL = {https://0-doi-org.pugwash.lib.warwick.ac.uk/10.1007/BF01435415},
        }
    
%    	\bib{larry}{article}{
%    	author={Smith, Larry},
%    	title={Nonsingular bilinear forms, generalized $J$ homomorphisms, and the
%    		homotopy of spheres. I},
%    	journal={Indiana Univ. Math. J.},
%    	volume={27},
%    	date={1978},
%    	number={5},
%    	pages={697--737},
%    	issn={0022-2518},
%    	review={\MR{500951}},
%    	doi={10.1512/iumj.1978.27.27046},
%    }

    \bib{spectorVS}{article}{
    	author={Spector, Daniel},
    	author={Van Schaftingen, Jean},
    	title={Optimal embeddings into Lorentz spaces for some vector
    		differential operators via Gagliardo's lemma},
    	journal={Atti Accad. Naz. Lincei Rend. Lincei Mat. Appl.},
    	volume={30},
    	date={2019},
    	number={3},
    	pages={413--436},
    	issn={1120-6330},
    	review={\MR{4002205}},
    }
		
		\bib{van-schaftingen2013limiting-sobole}{article}{
			author={Van~Schaftingen, Jean},
			title={Limiting {S}obolev inequalities for vector fields and canceling
				linear differential operators},
			date={2013},
			ISSN={1435-9855},
			journal={J. Eur. Math. Soc.},
			volume={15},
			number={3},
			pages={877\ndash 921},
			url={https://0-mathscinet-ams-org.pugwash.lib.warwick.ac.uk/mathscinet-getitem?mr=3085095},
			review={\MR{3085095}},
		}

		\bib{Vol_pert_1967}{article}{
			author={Vol'pert, Aizik~Isaakovich},
			title={The spaces bv and quasilinear equations},
			date={1967},
			journal={Matematicheskii Sbornik},
			volume={115},
			number={2},
			pages={255\ndash 302},
		}
		
	\end{biblist}
\end{bibdiv}

    \end{document}